\DeclareMathAlphabet{\mathpzc}{OT1}{pzc}{m}{it}
\newcommand{\NN}{\mathbb{N}}
\newcommand{\Z}{\mathbb{Z}}
\DeclareMathOperator{\res}{Res}
\DeclareMathOperator{\ind}{Ind}
\DeclareMathOperator{\Soc}{Soc}
\DeclareMathOperator{\Rad}{Rad}
\DeclareMathOperator{\Hd}{Hd}
\DeclareMathOperator{\Inf}{Inf}
\renewcommand{\leq}{\leqslant}
\renewcommand{\geq}{\geqslant}
\renewcommand{\unlhd}{\trianglelefteqslant}
\renewcommand{\unrhd}{\trianglerighteqslant}
\newcommand{\sym}[1]{\mathfrak{S}_{#1}}
\renewcommand{\P}{\mathscr{P}}
\newcommand{\JM}{\mathrm{JM}}
\newcommand{\RP}{\mathscr{RP}}
\newcommand{\m}{{\pmb{\mathscr{M}}}}
\newcommand{\sgn}{\operatorname{\mathrm{sgn}}}
\newcommand{\Rho}{\pmb{\mathscr{V}}}
\begin{document}

\swapnumbers
\setcounter{MaxMatrixCols}{20}

\theoremstyle{definition}

\newtheorem{defn}{Definition}[section]
\newtheorem{rem}[defn]{Remark}
\newtheorem{ques}[defn]{Question}
\newtheorem{eg}[defn]{Example}
\newtheorem{conj}[defn]{Conjecture}
\newtheorem{nota}[defn]{Notation}
\newtheorem{noth}[defn]{}

\theoremstyle{plain}

\newtheorem{prop}[defn]{Proposition}
\newtheorem{lem}[defn]{Lemma}
\newtheorem{cor}[defn]{Corollary}
\newtheorem{thm}[defn]{Theorem}

\baselineskip=14pt

\begin{center}
{\Large\bf Signed Young Modules and Simple Specht Modules}

\medskip

Susanne Danz and Kay Jin Lim

\today
\end{center}



\begin{abstract}
\noindent
By a result of Hemmer, every simple Specht module of a finite symmetric group over a field of odd characteristic is a signed Young module. While Specht modules are parametrized by partitions, indecomposable
signed Young modules are parametrized by certain pairs of partitions. The main result of this article establishes the signed Young module
labels of  simple Specht modules. Along the way we prove a number of results concerning indecomposable signed Young modules that are of independent interest.
In particular, we determine the label of the indecomposable signed Young module obtained
by tensoring a given indecomposable signed Young module with the sign representation.
As consequences, we obtain the Green vertices, Green correspondents, cohomological varieties, and complexities of all simple Specht modules and a class of simple modules of symmetric groups, and extend the results of Gill on periodic Young modules to periodic indecomposable signed Young modules.
\end{abstract}

\section{Introduction}\label{S:intro}

Young modules and Specht modules are of central importance for the representation theory of symmetric groups, and have
been studied intensively for more than a century.
Given a field $F$ and the symmetric group $\sym{n}$ of degree $n\geq 0$, both the isomorphism classes of  Young $F\sym{n}$-modules and Specht $F\sym{n}$-modules are well known
to be labelled by the partitions of $n$. 
If $F$ has characteristic 0 then the Specht $F\sym{n}$-modules are precisely the simple $F\sym{n}$-modules,
otherwise every simple $F\sym{n}$-module occurs as a quotient of a suitable Specht module.
In 2001 Donkin introduced the notion of indecomposable {\sl signed Young modules} over fields of odd characteristic, which generalize the classical Young modules (see \cite{SD}). Donkin proved that the isomorphism classes of indecomposable signed
Young $F\sym{n}$-modules are parametrized by certain pairs of partitions.

Suppose now that $F$ has odd characteristic $p$. A tight connection between the indecomposable signed Young modules and simple Specht modules has been established by the result of Hemmer \cite{DH}.
Hemmer showed that every simple Specht $F\sym{n}$-module is isomorphic to an indecomposable signed Young module. However, to our knowledge, it has so far been an open problem
to determine the signed Young module label of a given simple Specht module (see \cite[Problem~5.2]{DH}).
A conjecture concerning this labelling was first put up by the first author in  \cite[Vermutung~5.4.2]{SDanz} and later, independently, by the second author \cite[Conjecture 8.2]{KJL2} and Orlob \cite[Vermutung A.1.10]{JO}. In an unpublished note, the second author and Orlob verified the conjecture in the case when the simple Specht modules are labelled by hook partitions.
The main result  of the present article, Theorem~\ref{T: irred specht}, confirms this conjecture. 

\medskip

Our strategy towards the proof of Theorem~\ref{T: irred specht} follows the ideas used by Fayers in \cite{MF2}
and by Hemmer in \cite{DH} as follows. Given a simple Specht $F\sym{n}$-module $S^\lambda$, there is always some $m\geq n$
and some simple Specht $F\sym{m}$-module $S^\mu$ belonging to a {\sl Rouquier block} such that $S^\mu$ is isomorphic
to a direct summand of the induced module $\ind_{\sym{n}}^{\sym{m}}(S^\lambda)$ and $S^\lambda$ is isomorphic to a direct summand of the
restricted module $\res_{\sym{n}}^{\sym{m}}(S^\mu)$. The structure of a Rouquier block is generally much better understood
than that of arbitrary blocks of symmetric groups. We prove our main result, Theorem~\ref{T: irred specht}, in two steps: first we reduce the proof to the Rouquier block case in Proposition~\ref{P: reduction}, and then verify our result for this special case.


\medskip

Altogether our proof of the main result requires three key ingredients: firstly, we need a
`twisting formula' for indecomposable signed Young modules, that is, given an indecomposable signed Young
$F\sym{n}$-module $Y$, we determine the label of the signed Young $F\sym{n}$-module obtained by tensoring $Y$ with
the sign representation. This formula is given in Theorem~\ref{T:twists indecomposable signed Young modules}, and should be of independent interest.

Secondly, we shall exploit the theory of {\sl Young vertices} and a generalized version of Green correspondence, which we shall refer to as {\sl Young--Green correspondence}. This theory has been
introduced by Grabmeier \cite{JGrab} in order to generalize Green's classical vertex theory of finite groups.
Donkin has shown in \cite{SD} that the indecomposable signed Young $F\sym{n}$-modules are precisely the
indecomposable $F\sym{n}$-modules with linear Young sources; in particular, they have trivial Green sources.
Donkin has also described explicitly the Young vertices and Young--Green correspondents of the indecomposable signed Young $F\sym{n}$-modules.

Lastly, we shall make use of a recent generalization of the well-known 
Young Rule. Let $F$ be any field. This generalization is due to the second author and Tan \cite{KJLKMT}, and it describes the multiplicity of a Specht $F\sym{n}$-module appearing as factors of a particular filtration of a signed Young permutation $F\sym{n}$-module (see Theorem~\ref{T:Specht factors of sgn permutation}).
If $F$ has characteristic 0, this determines precisely the composition factors of a signed Young permutation module, counting
multiplicities. In order to apply this result in the proof of Theorem~\ref{T: irred specht} we shall have to compare
simple Specht modules and signed Young permutation modules over fields of characteristic 0 and over fields of characteristic $p\geq 3$.

\medskip

Some immediate consequences can be drawn from the twisting and labelling formulae
in Theorem~\ref{T:twists indecomposable signed Young modules} and Theorem~\ref{T: irred specht}, respectively.
In particular, we are able to describe the Green vertices, Green correspondents, cohomological varieties, and complexities of all simple Specht modules and a class of simple modules of symmetric groups over any (algebraically closed) field of odd characteristic.
In doing so, we recover a result of Wildon \cite{MW} on simple Specht modules labelled by hook partitions, and extend the results of Gill \cite{CG} on periodic Young modules to periodic indecomposable signed Young modules.

\medskip

The present paper is organized as follows. In Section~\ref{S: Prelim}, we set up the notation needed throughout, and we give a
summary of some well-known basic properties of symmetric groups and their modules.
In Section~\ref{sec signed Young}, we first review the theory of Young vertices and the Young--Green correspondence,
and then prove the twisting formula Theorem~\ref{T:twists indecomposable signed Young modules}. Along the way, we obtain the Green correspondents, with respect to any subgroup containing the normalizers of their respective Green vertices, and the Brauer constructions of indecomposable signed Young modules. In Section~\ref{sec simple Specht}, we analyze the set of partitions that label the simple Specht modules in odd characteristic $p$. The analysis is based on Fayers's characterization of these partitions \cite[Proposition 2.1]{MF2}.
We show that the $p$-cores of such partitions can be obtained by the independent procedures of stripping off horizontal and vertical $p$-hooks only.
Moreover, we
analyze the relation between two such partitions that are {\sl adjacent} in the sense of Fayers (see \ref{noth Rouquier}).
The latter analysis is particularly important for the proof of the reduction of Theorem~\ref{T: irred specht} to the Rouquier block case in Proposition~\ref{P: reduction}.
Section~\ref{sec labels} is devoted to the proof of the labelling formula of a simple Specht module as an indecomposable signed Young module.
In Section~\ref{S: conseq}, we apply our main results to derive the above-mentioned information concerning Green vertices, Green correspondents, cohomological varieties and complexities of all simple Specht modules and a class of simple modules of the symmetric groups. 
Furthermore, using Gill's result \cite{CG}, we describe the periods and, in the case of blocks of weight 1, we also give a minimal projective resolution of non-projective periodic indecomposable signed Young modules.

Since our notation is slightly different from Donkin's and there are some subtleties regarding sign representations of various permutation groups involved, in Appendix~\ref{appendix}, we follow closely the argument given by Donkin in \cite[\S5.2]{SD} to give a proof of his characterization of the Young--Green correspondents of indecomposable signed Young modules. In Appendix~\ref{S: signed Young Rule}, to make our paper self-contained, we present a signed version of Young's Rule following \cite{KJLKMT}.


\bigskip
\noindent
{\bf Acknowledgements:}\, The first author has been supported through a
Marie Curie Career Integration Grant (PCIG10-GA-2011-303774), and the
DFG Priority Programme `Representation Theory'
(grant \# DA1115/3-1). The second author is supported by the Singapore Ministry of Education AcRF Tier 1 grant RG13/14.

Both authors should like to thank John Murray for pointing out a mistake in an earlier version of this article. We would also like to thank the referee for the valuable suggestions and comments on an earlier version.

\section{Preliminaries}\label{S: Prelim}

We begin by fixing  the notation used throughout this article. We also
collect some well-known facts about the representation theory of symmetric groups that we shall
need repeatedly. For further details, we refer the reader to \cite{GJ1,GJAK,NT}.

\begin{nota}\label{nota general}
%
%
%
%
Suppose that $G$ is a finite group and that $R\neq\{0\}$ is a commutative ring. By an {\sl $RG$-module} we always understand a finitely generated
left $RG$-module that is $R$-free. If $M_1$ and $M_2$ are $RG$-modules such that $M_1$ is isomorphic to a direct summand of $M_2$ then we write $M_1\mid M_2$.

If $H\leq G$ and if $M$ and $N$ are an $RG$-module and $RH$-module, respectively, then we denote by $\res_H^G(M)$ the restriction of $M$ to $H$ and by
$\ind_H^G(N)$ the induction of $N$ to $G$.
If $H\unlhd G$ and if $N$ is an $R[G/H]$-module then
$\Inf_{G/H}^G(N)$ denotes the inflation of $N$ to $G$.

The Jacobson radical of an $RG$-module $M$ will be denoted by $\Rad(M)$. Moreover, the head and socle of $M$ are denoted by $\Hd(M)=M/\Rad(M)$ and $\Soc(M)$, respectively.

For subgroups $H$ and $K$ of $G$, we write $H\leq_GK$ if $H$ is $G$-conjugate to a subgroup of $K$, and we write $H=_GK$ if
$H$ is $G$-conjugate to $K$. For $g\in G$, we set ${}^gH:=gHg^{-1}$.

For two finite groups $G$ and $H$, an $RG$-module $M$ and an $RH$-module $N$, we denote by $M\boxtimes N:=M\otimes_R N$ the outer
tensor product of $M$ and $N$, which is naturally an $R[G\times H]$-module.

\end{nota}

\begin{noth}\label{noth abacus}{\bf Partitions, Young diagrams and abaci.}\;
Denote by $\NN$ and $\Z^+$ the sets of non-negative and positive integers, respectively. Let $n\in\NN$, and let $p$ be  a prime.

\smallskip

(a)\, A {\sl composition} of $n$ is a sequence of non-negative integers that sum to $n$. The unique composition of $0$ is denoted by $\varnothing$. Suppose that $\alpha=(\alpha_1,\ldots,\alpha_r)$ and $\beta=(\beta_1,\ldots,\beta_s)$ are compositions with $r\leq s$ and $k$ is positive integer. We set
\begin{align*}
  \alpha+\beta&:=(\alpha_1+\beta_1,\ldots,\alpha_r+\beta_r,\beta_{r+1},\ldots,\beta_s),\\
  k\alpha&:=(k\alpha_1,\ldots,k\alpha_r).
\end{align*} Also, $\beta+\alpha=\alpha+\beta$ and $\beta-\alpha$ has the obvious meaning if it is a composition. Similarly, if every part of $\alpha$ is a multiple of $k$ then we have the composition $\frac{1}{k}\alpha$ where $(\frac{1}{k}\alpha)_i=\frac{\alpha_i}{k}$ for $i\in\{1,\ldots,r\}$. A composition is called a {\sl partition} if its parts are weakly decreasing. The set of partitions of $n$ is denoted by $\P(n)$.

Let $\lambda=(\lambda_1,\ldots,\lambda_r)\in \P(n)$, where we shall mostly assume $\lambda_r>0$. In this case, we write $|\lambda|=n$ or $\lambda\vdash n$. 
The partition $\lambda$ is identified with its Young diagram \[[\lambda]=\{(i,j)\in\NN^2\ :\ 1\leq i\leq r,\ 1\leq j\leq \lambda_i\}.\] The elements of $[\lambda]$ are called the {\sl nodes} of $[\lambda]$. If $m\in\{0,1,\ldots,n\}$ and if $\mu$ is a partition of $m$ such that $\mu_i\leq \lambda_i$, for all $i\geq 1$, then $\mu$ is
called a {\sl subpartition of $\lambda$}. One then has $[\mu]\subseteq [\lambda]$, and one calls
$[\lambda]\smallsetminus [\mu]$ a {\sl skew diagram}.

The {\sl conjugate} of a partition $\lambda$ of $n$ is the partition whose Young diagram is obtained by transposing $[\lambda]$;
one denotes this partition by $\lambda'$.

\smallskip

(b)\, The
{\sl $p$-core} $\kappa_p(\lambda)$ of $\lambda$ is the partition obtained from $\lambda$ by successively removing all rim $p$-hooks from $[\lambda]$. The $p$-core of $\lambda$ is independent of the procedure of  rim $p$-hooks removal.
The {\sl $p$-weight} $\omega_p(\lambda)$ of $\lambda$ is the total number of such rim $p$-hooks removed to reach its $p$-core.
Thus $\omega_p(\lambda)=\frac{1}{p}(|\lambda|-|\kappa_p(\lambda)|)$.

The {\sl hook length} $h_\lambda(i,j)$ of a node $(i,j)$ of a diagram $[\lambda]$ is the total number of nodes $(x,j)$ and $(i,y)$ of $[\lambda]$ satisfying $x\geq i$ and $y\geq j$, respectively.

The {\sl $p$-residue diagram} of $\lambda$ is obtained by replacing each node $(i,j)$ of the Young diagram $[\lambda]$
by the residue of $j-i$ modulo $p$.

Given $\lambda$, for $i\in\{0,\ldots,p-1\}$, let $c_i$ be the number of nodes of $[\lambda]$ of $p$-residue $i$. Then $c_p(\lambda):=(c_0,\ldots,c_{p-1})$ is called the {\sl $p$-content of $\lambda$}. Partitions $\lambda,\mu\in\P(n)$ have
the same $p$-core if and only if they have the same $p$-content.

The partition $\lambda$ is called a {\sl $p$-singular} if $$\lambda_{i+1}=\lambda_{i+2}=\cdots=\lambda_{i+p}\geq 1\,,$$
for some $i\in\NN$. A partition that is not $p$-singular is called {\sl $p$-regular}.
If $\lambda$ is $p$-regular then the conjugate partition $\lambda'$ is called {\sl $p$-restricted}. Note that the $p$-core of a partition is both $p$-regular and $p$-restricted. We denote the set of all $p$-restricted partitions of $n$ by $\RP(n)$.

\medskip

(c)\, By \cite[Lemma~7.5]{JGrab}, every partition $\lambda$ of $n$ can be written as
$\lambda=\sum_{i=0}^{r_\lambda}p^i\cdot \lambda(i)$, for some uniquely determined $p$-restricted partitions $\lambda(0),\ldots,\lambda(r_\lambda)$ where $\lambda(r_\lambda)\neq \varnothing$ if $\lambda\neq\varnothing$. One calls this the {\sl $p$-adic expansion of $\lambda$}.

More explicitly, we obtain the $p$-adic expansion in the following inductive way. Suppose that $\lambda=(\lambda_1,\ldots,\lambda_r)$ with $\lambda_r>0$. One first removes all possible horizontal rim $p$-hooks from the $r$th row of $[\lambda]$, then all horizontal rim $p$-hooks from the $(r-1)$st row of the resulting diagram, and so on. This procedure leads to the remaining Young diagram of $\lambda(0)$. Furthermore, $\lambda-\lambda(0)$ is a partition such that every part is a multiple of $p$. Repeat the above procedure with the partition $\frac{1}{p}(\lambda-\lambda(0))$ to obtain the partition $\lambda(1)$. Continuing this process gives us the $p$-adic expansion of $\lambda$. The partitions $\lambda(0),\ldots,\lambda(r_\lambda)$ are clearly $p$-restricted and uniquely determined by both $\lambda$ and $p$.

\medskip

(d)\, An {\sl abacus (display)} consists of $p$ {\sl runners}, labelled from $0$ to $p-1$ from left to right, with positions $(i-1)p+j$ belonging to the $i$th row and runner $j$ such that each position of the abacus is either vacant or occupied by  a {\sl bead}. The position $k$ is said to be {\sl higher} (respectively, {\sl lower}) than the position $\ell$ if $k<\ell$ (respectively, $k>\ell$).
Every partition $\lambda$ can be associated to an abacus in the following way. Suppose that $\lambda=(\lambda_1,\ldots,\lambda_r)$ is a partition. Let $s\geq r$. A sequence of {\sl $\beta$-numbers} is a sequence of non-negative integers $(\beta_1,\ldots,\beta_s)$ such that $\beta_i=\lambda_i-i+s$, which is also called an {\sl $s$-element $\beta$-set}. The abacus associated to these $\beta$-numbers is the abacus whose position $(i-1)p+j$ is occupied if and only if, for some $1\leq k\leq s$, one has $\beta_k=(i-1)p+j$. Clearly, the abacus display of $\lambda$ depends on the choice of $s$.

Conversely, given an abacus with $p$ runners and a finite number of beads, we can read off the partition it represents in the following way. For each bead  on the abacus, we count the number of vacant positions that are higher than the position the bead occupies on the abacus. 
Suppose that the numbers obtained in this manner are $\lambda_1\geq \lambda_2\geq \cdots\geq \lambda_r$. Then the partition associated to this abacus is $(\lambda_1,\lambda_2,\ldots,\lambda_r)$ where $\lambda_r$ may be zero.

Moreover, we obtain the $p$-core of $\lambda$ as the partition obtained from an abacus of $\lambda$ by moving all beads on the abacus as high up as possible along every runner. The $p$-weight $\omega_p(\lambda)$ of $\lambda$ is precisely the total number of such bead movements.

\smallskip

Given an abacus of a partition $\lambda$ with $p$ runners, for each $i\in\{0,\ldots,p-1\}$, let $\lambda^{(i)}_j$ be the number of unoccupied positions higher than the $j$th lowest occupied position on runner $i$. Then we obtain the partition $\lambda^{(i)}=(\lambda^{(i)}_1,\lambda^{(i)}_2,\ldots)$ with respect to the given  abacus. The {\sl $p$-quotient} of $\lambda$ with respect to the abacus is the sequence
$$(\lambda^{(0)},\lambda^{(1)},\ldots,\lambda^{(p-1)})\,.$$
The $p$-quotient of $\lambda$ is unique up to cyclic place permutations. In particular, \[\omega_p(\lambda)=|\lambda^{(0)}|+|\lambda^{(1)}|+\cdots+|\lambda^{(p-1)}|.\] We demonstrate this by an example.
\end{noth}

\begin{eg} Suppose that we are given an abacus as follows. For each bead, we record the number of vacant positions that are higher than the position the bead occupies. We obtain the following configuration.
\begin{center}
  \begin{pspicture}(0,0)(2,4) \psset{yunit=.6cm}
    \rput(0,1){$\bullet$} \rput(1,1){$-$} \rput(2,1){$\bullet$} \psline(0,0)(0,6) \psline(1,0)(1,6) \psline(2,0)(2,6)
    \rput(0,2){$-$} \rput(1,2){$-$} \rput(1,3){$-$} \rput(2,3){$-$} \rput(0,4){$-$} \rput(2,2){$\bullet$} \rput(0,3){$\bullet$} \rput(1,4){$\bullet$} \rput(2,4){$\bullet$} \rput(0,5){$\bullet$} \rput(1,5){$\bullet$} \rput(2,5){$\bullet$}  \rput(2.2,2.2){{\tiny $5$}} \rput(0.2,3.2){{\tiny $1$}} \rput(1.2,4.2){{\tiny $1$}} \rput(2.2,4.2){{\tiny $1$}} \rput(0.2,5.2){{\tiny $0$}} \rput(1.2,5.2){{\tiny $0$}} \rput(2.2,5.2){{\tiny $0$}} \rput(0.2,1.1){{\tiny $5$}} \rput(2.2,1.2){{\tiny $6$}}
     \end{pspicture}
\end{center}
The partition this abacus represents is $\lambda=(6,5,5,1,1,1)$. As we move the beads on every runner as high up as possible, we obtain the following abacus.

 \begin{center}
  \begin{pspicture}(0,0)(2,4) \psset{yunit=.6cm}
    \rput(0,4){$\bullet$} \rput(1,1){$-$} \rput(2,2){$\bullet$} \psline(0,0)(0,6) \psline(1,0)(1,6) \psline(2,0)(2,6)
    \rput(0,2){$-$} \rput(1,2){$-$} \rput(1,3){$-$} \rput(2,1){$-$} \rput(0,1){$-$} \rput(2,3){$\bullet$} \rput(0,3){$\bullet$} \rput(1,4){$\bullet$} \rput(2,4){$\bullet$} \rput(0,5){$\bullet$} \rput(1,5){$\bullet$} \rput(2,5){$\bullet$} \rput(2.2,3.2){{\tiny $1$}} \rput(2.2,2.2){{\tiny $3$}} \rput(2.2,4.2){{\tiny $0$}} \rput(2.2,5.2){{\tiny $0$}} \rput(1.2,4.2){{\tiny $0$}} \rput(1.2,5.2){{\tiny $0$}} \rput(.2,4.2){{\tiny $0$}} \rput(.2,5.2){{\tiny $0$}} \rput(.2,3.2){{\tiny $0$}}
  \end{pspicture}
\end{center}
So $\kappa_p(\lambda)=(3,1)$. The number of bead movements carried out is $5$, and hence $\omega_3(\lambda)=5$. The $3$-quotient of $\lambda$ with respect to the abacus is \[(\lambda^{(0)},\lambda^{(1)},\lambda^{(2)})=((2,1),\varnothing,(1,1)).\]
\end{eg}

Next we recall various classical modules of the symmetric groups, and set up the notation used throughout.

\begin{noth}\label{noth perm Specht}{\bf Young permutation modules and Specht modules.}\,
Let $n\in\NN$, let $\sym{n}$ be the symmetric group of degree $n$, and let $R\neq \{0\}$ be a commutative ring.. By convention, $\sym{0}$ is the trivial group. Given a composition $\alpha=(\alpha_1,\ldots,\alpha_r)$ of $n$, we have the Young subgroup
$$\sym{\alpha}:=\sym{\alpha_1}\times\cdots\times\sym{\alpha_r}\,.$$
of $\sym{n}$.

\smallskip

(a)\,  For $\lambda\in\P(n)$, let $R$ be the trivial $R\sym{n}$-module. We have the {\sl Young permutation $R\sym{n}$-module}
$$M_R^\lambda:=\ind_{\sym{\lambda}}^{\sym{n}}(R)\,.$$
By a  classical result of James, see \cite[\S4]{GJ1}, the permutation module $M^\lambda_R$ contains the {\sl Specht $R\sym{n}$-module}
$S^\lambda_R$. In fact, the latter is defined in terms of a basis that does not depend on the ground ring $R$, and one has
$S^\lambda_R\cong R\otimes_{\mathbb{Z}}S^\lambda_{\mathbb{Z}}$. If $R$ is a field of characteristic 0 then the Specht modules
$S^\lambda_R$ yield a set of representatives of the isomorphism classes of simple $R\sym{n}$-modules, as $\lambda$ varies over the set
of partitions of $n$.

\smallskip

(b)\, An $R\sym{n}$-module $M$ is said to admit a {\sl Specht filtration} if there is a sequence of $R\sym{n}$-submodules $$\{0\}=M_0\subset M_1\subset\cdots\subset M_r\subset M_{r+1}=M$$ and partitions $\boldsymbol{\varrho}_1,\ldots,\boldsymbol{\varrho}_{r+1}$ of $n$ such that $M_i/M_{i-1}\cong S^{\boldsymbol{\varrho}_i}_R$, for $i\in\{1,\ldots,r+1\}$.
However, in general, $M$ may have several Specht filtrations, even if $R$ is a field.
Moreover, if $R$ is a field of characteristic $p\in\{2,3\}$ then the number of factors isomorphic to a given Specht $F\sym{n}$-module $S^\lambda_R$ may depend on the chosen filtration; this has been shown by Hemmer and Nakano in \cite{DHDN}.

\smallskip

(c)\, For the remainder of this paper, it will always be clear over which coefficient ring we are working, except in the proof of Theorem~\ref{T: irred specht}, where we shall have to deal with various coefficient rings. Thus, for ease of notation, we shall from now
just write $M^\lambda$ and $S^\lambda$ to denote the Young permutation $R\sym{n}$-module and Specht $R\sym{n}$-module, respectively, except in \ref{defi signed Young} and the remainder of Section \ref{sec labels} after \ref{rem modular system}.
\end{noth}

\begin{noth}\label{noth Specht}{\bf Simple modules and Young modules.}
%
Let again $n\in\NN$, and let $F$ be a field of characteristic $p\geq 0$.

(a)\, Let $F(n)$ be the trivial $F\sym{n}$-module and, for $H\leq \sym{n}$, we denote by $F(H)$ the trivial $FH$-module. If $H=\sym{\alpha}$ for some composition $\alpha$ of $n$ then we set $F(\alpha):=F(H)$. Let $\sgn(n)$ be the sign $F\sym{n}$-module. Similarly, for every subgroup $H\leq \sym{n}$, we denote by $\sgn(H)$ the sign module of $H$ over $F$, that is,
$\sgn(H)=\res_H^{\sym{n}}(\sgn(n))$. In the case when $H=\sym{\alpha}$, we set $\sgn(\alpha):=\sgn(\sym{\alpha})$.
Thus, if $\alpha=(\alpha_1,\ldots,\alpha_r)$, we have \[\sgn(\alpha)=\sgn(\alpha_1)\boxtimes\cdots\boxtimes \sgn(\alpha_r).\]

When no confusion concerning $H$ is possible, we simply write $F$ and $\sgn$ to denote the trivial $FH$-module and the sign $FH$-module, respectively. 

\smallskip

(b)\ For $\lambda\in\P(n)$, let $S_\lambda$ be the contragredient dual of $S^\lambda$. One has
\begin{equation}\label{eqn dual S}
S_\lambda=(S^\lambda)^*\cong S^{\lambda'}\otimes\sgn\,.
\end{equation}

Suppose that now $p>0$. Then, as $\lambda$ varies over the set of $p$-regular partitions of $n$, the module
$D^\lambda:=S^\lambda/\Rad(S^\lambda)$
varies over a set of representatives of the isomorphism classes of simple $F\sym{n}$-modules, and each of these modules
is self-dual.

\smallskip

Using the isomorphism (\ref{eqn dual S}), one can also parametrize the isomorphism classes of simple $F\sym{n}$-modules
by the $p$-restricted partitions of $n$; namely, we set $D_\mu:=\Soc(S^\mu)$, for each $\mu\in\RP(n)$. The relation between these two labellings of simple $F\sym{n}$-modules is  given by
\[D_{\lambda}\cong D^{\lambda'}\otimes \sgn\, ,\] for every $\lambda\in\RP(n)$.

\smallskip

For each $\lambda\in\RP(n)$, the $F\sym{n}$-module $D_\lambda\otimes \sgn$ is  of course simple again,
and we have $D_\lambda\otimes \sgn\cong D_{\m(\lambda)}$, where $\m:\RP(n)\to \RP(n)$ is the {\sl Mullineux map} on
the set of  $p$-restricted partitions of $n$. In this paper, we shall not need to know this map explicitly.

\smallskip

The combinatorial algorithm for computing the Mullineux map on the set of $p$-regular partitions was conjectured
by Mullineux in \cite{Mull}. The first proof of this conjecture is due to Ford an Kleshchev \cite{FoKl}; an alternative proof can be found in the work of Brundan and Kujawa \cite[\S6]{BrKu} in terms of $p$-restricted partitions.

\smallskip

Whenever $\lambda\in\P(n)$ is such that $\lambda=\kappa_p(\lambda)$, then
$\lambda$ is both $p$-regular and $p$-restricted. Furthermore, in this case, we have
$S^\lambda\cong D^\lambda\cong D_\lambda$ and $S^{\lambda'}\cong D^{\lambda'}\cong D_{\lambda'}$, so that
$D_\lambda\otimes \sgn\cong S^\lambda\otimes \sgn\cong S^{\lambda'}\cong D_{\lambda'}$, and
thus $\m(\lambda)=\lambda'$.

\smallskip

(c)\, Let $\lambda\in\P(n)$.
The indecomposable summands of the Young permutation $F\sym{n}$-modules are called Young modules.
In every decomposition of $M^\lambda$, there is a unique indecomposable direct summand that contains the Specht module $S^\lambda$; this module is unique up to isomorphism and is denoted by $Y^\lambda$. The remaining indecomposable direct summands of $M^\lambda$ are of the form $Y^\mu$ where $\mu\rhd\lambda$. By the results of Grabmeier~\cite[Satz 7.8]{JGrab} and Erdmann~\cite{KE}, the Young module $Y^\lambda$
is projective if and only if $\lambda\in\RP(n)$. In the latter case
$Y^\lambda$ is the projective cover of the simple module $D_\lambda$. As such, \[Y^\lambda\otimes \sgn\cong Y^{\m(\lambda)}\ .\]
%
%
\end{noth}

\begin{noth}\label{noth blocks Sn}{\bf Blocks of $F\sym{n}$.}
Let $F$ be a field of characteristic $p>0$.
By the Nakayama Conjecture, proved by Brauer and Robinson, the
blocks of the group algebra $F\sym{n}$ are labelled by the $p$-cores of the partitions of $n$.
Given $\lambda\in\P(n)$,
we shall denote by $b_{\kappa_p(\lambda)}$ the block of $F\sym{n}$ associated to $\kappa_p(\lambda)$. With this notation, each of the $F\sym{n}$-modules $Y^\lambda$, $S^\lambda$, $S_\lambda$, $D^\lambda$ and $D_\lambda$ belongs to the block $b_{\kappa_p(\lambda)}$,
for every admissible partition $\lambda$.

\smallskip

Since partitions $\lambda,\mu\in\P(n)$ have the same $p$-core if and only if they have the same $p$-content,
every block $b_{\kappa_p(\lambda)}$ of $F\sym{n}$ is also uniquely determined by the $p$-content $c=c_p(\lambda)$. In this case, we say that the block has $p$-content $c$.
\end{noth}

\begin{noth}\label{noth wreath}{\bf Wreath products and Sylow $p$-subgroups of $\sym{n}$.\,} Let $n\in\mathbb{Z}^+$.

(a)\, Let $G$ be any finite group, and let $H\leq \sym{n}$. We have the wreath product
$G\wr H:=\{(g_1,\ldots,g_n;\sigma): g_1,\ldots,g_n\in G,\, \sigma\in H\}$, whose multiplication is given by
$$(g_1,\ldots,g_n;\sigma)\cdot (h_1,\ldots,h_n;\pi)=(g_1h_{\sigma^{-1}(1)},\ldots,g_nh_{\sigma^{-1}(n)};\sigma\pi)\,,$$
for $g_1,\ldots,g_n,h_1,\ldots,h_n\in G$ and $\sigma,\pi\in H$.
We denote by $G^n$ the {\sl base group} $G^n:=\{(g_1,\ldots,g_n;1): g_1,\ldots,g_n\in G\}$ of $G\wr H$. If $U\leq H$ then
we set $U^\sharp:=\{(1,\ldots,1;\sigma): \sigma\in U\}\leq G\wr H$.

Whenever we have subgroups $H_1$ and $H_2$ of $H$ with $H_1\times H_2\leq H$, we may identify
the group $G\wr (H_1\times H_2)$ with the group $(G\wr H_1)\times (G\wr H_2)$.

Lastly, if $G\leq \sym{m}$, for some $m\geq 1$, then  we identify $G\wr H$ with a subgroup of the symmetric group $\sym{mn}$ in the
usual way: namely, $G^n$ is identified with a subgroup of the Young subgroup $\sym{(m^n)}$ and $H^\sharp$ is identified with the subgroup of $\sym{mn}$ that permutes $n$ successive blocks of size $m$ according to $H$.

\smallskip

(b)\, Let $p$ be a prime. We denote by $P_n$ a Sylow $p$-subgroup of $\sym{n}$. Recall from \cite[4.1.22, 4.1.24]{GJAK} that, if $n$ has $p$-adic
expansion $n=\sum_{i=0}^ra_ip^i$, for $a_0,\ldots,a_r\in\{0,\ldots,p-1\}$, we may choose
$P_n$ as a Sylow $p$-subgroup of the Young subgroup $\prod_{i=0}^r(\sym{p^i})^{a_i}$. Moreover, a Sylow
$p$-subgroup
$P_{p^i}$ of $\mathfrak{S}_{p^i}$  is isomorphic to the $i$-fold wreath product $C_p\wr\cdots \wr C_p$.
\end{noth}

The proof of the next lemma is straightforward, and is thus left to the reader.

\begin{lem}\label{lemma inflation}
Let $F$ be a field, let $G$ be a finite group, and let $N\leq H\leq G$ be such that $N\unlhd G$. Moreover, let $V$ be an $F[H/N]$-module. Then one has
$$\Inf_{G/N}^G(\ind_{H/N}^{G/N}(V))\cong \ind_{H}^G(\Inf_{H/N}^H(V))$$
as $FG$-modules.
\end{lem}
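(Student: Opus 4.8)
The plan is to prove the isomorphism $\Inf_{G/N}^G(\ind_{H/N}^{G/N}(V))\cong \ind_{H}^G(\Inf_{H/N}^H(V))$ directly, by exhibiting both sides as concrete induced modules and matching them up. Recall that for the normal subgroup $N\unlhd G$ and an $F[G/N]$-module $W$, the inflation $\Inf_{G/N}^G(W)$ is simply $W$ regarded as an $FG$-module via the quotient map $\pi\colon G\to G/N$; as an $F$-vector space it is unchanged.

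First I would unwind the right-hand side. By definition $\ind_H^G(\Inf_{H/N}^H(V))=FG\otimes_{FH}V$, where $V$ carries the $FH$-action obtained by inflating along $H\to H/N$. Since $N$ acts trivially on this inflated module and $N\unlhd G$, the submodule generated by $\{(n-1)\cdot x : n\in N,\ x\in FG\otimes_{FH}V\}$ is in fact zero — more precisely, $N$ acts trivially on all of $FG\otimes_{FH}V$, because for $n\in N$, $g\in G$ and $v\in V$ we have $n\cdot(g\otimes v)=(ng)\otimes v=(g\cdot g^{-1}ng)\otimes v=g\otimes(g^{-1}ng)v=g\otimes v$, using $g^{-1}ng\in N$ and that $N$ acts trivially on the inflated $V$. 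Hence $\ind_H^G(\Inf_{H/N}^H(V))$ is itself inflated from an $F[G/N]$-module, namely from $F[G/N]\otimes_{F[H/N]}V$, and this last module is precisely $\ind_{H/N}^{G/N}(V)$. So it remains to identify $FG\otimes_{FH}V$ with $F[G/N]\otimes_{F[H/N]}V$ as $F[G/N]$-modules.

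For that identification I would use the standard fact that $F[G/N]\cong FG/(FG\cdot I_N)$ where $I_N$ is the augmentation ideal of $FN$ inside $FG$ (equivalently, $F[G/N]=F\otimes_{FN}FG$ with $N$ acting trivially), together with the base-change / transitivity properties of tensor products:
\[
F[G/N]\otimes_{F[H/N]}V\;\cong\;(FG/FG\cdot I_N)\otimes_{F[H/N]}V\;\cong\;FG\otimes_{FH}(FH/FH\cdot I_N)\otimes_{F[H/N]}V\;\cong\;FG\otimes_{FH}V,
\]
the last step because $FH/FH\cdot I_N\cong F[H/N]$ and $F[H/N]\otimes_{F[H/N]}V\cong V$ as $F[H/N]$-modules (with $V$ re-read as $FH$-module via inflation on the left). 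Alternatively, and perhaps more transparently, one can simply write down the $F$-linear map $\Phi\colon FG\otimes_{FH}V\to F[G/N]\otimes_{F[H/N]}V$, $g\otimes v\mapsto \pi(g)\otimes v$, check it is well defined (using $N\unlhd G$ so that the $FH$-balancing condition maps correctly to the $F[H/N]$-balancing condition) and $G$-equivariant, and construct its inverse on coset representatives of $N$ in $G$; a dimension count via $[G:H]=[G/N:H/N]$ confirms bijectivity once injectivity or surjectivity is established.

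The only mildly delicate point — and the one I would be most careful about — is checking that the map $\Phi$ (or the chain of tensor-product isomorphisms) is genuinely well defined, i.e. respects the tensor relations over $FH$ versus over $F[H/N]$; this is exactly where the hypothesis $N\unlhd G$ (not merely $N\unlhd H$) is used, since we need $g^{-1}ng\in N$ for all $g\in G$ to see that the $G$-action descends. Everything else is routine bookkeeping with coset representatives, so I would present the proof compactly: establish that $N$ acts trivially on $\ind_H^G(\Inf_{H/N}^H(V))$, conclude it is inflated from $F[G/N]\otimes_{F[H/N]}V=\ind_{H/N}^{G/N}(V)$, and note that inflation of this module back to $G$ gives exactly the left-hand side. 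As the excerpt itself says, the argument is short enough to be left to the reader, so the write-up need only record these few lines.
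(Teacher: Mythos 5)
The paper does not actually supply a proof of this lemma; it explicitly leaves it to the reader, so there is no argument to compare yours against. Your proof is correct and complete: the key observation that $N$ acts trivially on $\ind_H^G(\Inf_{H/N}^H(V))$ — using normality of $N$ in all of $G$ to rewrite $n\cdot(g\otimes v)=g\otimes(g^{-1}ng)v=g\otimes v$ — is exactly the right point to isolate, and the explicit $G$-equivariant map $g\otimes v\mapsto \pi(g)\otimes v$ together with well-definedness over $FH$ versus $F[H/N]$, surjectivity, and the index equality $[G:H]=[G/N:H/N]$ closes the argument cleanly. The alternative chain of bimodule isomorphisms you sketch is also valid but needs slightly more bookkeeping (one must check each isomorphism as an $(FG,F[H/N])$-bimodule map); the direct map $\Phi$ is the crisper route and the one I would record.
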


\begin{rem}\label{rem inflation}
Suppose that $G$ is a finite group. Let $N\unlhd G$, and let $K\leq H\leq \sym{n}$.
Then we have $N^n\leq G\wr K\leq G\wr H$, $N^n\unlhd G\wr H$, and we have natural group isomorphisms $(G\wr H)/N^n\cong (G/N)\wr H$
and $(G\wr K)/N^n\cong (G/N)\wr K$. If $V$ is an $F[(G/N)\wr K]$-module then Lemma~\ref{lemma inflation} applies, and
we get an isomorphism \[\Inf_{(G/N)\wr H}^{G\wr H}(\ind_{(G/N)\wr K}^{(G/N)\wr H}(V))\cong \ind_{G\wr K}^{G\wr H}(\Inf_{(G/N)\wr K}^{G\wr K}(V))\] of
$F[G\wr H]$-modules.
\end{rem}


\section{Signed Young Modules, Vertices, Green Correspondents, and Young--Green Correspondents}\label{sec signed Young}

Let $n\in\NN$. In this section, we collect the basic properties of indecomposable signed Young modules and prove our main result, Theorem~\ref{T:twists indecomposable signed Young modules}, concerning the tensoring of indecomposable signed Young modules by the sign representation.

\begin{defn}\label{defi signed Young}
Let $\P^2(n)$ be the set consisting of all pairs $(\lambda|\zeta)$ of partitions $\lambda,\zeta$ such that $|\lambda|+|\zeta|=n$. We allow that $\lambda$ or $\zeta$ be the empty partition $\varnothing$.
Let further $R\neq \{0\}$ be a commutative ring. For $(\lambda|\zeta)\in\P^2(n)$, one has the {\sl signed Young permutation $R\sym{n}$-module}
$$M_R(\lambda|\zeta):=\ind_{\mathfrak{S}_\lambda\times\mathfrak{S}_\zeta}^{\mathfrak{S}_n}(R\boxtimes \sgn)\,.$$
Here $R$ denotes the trivial $R\sym{\lambda}$-module, and $\sgn$ denotes the sign $R\sym{\zeta}$-module.
In the case when $\zeta=\varnothing$, one obtains the usual Young permutation module, that is, $M^\lambda_R=M_R(\lambda|\varnothing)$.
\end{defn}

For the remainder of this section, $F$ is  a field of characteristic $p\geq 3$. For convenience, suppose that $F$ is algebraically closed. Since we will only be considering $F\sym{n}$-modules for the remainder of this section, we drop the subscripts and write $M(\lambda|\zeta)=M_F(\lambda|\zeta)$.

\begin{noth}{\bf Signed Young permutation modules and indecomposable signed Young modules.}\label{noth signed young}\;
For $(\lambda|\zeta), (\alpha|\beta)\in\P^2(n)$, one says that $(\lambda|\zeta)$ {\sl dominates} $(\alpha|\beta)$,
and writes $(\lambda|\zeta)\unrhd (\alpha|\beta)$, if, for
all $k\geq 1$, one has

(a)\,  $\sum^k_{i=1}\lambda_i\geq \sum^k_{i=1}\alpha_i$, and

\smallskip

(b)\, $|\lambda|+\sum^k_{i=1}\zeta_i\geq |\alpha|+\sum^k_{i=1}\beta_i$.

\noindent This gives rise to a partial order $\unrhd$ on $\P^2(n)$, which is called the {\sl dominance order}. Sometimes we shall also write $(\alpha|\beta)\unlhd (\lambda|\zeta)$ instead of $(\lambda|\zeta)\unrhd (\alpha|\beta)$.

Suppose that $(\lambda|\zeta)\unrhd (\alpha|\beta)$. We have both $|\lambda|\geq |\alpha|$ and $|\zeta|\leq |\beta|$. Furthermore, in the case when $|\lambda|=|\alpha|$, one also has $|\beta|=|\zeta|$, and hence $\lambda\unrhd\alpha$ and $\zeta\unrhd\beta$, where here $\unrhd$ are the usual dominance orders on the sets of partitions of $|\alpha|$ and $|\beta|$, respectively.

\smallskip


Following Donkin \cite{SD}, one may define indecomposable signed Young modules inductively as follows. For any $(\lambda|p\mu)\in\P^2(n)$, the signed Young permutation $F\sym{n}$-module $M(\lambda|p\mu)$ has an indecomposable direct summand $Y(\lambda|p\mu)$ with Krull--Schmidt multiplicity 1 and the remaining indecomposable summands are
are already isomorphic to indecomposable direct summands of signed permutation modules
$M(\alpha|p\beta)$ such that $(\alpha|p\beta)\rhd(\lambda|p\mu)$. The indecomposable $F\sym{n}$-module $Y(\lambda|p\mu)$ is called the {\sl indecomposable signed Young module} labelled by $(\lambda|p\mu)\in\P^2(n)$. More generally, following \cite[2.3(7)]{SD}, for an arbitrary pair $(\alpha|\beta)\in \P^2(n)$, the signed Young permutation module $M(\alpha|\beta)$ is  isomorphic to a direct sum of some signed Young $F\sym{n}$-modules $Y(\lambda|p\mu)$ such that $(\lambda|p\mu)\unrhd (\alpha|\beta)$. By \cite[Corollary 5.2.9]{DHJKDN}, $Y(\lambda|p\mu)$ belongs to the block $b_{\kappa_p(\lambda)}$ of $F\sym{n}$.


%
 Notice that, whenever $\mu=\varnothing$, one recovers the usual Young module
$Y^\lambda\cong Y(\lambda|\varnothing)$.


%
\end{noth}

\begin{lem}\label{L:ind and res of permutation}
Let $n\in\NN$, and let $(\alpha|\beta)\in \P^2(n)$.

{\rm (a)}\, For every positive integer $m>n$, one has
\[\ind_{\sym{n}}^{\sym{m}}(M(\alpha|\beta))\cong M(\alpha\#(1^{m-n})|\beta)\,,\]
where $\alpha\#(1^{m-n})$ is the concatenation of the partitions $\alpha$ and $(1^{m-n})$.

\smallskip

{\rm (b)} For every non-negative integer $m<n$, the restriction $\res_{\sym{m}}^{\sym{n}}(M(\alpha|\beta))$ is isomorphic
to a direct sum of copies of signed Young permutation $F\sym{m}$-modules of the form
$M(\delta|\partial)$, where $(\delta|\partial)$ is the pair of
partitions obtained from the pairwise rearrangement of some pair of compositions obtained from $(\alpha|\beta)$ by removing $n-m$ nodes.
\end{lem}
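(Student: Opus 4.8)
The plan is to prove both statements by reducing the induction and restriction of signed Young permutation modules to the well-understood behaviour of ordinary permutation modules, using the transitivity of induction and Mackey's formula.

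\medskip

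\textbf{Part (a).} Write $M(\alpha|\beta) = \ind_{\sym{\alpha}\times\sym{\beta}}^{\sym{n}}(F\boxtimes\sgn)$. First I would observe that $\sym{\alpha}\times\sym{\beta} \leq \sym{n} \leq \sym{m}$, so by transitivity of induction,
\[
\ind_{\sym{n}}^{\sym{m}}(M(\alpha|\beta)) \cong \ind_{\sym{\alpha}\times\sym{\beta}}^{\sym{m}}(F\boxtimes\sgn).
\]
Now the key point is to enlarge the parabolic: if $\alpha = (\alpha_1,\ldots,\alpha_r)$, then $\sym{\alpha}\times\sym{\beta} \leq \sym{\alpha\#(1^{m-n})}\times\sym{\beta}$, where the extra $m-n$ trivial factors $\sym{1}$ contribute nothing. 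Since $\sym{1}$ is trivial, the trivial module $F$ of $\sym{\alpha}$ induced up to $\sym{\alpha\#(1^{m-n})}$ is again the trivial module, i.e. $\ind_{\sym{\alpha}\times\sym{\beta}}^{\sym{\alpha\#(1^{m-n})}\times\sym{\beta}}(F\boxtimes\sgn) \cong F\boxtimes\sgn$ (as a module for the larger group, with $F$ now the trivial module of $\sym{\alpha\#(1^{m-n})}$ and $\sgn$ unchanged on $\sym{\beta}$). Applying transitivity once more gives
\[
\ind_{\sym{\alpha}\times\sym{\beta}}^{\sym{m}}(F\boxtimes\sgn) \cong \ind_{\sym{\alpha\#(1^{m-n})}\times\sym{\beta}}^{\sym{m}}(F\boxtimes\sgn) = M(\alpha\#(1^{m-n})|\beta),
\]
as required. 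This part is essentially a bookkeeping exercise; the only mild subtlety is matching up the sign module on $\sym{\beta}$ correctly, but since $\sym{\beta}$ is untouched, $\sgn(\beta)$ remains the same.

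\medskip

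\textbf{Part (b).} Here I would use Mackey's formula for $\res_{\sym{m}}^{\sym{n}}\ind_{\sym{\alpha}\times\sym{\beta}}^{\sym{n}}(F\boxtimes\sgn)$, which expresses it as a direct sum, over double cosets $\sym{m}\backslash\sym{n}/(\sym{\alpha}\times\sym{\beta})$ with representatives $g$, of
\[
\ind_{\sym{m}\cap\, {}^g(\sym{\alpha}\times\sym{\beta})}^{\sym{m}}\bigl(\res_{\sym{m}\cap\, {}^g(\sym{\alpha}\times\sym{\beta})}^{{}^g(\sym{\alpha}\times\sym{\beta})}({}^g(F\boxtimes\sgn))\bigr).
\]
The intersection $\sym{m}\cap {}^g(\sym{\alpha}\times\sym{\beta})$ is again a Young subgroup $\sym{\gamma}\times\sym{\delta}$, where $(\gamma|\delta)$ is a pair of compositions of $m$ obtained by intersecting the set partition of $\{1,\ldots,n\}$ underlying $\sym{\alpha}\times\sym{\beta}$ (after conjugating by $g$) with the set partition underlying $\sym{m}$; this amounts to deleting $n-m$ of the $n$ nodes and recording which of the surviving nodes lay in the ``$\alpha$-part'' versus the ``$\beta$-part''. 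On this intersection, the conjugated module ${}^g(F\boxtimes\sgn)$ restricts to $F\boxtimes\sgn$ with $F$ trivial on $\sym{\gamma}$ and $\sgn$ the sign module on $\sym{\delta}$ (the trivial module restricts to trivial, the sign module of a Young subgroup restricts to the sign module of a Young subgroup). Hence each Mackey summand is
\[
\ind_{\sym{\gamma}\times\sym{\delta}}^{\sym{m}}(F\boxtimes\sgn) \cong M(\delta_{\mathrm{rearr}}|\partial_{\mathrm{rearr}})
\]
after rearranging the compositions $\gamma,\delta$ into partitions (reordering the factors of a Young subgroup gives an isomorphic, in fact conjugate, subgroup, so the induced module is unchanged up to isomorphism). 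Summing over the double cosets yields the claimed decomposition.

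\medskip

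\textbf{Main obstacle.} The conceptual content is light; the delicate step in (b) is the careful verification that under conjugation and intersection a Young subgroup of the ``trivial-times-sign'' form produces again a subgroup of that form, with the sign module distributed correctly over exactly the nodes descending from the $\beta$-part — i.e. that no ``mixing'' of trivial and sign occurs and that $\sgn$ restricted to a Young subgroup of $\sym{\beta}$ is again of product form $\sgn(\delta_1)\boxtimes\cdots\boxtimes\sgn(\delta_s)$. This is exactly the content of $\sgn(\alpha)=\sgn(\alpha_1)\boxtimes\cdots\boxtimes\sgn(\alpha_r)$ recorded in \ref{noth Specht}(a), so it goes through, but it is the point that requires genuine (if routine) care rather than pure formalism.
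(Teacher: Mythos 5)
Your proof is correct and follows essentially the same route as the paper: part (a) by transitivity of induction together with absorbing the extra trivial $\sym{1}$-factors into the Young subgroup, and part (b) by Mackey's formula with the observation that ${}^g(\sym{\alpha}\times\sym{\beta})\cap\sym{m}$ is again a product of two Young subgroups on which the trivial and sign modules restrict compatibly. The only thing to tidy is a naming clash in (b) — you use $\delta$ both for the sign-part of the intersection and (as $\delta_{\mathrm{rearr}}$) for the partition that should instead come from $\gamma$ — but the argument itself matches the paper's.
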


\begin{proof}
As for part (a), we have
\begin{align*}
\ind_{\sym{n}}^{\sym{m}}(M(\alpha|\beta))&=\ind_{\sym{n}\times \sym{(1^{m-n})}}^{\sym{m}}(\ind_{\sym{\alpha}\times\sym{\beta}}^{\sym{n}}(F\boxtimes \sgn)\boxtimes F)\\
&\cong\ind_{\sym{\alpha}\times\sym{\beta}\times\sym{(1^{m-n})}}^{\sym{m}}(F\boxtimes \sgn\boxtimes F)\cong M(\alpha\#(1^{m-n})|\beta)\,.
\end{align*}

For part (b), we use the Mackey Formula to get $\res_{\sym{m}}^{\sym{n}}(M(\alpha|\beta))\cong \bigoplus_g N(g)$, where
\[ N(g)=\ind_{{}^g(\sym{\alpha}\times \sym{\beta})\cap \sym{m}}^{\sym{m}}\left (\res_{{}^g(\sym{\alpha}\times \sym{\beta})\cap \sym{m}}^{{}^g(\sym{\alpha}\times \sym{\beta})}({}^g(F\boxtimes \sgn))\right )\,,\] and where $g$ varies over a set of representatives of the double cosets $\sym{m}\backslash \sym{n}/(\sym{\alpha}\times \sym{\beta})$. Each direct summand $N(g)$ is induced from some subgroup of the form ${}^g(\sym{\alpha}\times \sym{\beta})\cap \sym{m}={}^g(\sym{\check{\delta}}\times \sym{\check{\partial}})$, for a pair of compositions $(\check{\delta}|\check{\partial})$ obtained from the pair $(\alpha|\beta)$ by removing some $n-m$ nodes. The subgroup
${}^g(\sym{\check{\delta}}\times \sym{\check{\partial}})$ is conjugate to the direct product of a pair of Young subgroups of $\sym{m}$. More precisely, after  rearranging  the parts of $\check{\delta}$ and $\check{\partial}$, respectively, we obtain a pair of partitions $(\delta|\partial)$ such that $M(\delta|\partial)$ is isomorphic to $N(g)$.
\end{proof}



\begin{rem}\label{rem sign twist}
Let $(\lambda|p\mu)\in\P^2(n)$. Since $Y(\lambda|p\mu)\mid M(\lambda| p\mu)$, the $F\mathfrak{S}_n$-module $Y(\lambda|p\mu)\otimes \sgn$ is an indecomposable
direct summand of the signed Young permutation module
$$M(\lambda|p\mu)\otimes\sgn\cong  \ind_{\mathfrak{S}_\lambda\times \mathfrak{S}_{p\mu}}^{\mathfrak{S}_n}((F\boxtimes \sgn)\otimes (\sgn\boxtimes \sgn))\cong M(p\mu|\lambda)\,.$$
Thus $Y(\lambda|p\mu)\otimes \sgn$ is again an indecomposable signed Young $F\mathfrak{S}_n$-module. In Theorem~\ref{T:twists indecomposable signed Young modules},
we shall determine the indecomposable  signed Young module label of $Y(\lambda|p\mu)\otimes \sgn$ by
exploiting the theory of Young vertices and the generalized Green correspondence in the sense
of Grabmeier \cite{JGrab} and Donkin \cite{SD}; we shall refer to the latter as {\sl Young--Green correspondence.}
We shall heavily use \cite[\S5.2]{SD}.
Some arguments there are quite subtle, since one has to be careful with various sign representations involved in the
definition of the Young--Green correspondents of indecomposable signed Young modules.
Therefore, for the reader's convenience and to make our arguments
as self-contained as possible, in Appendix~\ref{appendix}, we follow the arguments in \cite[\S5.2]{SD} to present the results using our set-up.
\end{rem}

\begin{noth}\label{noth vertex}{\bf Young vertices and Green vertices.}\, Next we recall some known facts from \cite{SD,JGrab} about the theory of vertices with respect to a Mackey system of a finite group $G$.

(a)\, Let $G$ be any finite group, and let $M$ be an $FG$-module. If $H\leq G$ is such that
$M\mid\ind_H^G(\res_H^G(M))$ then $M$ is called {\sl relatively $H$-projective}. In the case when
$M$ is an indecomposable $FG$-module and $Q$ is minimal such that $M$ is relatively $Q$-projective, $Q$ is called
a {\sl Green vertex} of $M$. By \cite{JG1}, the Green vertices of $M$ form a $G$-conjugacy class of $p$-subgroups of $G$. Moreover,
if $Q$ is a Green vertex of $M$ then there is an indecomposable $FQ$-module $S$, unique up isomorphism and $N_G(Q)$-conjugation, such that $M\mid \ind_Q^G(S)$. One calls $S$ a {\sl Green $Q$-source} of $M$.

In the case when $G=\sym{n}$ for some $n\in\NN$, a defect group of a block of $F\sym{n}$ of $p$-weight $w$ can be chosen as a Sylow $p$-subgroup of $\sym{pw}$. Thus every indecomposable $F\sym{n}$-module lying in a block of weight $w$ has a Green vertex that is a subgroup of $\sym{pw}$.
\smallskip

(b)\, There is a generalized vertex theory in terms of {\sl Mackey systems}.
A {\sl Mackey system} for $G$ is a set $\mathcal{M}$ of subgroups of $G$ that is closed under
$G$-conjugation and under taking intersections; moreover, one requires that $\mathcal{M}$ contains the trivial group $\{1\}$ and
that some element $H\in\mathcal{M}$ contains a Sylow $p$-subgroup of $G$.
For details, we refer the reader to \cite[\S 2]{JGrab} and \cite[\S1]{SD}. We briefly record the
results related to the symmetric groups that we shall need in this paper. Here we consider the Mackey system $\mathcal{Y}$
of Young subgroups of $\sym{n}$.

\smallskip

Suppose that $M$ is an indecomposable
$F\sym{n}$-module, and let $H\leq \sym{n}$ be a Young subgroup such that $M$ is relatively $H$-projective but is not
relatively $K$-projective for any proper Young subgroup $K$ of $H$.
 We call $H$ a {\sl Young vertex} of $M$. Young vertices of $M$ are uniquely determined up to $\sym{n}$-conjugation.
If $H$ is a  Young vertex of $M$ then there is some indecomposable $FH$-module $L$ such that $M\mid\ind_H^{\sym{n}}(L)$.
This module $L$ is called a {\sl Young $H$-source} of $M$, and is uniquely determined up to isomorphism and $N_{\sym{n}}(H)$-conjugation.
In the case when $L$ is one-dimensional, one calls $M$ a {\sl linear-source module}. By \cite[1.3(1)]{SD}, the indecomposable linear-source
$F\sym{n}$-modules with respect to $\mathcal{Y}$  are precisely the indecomposable  signed Young modules.

Whenever $U\leq \sym{n}$, one considers the set $\mathcal{Y}\downarrow U:=\{H\cap U: H\in\mathcal{Y}\}$
of subgroups of $U$. One then has an analogous notion of $(\mathcal{Y}\downarrow U)$-vertices and sources of indecomposable $FU$-modules.
We shall then speak of Young vertices and Young sources of indecomposable $FU$-modules as well. Note that
if $U$ is a Young subgroup of $\sym{n}$ then $\mathcal{Y}\downarrow U\subseteq \mathcal{Y}$.

%
\end{noth}

The following result is a special case of \cite[Lemma~3.9]{JGrab}, and relates Young vertices with
Green vertices.

\begin{prop}\label{prop young green}
Suppose that $U\leq \sym{n}$, and let $M$ be an indecomposable $FU$-module with Green vertex $Q$. Then
$$H:=\mathop{\bigcap_{K\in\mathcal{Y}\downarrow U}}_{Q\leq K} K$$
is a Young vertex of $M$. Moreover, one has $N_U(Q)\leq N_U(H)$.
\end{prop}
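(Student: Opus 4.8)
The goal is to prove Proposition~\ref{prop young green}: given $U\leq\sym n$, an indecomposable $FU$-module $M$ with Green vertex $Q$, the subgroup $H=\bigcap_{K\in\mathcal Y\downarrow U,\ Q\leq K}K$ is a Young vertex of $M$, and $N_U(Q)\leq N_U(H)$.

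The plan is to exhibit $H$ as the smallest element of $\mathcal Y\downarrow U$ that contains $Q$, and then transfer relative projectivity back and forth between $Q$ and $H$. First I would observe that $\mathcal Y\downarrow U$ is closed under intersection (since $\mathcal Y$ is), so $H$ is itself a member of $\mathcal Y\downarrow U$, and by construction $Q\leq H$ and $H$ is contained in every $K\in\mathcal Y\downarrow U$ with $Q\leq K$; thus $H$ is the unique minimal element of $\mathcal Y\downarrow U$ above $Q$.

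Next I would show $M$ is relatively $H$-projective in the $(\mathcal Y\downarrow U)$-sense. Since $Q$ is a Green vertex of $M$, we have $M\mid\ind_Q^U(S)$ for a Green source $S$; because $Q\leq H\leq U$, transitivity of induction gives $M\mid\ind_H^U(\ind_Q^H(S))=\ind_H^U(\res_H^U(M')\ \text{component})$, more precisely $M\mid\ind_H^U(\res_H^U(M))$ (as $M\mid\ind_Q^U S\mid\ind_H^U\ind_Q^H\res_Q^U M\mid\ind_H^U\res_H^U M$), so $M$ is relatively $H$-projective. For minimality, suppose $M$ is relatively $K$-projective for some $K\in\mathcal Y\downarrow U$; I would invoke the generalized vertex theory from \cite[\S2]{JGrab} (or \cite[\S1]{SD}), specifically that the $(\mathcal Y\downarrow U)$-vertices of $M$ are well-defined up to $U$-conjugacy and that relative $K$-projectivity forces a vertex to lie inside a $U$-conjugate of $K$. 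Combined with the standard fact that an ordinary Green vertex $Q$ of $M$ is contained (up to conjugacy) in any subgroup from which $M$ is relatively projective, one gets that a $U$-conjugate of $Q$ lies in $K$; but then, after conjugating, $H$ (being minimal above $Q$ in $\mathcal Y\downarrow U$) lies in that conjugate of $K$, forcing $K$ to contain a $U$-conjugate of $H$. Hence $H$ is a $(\mathcal Y\downarrow U)$-vertex, i.e.\ a Young vertex of $M$; this is exactly the content of \cite[Lemma~3.9]{JGrab}, which I would cite for the precise bookkeeping.

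Finally, for the normalizer statement $N_U(Q)\leq N_U(H)$: any $g\in N_U(Q)$ permutes the subgroups of $U$ by conjugation, preserving membership in $\mathcal Y\downarrow U$ (as $\mathcal Y$ is $G$-conjugation-closed and $U$-conjugation is a special case) and preserving the relation ``contains $Q$'' (since ${}^gQ=Q$). Therefore $g$ permutes the set $\{K\in\mathcal Y\downarrow U: Q\leq K\}$, hence fixes its intersection $H$, i.e.\ ${}^gH=H$, so $g\in N_U(H)$.

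The main obstacle is the minimality half of the Young-vertex claim: one must carefully link the \emph{ordinary} Green vertex $Q$ with $(\mathcal Y\downarrow U)$-relative projectivity, since a priori relative projectivity with respect to a Mackey system is a weaker/different condition than ordinary relative projectivity. The resolution is to use that $\{1\}\in\mathcal Y\downarrow U$ together with the transitivity and Mackey-decomposition properties axiomatized for Mackey systems in \cite[\S2]{JGrab}, which guarantee that the $(\mathcal Y\downarrow U)$-vertex of an indecomposable module, being a vertex in a system containing $\{1\}$, sandwiches the ordinary Green vertex appropriately; but since this is precisely \cite[Lemma~3.9]{JGrab}, I would present the argument above as an unwinding of that lemma rather than reproving the general Mackey-system machinery.
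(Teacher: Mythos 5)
The paper does not actually prove this proposition; it states it without proof as ``a special case of \cite[Lemma~3.9]{JGrab}''. Your proposal supplies a genuine argument, and it is essentially correct: $\mathcal Y\downarrow U$ is closed under intersection, so $H$ is the unique minimal member of $\mathcal Y\downarrow U$ containing $Q$; transitivity of induction gives relative $H$-projectivity; and if $M$ were relatively $K$-projective for some $K\in\mathcal Y\downarrow U$, the standard property of ordinary Green vertices forces $Q\leq_U K$, whence minimality of $H$ over $Q$ forces (a $U$-conjugate of) $H$ to lie in $K$, ruling out proper subgroups. The normalizer step is also correct: $N_U(Q)$-conjugation preserves both membership in $\mathcal Y\downarrow U$ and the condition of containing $Q$, so it permutes the index set of the intersection and hence fixes $H$.

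One technical slip worth flagging: the parenthetical divisibility chain
$M\mid\ind_Q^U S\mid\ind_H^U\ind_Q^H\res_Q^U M\mid\ind_H^U\res_H^U M$
does not go through at the last step, since $\ind_Q^H(\res_Q^H(W))$ is not in general a summand of $W$ --- the divisibility runs the other way when $W$ is relatively $Q$-projective. But this is extraneous to your argument: the correct and sufficient observation, which you also state, is simply $M\mid\ind_Q^U(S)=\ind_H^U(\ind_Q^H(S))$, which already establishes relative $H$-projectivity. So the slip is cosmetic and does not affect the validity of the proof. Since you explicitly defer the Mackey-system bookkeeping to \cite[Lemma~3.9]{JGrab} at the end, your proposal ends up citing exactly the same source the paper does, but with the advantage of making the reduction explicit.
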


\begin{nota}\label{nota p-adic}
Let $(\lambda|p\mu)\in\P^2(n)$. Recall from \ref{noth abacus}(c) the $p$-adic expansions
\begin{equation}\label{eqn exp}
\lambda=\sum_{i=0}^{r_\lambda}p^i\cdot \lambda(i)\quad\text{ and }\quad \mu=\sum_{i=0}^{r_\mu}p^i\cdot \mu(i)
\end{equation}
of $\lambda$ and $\mu$, respectively.
In the case when $\lambda=\varnothing=\lambda(0)$, we set $r_\lambda:=0$, and in the case when $\mu=\varnothing=\mu(0)$,
we set $r_\mu:=0$. Set $r:=\max\{r_\lambda,r_\mu+1\}$. We define the following composition $\Rho(\lambda|p\mu)$ of $n$:
\begin{equation*}\label{eqn rho}
\Rho(\lambda|p\mu):=(1^{|\lambda(0)|},p^{|\lambda(1)|+|\mu(0)|},(p^2)^{|\lambda(2)|+|\mu(1)|},\ldots,(p^r)^{|\lambda(r)|+|\mu(r-1)|})\,,
\end{equation*} here, the first $|\lambda(0)|$ parts of $\Rho(\lambda|p\mu)$ are of size 1, followed by $|\lambda(1)|+|\mu(0)|$ parts of size $p$, and so on. The notation $\Rho$ may be viewed as a function from $\P^2(n)$ to the set of compositions of $n$ with $p$-power parts.
\end{nota}

The next theorem describes the Young vertices and sources as well as the Green vertices and sources of
indecomposable signed Young modules. Part (a) is due to Donkin, see \cite[\S5.1]{SD}.
The assertion of part (b) is certainly also well known, and a proof in the case when $\mu=\varnothing$ can, for instance,
be found in \cite{KE}. We include a short proof of  part (b) for convenience.

\begin{thm}\label{thm Young vertex} Let $(\lambda|p\mu)\in\P^2(n)$ and $\rho=\Rho(\lambda|p\mu)$.

\smallskip


{\rm (a)}\, The indecomposable signed Young module $Y(\lambda|p\mu)$ has  Young vertex $\sym{\rho}$ and linear Young source.

\smallskip


{\rm (b)}\, Any Sylow $p$-subgroup of $\sym{\rho}$ is a Green vertex of
the indecomposable signed Young module $Y(\lambda|p\mu)$.
Moreover, $Y(\lambda|p\mu)$ has a trivial Green source.
\end{thm}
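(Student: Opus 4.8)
\textbf{Proof plan for Theorem~\ref{thm Young vertex}(b).}
The plan is to deduce part (b) from part (a) together with the relationship between Young vertices and Green vertices recorded in Proposition~\ref{prop young green}, and the explicit structure of Sylow $p$-subgroups of Young subgroups. By part (a), $Y(\lambda|p\mu)$ is relatively $\sym{\rho}$-projective with one-dimensional Young source, say $L$, an $F\sym{\rho}$-module; thus $Y(\lambda|p\mu)\mid \ind_{\sym{\rho}}^{\sym{n}}(L)$. Since $\Char F=p$ and $\sym{\rho}/O^p(\sym{\rho})$ is a $p'$-group acting on the one-dimensional module $L$, in fact $L$ is the restriction to $\sym{\rho}$ of a one-dimensional $F\sym{n}$-module; because $p$ is odd there are only two such modules, the trivial and the sign module. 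Writing $L' \in \{F, \sgn\}$ for the relevant one, we have $Y(\lambda|p\mu)\mid\ind_{\sym{\rho}}^{\sym{n}}(\res_{\sym{\rho}}^{\sym{n}}(L'))\cong \ind_{\sym{\rho}}^{\sym{n}}(F)\otimes L'$, so after twisting by $L'$ (which is a self-inverse Morita self-equivalence permuting indecomposables) it suffices to locate a Green vertex of $\ind_{\sym{\rho}}^{\sym{n}}(F)\otimes L'$, equivalently of $Y(\lambda|p\mu)$ itself.

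First I would fix $P$ to be a Sylow $p$-subgroup of $\sym{\rho}$ and show $Y(\lambda|p\mu)$ is relatively $P$-projective: indeed $L'$, being a restriction of an $F\sym{n}$-module, has $P$ as a Green vertex over $\sym{\rho}$ (one-dimensional modules over a $p$-group are trivial, hence relatively $P$-projective, and transitivity of relative projectivity through $P\leq\sym{\rho}$ gives the claim), so $Y(\lambda|p\mu)\mid\ind_{\sym{\rho}}^{\sym{n}}(L')\mid\ind_{\sym{\rho}}^{\sym{n}}\ind_P^{\sym{\rho}}\res_P^{\sym{\rho}}(L')=\ind_P^{\sym{n}}(\res_P^{\sym{n}}L')$, and $\res_P^{\sym{n}}(L')\cong F$ since $P$ is a $p$-group. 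Hence a Green vertex $Q$ of $Y(\lambda|p\mu)$ satisfies $Q\leq_{\sym{n}}P$, and the Green $Q$-source is trivial because $Y(\lambda|p\mu)\mid\ind_P^{\sym{n}}(F)$ forces the source to be a summand of $\res_Q^P\ind$-type modules, all of which have trivial source.

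It remains to show $Q =_{\sym{n}} P$, i.e. the Green vertex is as large as possible. This is where I would invoke Proposition~\ref{prop young green} with $U=\sym{n}$: it says that $\bigcap_{K\in\mathcal{Y},\,Q\leq K}K$ is a Young vertex of $Y(\lambda|p\mu)$, hence equals $\sym{\rho}$ up to conjugacy by the uniqueness in part (a). Now $P\leq\sym{\rho}$, so every Young subgroup $K$ containing (a conjugate of) $Q$ that arises in the intersection — in particular one may take all Young subgroups containing $P$ — must have intersection exactly $\sym{\rho}$; this pins down $\sym{\rho}$ as the smallest Young subgroup containing $P$. Combined with $Q\leq_{\sym{n}}P$, if $Q$ were strictly smaller than $P$ then $Q$ would be contained in more Young subgroups, but the key point is subtler: I would argue that $\sym{\rho}$ is the smallest Young subgroup containing a Sylow $p$-subgroup of itself (each block $\sym{p^i}$ of $\rho$ has a Sylow subgroup the iterated wreath product $C_p\wr\cdots\wr C_p$, which is contained in no proper Young subgroup of $\sym{p^i}$), so that $\bigcap_{K\in\mathcal{Y},\,P\leq K}K=\sym{\rho}$ exactly. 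Then from $Q\leq_{\sym{n}}P\leq\sym{\rho}$ and the fact that the Young vertex computed from $Q$ via Proposition~\ref{prop young green} is $\sym{\rho}$, while the one computed from $P$ is also $\sym{\rho}$, a counting/minimality comparison forces $Q=_{\sym{n}}P$: a Green vertex of $Y(\lambda|p\mu)$ cannot be a proper subgroup of $P$, since shrinking $Q$ inside $\sym{\rho}$ without leaving every proper Young subgroup of $\sym{\rho}$ is impossible once $Q$ is still Sylow-sized.

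\textbf{Main obstacle.} The genuinely delicate step is the last one — upgrading ``$Q\leq_{\sym{n}}P$'' to ``$Q=_{\sym{n}}P$''. The inequality direction is easy from relative projectivity; the reverse requires knowing that the Young vertex $\sym{\rho}$ (which is determined by part (a)) genuinely forces the Green vertex to contain a full Sylow $p$-subgroup of $\sym{\rho}$. I expect to handle this by the observation that an iterated wreath product $C_p\wr\cdots\wr C_p=P_{p^i}$ lies in no proper Young subgroup of $\sym{p^i}$ (it acts transitively in the relevant sense on each block), so the minimal Young subgroup of $\sym{n}$ containing $P$ is precisely $\sym{\rho}$; then Proposition~\ref{prop young green}, applied to the Green vertex $Q$ of $Y(\lambda|p\mu)$, yields that the minimal Young overgroup of $Q$ is $\sym{\rho}$, and since $Q\leq_{\sym{n}}P\leq\sym{\rho}$ with $P$ Sylow in $\sym{\rho}$, any $Q$ strictly smaller than $P$ would still have $\sym{\rho}$ as its minimal Young overgroup, which does not immediately contradict anything — so the real input must be a direct relative-projectivity argument showing $\ind_{\sym{\rho}}^{\sym{n}}(F)$ is not relatively $R$-projective for any $R<P$, e.g. via the fact that $Y(\lambda|p\mu)$ lies in the block $b_{\kappa_p(\lambda)}$ of weight $w=\omega_p(\lambda)$, whose defect group is a Sylow $p$-subgroup of $\sym{pw}$, and a dimension or Brauer-character computation showing the vertex has exactly the order of $P$. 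I would therefore finish by comparing $|P|=p^{\,\omega_p(\lambda)+\omega_p(\mu)+|\mu|}$ (read off from $\rho$) with the defect-group order and with the known vertex of $Y^\lambda$ in the case $\mu=\varnothing$, extending the latter argument from \cite{KE} to the general case by the wreath-product structure of $\sym{\rho}$.
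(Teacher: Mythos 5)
Your proposal establishes the easy half of part (b) correctly, namely $Q\leq_{\sym n}P$ (where $P$ is a Sylow $p$-subgroup of $\sym\rho$ and $Q$ a Green vertex of $Y(\lambda|p\mu)$) together with the fact that the source is trivial, but the argument for the converse inclusion $P\leq_{\sym n}Q$ has a genuine gap that you in fact acknowledge yourself. The route you propose — use Proposition~\ref{prop young green} to say that the smallest Young subgroup containing $Q$ is $\sym\rho$, then try to conclude $Q$ is Sylow in $\sym\rho$ — cannot work as stated: there are many $p$-subgroups of $\sym{p^i}$ (for instance a cyclic group generated by a $p^i$-cycle) that are contained in no proper Young subgroup of $\sym{p^i}$ and yet are far from Sylow. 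So knowing the Young vertex of $Y(\lambda|p\mu)$ does not, by itself, lower-bound the Green vertex. Your fallback suggestion of a ``dimension or Brauer-character computation'' or a comparison with the defect group of the block also does not pin things down: the defect group has order $p^{?}$ coming from $\sym{pw}$, which is in general strictly larger than $|P_\rho|$, so containment in the defect group gives no lower bound on $|Q|$.

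The missing ingredient is a restriction, not an induction, statement. Part (a) gives not only $Y\mid\ind_{\sym\rho}^{\sym n}(L')$ but also $L'\mid\res_{\sym\rho}^{\sym n}(Y)$ for a linear Young source $L'$ (this is part of the definition of a Young source; cf.\ \ref{noth vertex}(b) and \cite[Lemma~2.10]{JGrab}). Since $L'$ is one-dimensional and $P$ is a $p$-group, $\res_P^{\sym\rho}(L')\cong F$, and therefore $F\mid\res_P^{\sym n}(Y)$. Combining $F\mid\res_P^{\sym n}(Y)$ with $Y\mid\ind_P^{\sym n}(F)$ and applying the standard fact that every indecomposable direct summand of $\res_P^{\sym n}(Y)$ has Green vertex $\sym n$-subconjugate to a Green vertex of $Y$ (this is \cite[\S4, Lemma~3.4]{NT}, as the paper invokes) gives $P\leq_{\sym n}Q$ at once; with the reverse inclusion you already have, $Q=_{\sym n}P$. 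This is exactly the step your sketch is missing.

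Two smaller comments. First, the claim that the linear Young source $L$ must be the restriction to $\sym\rho$ of a one-dimensional $F\sym n$-module is false: the one-dimensional $F\sym\rho$-modules are exterior products of trivial and sign modules on the individual factors $\sym{\rho_i}$, and the Young source of $Y(\lambda|p\mu)$ genuinely mixes trivial and sign factors (this is the whole point of \emph{signed} Young modules). It happens not to derail your downstream computation because in the end you only use $\res_P(L)\cong F$, but the intermediate $\otimes L'$ reduction is not available. Second, you do not need the observation that $P_{p^i}$ lies in no proper Young subgroup of $\sym{p^i}$ for this proof; that fact is genuinely useful elsewhere (e.g.\ in computing Young vertices from Green vertices), but here the lower bound on $Q$ has to come from the restriction statement above, not from Young-vertex combinatorics.
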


\begin{proof}
Set $Y:=Y(\lambda|p\mu)$. As mentioned above, part (a) was proved in \cite[5.1(3)]{SD}.

\smallskip

To prove part (b), set $H:=\sym{\rho}$, and let $L$ be  a linear Young $H$-source of $Y$.
Let $R$ be a Sylow $p$-subgroup of $H$, and let $Q$ be a Green vertex of $Y$.
Since $Y$ is relatively $H$-projective, there is an indecomposable
$FH$-module $L'$ such that $L'\mid\res_H^{\sym{n}}(Y)$ and $Y\mid\ind_H^{\sym{n}}(L')$. By definition, $L'$ is also a
Young $H$-source of $Y$. Thus, by \cite[Lemma 2.10]{JGrab}, $L'\cong {}^gL$, for some $g\in N_{\sym{n}}(H)$. In particular, $L'$ has dimension 1 as well.
Therefore, $L'$ has Green vertex $R$ and trivial Green $R$-source $F$, so that $L'\mid\ind_R^H(F)$ and $F\mid\res_R^H(L')$.
This implies $Y\mid\ind_R^{\sym{n}}(F)$ and $F\mid\res_R^{\sym{n}}(Y)$.
So, by \cite[\S4, Lemma 3.4]{NT},  we get $Q\leq_{\sym{n}} R$ as well as $R\leq_{\sym{n}} Q$. Hence $Q=_{\sym{n}} R$ is a Green vertex of $Y$, and the trivial $FQ$-module $F$ is a Green $Q$-source of $Y$.
\end{proof}

\begin{noth}\label{noth Green}{\bf Young--Green correspondents and Green correspondents.}
Recall, for instance from \cite[\S4.4]{NT}, the notion of Green correspondence. There is an analogous notion of Green
correspondence with respect to a Mackey system of subgroups of a given finite group (see \cite[\S3]{JGrab}). In the case of the symmetric groups, we shall again consider the Mackey system $\mathcal{Y}$ of Young subgroups of $\sym{n}$, and
from now on refer to the resulting generalized Green correspondence as {\sl Young--Green correspondence}. A concise summary of
Grabmeier's theory of Young--Green correspondence can also be found in \cite[\S 1.1]{SD}. For convenience, we recall the
basic notion that we shall need in the case of the symmetric groups here.

Suppose that $M$ is an indecomposable $F\sym{n}$-module with Young vertex $H$, and let $N$ be a subgroup of $\sym{n}$ such that $N_{\sym{n}}(H)\leq N\leq \sym{n}$.
One defines the following sets of subgroups of $\sym{n}$:
\begin{align*}
\mathcal{X}&:=\{A\in\mathcal{Y}: A\leq H\cap {}^gH,\text{ for some } g\in \sym{n}\smallsetminus N\}\,,\\
\mathcal{A}&:=\{A\in\mathcal{Y}: A\leq H,\, {}^gA\notin \mathcal{X},\text{ for all }  g\in \sym{n}\}\,,\\
\mathcal{Z}&:=\{A\in\mathcal{Y}\downarrow N: A\leq N\cap {}^g H,\text{ for some } g\in\sym{n}\smallsetminus N\}\,.
\end{align*}
%
%
%
%
%
%
Note that the Young vertex $H$ of $M$ is contained in $\mathcal{A}$.
\end{noth}

With this notation, one has the following central theorem of Grabmeier \cite[Satz 3.7]{JGrab}:

\begin{thm}\label{thm Grab} Let $M$ be an indecomposable $F\sym{n}$-module with Young vertex $H$, and let $N$ be a subgroup of $\sym{n}$ such that $N_{\sym{n}}(H)\leq N\leq \sym{n}$.

\smallskip

{\rm (a)}\, There is an indecomposable $FN$-module $f(M)$, unique up to isomorphism, with the following properties:

\smallskip

\quad {\rm (i)}\, $M\mid\ind_N^{\sym{n}}(f(M))$,

\quad {\rm (ii)}\, $\res_N^{\sym{n}}(M)\cong f(M)\oplus M'$, where every indecomposable direct summand of $M'$ has a Young vertex
in $\mathcal{Z}$,

\quad {\rm (iii)}\, no Young vertex of $f(M)$ is contained in $\mathcal{Z}$.

\medskip

{\rm (b)}\, The modules $M$ and $f(M)$ have a common Young vertex contained in $\mathcal{A}$, and a common Young source.

\smallskip

{\rm (c)}\, If $M''$ is an indecomposable $FN$-module with Young vertex contained in $\mathcal{A}$ then the following are equivalent:

\smallskip

\quad {\rm (i)}\, $M''\cong f(M)$;

\quad {\rm (ii)}\, $M''\mid\res_N^{\sym{n}}(M)$;

\quad {\rm (iii)}\, $M\mid\ind_N^{\sym{n}}(M'')$.

\medskip

{\rm (d)}\, The function $f$ given by $M\mapsto f(M)$ is a bijection between the set of isomorphism classes of indecomposable $F\sym{n}$-modules
with Young vertex in $\mathcal{A}$ and the set of isomorphism classes of indecomposable $FN$-modules with Young vertex contained in
$\mathcal{A}$.
\end{thm}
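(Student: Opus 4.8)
Since this is Grabmeier's realisation of the Green correspondence for the Mackey system $\mathcal{Y}$ of Young subgroups of $\sym{n}$, the plan is to transcribe the classical proof of the Green correspondence (see, e.g., \cite[\S4.4]{NT}) to the $\mathcal{Y}$-relative setting, reading ``subgroup'' as ``member of $\mathcal{Y}$'' (or of $\mathcal{Y}\downarrow N$), ``vertex/source'' as ``Young vertex/source'', and ``relatively $K$-projective'' as ``relatively $K$-projective for some $K\in\mathcal{Y}$''. From Grabmeier's $\mathcal{Y}$-relative vertex theory in \cite[\S 2, \S 3]{JGrab} I would freely use: the existence and conjugacy-uniqueness of Young vertices and Young sources; Higman's criterion ``$V$ relatively $K$-projective $\iff V\mid\ind_K(\res_K(V))$'' for $K\in\mathcal{Y}$; the Mackey decomposition formula and its standard consequences --- for instance that the Young vertices of the indecomposable summands of $\res_{U'}^U(V)$ are $U'$-subconjugate to the groups $U'\cap{}^uE$ ($u\in U$), where $E$ is a Young vertex of the indecomposable module $V$, and that relative $A$- and relative $B$-projectivity of an indecomposable module force relative $(A\cap{}^xB)$-projectivity for some $x$. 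All of these hold for $\mathcal{Y}$ rather than for the system of all subgroups precisely because $\mathcal{Y}$ is closed under $\sym{n}$-conjugation and under intersection, so that every group arising from a Mackey decomposition lies again in $\mathcal{Y}$ (resp.\ in $\mathcal{Y}\downarrow N$).

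For part (a) I would argue as follows. Let $L$ be a Young source of $M$, so $M\mid\ind_H^{\sym{n}}(L)$; since $H\leq N_{\sym{n}}(H)\leq N$, transitivity of induction gives $\ind_H^{\sym{n}}(L)\cong\ind_N^{\sym{n}}(\ind_H^N(L))$, and Krull--Schmidt produces an indecomposable summand $f(M)$ of $\ind_H^N(L)$ with $M\mid\ind_N^{\sym{n}}(f(M))$; this is (i). As $f(M)\mid\ind_H^N(L)$, every Young vertex $E$ of $f(M)$ satisfies $E\leq_N H$; as $M\mid\ind_N^{\sym{n}}(f(M))\mid\ind_E^{\sym{n}}(\,\cdot\,)$, the module $M$ is relatively $E$-projective, whence $H\leq_{\sym{n}}E$. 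Comparing orders gives $E=_N H$, so $M$ and $f(M)$ have the common Young vertex $H$ (which lies in $\mathcal{A}$, as noted in \ref{noth Green}), and by uniqueness of sources their Young sources agree up to the relevant conjugation --- this is part (b). Moreover $H\notin\mathcal{Z}$ (otherwise $H\leq{}^gH$ for some $g\notin N$, so $H={}^gH$ and $g\in N_{\sym{n}}(H)\leq N$, a contradiction), and $\mathcal{Z}$ is stable under $N$-conjugation; hence $E\notin\mathcal{Z}$, which is (iii). For the decomposition (ii), restrict $M\mid\ind_N^{\sym{n}}(f(M))$ and apply the Mackey formula: the trivial double coset contributes $f(M)$, and, since $f(M)$ has Young vertex $H$ with $H\leq N$, every other summand is induced from a group $N\cap{}^gH$ with $g\notin N$ and so has all its Young vertices in $\mathcal{Z}$; because no Young vertex of $f(M)$ lies in $\mathcal{Z}$, a multiplicity-one argument (at which point the hypothesis $N_{\sym{n}}(H)\leq N$ is used in the customary way) yields $f(M)\mid\res_N^{\sym{n}}(M)$ and $\res_N^{\sym{n}}(M)\cong f(M)\oplus M'$ with all Young vertices of $M'$ in $\mathcal{Z}$. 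Finally, (ii) and (iii) force the uniqueness of $f(M)$, since any indecomposable summand of $\res_N^{\sym{n}}(M)$ with Young vertex outside $\mathcal{Z}$ must be isomorphic to $f(M)$.

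Parts (c) and (d) are then formal consequences. In (c): (i)$\Rightarrow$(ii) and (i)$\Rightarrow$(iii) are contained in (a)(ii) and (a)(i); for (ii)$\Rightarrow$(i), an indecomposable summand $M''$ of $\res_N^{\sym{n}}(M)=f(M)\oplus M'$ whose Young vertex lies in $\mathcal{A}$ cannot be a summand of $M'$, because $\mathcal{A}\cap\mathcal{Z}=\varnothing$, so $M''\cong f(M)$; for (iii)$\Rightarrow$(ii), restrict $M\mid\ind_N^{\sym{n}}(M'')$ and use, via Mackey, that the Young vertex of $M''$ lies in $\mathcal{A}$ but not in $\mathcal{Z}$, so that $M''$ itself must occur in $\res_N^{\sym{n}}(M)$; hence (iii)$\Rightarrow$(i). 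For (d): $f$ is injective because $M$ is recovered from $f(M)$ as the unique indecomposable summand of $\ind_N^{\sym{n}}(f(M))$ with Young vertex in $\mathcal{A}$, and surjective because, for an indecomposable $FN$-module $V$ with Young vertex in $\mathcal{A}$, the unique such summand $M$ of $\ind_N^{\sym{n}}(V)$ has Young vertex in $\mathcal{A}$ and satisfies $f(M)\cong V$ by (c).

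The hard part will be the Mackey bookkeeping underlying (a)(ii)--(iii): one must run the Mackey formula attentively enough to see that it is precisely the Young vertices in $\mathcal{Z}$ (and dually those in $\mathcal{X}$ on the $\sym{n}$-side) that survive into the ``error'' summand $M'$ and into the complement of $M$ in $\ind_N^{\sym{n}}(f(M))$, and the hypothesis $N_{\sym{n}}(H)\leq N$ must be invoked at the delicate points --- to ensure $H\leq N$ and $H\in\mathcal{A}$, and in the multiplicity argument securing that $f(M)$ is a direct summand of $\res_N^{\sym{n}}(M)$. Everything else is a routine transcription of the Green correspondence argument, made legitimate by the closure of $\mathcal{Y}$ under conjugation and intersection.
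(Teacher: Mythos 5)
The paper does not supply its own proof of this theorem: it is stated as Grabmeier's result with the citation \cite[Satz~3.7]{JGrab}, so there is nothing in the text to compare against. Your strategy --- to re-run the classical Green correspondence argument of \cite[\S 4.4]{NT} over the Mackey system $\mathcal{Y}$, using closure of $\mathcal{Y}$ under conjugation and intersection to keep all the Mackey terms inside the system --- is the right one and is in essence what Grabmeier does, and your treatment of (a)(i)--(iii), (c), and (d) is organised along the correct lines.

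There is, however, a genuine slip in the passage that establishes the common Young vertex in (b). You write ``as $M\mid\ind_N^{\sym{n}}(f(M))\mid\ind_E^{\sym{n}}(\,\cdot\,)$, the module $M$ is relatively $E$-projective, whence $H\leq_{\sym{n}}E$.'' The implication ``$M$ relatively $E$-projective $\Rightarrow H\leq_{\sym{n}}E$'' is \emph{false} for an arbitrary subgroup $E\leq\sym{n}$: a Young vertex is only required to be minimal among \emph{Young} subgroups with respect to which $M$ is relatively projective (take $M$ the trivial module and $E$ a Sylow $p$-subgroup, so that $H=\sym{n}\not\leq_{\sym{n}}E$). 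In your situation $E$ is a Young vertex of the $FN$-module $f(M)$, so $E\in\mathcal{Y}\downarrow N$, and when $N$ is not itself a Young subgroup (e.g.\ when $N=N_{\sym{n}}(H)$, the case most used in the paper) the group $E$ need not lie in $\mathcal{Y}$. The correct route is two steps. First, let $\widetilde{E}$ be the smallest member of $\mathcal{Y}$ containing $E$; from relative $E$-projectivity of $M$ one gets relative $\widetilde{E}$-projectivity, hence $H\leq_{\sym{n}}\widetilde{E}$, and since (after conjugating) $E\leq H$ forces $\widetilde{E}\leq H$, comparing orders gives $\widetilde{E}=H$. Second, write $E=K\cap N$ with $K\in\mathcal{Y}$; then $E\leq K$ implies $H=\widetilde{E}\leq K$, and since $H\leq N_{\sym{n}}(H)\leq N$ one gets $H\leq K\cap N=E$, whence $E=H$. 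Only with this refinement does one obtain the common Young vertex in $\mathcal{A}$ --- and, as the paper's Remark~\ref{rem Grab} subsequently shows, that the Young vertex of $f(M)$ is $N$-conjugate (not merely $\sym{n}$-conjugate) to $H$. You should also be aware that in your Mackey bookkeeping for (a)(ii) the terms of $\res_N^{\sym{n}}\ind_N^{\sym{n}}(f(M))$ are a priori induced from $N\cap{}^gN$, not $N\cap{}^gH$; passing to $N\cap{}^gH$ requires a second application of Mackey to a Young source, which is what lets the sets $\mathcal{Z}$ and $\mathcal{X}$ enter as in \ref{noth Green}.
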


\begin{rem}\label{rem Grab}
With the notation as in Theorem~\ref{thm Grab} we shall call the $FN$-module $f(M)$ the {\sl Young--Green correspondent}
of $M$ with respect to $N$.
Let $K\in\mathcal{A}$ be a common
Young vertex of $M$ and $f(M)$. Then $K=_{\sym{n}}H$, by \cite[Korollar 2.7]{JGrab}.
Let $g\in\sym{n}$ be such that $K={}^gH$. If $g\notin N$ then $K\leq H\cap{}^gH$, thus $K\in\mathcal{X}$, a contradiction.
Hence $K=_NH$, so that $H$ is also a Young vertex of $f(M)$.
%

Theorem~\ref{thm Grab} also shows that, in particular, $f(M)$ has a linear Young source if and only if $M$ has. Furthermore, $M$ and $f(M)$ have
a common Green vertex and a common Green source.
\end{rem}

\begin{lem}\label{lemma Green sign}
Let $M$ be an indecomposable $F\mathfrak{S}_n$-module, and let $H$ be  a Young vertex of $M$.
Then

\smallskip

{\rm (a)}\, the indecomposable $F\sym{n}$-module $M\otimes \sgn$ has Young vertex $H$;

\smallskip

{\rm (b)}\, if $N_{\sym{n}}(H)\leq N\leq \sym{n}$ and if $M'$ is the Young--Green correspondent of $M$ with respect to the subgroup $N$
then $M'\otimes\sgn(N)$ is the Young--Green correspondent of $M\otimes\sgn$ with respect to $N$.
\end{lem}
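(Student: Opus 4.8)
The plan is to prove both parts directly from the characterizations of Young vertices and the Young--Green correspondence recorded in Theorems~\ref{thm Grab} and the definitions in \ref{noth vertex}, \ref{noth Green}, using that tensoring with the one-dimensional module $\sgn$ is an exact autoequivalence of $F\sym{n}\modcat$ that commutes with induction and restriction in the appropriate sense.

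For part (a), first I would observe that $M\otimes\sgn$ is indecomposable since $-\otimes\sgn$ is an autoequivalence of $F\sym{n}\modcat$. Then, for any Young subgroup $K\leq\sym{n}$, one has the standard identities $\ind_K^{\sym{n}}(\res_K^{\sym{n}}(M\otimes\sgn))\cong\ind_K^{\sym{n}}(\res_K^{\sym{n}}(M)\otimes\sgn(K))\cong\ind_K^{\sym{n}}(\res_K^{\sym{n}}(M))\otimes\sgn(n)$, where the last step is the projection formula. Hence $M$ is relatively $K$-projective (with respect to $\mathcal{Y}$) if and only if $M\otimes\sgn$ is. Applying this with $K=H$ and with every proper Young subgroup of $H$ shows that $H$ is a Young vertex of $M\otimes\sgn$, which is the claim.

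For part (b), let $M'=f(M)$ be the Young--Green correspondent of $M$ with respect to $N$, so $M'$ is the unique indecomposable $FN$-module satisfying the three properties of Theorem~\ref{thm Grab}(a). By part (a), $M\otimes\sgn$ has Young vertex $H$, so its Young--Green correspondent with respect to $N$ is defined; I will verify that $M'\otimes\sgn(N)$ satisfies the characterizing properties, most conveniently property (iii) of Theorem~\ref{thm Grab}(c). Note $M'\otimes\sgn(N)$ is indecomposable (autoequivalence of $FN\modcat$) and, by part (a) applied inside $N$ — i.e. the same argument with $\mathcal{Y}\downarrow N$ in place of $\mathcal{Y}$ — it has the same Young vertex $H\in\mathcal{A}$ as $M'$. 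Since $M\mid\ind_N^{\sym{n}}(M')$ by Theorem~\ref{thm Grab}(a)(i), tensoring with $\sgn(n)$ and using the projection formula $\ind_N^{\sym{n}}(M')\otimes\sgn(n)\cong\ind_N^{\sym{n}}(M'\otimes\res_N^{\sym{n}}(\sgn(n)))=\ind_N^{\sym{n}}(M'\otimes\sgn(N))$ gives $M\otimes\sgn\mid\ind_N^{\sym{n}}(M'\otimes\sgn(N))$. Now $M'\otimes\sgn(N)$ is an indecomposable $FN$-module with Young vertex in $\mathcal{A}$ which divides $\ind_N^{\sym{n}}$ of $M\otimes\sgn$; by the equivalence (i)$\Leftrightarrow$(iii) in Theorem~\ref{thm Grab}(c) applied to $M\otimes\sgn$, it must be the Young--Green correspondent of $M\otimes\sgn$ with respect to $N$.

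The only genuinely delicate point — and the one I would be most careful about — is bookkeeping of the various sign modules: $\sgn(n)=\sgn(\sym{n})$ versus $\sgn(N)=\res_N^{\sym{n}}(\sgn(n))$ versus the intrinsic sign modules of the symmetric-group factors appearing when $N$ or $H$ is a Young subgroup or a wreath product, as flagged in Remark~\ref{rem sign twist}. All the module isomorphisms above are instances of the projection (push-pull) formula $\ind_K^G(X)\otimes Y\cong\ind_K^G(X\otimes\res_K^G(Y))$ and the fact that $\res$ of $\sgn$ is $\sgn$; no deeper input is needed, but one must consistently interpret every "$\sgn$" as the restriction of $\sgn(n)$ to the subgroup in question, which is exactly the convention fixed in \ref{noth Specht}(a). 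With that convention in force, both parts are immediate.
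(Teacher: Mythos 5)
Your proof is correct and follows essentially the same route as the paper: both arguments rest on the projection formula to show that tensoring with $\sgn$ preserves relative projectivity (hence Young vertices) and the divisibility $M\otimes\sgn\mid\ind_N^{\sym{n}}(M'\otimes\sgn(N))$, and then invoke the characterization in Theorem~\ref{thm Grab}(c) to identify $M'\otimes\sgn(N)$ as the Young--Green correspondent. The paper's wording is slightly terser, but the substance is identical.
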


\begin{proof}
The $F\sym{n}$-module $M$ is relatively $K$-projective for a subgroup $K$ of $\sym{n}$ if and only if $M\otimes \sgn$ is.
Thus $M$ and $M\otimes\sgn$ have the same Young vertices. Analogously, $M'$ and $M'\otimes\sgn$ have the same
Young vertices. Since $M'$ is the Young--Green correspondent of $M$, $M$ and $M'$ have common Young vertex $H$, hence
so do $M\otimes \sgn$ and $M'\otimes \sgn$.
Since $M\mid\ind_N^{\sym{n}}(M')$, we also have $M\otimes \sgn\mid (\ind_N^{\sym{n}}(M'))\otimes\sgn\cong \ind_N^{\sym{n}}(M'\otimes \sgn)$. Thus, by \cite[Satz~3.7(iii)]{JGrab} (see also Theorem~\ref{thm Grab}(c)), $M'\otimes\sgn$ is the Young--Green correspondent of $M\otimes\sgn$ with respect to $N$.
\end{proof}

\begin{nota}\label{nota R}
Let $m,k\in \NN$ be such that $k\geq 2$. If $M$ is an $F\sym{k}$-module then the outer tensor product $M^{\otimes m}$ is naturally
a module for the base group of the wreath product $\sym{k}\wr\sym{m}$. Via tensor induction $M^{\otimes m}$ extends
to an $F[\sym{k}\wr\sym{m}]$-module. Thus, for $(g_1,\ldots,g_{m};\sigma)\in\sym{k}\wr\sym{m}$
and $x_1,\ldots,x_{m}\in M$, one then has
$$(g_1,\ldots,g_{m};\sigma)\cdot (x_1\otimes\cdots \otimes x_{m}):=g_1x_{\sigma^{-1}(1)}\otimes\cdots\otimes g_{m}x_{\sigma^{-1}(m)}\,.$$
By abuse of notation, we shall denote this module by $M^{\otimes m}$ as well.

Now consider the special case where $M=\sgn(k)$. Then the action of $\sym{k}\wr\sym{m}$ on the one-dimensional tensor-induced module
$\sgn(k)^{\otimes m}$ reads
\begin{align*}
(g_1,\ldots,g_{m};\sigma)\cdot (x_1\otimes\cdots \otimes x_{m})&=g_1x_{\sigma^{-1}(1)}\otimes\cdots\otimes g_{m}x_{\sigma^{-1}(m)}\\
&=\sgn(g_1)\cdots \sgn(g_{m})\cdot x_1\otimes\cdots\otimes x_{m}\,
\end{align*}
for $x_1,\ldots,x_{m}\in \sgn(k)$ and $(g_1,\ldots,g_{m};\sigma)\in\sym{k}\wr\sym{m}$.
%
Thus the group $\sym{m}^{\sharp}\leq \sym{k}\wr\sym{m}$
acts trivially on $\sgn(k)^{\otimes m}$.
On the other hand, $\sgn(k)^{\otimes m}$ has another extension, namely,
$$\sgn(k;m):=\sgn(k)^{\otimes m}\otimes \Inf_{\mathfrak{S}_{m}}^{\sym{k}\wr\sym{m}}(\sgn(m))\,.$$
So the action of $\sym{k}\wr\sym{m}$ on $\sgn(k;m)$ is given by
$$(g_1,\ldots,g_{m};\sigma)* (x_1\otimes\cdots \otimes x_{m})=\sgn(\sigma) \sgn(g_1)\cdots \sgn(g_{m})\cdot x_1\otimes\cdots\otimes x_{m}\,,$$
for $x_1,\ldots,x_{m}\in \sgn(k)$ and $(g_1,\ldots,g_{m};\sigma)\in\sym{k}\wr\sym{m}$.

\smallskip

Let $m_1,m_2\in \NN$ be such that $m=m_1+m_2$. Let further $\alpha\in\RP(m_1)$ and $\beta\in\RP(m_2)$.
Following \cite[\S5]{SD}, we construct an $F[\mathfrak{S}_k\wr\mathfrak{S}_m]$-module $R_k(\alpha|\beta)$ as follows.
Recall from \ref{noth Specht}(c) that the projective cover of the simple $F\sym{m_1}$-module $D_\alpha$ is the Young $F\sym{m_1}$-module $Y^\alpha$, and, similarly, $Y^\beta$ is the projective cover of the simple $F\sym{m_2}$-module $D_\beta$.
Set \[Y^\alpha_k:=\Inf_{\sym{m_1}}^{\sym{k}\wr\sym{m_1}}(Y^\alpha),\] and, similarly, we set $Y^\beta_k:=\Inf_{\sym{m_2}}^{\sym{k}\wr\sym{m_2}}(Y^\beta)$.
With the above notation, we obtain the following
$F[\sym{k}\wr\sym{m}]$-module
\begin{equation}\label{eqn Rk}
R_k(\alpha|\beta):=\ind_{(\sym{k}\wr\sym{m_1})\times(\sym{k}\wr\sym{m_2})}^{\sym{k}\wr\sym{m}}(Y^\alpha_k\boxtimes (Y^\beta_k\otimes \sgn(k;m_2)))\,.
\end{equation}
It is understood that, whenever $m_1=0$ and $m_2>0$ (respectively, $m_1>0$ and $m_2=0$), we have $R_k(\varnothing|\beta)=Y^\beta_k\otimes \sgn(k;m_2)$ (respectively, $R_k(\alpha|\varnothing)=Y^\alpha_k$). Furthermore, $R_k(\varnothing|\varnothing)$ is the trivial $F\sym{0}$-module if $m=0$.
\end{nota}

\begin{rem}\label{rem k odd}
 Retain the notation as in \ref{nota R} above.

 \smallskip

(a)\, Note that, in the case when $k$ is odd, we have
$\sgn(k;m_2)\cong \sgn(\sym{k}\wr\sym{m_2})$.
But if $k$ is even then $\sgn(k)^{\otimes m_2}\cong \sgn(\sym{k}\wr\sym{m_2})$.

\smallskip

(b)\,Let $\mathfrak{A}_k$ be the alternating subgroup of degree $k$ of $\sym{k}$. The wreath product $\sym{k}\wr\sym{m_2}$ contains the normal subgroup
$\mathfrak{A}_k^{m_2}$, which acts trivially on both $\sgn(k)^{\otimes m_2}$ and $\sgn(k;m_2)$. Thus we may
view both modules as modules of the quotient group
$$(\sym{k}\wr\sym{m_2})/\mathfrak{A}_k^{m_2}\cong (\sym{k}/\mathfrak{A}_k)\wr\sym{m_2}\cong \sym{2}\wr\sym{m_2}\,.$$ Via the latter group isomorphism, we also have
\begin{align*}
\sgn(k)^{\otimes m_2}&\cong \Inf_{\sym{2}\wr\sym{m_2}}^{\sym{k}\wr\sym{m_2}}(\sgn(2)^{\otimes m_2}), \\
\sgn(k;m_2)&\cong \Inf_{\sym{2}\wr\sym{m_2}}^{\sym{k}\wr\sym{m_2}}(\sgn(2;m_2))\,.
\end{align*}
However, $\sgn(\sym{2}\wr\sym{m_2})\cong \sgn(2)^{\otimes m_2}\not\cong \sgn(2;m_2)$.

\smallskip

(c)\, By part (b) and Lemma~\ref{lemma inflation}, we have $R_k(\alpha|\beta)\cong \Inf_{\sym{2}\wr\sym{m}}^{\sym{k}\wr\sym{m}}(R_2(\alpha|\beta))$
via the epimorphism $\sym{k}\wr\sym{m}\to (\sym{k}\wr\sym{m})/\mathfrak{A}_k^m\cong \sym{2}\wr\sym{m}$.
\end{rem}

\begin{lem}\label{lemma R sign}
Let $k\geq 3$ be an odd integer and $\alpha,\beta$ be $p$-restricted partitions. We have
$$R_k(\alpha|\beta)\otimes\sgn\cong R_k(\beta|\alpha)\,.$$
\end{lem}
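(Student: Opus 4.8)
The plan is to compute $R_k(\alpha|\beta)\otimes\sgn$ directly from the definition \eqref{eqn Rk}, using standard push-pull identities for tensoring an induced module by a one-dimensional module. Write $m=m_1+m_2$ with $\alpha\in\RP(m_1)$, $\beta\in\RP(m_2)$, and set $G:=\sym{k}\wr\sym{m}$, $H:=(\sym{k}\wr\sym{m_1})\times(\sym{k}\wr\sym{m_2})$. Since $\sgn(G)$ is one-dimensional, the projection formula gives
\[
R_k(\alpha|\beta)\otimes\sgn(G)\cong\ind_H^G\bigl((Y^\alpha_k\boxtimes(Y^\beta_k\otimes\sgn(k;m_2)))\otimes\res_H^G\sgn(G)\bigr).
\]
Now $\res_H^G\sgn(G)\cong\sgn(\sym{k}\wr\sym{m_1})\boxtimes\sgn(\sym{k}\wr\sym{m_2})$, and since $k$ is odd, Remark~\ref{rem k odd}(a) identifies $\sgn(\sym{k}\wr\sym{m_i})\cong\sgn(k;m_i)$ for each $i$. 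So the module being induced is
\[
(Y^\alpha_k\otimes\sgn(k;m_1))\ \boxtimes\ (Y^\beta_k\otimes\sgn(k;m_2)\otimes\sgn(k;m_2)).
\]

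The key simplification is that $\sgn(k;m_2)\otimes\sgn(k;m_2)$ is the trivial $F[\sym{k}\wr\sym{m_2}]$-module (a one-dimensional module tensored with itself), so the second factor collapses to $Y^\beta_k$. For the first factor I would show $Y^\alpha_k\otimes\sgn(k;m_1)\cong Y^\alpha_k$: recalling $Y^\alpha_k=\Inf_{\sym{m_1}}^{\sym{k}\wr\sym{m_1}}(Y^\alpha)$, the base group $\sym{k}^{m_1}$ acts trivially on $Y^\alpha_k$; but the defining formula for $\sgn(k;m_1)$ shows $(g_1,\dots,g_{m_1};\sigma)$ acts on it by $\sgn(\sigma)\sgn(g_1)\cdots\sgn(g_{m_1})$, and on the inflation $Y^\alpha_k$ as $\sigma$ acts on $Y^\alpha$. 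So $Y^\alpha_k\otimes\sgn(k;m_1)\cong\Inf_{\sym{m_1}}^{\sym{k}\wr\sym{m_1}}(Y^\alpha\otimes\sgn(m_1))\otimes(\sgn(k)^{\otimes m_1})$; since $k$ is odd, $\sgn(k)$ is trivial (or: use Remark~\ref{rem k odd} to reduce the base group), so the last factor disappears, and $Y^\alpha\otimes\sgn(m_1)\cong Y^{\m(\alpha)}$ by \ref{noth Specht}(c). Wait — this would give $Y^{\m(\alpha)}_k$, not $Y^\alpha_k$; the resolution is that I should only pull the $\sgn(m_1)$ out of the $\sym{m_1}$-part, i.e. $\sgn(k;m_1)=\sgn(k)^{\otimes m_1}\otimes\Inf(\sgn(m_1))$ and the $\Inf(\sgn(m_1))$ factor is exactly what is \emph{absent} in $R_k(\alpha|\beta)$'s first slot, so tensoring it in is what converts $Y^\alpha_k$ into (the inflation of) $Y^\alpha\otimes\sgn$. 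I will therefore instead compare with $R_k(\beta|\alpha)=\ind_{H'}^G(Y^\beta_k\boxtimes(Y^\alpha_k\otimes\sgn(k;m_1)))$ after swapping the two factors via a permutation in $G$ normalizing the base group and interchanging the two wreath blocks; conjugating by this element identifies the induced module $\ind_H^G(Y^\alpha_k\boxtimes(Y^\beta_k\otimes\sgn(k;m_2)\otimes\sgn(k;m_2)))$ with $\ind_{H'}^G((Y^\beta_k\otimes\sgn(k;m_2)\otimes\sgn(k;m_2))\boxtimes Y^\alpha_k)$, and since $\sgn(k;m_2)^{\otimes 2}$ is trivial this is $R_k(\beta|\alpha)$.

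The one genuine subtlety — and the step I expect to be the main obstacle — is the bookkeeping of which sign module sits where: $\sgn(k;m_i)$ versus $\sgn(k)^{\otimes m_i}$ versus $\Inf(\sgn(m_i))$, and in particular making sure that when $k$ is odd the identification $\res_H^G\sgn(G)\cong\sgn(k;m_1)\boxtimes\sgn(k;m_2)$ is correct rather than off by a factor of $\sgn(k)^{\otimes m_i}$. Here the oddness of $k$ is essential and is used precisely through Remark~\ref{rem k odd}(a): $\sgn(k;m_i)\cong\sgn(\sym{k}\wr\sym{m_i})$. I would phrase the argument so that the only facts invoked are: (i) the projection formula $\ind_H^G(V)\otimes W\cong\ind_H^G(V\otimes\res_H^G W)$ for one-dimensional $W$; (ii) $\res_H^G\sgn(G)\cong\sgn(\sym{k}\wr\sym{m_1})\boxtimes\sgn(\sym{k}\wr\sym{m_2})$; (iii) Remark~\ref{rem k odd}(a); (iv) $L\otimes L\cong F$ for any one-dimensional $L$; and (v) the conjugation isomorphism swapping the two wreath-product blocks inside $G$. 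Assembling these gives $R_k(\alpha|\beta)\otimes\sgn\cong R_k(\beta|\alpha)$ with no further computation.
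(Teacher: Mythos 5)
Your final argument is exactly the paper's proof: pull $\sgn(G)$ inside the induction by the projection (Frobenius) formula, use Remark~\ref{rem k odd}(a) to identify $\res_H^G\sgn(G)$ with $\sgn(k;m_1)\boxtimes\sgn(k;m_2)$, cancel the two copies of $\sgn(k;m_2)$ on the second slot, and swap the two wreath-product blocks by an inner automorphism of $G$ to recover $R_k(\beta|\alpha)$. Two small corrections are worth recording. First, the parenthetical claim in your discarded sub-argument that ``since $k$ is odd, $\sgn(k)$ is trivial'' is false: $\sgn(k)$ is the nontrivial one-dimensional $F\sym{k}$-module for every $k\geq 2$, and $\sgn(k)^{\otimes m_1}$ is a nontrivial $F[\sym{k}\wr\sym{m_1}]$-module, which is exactly why $Y^\alpha_k\otimes\sgn(k;m_1)$ is \emph{not} isomorphic to $Y^\alpha_k$ and you are right to abandon that route. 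Second, in the penultimate sentence the module you conjugate should carry the $\sgn(k;m_1)$ twist on the first slot, i.e.\ it is $\ind_H^G\bigl((Y^\alpha_k\otimes\sgn(k;m_1))\boxtimes(Y^\beta_k\otimes\sgn(k;m_2)^{\otimes 2})\bigr)$ rather than $\ind_H^G\bigl(Y^\alpha_k\boxtimes(Y^\beta_k\otimes\sgn(k;m_2)^{\otimes 2})\bigr)$; with that restored, the swap produces $\ind_{H'}^G\bigl(Y^\beta_k\boxtimes(Y^\alpha_k\otimes\sgn(k;m_1))\bigr)=R_k(\beta|\alpha)$ as desired.
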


\begin{proof}
By Remark~\ref{rem k odd}(a) and the Frobenius Formula, we have
\begin{align*}
R_k(\alpha|\beta)\otimes\sgn=&\left (\ind_{(\sym{k}\wr\sym{m_1})\times(\sym{k}\wr\sym{m_2})}^{\sym{k}\wr\sym{m}}(Y^\alpha_k\boxtimes (Y^\beta_k\otimes \sgn))\right )\otimes\sgn\\
\cong&\ind_{(\sym{k}\wr\sym{m_1})\times(\sym{k}\wr\sym{m_2})}^{\sym{k}\wr\sym{m}}((Y^\alpha_k\boxtimes (Y^\beta_k\otimes \sgn)\otimes\sgn))\\
\cong&\ind_{(\sym{k}\wr\sym{m_1})\times(\sym{k}\wr\sym{m_2})}^{\sym{k}\wr\sym{m}}((Y^\alpha_k\otimes \sgn)\boxtimes  Y^\beta_k)\cong R_k(\beta|\alpha)\,.
\end{align*}
\end{proof}

\begin{nota}\label{nota Nrho}
Let $(\lambda|p\mu)\in\P^2(n)$, and let $\lambda=\sum_{i=0}^{r_\lambda}p^i\cdot\lambda(i)$ and $\mu=\sum_{i=0}^{r_\mu}p^i\cdot\mu(i)$  be the $p$-adic expansions of $\lambda$ and $\mu$, respectively. Let $r=\max\{r_\lambda,r_\mu+1\}$ and $\rho=\Rho(\lambda|p\mu)$. For $i\in\{0,\ldots,r\}$, set $n_i:=|\lambda(i)|+|\mu(i-1)|$; by convention, $\mu(-1)=\varnothing$.
By Theorem~\ref{thm Young vertex}(a), the Young subgroup
$$\sym{\rho}=(\sym{1})^{n_0}\times (\sym{p})^{n_1}\times\cdots \times (\sym{p^r})^{n_r}$$
of $\sym{n}$ is a Young vertex of $Y(\lambda|p\mu)$; here the first non-trivial direct factor $\sym{p}$ is supposed to act on the set $\{n_0+1,\ldots,n_0+p\}$, and so on.

In the sequel, we shall
denote the normalizer $N_{\sym{n}}(\sym{\rho})$ by $N(\rho)$ and identify $N(\rho)$ with the direct
product
$$\sym{n_0}\times (\sym{p}\wr\sym{n_1})\times\cdots \times (\sym{p^r}\wr\sym{n_r})\,.$$
From now on we fix a Sylow $p$-subgroup $P_\rho:=(P_{1})^{n_0}\times (P_p)^{n_1}\times\cdots\times (P_{p^r})^{n_r}$ of $\sym{\rho}$ where $P_{p^i}$ is a Sylow $p$-subgroup of $\sym{p^i}$ as in \ref{noth wreath}(b).
Now observe that
\begin{align*}
N_{\sym{n}}(P_\rho)&=\sym{n_0}\times N_{\sym{n_1p}}((P_p)^{n_1})\times\cdots \times N_{\sym{n_rp^r}}((P_{p^r})^{n_r})\\
&=\sym{n_0}\times (N_{\sym{p}}(P_p)\wr \sym{n_1})\times\cdots\times (N_{\sym{p^r}}(P_{p^r})\wr\sym{n_r})\leq N(\rho)\,.
\end{align*}

For each $i\in\{1,\ldots,r\}$, $R_{p^i}(\lambda(i)|\mu(i-1))$ is a module for $F[\sym{p^i}\wr\sym{n_i}]$. We define the $FN(\rho)$-module
$$\mathbf{R}(\lambda|p\mu):=Y^{\lambda(0)}\boxtimes R_p(\lambda(1)|\mu(0))\boxtimes\cdots\boxtimes R_{p^r}(\lambda(r)|\mu(r-1))\,.$$
\end{nota}

With the above notation, we can now state Donkin's result on Young--Green correspondents of indecomposable signed Young modules (see also Appendix \ref{appendix}).

\begin{thm}[{\cite[5.2 (2)]{SD}}]\label{thm Young Green}
Let $(\lambda|p\mu)\in\P^2(n)$ and $\rho=\Rho(\lambda|p\mu)$. The $FN(\rho)$-module $\mathbf{R}(\lambda|p\mu)$ is the Young--Green correspondent of $Y(\lambda|p\mu)$ with respect to $N(\rho)$.
\end{thm}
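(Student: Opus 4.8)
The plan is to follow Donkin's argument from \cite[\S5.2]{SD}, rephrased in the notation of this paper, and to carry it out by induction on the number of runners actually in play, i.e. by peeling off one wreath-product factor at a time. First I would record the numerology: with $n_i=|\lambda(i)|+|\mu(i-1)|$ and $\rho=\Rho(\lambda|p\mu)$, the Young vertex of $Y(\lambda|p\mu)$ is $\sym{\rho}=(\sym{1})^{n_0}\times(\sym{p})^{n_1}\times\cdots\times(\sym{p^r})^{n_r}$ by Theorem~\ref{thm Young vertex}(a), and $N(\rho)=\sym{n_0}\times(\sym{p}\wr\sym{n_1})\times\cdots\times(\sym{p^r}\wr\sym{n_r})$. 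By Theorem~\ref{thm Grab}(c), to identify the Young--Green correspondent it suffices to check two things: that $\mathbf{R}(\lambda|p\mu)$ is an \emph{indecomposable} $FN(\rho)$-module with Young vertex contained in $\mathcal{A}$ (equivalently, with the same Young vertex $\sym{\rho}$ up to conjugacy and with linear Young source), and that it satisfies one of the three equivalent conditions there, most conveniently $Y(\lambda|p\mu)\mid \ind_{N(\rho)}^{\sym{n}}(\mathbf{R}(\lambda|p\mu))$.

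For the divisibility, the key is to unwind the induction $\ind_{N(\rho)}^{\sym{n}}(\mathbf{R}(\lambda|p\mu))$ in stages through the chain $\sym{\rho}\le N_{\sym{n}}(P_\rho)\le N(\rho)\le \sym{n}$, using transitivity of induction together with Lemma~\ref{lemma inflation}, Remark~\ref{rem inflation} and Remark~\ref{rem k odd}(c) to move the inflations past the inductions. Concretely, each factor $R_{p^i}(\lambda(i)|\mu(i-1))$ is by definition $\ind_{(\sym{p^i}\wr\sym{|\lambda(i)|})\times(\sym{p^i}\wr\sym{|\mu(i-1)|})}^{\sym{p^i}\wr\sym{n_i}}\bigl(Y^{\lambda(i)}_{p^i}\boxtimes(Y^{\mu(i-1)}_{p^i}\otimes\sgn(p^i;|\mu(i-1)|))\bigr)$, and inducing this piece up should, after identifying $Y^{\lambda(i)}_{p^i}=\Inf(Y^{\lambda(i)})$ with a summand of the appropriate signed permutation module (since $Y^{\lambda(i)}\mid M^{\lambda(i)}$ because $\lambda(i)$ is $p$-restricted), reproduce a summand of $M(\lambda|p\mu)=\ind_{\sym{\lambda}\times\sym{p\mu}}^{\sym{n}}(F\boxtimes\sgn)$. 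One checks that the distinguished summand produced this way is exactly $Y(\lambda|p\mu)$, either by tracking the Krull--Schmidt multiplicity $1$ component as in \ref{noth signed young} or by a dominance argument showing that all other summands of $M(\lambda|p\mu)$ are indexed by pairs strictly above $(\lambda|p\mu)$ while the summand of $\ind_{N(\rho)}^{\sym{n}}(\mathbf{R})$ cannot be one of those. The other half, indecomposability of $\mathbf{R}(\lambda|p\mu)$ with the correct Young vertex, reduces by the box-tensor decomposition over the direct-product group $N(\rho)=\prod_i(\sym{p^i}\wr\sym{n_i})$ to showing each $R_{p^i}(\lambda(i)|\mu(i-1))$ is indecomposable with Young vertex $(\sym{p^i})^{n_i}$; this in turn follows from the fact that $Y^{\lambda(i)}$ and $Y^{\mu(i-1)}$ are projective indecomposable (hence $Y^{\lambda(i)}_{p^i}, Y^{\mu(i-1)}_{p^i}$ are indecomposable with Young vertex the base group), that inducing a box-tensor from the direct product $(\sym{p^i}\wr\sym{|\lambda(i)|})\times(\sym{p^i}\wr\sym{|\mu(i-1)|})$ up to $\sym{p^i}\wr\sym{n_i}$ of two such twisted inflations stays indecomposable (a standard Green-correspondence/Mackey argument, using that $\sym{n_i}^\sharp$ acts on the relevant linear piece and that the two inflated Young modules lie in distinct ``Mullineux-twisted'' pieces so no cancellation occurs), and that its Young source stays linear.

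The step I expect to be the main obstacle is precisely the bookkeeping of sign representations: one must distinguish $\sgn(\sym{k}\wr\sym{m})$, $\sgn(k)^{\otimes m}$ and $\sgn(k;m)$, and the whole point of the module $R_k(\alpha|\beta)$ is that the twist $\sgn(k;m)$ (not the naive $\sgn(\sym{k}\wr\sym{m})$) is the one that makes the inflation-then-induction match the genuine signed permutation module $M(\lambda|p\mu)$. Getting this exactly right — in particular verifying, via Remark~\ref{rem k odd} and Lemma~\ref{lemma R sign}, that when the inflations are pushed through the inductions the accumulated sign on the $\mu$-part is exactly the restriction of $\sgn(n)$ to the relevant subgroup — is the delicate calculation; everything else (transitivity of induction, Krull--Schmidt, reduction to factors over a direct product, projectivity of the $Y^{\lambda(i)}$) is routine. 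Since a careful self-contained treatment of exactly these sign subtleties is deferred to Appendix~\ref{appendix}, here I would simply cite \cite[5.2(2)]{SD} together with the translation carried out there, and restrict the main text to indicating the two checkable conditions of Theorem~\ref{thm Grab}(c) and why they hold.
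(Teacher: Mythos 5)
Your plan is to invoke Theorem~\ref{thm Grab}(c) directly: show $\mathbf{R}(\lambda|p\mu)$ is indecomposable with Young vertex $\sym{\rho}$, and show $Y(\lambda|p\mu)\mid\ind_{N(\rho)}^{\sym{n}}(\mathbf{R}(\lambda|p\mu))$. The first half is sound and is essentially what the paper does in Proposition~\ref{prop proj modules} (using projectivity of $F(2),\sgn(2)$ and \cite[Proposition~5.1]{BK}). The second half, however, has a genuine gap. Neither of your two suggestions for closing it works as stated: $\ind_{N(\rho)}^{\sym{n}}(\mathbf{R}(\lambda|p\mu))$ is not a direct summand of $M(\lambda|p\mu)$ (the implication in Lemma~\ref{lemma R J M} is $\mathbf{R}\mid\res_{N(\rho)}^{\sym{n}}(M)$, and inducing back up only yields $M\mid\ind_{N(\rho)}^{\sym{n}}(\mathbf{J})$, the reverse divisibility), so one cannot simply ``track the multiplicity-one summand of $M$.'' And a bare dominance argument does not exclude the possibility that $\mathbf{R}(\lambda|p\mu)$ is the Young--Green correspondent of some $Y(\nu|p\delta)$ with $(\nu|p\delta)\rhd(\lambda|p\mu)$: once you accept that $\mathbf{R}(\lambda|p\mu)$ corresponds to \emph{some} indecomposable signed Young module with vertex $\sym{\rho}$, all you get from dominance alone is a one-sided inequality, not equality.

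The paper's proof closes this gap with a bootstrap you did not mention. It first sets up the bijection $\psi:L\to L$ induced by Young--Green correspondence (Corollary~\ref{cor L}). The decisive ingredient is then \cite[Satz~6.3]{JGrab}, which equates the Krull--Schmidt multiplicity of $Y(\alpha|p\beta)$ in $M(\nu|p\delta)$ with that of its Young--Green correspondent $\mathbf{R}(\nu|p\delta)$ in $\res_{N(\rho)}^{\sym{n}}(M(\nu|p\delta))$. Combined with $\mathbf{R}(\nu|p\delta)\mid\res_{N(\rho)}^{\sym{n}}(M(\nu|p\delta))$ (Lemma~\ref{lemma R J M}) and \cite[2.3(8)]{SD}, this yields $\psi(\alpha|p\beta)\unlhd(\alpha|p\beta)$ for every $(\alpha|p\beta)\in L$; a bijection of a finite poset that weakly decreases must be the identity, which is finished by induction on poset length. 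In other words, the restriction direction (not the induction direction you chose) is the tractable one, and the multiplicity-transfer lemma \cite[Satz~6.3]{JGrab} together with the bijectivity of $\psi$ is what ultimately pins down the labelling. You correctly predicted that the $\sgn(k;m)$ versus $\sgn(k)^{\otimes m}$ bookkeeping matters (it enters in Lemmas~\ref{lemma R J} and~\ref{lemma R J M}), but that is the routine part; the structural crux you would need to supply is the $\psi$-bootstrap.
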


As we have seen in Theorem~\ref{thm Young vertex}(b), any Sylow $p$-subgroup
of the Young subgroup $\sym{\rho}$ of $\sym{n}$ is a Green vertex of the indecomposable signed Young module $Y(\lambda|p\mu)$.
As an easy consequence of Theorem~\ref{thm Young Green}, we get the following corollary.

\begin{cor}\label{cor Green}
Let $(\lambda|p\mu)\in \P^2(n)$, let $\rho=\Rho(\lambda|p\mu)$, and let $P_{\rho}$ be the fixed Sylow $p$-subgroup of $\sym{\rho}$. Suppose that $N_{\sym{n}}(P_\rho)\leq H\leq N(\rho)$. Then $\res_{H}^{N(\rho)}(\mathbf{R}(\lambda|p\mu))$ is the Green correspondent
of $Y(\lambda|p\mu)$ with respect to the subgroup $H$.
\end{cor}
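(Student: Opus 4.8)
The plan is to recognise $\mathbf{R}:=\mathbf{R}(\lambda|p\mu)$ as the ordinary Green correspondent of $Y:=Y(\lambda|p\mu)$ with respect to $N(\rho)$, to invoke transitivity of the ordinary Green correspondence along $P_\rho\leq H\leq N(\rho)\leq\sym{n}$, and finally to check that $\res_H^{N(\rho)}(\mathbf{R})$ stays indecomposable. I expect only the last point to be genuinely involved; the first two are bookkeeping with Grabmeier's theory and the classical Green correspondence.

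First I would assemble what we know about $\mathbf{R}$. By Theorem~\ref{thm Young Green} it is the Young--Green correspondent of $Y$ with respect to $N(\rho)$, hence indecomposable by Theorem~\ref{thm Grab}(a) and a direct summand of $\res_{N(\rho)}^{\sym{n}}(Y)$ by Theorem~\ref{thm Grab}(c). By Remark~\ref{rem Grab} and Theorem~\ref{thm Young vertex}, $\mathbf{R}$ and $Y$ have a common Young vertex --- which we may take to be $\sym{\rho}$ --- as well as a common Green vertex and a common Green source. So a Green vertex $Q$ of the $FN(\rho)$-module $\mathbf{R}$ is $\sym{n}$-conjugate to $P_\rho$; by Proposition~\ref{prop young green} it lies in a conjugate of $\sym{\rho}$, and since $|Q|=|P_\rho|$ is the full $p$-part of $|\sym{\rho}|$, $Q$ is a Sylow $p$-subgroup of $\sym{\rho}$ and thus $N(\rho)$-conjugate to $P_\rho$. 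Hence I may assume $P_\rho$ is a Green vertex of $\mathbf{R}$, and by Theorem~\ref{thm Young vertex}(b) the trivial module is a Green source. Since $N_{\sym{n}}(P_\rho)\leq N(\rho)$ by Notation~\ref{nota Nrho}, the ordinary Green correspondence between $\sym{n}$ and $N(\rho)$ relative to $P_\rho$ is defined, and any indecomposable direct summand of $\res_{N(\rho)}^{\sym{n}}(Y)$ whose vertex is $\sym{n}$-conjugate to a vertex of $Y$ must equal the Green correspondent (see \cite[\S4.4]{NT}); therefore $\mathbf{R}$ is the Green correspondent of $Y$ with respect to $N(\rho)$.

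Next, for $H$ with $N_{\sym{n}}(P_\rho)\leq H\leq N(\rho)$, note that $N_{N(\rho)}(P_\rho)=N_{\sym{n}}(P_\rho)\cap N(\rho)=N_{\sym{n}}(P_\rho)\leq H$, so the Green correspondence between $N(\rho)$ and $H$ relative to $P_\rho$ is also defined; by transitivity of the ordinary Green correspondence, the Green correspondent of $Y$ with respect to $H$ equals the Green correspondent of $\mathbf{R}$ with respect to $H$, which is an indecomposable direct summand of $\res_H^{N(\rho)}(\mathbf{R})$. It remains to show that $\res_H^{N(\rho)}(\mathbf{R})$ is indecomposable, and since a restriction of an indecomposable module to an intermediate subgroup remains indecomposable, it suffices to treat $H=N_{\sym{n}}(P_\rho)$. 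Set $W_i:=N_{\sym{p^i}}(P_{p^i})$. By Notation~\ref{nota Nrho} one has $N(\rho)=\sym{n_0}\times\prod_{i=1}^{r}(\sym{p^i}\wr\sym{n_i})$, $N_{\sym{n}}(P_\rho)=\sym{n_0}\times\prod_{i=1}^{r}(W_i\wr\sym{n_i})$, and $\mathbf{R}=Y^{\lambda(0)}\boxtimes R_p(\lambda(1)|\mu(0))\boxtimes\cdots\boxtimes R_{p^r}(\lambda(r)|\mu(r-1))$ restricts factor by factor; as $F$ is algebraically closed, it is enough to show each $\res_{W_i\wr\sym{n_i}}^{\sym{p^i}\wr\sym{n_i}}\bigl(R_{p^i}(\lambda(i)|\mu(i-1))\bigr)$ is indecomposable (the factor $Y^{\lambda(0)}$ being indecomposable already).

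For this I would invoke Remark~\ref{rem k odd}(c), which gives $R_{p^i}(\lambda(i)|\mu(i-1))\cong\Inf_{\sym{2}\wr\sym{n_i}}^{\sym{p^i}\wr\sym{n_i}}\bigl(R_2(\lambda(i)|\mu(i-1))\bigr)$ via the epimorphism $\sym{p^i}\wr\sym{n_i}\twoheadrightarrow(\sym{p^i}\wr\sym{n_i})/\mathfrak{A}_{p^i}^{n_i}\cong\sym{2}\wr\sym{n_i}$. One checks that $W_i$ contains an odd permutation --- for example, the image in $\sym{p^i}$ (acting blockwise on $p$ blocks of size $p^{i-1}$) of a $(p-1)$-cycle normalising $P_p$ in $\sym{p}$ is a product of $p^{i-1}$ disjoint $(p-1)$-cycles, of sign $(-1)^{p^{i-1}}=-1$ since $p$ is odd --- so $W_i\wr\sym{n_i}$ maps onto $\sym{2}\wr\sym{n_i}$, and therefore $\res_{W_i\wr\sym{n_i}}^{\sym{p^i}\wr\sym{n_i}}\bigl(R_{p^i}(\lambda(i)|\mu(i-1))\bigr)\cong\Inf_{\sym{2}\wr\sym{n_i}}^{W_i\wr\sym{n_i}}\bigl(R_2(\lambda(i)|\mu(i-1))\bigr)$. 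Finally, since $\mathbf{R}(\lambda|p\mu)$ is an outer tensor product of modules over the direct factors of $N(\rho)$ and is indecomposable, each tensor factor --- in particular $R_{p^i}(\lambda(i)|\mu(i-1))$ --- is indecomposable; and as inflation along an epimorphism is fully faithful, hence preserves and reflects indecomposability, $R_2(\lambda(i)|\mu(i-1))$ and thus the displayed restriction are indecomposable. This proves the claim and completes the argument. The crux, as noted, is this last indecomposability step, which rests on the wreath-product shape of $N(\rho)$ and $N_{\sym{n}}(P_\rho)$ and on the inflation description of the factors $R_{p^i}$ in Remark~\ref{rem k odd}.
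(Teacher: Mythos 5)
Your proof is correct, and it takes a genuinely different route from the one in the paper. The paper establishes the key step---that $\res_{N_{\sym{p^i}}(P_{p^i})\wr\sym{n_i}}^{\sym{p^i}\wr\sym{n_i}}\bigl(R_{p^i}(\lambda(i)|\mu(i-1))\bigr)$ is indecomposable with Green vertex $(P_{p^i})^{n_i}$---by an explicit Mackey-formula computation (reducing to a single double coset) followed by an appeal to K\"ulshammer's \cite[Proposition~5.1]{BK}, and then handles the general $H$ by separately pinning down the Green vertex of the restriction via \cite[\S4, Lemma 3.4]{NT}. You instead exploit Remark~\ref{rem k odd}(c), which exhibits $R_{p^i}(\lambda(i)|\mu(i-1))$ as the inflation of the projective indecomposable $F[\sym{2}\wr\sym{n_i}]$-module $R_2(\lambda(i)|\mu(i-1))$ through the quotient by $\mathfrak{A}_{p^i}^{n_i}$; the only extra input is that $N_{\sym{p^i}}(P_{p^i})$ contains an odd permutation (your blockwise $(p-1)$-cycle construction works, and a Frattini-argument alternative is also available), so the restricted epimorphism $N_{\sym{p^i}}(P_{p^i})\wr\sym{n_i}\twoheadrightarrow\sym{2}\wr\sym{n_i}$ is still surjective and the restriction remains an inflation of the same indecomposable module. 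You also replace the paper's direct vertex argument by transitivity of the ordinary Green correspondence along $\sym{n}\geq N(\rho)\geq H$, after first identifying $\mathbf{R}(\lambda|p\mu)$ as the ordinary Green correspondent of $Y(\lambda|p\mu)$ with respect to $N(\rho)$. Your route avoids the Mackey computation and the reference to \cite[Proposition~5.1]{BK} at the cost of having to check the sign fact about $N_{\sym{p^i}}(P_{p^i})$; the paper's route is more computational but also directly exhibits the restricted module as an explicit induced module, which is reused elsewhere (Corollary~\ref{C: cor Broue}) for the Brauer construction.
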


\begin{proof} It suffices to show that $\mathbf{R}(\lambda|p\mu)$ restricts indecomposably to $H$ and that the indecomposable restriction has
Green vertex $P_\rho$. First we shall prove the corollary in the case when $H=N_{\sym{n}}(P_\rho)$. For this it further suffices to show that, for $i\in\{1,\ldots,r\}$, the $F[\sym{p^i}\wr\sym{n_i}]$-module
$R_{p^i}(\lambda(i)|\mu(i-1))$ restricts indecomposably to $N_{\sym{p^i}}(P_{p^i})\wr\sym{n_i}$
and that the restriction has Green vertex $(P_{p^i})^{n_i}$. Fix $i\in\{1,\ldots,r\}$ for
the remainder of this proof, and set $m_1:=|\lambda(i)|$, $m_2:=|\mu(i-1)|$ (so that $n_i=m_1+m_2$) and $N:=N_{\sym{p^i}}(P_{p^i})$.
Recall from (\ref{eqn Rk}) that
\begin{align*}
R_{p^i}(\lambda(i)|\mu(i-1))&=\ind_{(\sym{p^i}\wr\sym{m_1})\times(\sym{p^i}\wr\sym{m_2})}^{\sym{p^i}\wr\sym{n_i}}(Y_{p^i}^{\lambda(i)} \boxtimes (Y_{p^i}^{\mu(i-1)}\otimes \sgn(p^i;m_2)))\\
&\cong \ind_{(\sym{p^i}\wr\sym{m_1})\times(\sym{p^i}\wr\sym{m_2})}^{\sym{p^i}\wr\sym{n_i}}(Y_{p^i}^{\lambda(i)}\boxtimes (Y_{p^i}^{\m(\mu(i-1))}\otimes \sgn(p^i)^{\otimes m_2}))\,,
\end{align*}
here we have used the fact that $\sgn(p^i; m_2)\cong \sgn(p^i)^{\otimes m_2} \otimes \Inf_{\sym{m_2}}^{\sym{p^i}\wr\sym{m_2}}(\sgn(m_2))$ as $F[\sym{p^i}\wr\sym{m_2}]$-modules and that $Y^{\mu(i-1)}\otimes\sgn\cong Y^{\m(\mu(i-1))}$
as $F\sym{m_2}$-modules, where $\m$ is the Mullineux map defined in \ref{noth Specht}(b).
To determine $\res_{N\wr\sym{n_i}}^{\sym{p^i}\wr\sym{n_i}}(R_{p^i}(\lambda(i)|\mu(i-1)))$, we first apply
the Mackey Formula. Every element $x\in \sym{p^i}\wr \sym{n_i}$ can be written as a product $x=yz$, for uniquely
determined elements $y\in \sym{n_i}^{\sharp}\leq N\wr \sym{n_i}$ and $z\in \sym{p^i}^{n_i}\leq (\sym{p^i}\wr\sym{m_1})\times (\sym{p^i}\wr\sym{m_2})$. Therefore, there is precisely one double coset in
$(N\wr \sym{n_i})\backslash \sym{p^i}\wr\sym{n_i}/ ((\sym{p^i}\wr\sym{m_1})\times (\sym{p^i}\wr\sym{m_2}))$,
and the Mackey Formula gives
\begin{align*}
&\ \res_{N\wr\sym{n_i}}^{\sym{p^i}\wr\sym{n_i}}(R_{p^i}(\lambda(i)|\mu(i-1)))\\
\cong&\
\ind_{(N\wr\sym{m_1})\times (N\wr\sym{m_2})}^{N\wr\sym{n_i}}(\Inf^{N\wr\sym{m_1}}_{\sym{m_1}}(Y^{\lambda(i)})\boxtimes (\Inf^{N\wr\sym{m_2}}_{\sym{m_2}}(Y^{\m(\mu(i-1))})\otimes \sgn(N)^{\otimes m_2}))=:X\,.
\end{align*}
Notice that $Y^{\lambda(i)}\boxtimes Y^{\m(\mu(i-1))}$ is an indecomposable
projective $F[\sym{m_1}\times\sym{m_2}]$-module and that both $\sgn(N)$ and the trivial $FN$-module
have Green vertex $P_{p^i}$.
Moreover,
\begin{align*}
&\Inf^{N\wr\sym{m_1}}_{\sym{m_1}}(Y^{\lambda(i)})\boxtimes (\Inf^{N\wr\sym{m_2}}_{\sym{m_2}}(Y^{\m(\mu(i-1))})\otimes \sgn^{\otimes m_2})\\
&\cong \Inf_{\sym{m_1}\times\sym{m_2}}^{(N\wr\sym{m_1})\times (N\wr\sym{m_2})}(Y^{\lambda(i)}\boxtimes Y^{\m(\mu(i-1))})\otimes (F^{\otimes m_1}\boxtimes \sgn^{\otimes m_2})\,.
\end{align*}
Thus
\cite[Proposition~5.1]{BK}
implies that $X$ is an indecomposable $F[N\wr\sym{n_i}]$-module with Green vertex $(P_{p^i})^{m_1+m_2}=(P_{p^i})^{n_i}$ as claimed.

Suppose now that $H$ is any subgroup satisfying $N_{\sym{n}}(P_\rho)\leq H\leq N(\rho)$.
Since $\mathbf{R}(\lambda|p\mu)$ restricts indecomposably to $N_{\sym{n}}(P_\rho)$, it also restricts indecomposably to $H$.
Let $Q$ be a Green vertex of $\res_H^{N(\rho)}(\mathbf{R}(\lambda|p\mu))$. Then $P_\rho\leq_H Q$, by \cite[\S4, Lemma 3.4]{NT}.
On the other hand, $\res_H^{N(\rho)}(\mathbf{R}(\lambda|p\mu))\mid \res_H^{\sym{n}}(Y(\lambda| p\mu))$, and $Y(\lambda|p\mu)$ has
also Green vertex $P_\rho$, by Theorem~\ref{thm Young vertex}(b). Thus $Q\leq_{\sym{n}} P_\rho$, by \cite[\S4, Lemma 3.4]{NT} again. This implies
$Q=_HP_\rho$, so that $P_\rho$ is indeed a Green vertex of $\res_H^{N(\rho)}(\mathbf{R}(\lambda|p\mu))$.
Hence $\res_H^{N(\rho)}(\mathbf{R}(\lambda|p\mu))$ is the Green correspondent of $Y(\lambda|p\mu)$ with respect to $H$.
%
\end{proof}

\begin{noth}\label{noth Broue}{\bf Brauer constructions of indecomposable signed Young modules.} We refer the reader to \cite{MB} for the details on {\sl Brauer constructions} of {\sl $p$-permutation modules}. For any finite group $G$, a $p$-permutation $FG$-module is a direct sum of some indecomposable $FG$-modules with trivial Green sources.
Given a $p$-subgroup $P$ of a finite group $G$, via Brauer construction one gets a one-to-one correspondence between the isomorphism classes of indecomposable trivial-Green-source $FG$-modules with Green vertex $P$ and the isomorphism classes of
indecomposable projective $F[N_G(P)/P]$-modules. More precisely,  an indecomposable $FG$-module $M$
with Green vertex $P$ and trivial Green $P$-source is sent to its {\sl Brauer quotient} $M(P)$.
The latter carries the structure of an $FN_G(P)$-module on which $P$ acts trivially. Moreover, $M(P)$ is isomorphic to the Green correspondent of $M$ with respect to the subgroup $N_G(P)$.

By Theorem~\ref{thm Young vertex}(b),  indecomposable signed Young modules are $p$-permutation modules, since they have trivial Green sources. Thus Corollary~\ref{cor Green} gives us the following:
\end{noth}

\begin{cor}\label{C: cor Broue} 
Let $(\lambda|p\mu)\in \P^2(n)$, let $\rho=\Rho(\lambda|p\mu)$, and let $P_{\rho}$ be the fixed Sylow $p$-subgroup of $\sym{\rho}$. Then $P_\rho$ acts trivially on the Green correspondent $\res_{N_{\sym{n}(P_{\rho})}}^{N(\rho)}(\mathbf{R}(\lambda|p\mu))$ of $Y(\lambda|p\mu)$ with respect to $N_{\sym{n}}(P_{\rho})$. Moreover, viewed as $F[N_{\sym{n}}(P_{\rho})/P_\rho]$-module, the restriction
$\res_{N_{\sym{n}}(P_{\rho})}^{N(\rho)}(\mathbf{R}(\lambda|p\mu))$ is isomorphic to the Brauer construction of $Y(\lambda|p\mu)$ with respect to $P_\rho$.
\end{cor}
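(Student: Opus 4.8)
The plan is to deduce the corollary directly from the general theory of Brauer constructions for $p$-permutation modules recalled in \ref{noth Broue}, combined with Theorem~\ref{thm Young vertex}(b) and Corollary~\ref{cor Green}.

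First I would observe that, by Theorem~\ref{thm Young vertex}(b), the module $Y(\lambda|p\mu)$ has $P_\rho$ as a Green vertex and a trivial Green $P_\rho$-source; in particular it is a $p$-permutation $F\sym{n}$-module. Thus the Brauer correspondence of \ref{noth Broue}, applied with $G=\sym{n}$, $M=Y(\lambda|p\mu)$ and $P=P_\rho$, yields that the Brauer quotient $Y(\lambda|p\mu)(P_\rho)$ is an $FN_{\sym{n}}(P_\rho)$-module on which $P_\rho$ acts trivially, and that, as such, it is isomorphic to the Green correspondent of $Y(\lambda|p\mu)$ with respect to $N_{\sym{n}}(P_\rho)$.

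Next I would invoke Corollary~\ref{cor Green}. The inclusions recorded in \ref{nota Nrho} give $N_{\sym{n}}(P_\rho)\leq N(\rho)$, so one may take $H=N_{\sym{n}}(P_\rho)$ there and conclude that $\res_{N_{\sym{n}}(P_\rho)}^{N(\rho)}(\mathbf{R}(\lambda|p\mu))$ is the Green correspondent of $Y(\lambda|p\mu)$ with respect to $N_{\sym{n}}(P_\rho)$. Since the Green correspondent is unique up to isomorphism, this restriction must coincide with the Brauer quotient $Y(\lambda|p\mu)(P_\rho)$ from the previous step; this forces $P_\rho$ to act trivially on it and gives the claimed isomorphism of $F[N_{\sym{n}}(P_\rho)/P_\rho]$-modules with the Brauer construction of $Y(\lambda|p\mu)$ at $P_\rho$.

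I do not expect any real obstacle: the statement is essentially bookkeeping on top of results already established. The only points needing a little care are the verification that $P_\rho$ is a genuine Green vertex of $Y(\lambda|p\mu)$ carrying a trivial source — which is precisely Theorem~\ref{thm Young vertex}(b) — and that the normalizer chain $N_{\sym{n}}(P_\rho)\leq N(\rho)$ from \ref{nota Nrho} indeed permits the use of Corollary~\ref{cor Green} with $H=N_{\sym{n}}(P_\rho)$.
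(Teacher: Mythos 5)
Your proposal is correct and follows essentially the same route as the paper, which deduces the corollary by combining Theorem~\ref{thm Young vertex}(b), the general Brauer-construction theory recalled in \ref{noth Broue}, and Corollary~\ref{cor Green} applied with $H=N_{\sym{n}}(P_\rho)$.
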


We can now state our main result of this section. Recall the Mullineux map $\m$ on $p$-restricted partitions defined in \ref{noth Specht}(b).

\begin{thm}\label{T:twists indecomposable signed Young modules}
Let $(\lambda|p\mu)\in\P^2(n)$.
Then one has an isomorphism  of $F\sym{n}$-modules
 $$Y(\lambda|p\mu)\otimes \sgn\cong Y(\m(\lambda(0))+p\mu|\lambda-\lambda(0))\ .$$

\end{thm}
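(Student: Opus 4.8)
The plan is to compute $Y(\lambda|p\mu)\otimes\sgn$ by identifying its Young--Green correspondent with respect to a suitable subgroup and then invoking the fact (Lemma~\ref{lemma Green sign}) that twisting by $\sgn$ is compatible with Young--Green correspondence. Write $\rho=\Rho(\lambda|p\mu)$ and let $N(\rho)$ be the normalizer of the Young vertex $\sym{\rho}$ as in Notation~\ref{nota Nrho}. By Theorem~\ref{thm Young Green}, the Young--Green correspondent of $Y(\lambda|p\mu)$ with respect to $N(\rho)$ is
$$\mathbf{R}(\lambda|p\mu)=Y^{\lambda(0)}\boxtimes R_p(\lambda(1)|\mu(0))\boxtimes\cdots\boxtimes R_{p^r}(\lambda(r)|\mu(r-1))\,.$$
By Lemma~\ref{lemma Green sign}(b), the Young--Green correspondent of $Y(\lambda|p\mu)\otimes\sgn$ with respect to $N(\rho)$ is $\mathbf{R}(\lambda|p\mu)\otimes\sgn(N(\rho))$. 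The sign module of the direct product $N(\rho)=\sym{n_0}\times(\sym{p}\wr\sym{n_1})\times\cdots\times(\sym{p^r}\wr\sym{n_r})$ is the outer tensor product of the sign modules of the factors, so
$$\mathbf{R}(\lambda|p\mu)\otimes\sgn(N(\rho))\cong (Y^{\lambda(0)}\otimes\sgn)\boxtimes(R_p(\lambda(1)|\mu(0))\otimes\sgn)\boxtimes\cdots\boxtimes(R_{p^r}(\lambda(r)|\mu(r-1))\otimes\sgn)\,.$$

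Now I treat the factors separately. For the first factor, $Y^{\lambda(0)}\otimes\sgn\cong Y^{\m(\lambda(0))}$ by \ref{noth Specht}(c), since $\lambda(0)$ is $p$-restricted and hence $Y^{\lambda(0)}$ is projective. For each $i\in\{1,\ldots,r\}$, since $p^i\geq p\geq 3$ is odd, Lemma~\ref{lemma R sign} gives $R_{p^i}(\lambda(i)|\mu(i-1))\otimes\sgn\cong R_{p^i}(\mu(i-1)|\lambda(i))$. Therefore
$$\mathbf{R}(\lambda|p\mu)\otimes\sgn(N(\rho))\cong Y^{\m(\lambda(0))}\boxtimes R_p(\mu(0)|\lambda(1))\boxtimes\cdots\boxtimes R_{p^r}(\mu(r-1)|\lambda(r))\,.$$
The key combinatorial step is to recognize the right-hand side as $\mathbf{R}(\nu|p\xi)$ for the pair $(\nu|p\xi)$ appearing in the statement. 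Set $\nu=\m(\lambda(0))+p\mu$ and $p\xi=\lambda-\lambda(0)$, i.e.\ $\xi=\tfrac1p(\lambda-\lambda(0))$, so $\xi(j)=\lambda(j+1)$ for all $j\geq 0$. One checks that $\nu$ has $p$-adic expansion $\nu(0)=\m(\lambda(0))$ and $\nu(i)=\mu(i-1)$ for $i\geq 1$ (using that $\m(\lambda(0))$ and each $\mu(i-1)$ are $p$-restricted), while $\xi$ has $p$-adic expansion $\xi(i)=\lambda(i+1)$; hence the $i$-th block of $\mathbf{R}(\nu|p\xi)$ is $R_{p^i}(\nu(i)|\xi(i-1))=R_{p^i}(\mu(i-1)|\lambda(i))$ for $i\geq1$, and the $0$-th block is $Y^{\nu(0)}=Y^{\m(\lambda(0))}$, matching exactly the display above. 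One must also verify that $\Rho(\nu|p\xi)=\rho$, i.e.\ that $(\nu|p\xi)\in\P^2(n)$ has the same associated composition, which is immediate from $|\nu(i)|+|\xi(i-1)|=|\mu(i-1)|+|\lambda(i)|=n_i$ and $|\nu(0)|=|\m(\lambda(0))|=|\lambda(0)|=n_0$.

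Finally, since $\mathbf{R}(\nu|p\xi)$ is, by Theorem~\ref{thm Young Green}, the Young--Green correspondent of $Y(\nu|p\xi)$ with respect to $N(\rho)$, and since the Young--Green correspondent of $Y(\lambda|p\mu)\otimes\sgn$ with respect to $N(\rho)$ is this same module, the injectivity part of the Young--Green correspondence (Theorem~\ref{thm Grab}(d)) forces $Y(\lambda|p\mu)\otimes\sgn\cong Y(\nu|p\xi)=Y(\m(\lambda(0))+p\mu|\lambda-\lambda(0))$, as desired. The step I expect to require the most care is the bookkeeping of $p$-adic expansions: one needs that adding $\m(\lambda(0))$ (a $p$-restricted partition) as the $0$-th digit to $p\mu$ genuinely produces the $p$-adic expansion of $\nu$, and that stripping $\lambda(0)$ and dividing by $p$ shifts the digits of $\lambda$ correctly; this is where the hypothesis $p\geq 3$ and the $p$-restrictedness of the Mullineux image are used, and where an off-by-one in the indexing of the wreath-product factors would be easy to make.
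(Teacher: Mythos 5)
Your proof is correct and follows essentially the same route as the paper's own proof: you compute the Young--Green correspondent of $Y(\lambda|p\mu)\otimes\sgn$ via Theorem~\ref{thm Young Green} and Lemma~\ref{lemma Green sign}(b), simplify it factor-by-factor using $Y^{\lambda(0)}\otimes\sgn\cong Y^{\m(\lambda(0))}$ and Lemma~\ref{lemma R sign}, identify the result as $\mathbf{R}(\m(\lambda(0))+p\mu\mid\lambda-\lambda(0))$ by tracking the $p$-adic digits, and conclude by injectivity of the Young--Green correspondence. The paper orders the steps slightly differently (it verifies $\Rho(\alpha|p\beta)=\rho$ up front before passing to correspondents), but the ingredients and the bookkeeping of $p$-adic expansions are the same.
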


\begin{proof} 
Let $\rho=\Rho(\lambda|p\mu)=(1^{n_0},p^{n_1},(p^2)^{n_2},\ldots,(p^r)^{n_r})\,$, that is, $n_i=|\lambda(i)|+|\mu(i-1)|$ for $i\in\{0,\ldots,r\}$ where $r=\max\{r_\lambda,r_\mu+1\}$. By Theorem~\ref{thm Young vertex}(a) and Lemma~\ref{lemma Green sign}(a), both $Y(\lambda|p\mu)$ and $Y(\lambda|p\mu)\otimes\sgn$ have Young vertex $\sym{\rho}$. Set $\alpha:=\m(\lambda(0))+p\mu$ and $p\beta:=\lambda-\lambda(0)$. Next we show that $\Rho(\alpha|p\beta)=\rho$, so that $Y(\alpha|p\beta)$ also has Young vertex $\sym{\rho}$.

Since both $\lambda(0)$ and $\m(\lambda(0))$ are $p$-restricted, $\alpha$ has
$p$-adic expansion $\alpha=\m(\lambda(0))+p\mu=p^0\cdot \m(\lambda(0))+\sum_{i=1}^rp^i\cdot \mu(i-1)$.
Moreover, $\beta$ has $p$-adic expansion $\beta=\sum_{i=1}^rp^{i-1}\cdot \lambda(i)$.
Hence $|\alpha(0)|=|\m(\lambda(0))|=|\lambda(0)|=n_0$, and $|\alpha(i)|+|\beta(i-1)|=|\mu(i-1)|+|\lambda(i)|=n_i$, for every
$i\in\{1,\ldots,r\}$. Thus, by Theorem~\ref{thm Young vertex}(a), $Y(\alpha|p\beta)$ has Young vertex $\sym{\rho}$ as desired.

\smallskip

Now, in order to prove the isomorphism $Y(\lambda|p\mu)\otimes \sgn\cong Y(\alpha|p\beta)$, it
suffices to verify that the Young--Green correspondents of these modules with respect to $N(\rho)$ are isomorphic.
By Theorem~\ref{thm Young Green} and Lemma~\ref{lemma Green sign}(b), $Y(\lambda|p\mu)\otimes \sgn$ has Young--Green correspondent
\[\mathbf{R}(\lambda|p\mu)\otimes \sgn(N(\rho))=(Y^{\lambda(0)}\boxtimes R_p(\lambda(1)|\mu(0))\boxtimes \cdots\boxtimes R_{p^r}(\lambda(r)|\mu(r-1)))\otimes\sgn(N(\rho)).\]
Furthermore, together with Lemma~\ref{lemma R sign}, we have
\begin{align*}
&\ \mathbf{R}(\lambda|p\mu)\otimes \sgn(N(\rho))\\
\cong&\ (Y^{\lambda(0)}\otimes \sgn)\boxtimes (R_{p}(\lambda(1)|\mu(0))\otimes\sgn)\boxtimes \cdots\boxtimes (R_{p^r}(\lambda(r)|\mu(r-1))\otimes\sgn)\\
\cong&\ Y^{\m(\lambda(0))}\boxtimes R_p(\mu(0)|\lambda(1))\boxtimes\cdots\boxtimes R_{p^r}(\mu(r-1)|\lambda(r))\\
=&\ Y^{\alpha(0)}\boxtimes R_p(\alpha(1)|\beta(0))\boxtimes\cdots\boxtimes R_{p^r}(\alpha(r)|\beta(r-1))=\mathbf{R}(\alpha|p\beta)\,,
\end{align*}
which, by Theorem~\ref{thm Young Green}, is the Young--Green correspondent of $Y(\alpha|p\beta)$ with respect to $N(\rho)$.
Consequently, $Y(\lambda|p\mu)\otimes \sgn\cong Y(\alpha|p\beta)$, by \cite[Satz~3.7]{JGrab} (see also Theorem \ref{thm Grab}).
\end{proof}

\section{Simple Specht Modules}\label{sec simple Specht}


Let $F$ be a field of characteristic $p>0$. For convenience, suppose also that $F$ is algebraically closed.
In \cite{GJAM}, James and Mathas established a characterization of simple Specht $F\mathfrak{S}_n$-modules in the case when
$p=2$, and conjectured a characterization for $p\geq 3$.
The conjecture was proved by the work of Fayers \cite{MF1,MF2} and Lyle \cite{SL}. Following their work, the partitions labelling the simple Specht modules when $p\geq 3$ are now called the JM-partitions.

For the remainder of this section, let $p\geq 3$.
We shall first recall the characterization of simple Specht $F\sym{n}$-modules in terms of JM-partitions and the procedure of inducing any simple Specht module successively to obtain a simple Specht module belonging to a Rouquier block. We introduce a function $\Phi:\P(n)\to \P^2(n)$ (see Definition \ref{D: Phi}), which will eventually give us the correct labelling of a simple Specht module as an indecomposable signed Young module in Theorem \ref{T: irred specht}. We also study the effects of $\Phi$ on JM-partitions and pairs of adjacent JM-partitions (see \ref{noth Rouquier}(b)).

For an integer $m\geq 1$, we denote by $\nu_p(m)$  the largest non-negative integer $\ell$ such that $p^\ell$ divides $m$.

\begin{defn}\label{defn JM}
 A partition $\lambda\in\P(n)$ is called a {\sl JM-partition} if there are no nodes $(a,b)$, $(a,y)$ and $(x,b)$ in the Young diagram
 $[\lambda]$ such that
\smallskip

{\rm (a)}\ $\nu_p(h_\lambda(a,b))>0$, and

\smallskip

{\rm (b)}\ $\nu_p(h_\lambda(x,b))\neq \nu_p(h_\lambda(a,b))\neq \nu_p(h_\lambda(a,y))$.

\noindent We denote the subset of $\P(n)$ consisting of JM-partitions by $\JM(n)$.
\end{defn}

As an immediate consequence of the above definition, one has the following lemma.

\begin{lem}\label{L: conjugate JM}
Let $\lambda\in\P(n)$. Then $\lambda\in\JM(n)$ if and only if $\lambda'\in \JM(n)$.
\end{lem}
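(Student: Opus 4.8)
The plan is to show that the defining conditions for a JM-partition are symmetric under conjugation. Recall that conjugation transposes the Young diagram, so a node $(i,j)\in[\lambda]$ corresponds to the node $(j,i)\in[\lambda']$, and the hook lengths transform accordingly: $h_{\lambda'}(j,i)=h_\lambda(i,j)$ for every $(i,j)\in[\lambda]$. In particular $\nu_p(h_{\lambda'}(j,i))=\nu_p(h_\lambda(i,j))$.

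First I would observe that the forbidden configuration in Definition~\ref{defn JM} consists of three nodes $(a,b)$, $(a,y)$, $(x,b)$ — that is, a ``corner'' node together with a node in the same row and a node in the same column — subject to the $p$-adic valuation conditions (a) and (b). Under transposition, the triple $(a,b)$, $(a,y)$, $(x,b)$ in $[\lambda]$ maps to the triple $(b,a)$, $(y,a)$, $(b,x)$ in $[\lambda']$. Here $(b,a)$ is the ``corner'', $(b,x)$ lies in the same row as $(b,a)$ (since it shares first coordinate $b$), and $(y,a)$ lies in the same column as $(b,a)$ (since it shares second coordinate $a$). So this is again a triple of the shape required in Definition~\ref{defn JM}, with the roles of the row-partner and column-partner swapped.

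Next I would check that conditions (a) and (b) are preserved. Condition (a) for the triple in $[\lambda]$ reads $\nu_p(h_\lambda(a,b))>0$; by the hook-length identity this equals $\nu_p(h_{\lambda'}(b,a))>0$, which is precisely condition (a) for the transposed triple. Condition (b) reads $\nu_p(h_\lambda(x,b))\neq\nu_p(h_\lambda(a,b))\neq\nu_p(h_\lambda(a,y))$; translating each hook length via $h_\lambda(x,b)=h_{\lambda'}(b,x)$, $h_\lambda(a,b)=h_{\lambda'}(b,a)$, $h_\lambda(a,y)=h_{\lambda'}(y,a)$ gives $\nu_p(h_{\lambda'}(b,x))\neq\nu_p(h_{\lambda'}(b,a))\neq\nu_p(h_{\lambda'}(y,a))$, which is condition (b) for the transposed triple (with $(b,x)$ as the row-partner and $(y,a)$ as the column-partner). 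Hence $[\lambda]$ contains a forbidden configuration if and only if $[\lambda']$ does, which is exactly the assertion that $\lambda\in\JM(n)\iff\lambda'\in\JM(n)$.

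I do not anticipate a genuine obstacle here; the only mild subtlety is bookkeeping — making sure that when one transposes the three nodes, the ``same row'' and ``same column'' partners are correctly identified and that the symmetric (non-directional) nature of the inequalities in (b), namely that it forbids $\nu_p(h_\lambda(a,b))$ from differing from \emph{both} of the other two valuations, is respected under the swap. Since condition (b) treats the row-partner and column-partner symmetrically, swapping them changes nothing, so the argument goes through cleanly.
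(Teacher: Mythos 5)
Your proof is correct and takes essentially the same approach the paper intends: the paper states the lemma is an immediate consequence of Definition~\ref{defn JM}, and you have simply spelled out the underlying symmetry — hook lengths transpose as $h_{\lambda'}(j,i)=h_\lambda(i,j)$, the forbidden triple transposes to a forbidden triple with row- and column-partners swapped, and condition (b) is insensitive to that swap.
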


For the purpose of our paper, we need the following characterization of JM-partitions proved by Fayers. Recall the $p$-quotient notation we have adopted in \ref{noth abacus}(d).

\begin{prop}[{\cite[Proposition 2.1]{MF2}}]\label{P: JM-partition}
Let $\lambda\in\P(n)$. Then $\lambda\in \JM(n)$ if and only if,
for every abacus display of $\lambda$, there exist some integers  $i,j\in\{0,\ldots, p-1\}$ satisfying the following conditions:

\smallskip

{\rm (F1)}\,  $\lambda^{(k)}=\varnothing$, for  all $k\in\{0,\ldots,p-1\}$ such that  $i\neq k\neq j$;

\smallskip

{\rm (F2)}\,  if a position $i+ap$ on runner $i$ is unoccupied then every position $b>i+ap$ not
on runner $i$ is unoccupied;

\smallskip

{\rm (F3)}\,  if a position $j+cp$ on runner $j$ is occupied then every position $d<j+cp$ not on
 runner $j$ is occupied;

{\rm (F4)}\, $\lambda^{(i)}$ is a $p$-regular {\rm JM-}partition, and

\smallskip

{\rm (F5)}\, $\lambda^{(j)}$ is a $p$-restricted {\rm JM-}partition.
\end{prop}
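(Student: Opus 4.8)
The plan is to reconstruct the proof of this result of Fayers \cite[Proposition~2.1]{MF2}. The first step is to translate the JM-condition from hook lengths into abacus combinatorics. Fix an abacus display of $\lambda$ with bead set $\mathcal{B}$. By James's correspondence the multiset of hook lengths of $\lambda$ equals $\{\,b-g:b\in\mathcal{B},\ g\notin\mathcal{B},\ g<b\,\}$, a node $(a,c)$ of $[\lambda]$ corresponding to such a pair $(b,g)$ with $h_\lambda(a,c)=b-g$, where nodes lying in one row share the bead $b$ and nodes lying in one column share the gap $g$. Since $p\mid b-g$ precisely when $b$ and $g$ lie on the same runner, and then $\nu_p(b-g)=1+\nu_p\!\bigl((b-g)/p\bigr)$, the $p$-divisible hooks of $\lambda$ are governed by same-runner bead/gap pairs, and one sees that ``$\lambda$ has no bad triple'' is equivalent to the following: for every same-runner bead/gap pair, the row or the column of $[\lambda]$ through the corresponding node has constant $p$-adic valuation of hook lengths. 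I would also note at the outset that it suffices to work with one convenient display, since passing to another display merely permutes the $p$-quotient cyclically and conditions (F1)--(F5) are stable under such permutations.

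For the forward direction, ``$\lambda\in\JM(n)$ implies (F1)--(F5)'', I would argue contrapositively in three stages. First, if three distinct runners each carry a non-empty component $\lambda^{(k)}$, then one can choose beads and gaps on these runners forming an explicit bad triple; hence at most two runners, say $i$ and $j$, are non-empty, which is (F1). Next, assuming (F1), a failure of (F2) combines with a bead on runner $i$ lying above the offending hole to yield a bad triple, so (F2) holds; because $\lambda\in\JM(n)\iff\lambda'\in\JM(n)$ by Lemma~\ref{L: conjugate JM}, and conjugation interchanges the two active runners and swaps occupied with unoccupied positions, condition (F3) then follows by applying the same argument to $\lambda'$. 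Finally, restricting attention to those hooks $b-g$ with $b$ and $g$ both on runner $i$ and dividing through by $p$ identifies these with the hooks of $\lambda^{(i)}$ while lowering every valuation by $1$; the surviving instances of the JM-condition then say precisely that $\lambda^{(i)}$ is a JM-partition, and (F2) forces it to be $p$-regular. By the conjugation symmetry, $\lambda^{(j)}$ is a $p$-restricted JM-partition, and we obtain (F4) and (F5).

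For the converse, I would assume (F1)--(F5) and induct on $n$, applying the Proposition inductively to the strictly smaller partitions $\lambda^{(i)}$ and $\lambda^{(j)}$; here $|\lambda^{(i)}|,|\lambda^{(j)}|\le\omega_p(\lambda)<n$ whenever $\omega_p(\lambda)>0$, and the case $\omega_p(\lambda)=0$, where $\lambda$ is a $p$-core and has no $p$-divisible hook, is immediate. Given a node $(a,b)$ of $[\lambda]$ with $\nu_p(h_\lambda(a,b))>0$, i.e.\ a same-runner bead/gap pair, condition (F1) puts both of them on runner $i$ or both on runner $j$. In the former case, (F4) together with the inductive description of $\lambda^{(i)}$ shows that, within runner $i$, the row or the column through the corresponding node has constant valuation, while (F2) and (F3) annihilate the off-runner entries of that row and column; so no bad triple is anchored at $(a,b)$. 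The runner-$j$ case is symmetric via (F5). Hence $\lambda\in\JM(n)$.

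The step I expect to be the main obstacle is the bookkeeping connecting the $p$-adic valuation of a hook of $\lambda$ to the abacus data of a single active runner and of the two active runners together---in particular, verifying that conditions (F2) and (F3) are exactly what is needed to control the ``interface'' hooks, those whose bead and gap straddle the boundary between runner $i$ (or $j$) and the remaining runners, both when extracting the runner structure in the forward direction and when excluding bad triples in the converse. These interface cases, together with the initial reduction to a single display and the recursive passage to the quotient partitions, are the delicate points; the remainder is a routine, if lengthy, case analysis, and Lemma~\ref{L: conjugate JM} roughly halves the work by letting the ``$j$ and $p$-restricted'' assertions be deduced from the ``$i$ and $p$-regular'' ones.
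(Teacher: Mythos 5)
The paper does not supply its own proof of Proposition~\ref{P: JM-partition}: it is imported verbatim from Fayers' paper, cited as \cite[Proposition~2.1]{MF2}, so there is no in-paper argument for your reconstruction to be measured against. What follows is therefore an assessment of the sketch on its own terms.

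Your overall framework is the right one: translating $\nu_p(h_\lambda(a,b))$ into same-runner bead/gap data, observing that nodes in a row share a bead and nodes in a column share a gap, reducing to a single display via cyclic stability of (F1)--(F5), and halving the work with the conjugation symmetry that pairs (F2)/(F4) with (F3)/(F5) via Lemma~\ref{L: conjugate JM}. These ingredients are all essential to any proof of this result.

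However, in both directions the crucial step is declared rather than demonstrated, and in at least one place the declaration is not obviously true as stated. In the forward direction, from ``three runners each carry a non-empty component'' you assert one can ``choose beads and gaps on these runners forming an explicit bad triple.'' Concretely, taking a bead $B$ and gap $G < B$ on one runner $r$, a bad triple at the corresponding node requires \emph{both} a gap $G'<B$ off runner $r$ \emph{and} a bead $B'>G$ off runner $r$; the existence of these auxiliary positions has to be extracted from the non-emptiness of the other two runners and is not automatic without an argument that uses where those other runners' beads and gaps actually sit relative to $G$ and $B$. In the converse direction, the real work is the column-valuation (resp.\ row-valuation) constancy. If $(a,b)$ has bead $B$ and gap $G$ both on runner $i$, condition (F2) does give that every bead at a position $>G$ lies on runner $i$, so the column of $\lambda$ through $(a,b)$ consists entirely of $p$-divisible hooks, with valuations $1+\nu_p(\cdot)$ of hooks of $\lambda^{(i)}$. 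But the JM-property of $\lambda^{(i)}$ (condition (F4), or your inductive hypothesis) says only that \emph{for each} $p$-divisible hook of $\lambda^{(i)}$ the row \emph{or} the column has constant valuation; it does not by itself give that the specific column you need is constant. Closing this requires a further use of (F2)/(F3) at the level of the quotient abacus, together with a case split on whether row- or column-constancy holds for the relevant hook of $\lambda^{(i)}$, and this is precisely the ``interface'' bookkeeping you flag as the likely obstacle. So the sketch identifies the correct obstacle but does not overcome it, and as written the converse is not established.
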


\begin{rem}\label{rem i and j}
We emphasize again that the abacus display of a partition $\lambda$ depends on the length of the chosen sequence
of $\beta$-numbers. In particular, if $\lambda$ is a JM-partition then the numbers $i$ and $j$ in Proposition~\ref{P: JM-partition}
depend on the chosen $\beta$-numbers. However, the partitions $\lambda^{(i)}$ and $\lambda^{(j)}$ do not depend
on the choice of $\beta$-numbers. In other words, $\lambda^{(i)}$ and $\lambda^{(j)}$ are uniquely determined by the JM-partition $\lambda$.
\end{rem}

The following result gives us the characterization of the partition $\lambda$ when $S^\lambda$ is simple.

\begin{thm}[{\cite{MF1,MF2,SL}}]\label{T: irred classification}
Let $\lambda\in\P(n)$. Then the Specht $F\sym{n}$-module $S^\lambda$ is simple if and only if $\lambda\in \JM(n)$.
\end{thm}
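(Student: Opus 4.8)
The statement to be proved, Theorem~\ref{T: irred classification}, asserts that $S^\lambda$ is simple if and only if $\lambda\in\JM(n)$. This is not something one proves from scratch here; it is quoted from the combined work of Fayers \cite{MF1,MF2} and Lyle \cite{SL}, which settled the James--Mathas conjecture in odd characteristic. So the ``proof'' in the body of the paper will simply be a citation, and the content I would want to sketch is how that classification was actually established in the literature, so that the reader understands why the result is true.

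The plan is to recall the structure of the James--Mathas conjecture and its resolution. First I would note that James and Mathas, in \cite{GJAM}, isolated the combinatorial class of what are now called JM-partitions precisely by studying when the bilinear form on the Specht module $S^\lambda$ is non-degenerate, equivalently when $S^\lambda\cong D^\lambda$ (for $\lambda$ $p$-regular) or more generally when $S^\lambda$ is irreducible; the key device is the Jantzen--Schaper sum formula, which controls the Jantzen filtration of $S^\lambda$ in terms of hook-length $p$-adic valuations. The ``only if'' direction --- if $\lambda\notin\JM(n)$ then $S^\lambda$ is reducible --- is the easier half: one exhibits, from a triple of nodes $(a,b),(a,y),(x,b)$ violating the JM-condition, a non-trivial term in the Jantzen--Schaper formula, forcing a proper submodule. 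This is essentially James--Mathas's original argument and works uniformly.

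The harder direction --- if $\lambda\in\JM(n)$ then $S^\lambda$ is irreducible --- is where Fayers's and Lyle's work is needed, and I expect this to be the main obstacle in any self-contained treatment. The strategy there is inductive on the $p$-weight, using the abacus characterization recorded here as Proposition~\ref{P: JM-partition}: a JM-partition has $p$-quotient supported on at most two runners $i$ and $j$, with $\lambda^{(i)}$ a $p$-regular JM-partition and $\lambda^{(j)}$ a $p$-restricted JM-partition, and strong separation conditions (F2), (F3) on the beads. One uses row and column removal theorems for decomposition numbers (Fayers--Lyle type reductions), together with the known irreducibility results for Specht modules in small weight and for the ``one-runner'' pieces $S^{\lambda^{(i)}}$ and $S^{\lambda^{(j)}}$, to push the irreducibility of $S^\lambda$ down to these building blocks. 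Lyle \cite{SL} supplied the remaining cases that Fayers's reduction did not reach. Since all of this is external, in the paper the statement is simply asserted with the citation \cite{MF1,MF2,SL}, and the genuinely new combinatorial work --- analysing how the map $\Phi$ and adjacency interact with this classification --- is carried out in the subsequent sections rather than in the proof of Theorem~\ref{T: irred classification} itself.
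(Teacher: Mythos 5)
You correctly identify that Theorem~\ref{T: irred classification} is stated in the paper as a citation of Fayers and Lyle, not proved there, and your sketch of how the James--Mathas conjecture was resolved in the literature (Jantzen--Schaper for the ``only if'' direction, inductive abacus/runner-removal arguments for the ``if'' direction) accurately reflects the cited sources. Your treatment matches the paper's.
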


We shall investigate JM-partitions in more detail. More generally, we are mostly interested in partitions with abacus displays satisfying the conditions (F1), (F2) and (F3) in Proposition~\ref{P: JM-partition}, for some runners $i$ and $j$.

\begin{lem}\label{L: i not j}
Let $\lambda\in\P(n)$, and let $i,j\in\{0,\ldots,p-1\}$ be integers satisfying the conditions (F1), (F2) and (F3) in Proposition~\ref{P: JM-partition}, for some abacus display of $\lambda$. If $\kappa_p(\lambda)\neq \lambda$ then $i\neq j$.
\end{lem}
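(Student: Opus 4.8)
Let $\lambda\in\P(n)$, and suppose $i,j\in\{0,\ldots,p-1\}$ satisfy (F1), (F2), (F3) for some abacus display of $\lambda$. If $\kappa_p(\lambda)\neq\lambda$, then $i\neq j$.

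The plan is to prove the contrapositive: if $i=j$, then $\lambda=\kappa_p(\lambda)$, i.e. $\lambda$ has $p$-weight $0$. So suppose $i=j$; call this common runner $i$. Then (F1) says $\lambda^{(k)}=\varnothing$ for every $k\neq i$, which already forces $\omega_p(\lambda)=|\lambda^{(i)}|$ by the formula $\omega_p(\lambda)=\sum_k|\lambda^{(k)}|$ from \ref{noth abacus}(d). So it suffices to show $\lambda^{(i)}=\varnothing$, i.e. that every bead on runner $i$ is already pushed as high as possible on that runner. Equivalently, I must show there is no ``gap'' on runner $i$: there is no unoccupied position $i+ap$ lying above an occupied position $i+cp$ (with $a<c$) on that same runner.

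The key step is to play (F2) against (F3). Suppose, for contradiction, that runner $i$ has a gap, so pick an unoccupied position $i+ap$ and an occupied position $i+cp$ on runner $i$ with $i+ap < i+cp$. Now I look at a position on a \emph{different} runner that lies strictly between them. Concretely, since $p\geq 2$ there is at least one residue $k\neq i$, and one can choose an integer in the open interval $(i+ap,\, i+cp)$ whose residue mod $p$ is not $i$; such an integer exists because that interval has length at least $p$ (it spans at least one full block of $p$ consecutive integers as $c\ge a+1$... in fact $c - a \ge 1$ gives length $\ge p$, hence the interval contains a complete residue system mod $p$ minus endpoints, so some position $b$ in it is off runner $i$). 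Call this position $b$. By (F2), applied with the unoccupied position $i+ap$ on runner $i$: since $b > i+ap$ and $b$ is not on runner $i$, the position $b$ is unoccupied. By (F3), applied with the occupied position $i+cp$ on runner $i$: since $b < i+cp$ and $b$ is not on runner $i$, the position $b$ is occupied. This is the desired contradiction. Hence runner $i$ has no gap, $\lambda^{(i)}=\varnothing$, $\omega_p(\lambda)=0$, and $\lambda=\kappa_p(\lambda)$.

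The main (really the only) obstacle is the small bookkeeping at the start: one must make sure such a between-position $b$ off runner $i$ genuinely exists, which is where $p\geq 3$ — or even just $p\geq 2$ — is used, together with the observation that a gap on runner $i$ forces the two chosen positions to be at least $p$ apart. A secondary subtlety is that (F2) and (F3) as stated in Proposition~\ref{P: JM-partition} are conditions ``for every abacus display,'' whereas here $i,j$ are given only for \emph{one} display; but the argument only uses the hypotheses for that one fixed display, so this causes no difficulty. One should also note the edge case where $\lambda=\varnothing$ or has very few beads — but then trivially $\kappa_p(\lambda)=\lambda$, so the hypothesis $\kappa_p(\lambda)\neq\lambda$ is vacuous and there is nothing to prove.
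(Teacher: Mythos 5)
Your proof is correct and is essentially the same argument as the paper's: assume $i=j$ and that runner $i$ has a gap, locate a position off runner $i$ strictly between the unoccupied position $i+ap$ and the occupied position $i+cp$, and observe that (F2) forces it to be unoccupied while (F3) forces it to be occupied. The only cosmetic difference is that the paper pins down the specific position $i+ap+1$ (which is automatically off runner $i$ and lies below $i+cp$ since $c>a$), whereas you argue more abstractly that some such position exists; both are fine.
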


\begin{proof}
Assume that $i=j$ and $\kappa_p(\lambda)\neq \lambda$.
By Proposition \ref{P: JM-partition}(F1), runner $i$ is the only runner such that $\lambda^{(i)}\neq \varnothing$. Since $\kappa_p(\lambda)\neq \lambda$, there are some integers $0\leq a<b$ such that the positions $i+ap$ and $i+bp$ are unoccupied and occupied, respectively. The position $i+ap+1$ is not on runner $i$. By Proposition \ref{P: JM-partition}(F2), the position $i+ap+1$ is unoccupied. On the other hand, by Proposition \ref{P: JM-partition}(F3), the position $i+ap+1$ is occupied, a contradiction.
\end{proof}



The next definition will be crucial to state Theorem~\ref{T: irred specht}, our main result  of this paper.

\begin{defn}\label{D: Phi}
Let $\Phi$ be the map $\Phi:\P(n)\to \P^2(n)$ defined by
$$\Phi(\lambda)=(\alpha|p\beta)\,,$$
where
$p\beta=\lambda'-\lambda'(0)$ and $\alpha=(\lambda'(0))'$.
\end{defn}

\begin{rem}\label{rem Phi} In order to obtain $\Phi(\lambda)$ for a given $\lambda=(\lambda_1,\ldots,\lambda_k)\in\P(n)$, one proceeds as follows. Successively remove all possible vertical rim $p$-hooks from the rightmost, that is, the  $\lambda_1$th column of $[\lambda]$, and let $\beta_{\lambda_1}$ be the total number of vertical $p$-hooks removed.
Next remove all possible vertical rim $p$-hooks from the
$(\lambda_1-1)$st column of the Young diagram of the resulting partition, and let
$\beta_{\lambda_1-1}$ be the total number of these vertical $p$-hooks.
Proceeding in this way from right to left, we end up with
the partition $\alpha$. Moreover, $\beta=(\beta_1,\ldots,\beta_{\lambda_1})$ counts the number of vertical $p$-hooks removed from each
column of the initial Young diagram $[\lambda]$.

\end{rem}

We shall be mostly interested in $\Phi(\lambda)$ in the case when $\lambda$ is a JM-partition. The next lemma shows, in particular, how $\Phi(\lambda)$ can be read off from the abacus display in Proposition~\ref{P: JM-partition} when $\lambda$ is a JM-partition.


\begin{lem}\label{L: stripping JM partition}
Let $\lambda\in\P(n)$, and let $i,j\in\{0,\ldots,p-1\}$ be integers satisfying the conditions (F1), (F2) and (F3) in Proposition~\ref{P: JM-partition}, for some abacus display of $\lambda$. Then the $p$-core of $\lambda$ is obtained by the independent procedures of stripping off all horizontal and vertical rim $p$-hooks of $\lambda$. Furthermore, one has
\[\Phi(\lambda)=(\kappa_p(\lambda)+p\lambda^{(i)}|p(\lambda^{(j)})')=(\kappa_p(\lambda)+\lambda-\lambda(0)|\lambda'-\lambda'(0))\ .\]
\end{lem}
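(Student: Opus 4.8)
The plan is to analyze the abacus combinatorics of $\lambda$ together with the column-stripping description of $\Phi$ from Remark~\ref{rem Phi}. First I would establish the claim about the $p$-core: conditions (F1), (F2), (F3) severely restrict the occupied/unoccupied positions, and I want to show that moving beads up along runner $i$ (horizontal hook removal) and moving beads up along runner $j$ (vertical hook removal) can be carried out independently, each reaching the same configuration in which all beads on every runner are as high as possible. The key observation is that by (F1) every runner other than $i$ and $j$ already has its beads at the top, so stripping only affects runners $i$ and $j$; and (F2), (F3) guarantee that a bead move on runner $i$ never interferes with a bead move on runner $j$. Concretely, horizontal $p$-hook removal corresponds to moving a bead from position $k$ to position $k-p$ (same runner), so iterating this only shuffles beads within each runner; the point is to identify which such moves are \emph{legal} (i.e.\ correspond to removing a rim $p$-hook) and to see that the moves on runner $i$ account exactly for $\lambda^{(i)}$ and those on runner $j$ for $\lambda^{(j)}$.

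Next I would identify the horizontal part. Removing horizontal rim $p$-hooks from $[\lambda]$ corresponds on the abacus to sliding beads up one position at a time within a runner; (F2) ensures that on every runner $k \ne i$ the beads are already packed at the top, so all horizontal stripping happens on runner $i$, and the number and shape of these moves is recorded precisely by the partition $\lambda^{(i)}$ read off runner $i$. Hence the result of removing all horizontal rim $p$-hooks from $\lambda$ has $p$-quotient supported on runners $i$ and $j$ with the runner-$i$ component emptied, which is exactly the partition $\kappa_p(\lambda) + p\lambda^{(j)(\text{stuff})}$; more to the point, by the definition in \ref{noth abacus}(c), $\lambda(0)$ is obtained precisely by this horizontal stripping, so $\lambda - \lambda(0)$ is $p$ times the partition whose abacus has all horizontal moves already done, and one checks $\lambda - \lambda(0) = p\lambda^{(i)}$ up to the identification of runners (here I will need to be careful that the abacus in Proposition~\ref{P: JM-partition} and the one implicit in the $p$-adic expansion give compatible readings). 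Dually, the vertical rim $p$-hook removal corresponds, after transposing, to horizontal removal on $\lambda'$; by Lemma~\ref{L: conjugate JM} and the symmetry of conditions (F1)--(F3) under conjugation (which swaps the roles of $i$ and $j$ and complements the abacus), the vertical stripping of $[\lambda]$ is governed by runner $j$ and yields $\lambda' - \lambda'(0) = p(\lambda^{(j)})'$.

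Then, combining these, $\Phi(\lambda) = (\alpha \mid p\beta)$ with $p\beta = \lambda' - \lambda'(0) = p(\lambda^{(j)})'$ by the vertical-stripping description above, and $\alpha = (\lambda'(0))'$. To compute $\alpha$: $\lambda'(0)$ is the partition obtained from $\lambda'$ by stripping all horizontal rim $p$-hooks, equivalently from $\lambda$ by stripping all vertical rim $p$-hooks, so $\lambda'(0)$ is the transpose of the partition $\kappa_p(\lambda) + p\lambda^{(i)}$ (which is what remains after removing the vertical hooks, since the independence of the two stripping procedures means removing vertical hooks from $\lambda$ leaves exactly the horizontal-hook content on runner $i$ together with the $p$-core). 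Transposing back gives $\alpha = \kappa_p(\lambda) + p\lambda^{(i)}$, and finally $p\lambda^{(i)} = \lambda - \lambda(0)$ and $p(\lambda^{(j)})' = \lambda' - \lambda'(0)$ translate the formula into the second form stated.

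The main obstacle I expect is bookkeeping with the abacus conventions: the abacus in Proposition~\ref{P: JM-partition} is one particular display (depending on the chosen number of $\beta$-numbers), whereas the $p$-adic expansion and the $p$-quotient in \ref{noth abacus}(c),(d) use their own conventions, and the identification of ``runner $i$'' with ``the horizontal part'' versus ``runner $j$'' with ``the vertical part'' has to be pinned down correctly (including that the relevant $s$-element $\beta$-set has $s$ divisible by $p$ so that runners are unambiguous, and that adding rows of zeros does not disturb anything by Remark~\ref{rem i and j}). The genuinely substantive point, rather than bookkeeping, is proving the \emph{independence} of horizontal and vertical stripping: a priori the two procedures could interact, but conditions (F2) and (F3) are exactly what rules this out, and making that rigorous — showing that after stripping all horizontal hooks one can still strip all the vertical hooks that were available originally, and vice versa, reaching the same $p$-core either way — is the heart of the argument. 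Once that is in place, the formula for $\Phi(\lambda)$ follows by matching the stripped-off pieces against the definitions of $\lambda(0)$, $\lambda^{(i)}$, $\lambda^{(j)}$ and $\kappa_p(\lambda)$.
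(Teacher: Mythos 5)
Your overall plan tracks the paper's proof quite closely -- abacus analysis restricted to runners $i$ and $j$, separating the stripping into horizontal moves on runner $i$ and vertical moves on runner $j$, and using the independence of the two stripping procedures to read off $\Phi(\lambda)$. You also correctly identify the crux (the independence and the identification of which bead moves are which) as the point that needs real work. However, there is a concrete error in how you deploy condition (F2), and the part you flag as ``the heart of the argument'' is in fact left undone, so the proposal as written has a genuine gap.

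The error: you write that ``(F2) ensures that on every runner $k\ne i$ the beads are already packed at the top, so all horizontal stripping happens on runner $i$.'' This is not what (F2) says, and it is false in general: it is (F1) that forces runners $k\ne i,j$ to be packed, and runner $j$ need \emph{not} be packed (it carries the quotient $\lambda^{(j)}$). In fact your own first paragraph states the (F1) version correctly, so the second paragraph contradicts it. Moreover, the intended conclusion does not follow from ``packing'' anyway. Moving a bead one row up on \emph{any} runner removes \emph{some} rim $p$-hook, not automatically a horizontal one. The actual mechanism is: a bead move on runner $i$ from $i+bp$ to the unoccupied $i+(b-1)p$ is a horizontal hook removal because, by (F2) applied to the vacancy at $i+(b-1)p$, the $p-1$ intermediate positions (all off runner $i$) are unoccupied; and a bead move on runner $j$ from the occupied $j+cp$ to $j+(c-1)p$ is a vertical hook removal because, by (F3) applied to the bead at $j+cp$, the $p-1$ intermediate positions (all off runner $j$) are occupied. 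This is precisely what distinguishes runner-$i$ moves (horizontal) from runner-$j$ moves (vertical), and it is what the paper's proof establishes with its explicit bead-labelling argument ($u_B(B_k)=\lambda_k$, and the computation of how $u_B$ changes under each kind of move). Without this, the assertion ``the moves on runner $i$ account exactly for $\lambda^{(i)}$ and those on runner $j$ for $\lambda^{(j)}$'' is unsupported.

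Two further remarks. First, even after the horizontal/vertical identification, one still has to show the two procedures are simultaneously available and non-interfering; the paper does this by observing that the runner-$i$ beads eligible for a move are $B_1,\dots,B_{k_i}$ (the top $k_i$ rows), while the runner-$j$ beads involved sit strictly lower (rows $\ge k_j+1$ with $k_j>k_i$), and then closes the argument by an induction on $p$-weight after checking that the stripped partition $\mu=\lambda-(p^{k_i})$ still satisfies (F1)--(F3). Your plan gestures at this but does not carry it out. Second, your idea of deducing the vertical ($\beta$) part by passing to $\lambda'$ via Lemma~\ref{L: conjugate JM} is a reasonable alternative to the paper's direct treatment, and it would reduce the work once the horizontal case is correctly proved; but since the horizontal case is exactly where your reasoning breaks, the conjugation trick does not save the argument as it stands.
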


\begin{proof} 
Let $B$ be the abacus display in the statement. If $\kappa_p(\lambda)=\lambda$ then there is nothing to show. Suppose that $\kappa_p(\lambda)\neq\lambda$. Let $i,j$ be the numbers as in Proposition \ref{P: JM-partition} with respect to $B$. By Lemma~\ref{L: i not j}, $i\neq j$. Label all the beads of $B$ by $B_1,\ldots,B_s$ such that their positions are $b_1>\cdots>b_s$, respectively. Recall how one can read off the partition $\lambda$ from $B$ as explained in \ref{noth abacus}(d). For each $k\in\{1,\ldots,s\}$, there are exactly $\lambda_k$ unoccupied positions $t$ such that $t<b_k$, and let this number be $u_B(B_k)$. So we get $\lambda=(u_B(B_1),u_B(B_2),\ldots,u_B(B_s))$.

Suppose that $\lambda^{(i)}\neq \varnothing$, and let $i+a_0p$ be the highest unoccupied position on runner $i$. Let $k_i$ be the unique largest number such that $i+a_0p<b_{k_i}$. Then $B_1,\ldots,B_{k_i}$ belong to runner $i$, by Proposition~\ref{P: JM-partition}(F2). Moving a bead $B_k$, where $1\leq k\leq k_i$, to the vacant position $b_k-p$ yields the abacus $B'$, where $u_{B'}(B_k)=u_B(B_k)-p$, and $u_{B'}(B_\ell)=u_B(B_\ell)$ whenever $\ell\neq k$. This is equivalent to stripping off a horizontal $p$-hook from the $k$th row of $\lambda$. If $\lambda^{(j)}$ is empty then, inductively, we obtain the $p$-core of $\lambda$ by stripping off all horizontal $p$-hooks from $\lambda$.

Let $k_j$ be the unique smallest number such that $B_{k_j}$ belongs to runner $j$; namely $B_{k_j}$ is the lowest bead on runner $j$. Furthermore, if $B_k$ belongs to runner $j$ then, by Proposition \ref{P: JM-partition}(F3), we have
\[b_{k+1}=b_k-1,\ldots,b_{k+p-1}=b_k-(p-1)\,,\]
and thus $u_B(B_k)=u_B(B_{k+1})=\cdots=u_B(B_{k+p-1})$. Suppose that $\lambda^{(j)}\neq \varnothing$. Moving a bead $B_k$ on runner $j$ to the vacant position $b_k-p$ is equivalent to moving the beads $B_\ell$, one for each $k\leq \ell\leq k+p-1$, from the position $b_\ell$ to the position $b_\ell-1$, and this yields the abacus $B''$, where $u_{B''}(B_\ell)=u_B(B_\ell)-1=u_B(B_k)-1$ if $k\leq \ell\leq k+p-1$, and $u_{B''}(B_\ell)=u_B(B_\ell)$ if $\ell\not\in\{k,k+1,\ldots,k+p-1\}$. This is equivalent to stripping off a vertical $p$-hook from the $\lambda_k$th column of $\lambda$. If $\lambda^{(i)}$ is empty then, inductively, we obtain the $p$-core of $\lambda$ by stripping off all vertical $p$-hooks from $\lambda$.

Suppose that $\lambda^{(i)}\neq \varnothing \neq \lambda^{(j)}$. Since $B_1,\ldots,B_{k_i}$ belong to runner $i$ and $i\neq j$, we have $k_j>k_i$. Furthermore, $i+a_0p>b_{k_j}$ by Proposition \ref{P: JM-partition}(F3). Thus stripping off a horizontal $p$-hook involves a bead of $B_1,\ldots,B_{k_i}$ and positions $t$ such that $t\geq i+a_0p$, and, on the other hand, stripping off a vertical $p$-hook involves $p$ beads $B_k,B_{k+1},\ldots,B_{k+p-1}$ such that $k\geq k_j$ and positions $t$ such that $t<i+a_0p$. Thus horizontal stripping involves the first $k_i$ rows, vertical stripping involves the $(k_j+1)$st and lower rows, and so these two stripping procedures are independent of each other.

We shall now prove that $\Phi(\lambda)$ has the desired form by using induction on the $p$-weight of $\lambda$. We deal with the case when $\lambda^{(i)}\neq \varnothing$ (the other case is similar).
Move every bead $B_k$ of $B_{k_i},\ldots,B_1$
in turn from the position $b_k$ to $b_k-p$. This is equivalent to stripping off $k_i$ horizontal $p$-hooks, one for each row, from the first $k_i$ rows of $\lambda$. Let the abacus obtained be $B'''$. The abaci $B,B'''$ have exactly the same configuration except on runner $i$. The abacus $B'''$ represents the partition $\mu=\lambda-(p^{k_i})$, where $(p^{k_i})$ is the partition $(p,\ldots,p)\vdash k_ip$. Furthermore, we have \[\mu^{(i)}=\lambda^{(i)}-(1^{k_i})\] and $\mu^{(k)}=\lambda^{(k)}$, for all $k\neq i$. Also, by the previous paragraph, we have that \[\mu'-\mu'(0)=\lambda'-\lambda'(0).\] We claim that $\mu$ again satisfies Proposition~\ref{P: JM-partition}(F1)-(F3). Observe that

\smallskip

(i)\, $\mu^{(k)}=\varnothing$ whenever $i\neq k\neq j$;

\smallskip

(ii)\, suppose that a position $i+ap$ is unoccupied on runner $i$ of $B'''$. Then $i+ap\geq i+a_0p$. If a position $b>i+ap$ is not on the runner $i$ of $B'''$ then the position $b$ is unoccupied in $B$, and hence is also unoccupied in $B'''$;

\smallskip

(iii)\, suppose that a position $j+cp$ is occupied in $B'''$. Then $j+cp\leq b_{k_j}$. If a position $d<j+cp$ is not on runner $j$ then the position is occupied in $B$, and hence is also occupied in $B'''$.





\medskip

\noindent
By induction, \[\Phi(\mu)=(\kappa_p(\mu)+p\mu^{(i)}|p(\mu^{(j)})')=(\kappa_p(\mu)+\mu-\mu(0)|\mu'-\mu'(0)).\] Let $\Phi(\lambda)=(\alpha|p\beta)$. Then
\begin{align*}
p\beta&=\lambda'-\lambda'(0)=\mu'-\mu'(0)=p(\mu^{(j)})'=p(\lambda^{(j)})'\\
\alpha&=\kappa_p(\mu)+p\mu^{(i)}+(p^{k_i})=\kappa_p(\mu)+p\lambda^{(i)}=\kappa_p(\lambda)+p\lambda^{(i)}.
\end{align*}
The case when $\lambda^{(j)}\neq \varnothing$ can be treated similarly by considering $\lambda'$.
\end{proof}

Together with Theorem \ref{T:twists indecomposable signed Young modules}, we obtain the following useful corollary.

\begin{cor}\label{C: tensor JM partitions}
Suppose that $\lambda\in\JM(n)$. We have
$$Y(\Phi(\lambda))\otimes \sgn\cong Y(\Phi(\lambda'))\,.$$
\end{cor}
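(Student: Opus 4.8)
The plan is to apply the twisting formula Theorem~\ref{T:twists indecomposable signed Young modules} to the pair $\Phi(\lambda)$ and then recognise the resulting pair as $\Phi(\lambda')$ using the explicit description of $\Phi$ on JM-partitions from Lemma~\ref{L: stripping JM partition}. First I would fix an abacus display of $\lambda$ and runners $i,j$ as in Proposition~\ref{P: JM-partition}, so that by Lemma~\ref{L: stripping JM partition} we have
\[
\Phi(\lambda)=(\kappa_p(\lambda)+\lambda-\lambda(0)\mid \lambda'-\lambda'(0))=(\alpha\mid p\mu)\,,
\]
where $\alpha=\kappa_p(\lambda)+p\lambda^{(i)}$ and $p\mu=p(\lambda^{(j)})'$. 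Since $\lambda'$ is again a JM-partition by Lemma~\ref{L: conjugate JM}, and its abacus display (obtained from the complementary $\beta$-set) has the roles of the runners $i$ and $j$ swapped together with conjugation of the quotient parts, Lemma~\ref{L: stripping JM partition} applied to $\lambda'$ gives $\Phi(\lambda')=(\kappa_p(\lambda')+p(\lambda^{(j)})'\mid p\lambda^{(i)})$. Note $\kappa_p(\lambda')=\kappa_p(\lambda)'$, which since a $p$-core is both $p$-regular and $p$-restricted (see \ref{noth Specht}(b)) satisfies $\m(\kappa_p(\lambda))=\kappa_p(\lambda)'$.

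Next I would compute the right-hand side of the twisting formula for the pair $(\alpha\mid p\mu)=\Phi(\lambda)$. We need the $p$-adic expansion of $\alpha$: since $\kappa_p(\lambda)$ is $p$-restricted and $\lambda^{(i)}$ is $p$-regular, one has to be slightly careful, but $\alpha(0)$ is governed by the bottom of the $p$-adic expansion. In fact the key point, which I would extract from the stripping description, is that $\alpha-\alpha(0)=p\lambda^{(i)}$ (the $p$-divisible part peeled off $\alpha$ is exactly the part contributed by the horizontal hooks) and $\alpha(0)=\kappa_p(\lambda)$. Then Theorem~\ref{T:twists indecomposable signed Young modules} yields
\[
Y(\Phi(\lambda))\otimes\sgn=Y(\alpha\mid p\mu)\otimes\sgn\cong Y\big(\m(\alpha(0))+p\mu\mid \alpha-\alpha(0)\big)=Y\big(\m(\kappa_p(\lambda))+p(\lambda^{(j)})'\mid p\lambda^{(i)}\big).
\]
Since $\m(\kappa_p(\lambda))=\kappa_p(\lambda)'=\kappa_p(\lambda')$, this is precisely $Y(\kappa_p(\lambda')+p(\lambda^{(j)})'\mid p\lambda^{(i)})=Y(\Phi(\lambda'))$, completing the argument.

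The main obstacle I anticipate is the bookkeeping around the $p$-adic expansion of $\alpha=\kappa_p(\lambda)+p\lambda^{(i)}$, namely verifying cleanly that $\alpha(0)=\kappa_p(\lambda)$ and $\alpha-\alpha(0)=p\lambda^{(i)}$. This is not automatic from $p$-restrictedness of the summands alone; it requires knowing that adding $p\lambda^{(i)}$ does not disturb the lowest layer of the $p$-adic expansion, which in turn follows from the inductive/stripping description of the $p$-adic expansion in \ref{noth abacus}(c) together with the structural constraints (F1)--(F3) on the abacus (the horizontal hooks all sit on runner $i$, high up, and the removal of horizontal rim $p$-hooks row by row from $\kappa_p(\lambda)+p\lambda^{(i)}$ returns $\kappa_p(\lambda)$ exactly). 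I would phrase this as a short observation, possibly invoking the abacus directly rather than manipulating compositions: on the abacus of $\alpha$, pushing beads up along runner $i$ performs exactly the horizontal $p$-hook removals, and what remains is the abacus of $\kappa_p(\lambda)$, which is $p$-restricted, hence equals $\alpha(0)$. With that in hand, the rest is a direct substitution into the twisting formula and the conjugation identity for $p$-cores.
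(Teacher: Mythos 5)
Your argument is correct and is essentially the paper's proof, which sets $p\sigma:=\lambda-\lambda(0)$, $p\tau:=\lambda'-\lambda'(0)$, records $\Phi(\lambda)=(\kappa_p(\lambda)+p\sigma|p\tau)$ and $\Phi(\lambda')=(\kappa_p(\lambda')+p\tau|p\sigma)$ from Lemma~\ref{L: stripping JM partition} applied directly to $\lambda$ and to $\lambda'$ (so no separate complementary $\beta$-set digression is needed), and then invokes Theorem~\ref{T:twists indecomposable signed Young modules} together with $\m(\kappa_p(\lambda))=\kappa_p(\lambda)'=\kappa_p(\lambda')$. The step $(\kappa_p(\lambda)+p\sigma)(0)=\kappa_p(\lambda)$ that you flag as a potential obstacle requires no abacus reasoning: since $\kappa_p(\lambda)$ is $p$-restricted and $p\sigma=\sum_{j\geq 0}p^{j+1}\sigma(j)$ contributes only $p$-restricted coefficients at levels $\geq 1$, the uniqueness of the $p$-adic expansion in \ref{noth abacus}(c) immediately gives $(\kappa_p(\lambda)+p\sigma)(0)=\kappa_p(\lambda)$, which is exactly what the paper asserts without further comment.
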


\begin{proof}
By Lemma \ref{L: conjugate JM}, the conjugate partition $\lambda'$ is also
a JM-partition of $n$.
Set $p\sigma:=\lambda-\lambda(0)$ and $p\tau:=\lambda'-\lambda'(0)$. Then,
by Proposition~\ref{P: JM-partition} and Lemma \ref{L: stripping JM partition}, we have
$\Phi(\lambda)=(\kappa_p(\lambda)+p\sigma|p\tau)$ and $\Phi(\lambda')=(\kappa_p(\lambda')+p\tau|p\sigma)$.
Since $\kappa_p(\lambda)$ is a $p$-core, we have $\m(\kappa_p(\lambda))=\kappa_p(\lambda)'=\kappa_p(\lambda')$, by \ref{noth Specht}, and also $(\kappa_p(\lambda)+p\sigma)(0)=\kappa_p(\lambda)$. So, by
Theorem~\ref{T:twists indecomposable signed Young modules}, we get
\begin{align*}
Y(\Phi(\lambda))\otimes \sgn=Y(\kappa_p(\lambda)+p\sigma|p\tau)\otimes \sgn&\cong Y(\m(\kappa_p(\lambda))+p\tau|p\sigma)\\
&=Y(\kappa_p(\lambda')+p\tau|p\sigma)=Y(\Phi(\lambda'))\,.
\end{align*}
\end{proof}

\begin{noth}\label{noth Rouquier}{\bf Rouquier blocks and adjacent JM-partitions.}\,
Recall from \ref{noth blocks Sn} that, for $\lambda\in\P(n)$, we denote by $b_{\kappa_p(\lambda)}$ the block of $F\sym{n}$
labelled by the $p$-core $\kappa_p(\lambda)$ and the $p$-weight $\omega_p(\lambda)$ of $\lambda$ (or equivalently, labelled by the $p$-content $c_p(\lambda)$).

\smallskip

(a)\, Following the notation used by Fayers in \cite{MF2}, one calls $b_{\kappa_p(\lambda)}$ a {\sl Rouquier block} if $\kappa_p(\lambda)$ (and thus also $\lambda$) admits an abacus display such that the number of beads on  runner $i+1$ exceeds the number of beads on  runner $i$ by at least $\omega_p(\lambda)-1$, for all $i\in\{0,\ldots,p-2\}$.

\smallskip

(b)\, We shall follow Fayers's idea  \cite[\S3]{MF2} to induce a simple Specht $F\mathfrak{S}_n$-module $S^\lambda$ to a simple Specht module lying in a Rouquier block as follows.

Suppose that $\lambda\in\JM(n)$, and consider an abacus display $B$
of $\lambda$.
Suppose that $B$
is obtained from an $s$-element $\beta$-set.
Suppose further that, for some $\ell\in\{1,\ldots,p-1\}$, there are $r\geq 1$ more beads on runner $\ell-1$ than on runner $\ell$
and that whenever there is a bead in position $ap+\ell$ then there is also a bead in position $ap+\ell-1$ .
Putting this differently, the Young diagram $[\lambda]$ has precisely $r$ addable nodes of $p$-residue
$\overline{\ell-s}$ and no removable nodes of this $p$-residue, where $\overline{\ell-s}\in\{0,\ldots,p-1\}$ is the residue
of $\ell-s$ modulo $p$.
Swapping runners $\ell-1$ and $\ell$ of $B$
yields an abacus display of a JM-partition $\mu\in\JM(n+r)$, and the Young diagram $[\mu]$ is obtained by
adding all addable nodes of $p$-residue $\overline{\ell-s}$ to $[\lambda]$.

One calls $\lambda$ and $\mu$  {\sl a pair of adjacent JM-partitions}.
Also, the blocks $b_{\kappa_p(\lambda)}$ and $b_{\kappa_p(\mu)}$
of $F\sym{n}$ and $F\sym{n+r}$, respectively,  are said to form {\sl a pair of adjacent blocks}.
\end{noth}

\begin{lem}\label{L: conjugate pair of JM} If $\lambda\in\JM(n)$ and $\mu\in\JM(n+r)$ form a pair of adjacent JM-partitions then so do $\lambda'$ and $\mu'$.
\end{lem}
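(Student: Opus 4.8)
The plan is to verify the claim by taking conjugates of abacus displays and tracking what the operation described in~\ref{noth Rouquier}(b) does under transposition. First I would recall the standard fact that if $B$ is an abacus display of $\lambda$ arising from an $s$-element $\beta$-set, then the complementary display---replacing beads by gaps and gaps by beads, and reading the runners in reverse order---is an abacus display of $\lambda'$, and moreover this complementation is compatible with the runner-labelling in the sense that runner $k$ of the $\lambda$-display corresponds to runner $\overline{s-1-k}$ (or a suitable reindexing) of the $\lambda'$-display. The key point is that swapping two \emph{adjacent} runners $\ell-1$ and $\ell$ of $B$ corresponds, after complementation, to swapping two adjacent runners of the complementary display.

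Concretely, I would argue as follows. Since $\lambda\in\JM(n)$, Lemma~\ref{L: conjugate JM} gives $\lambda'\in\JM(n)$, and similarly $\mu'\in\JM(n+r)$ once we know $\mu\in\JM(n+r)$, which is part of the hypothesis. So it only remains to check that $\lambda'$ and $\mu'$ are \emph{adjacent} in the sense of~\ref{noth Rouquier}(b), i.e.\ that $[\mu']$ is obtained from $[\lambda']$ by adding all addable nodes of a single $p$-residue and that $[\lambda']$ has no removable node of that residue. The cleanest route is the combinatorial description already given in~\ref{noth Rouquier}(b): the pair $(\lambda,\mu)$ being adjacent means $[\mu]$ is obtained from $[\lambda]$ by adding all addable nodes of some $p$-residue $e$, with no removable node of residue $e$ in $[\lambda]$. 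Transposing a Young diagram sends an addable (resp.\ removable) node of residue $e$ to an addable (resp.\ removable) node of residue $-e \bmod p$, since the node $(i,j)$ of residue $j-i$ becomes the node $(j,i)$ of residue $i-j$. Hence $[\mu']$ is obtained from $[\lambda']$ by adding all addable nodes of residue $\overline{-e}$, and $[\lambda']$ has no removable node of residue $\overline{-e}$. Since $|\mu'|=|\mu|=n+r$ and $|\lambda'|=|\lambda|=n$, the number of nodes added is again $r$, so $\lambda'$ and $\mu'$ satisfy exactly the defining condition for a pair of adjacent JM-partitions.

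I would present this as the main line of the proof, phrasing it in terms of addable/removable nodes rather than abaci, which avoids fussing over the precise reindexing of runners under complementation. If one prefers the abacus formulation, the alternative is to spell out that the hypothesis ``$r$ more beads on runner $\ell-1$ than on runner $\ell$, and a bead in position $ap+\ell$ forces a bead in position $ap+\ell-1$'' is exactly the complement of ``$r$ more beads on runner $\ell'-1$ than on runner $\ell'$, and a bead in the corresponding position forces a bead one runner to the left'' for the appropriate $\ell'$ in the complementary display; this is a routine but slightly bookkeeping-heavy check. The only genuine subtlety---and the place I would be most careful---is getting the residue convention consistent: the residue of an addable node depends on the number $s$ of $\beta$-numbers only through a global shift, and under conjugation this shift changes sign along with the node residues, so the statement ``add all addable nodes of a fixed residue'' is preserved. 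I expect no real obstacle here; the lemma is essentially immediate once the transposition-of-residues observation is in place, which is why the authors state it without proof in the displayed excerpt (Lemma~\ref{L: conjugate pair of JM} is stated but its proof is presumably one or two lines along these lines).
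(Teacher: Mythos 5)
Your proposal is correct and takes essentially the same approach as the paper: restate adjacency via addable/removable nodes of a fixed $p$-residue $t$, observe that conjugation sends these to addable/removable nodes of residue $-t\bmod p$, and invoke Lemma~\ref{L: conjugate JM} for the JM property of the conjugates. The paper's proof is exactly the first of your two suggested routes (in terms of nodes rather than abaci).
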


\begin{proof}
Since $\lambda$ and $\mu$ form  a pair of adjacent partitions, there is some $t\in\{0,\ldots,p-1\}$ such that
$[\lambda]$ has precisely $r$ addable nodes of $p$-residue $t$ and has no removable nodes of $p$-residue $t$. Equivalently, $\lambda'$ has precisely $r$ addable nodes of $p$-residue $p-t$ and has no removable nodes of $p$-residue $p-t$. By Lemma~\ref{L: conjugate JM}, both $\lambda',$ and $\mu'$ are JM-partitions. Moreover, since $[\mu]$ is obtained by adding all $t$-addable nodes to $[\lambda]$, the diagram $[\mu']$ is obtained by adding all addable nodes of $p$-residue $p-t$ to $[\lambda']$. Thus $\lambda'$ and $\mu'$ form a pair of adjacent partitions.
\end{proof}

For our purpose, we need to analyze the effect of the function $\Phi$ on a pair of adjacent JM-partitions.

\begin{lem}\label{L: simple partition p-adic}
Suppose that $\lambda\in\JM(n)$ and $\mu\in\JM(n+r)$ form a pair of adjacent JM-partitions, for some $r>0$, and let $B_\mu$ be the abacus display of $\mu$ obtained from an abacus display $B_\lambda$ of $\lambda$ by exchanging runners $\ell-1$ and $\ell$.
Let $i_\lambda,j_\lambda,i_\mu,j_\mu\in\{0,\ldots,p-1\}$  be numbers satisfying the conditions (F1)--(F5) in Proposition \ref{P: JM-partition} with respect to $B_\lambda$ and $B_\mu$, respectively.
Then one has $\Phi(\lambda)=(\kappa_p(\lambda)+p\sigma|p\beta)$ and $\Phi(\mu)=(\kappa_p(\mu)+p\sigma|p\beta)$, where
$\sigma=\lambda^{(i_\lambda)}=\mu^{(i_\mu)}$ and $\beta'=\lambda^{(j_\lambda)}=\mu^{(j_\mu)}$.
 \end{lem}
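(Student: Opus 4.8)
The plan is to reduce everything to the structure of the abacus displays $B_\lambda$ and $B_\mu$ and to the explicit description of $\Phi$ on JM-partitions provided by Lemma~\ref{L: stripping JM partition}. By Lemma~\ref{L: stripping JM partition} applied to $B_\lambda$ we already know $\Phi(\lambda)=(\kappa_p(\lambda)+p\lambda^{(i_\lambda)}\,|\,p(\lambda^{(j_\lambda)})')$, and similarly for $\mu$ with respect to $B_\mu$, so the entire content of the lemma is the two combinatorial identities $\lambda^{(i_\lambda)}=\mu^{(i_\mu)}$ and $\lambda^{(j_\lambda)}=\mu^{(j_\mu)}$ (equivalently, $\sigma$ and $\beta'$ are the same when read off from either display). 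The key geometric fact to exploit is that $B_\mu$ is obtained from $B_\lambda$ by swapping the two adjacent runners $\ell-1$ and $\ell$; hence the individual runners of $B_\mu$, \emph{as sequences of occupied/vacant positions}, are exactly the runners of $B_\lambda$, merely relabelled and with runners $\ell-1,\ell$ interchanged. Since $\lambda^{(k)}$ depends only on the pattern of beads on runner $k$ (the $j$-th part is the number of gaps above the $j$-th bead on that runner), the multiset $\{\lambda^{(0)},\dots,\lambda^{(p-1)}\}$ is preserved under the runner swap; only the indexing changes.

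First I would fix notation: let $c_k$ denote the number of beads on runner $k$ of $B_\lambda$, so that runner $\ell-1$ carries $r$ more beads than runner $\ell$ and, by the adjacency hypothesis in \ref{noth Rouquier}(b), whenever position $ap+\ell$ is occupied so is $ap+\ell-1$. The swap turns this into: runner $\ell$ of $B_\mu$ has $r$ more beads than runner $\ell-1$, and whenever a position on (new) runner $\ell$ is occupied the corresponding position on (new) runner $\ell-1$ is occupied. Now invoke the JM-characterization: by Proposition~\ref{P: JM-partition}(F1), for $B_\lambda$ at most the two runners $i_\lambda,j_\lambda$ carry a non-empty associated partition $\lambda^{(\cdot)}$, and likewise for $B_\mu$. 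The core case is $\kappa_p(\lambda)\neq\lambda$, in which Lemma~\ref{L: i not j} gives $i_\lambda\neq j_\lambda$ and $i_\mu\neq j_\mu$ (if $\kappa_p(\lambda)=\lambda$ then $r>0$ is incompatible with the hypotheses unless both sides are $p$-cores, a degenerate subcase handled separately). I would then argue that the two ``active'' runners are not among $\{\ell-1,\ell\}$, or analyse the small number of cases where one of them is: because condition (F2)/(F3) forces the beads on all runners other than $i_\lambda,j_\lambda$ to be pushed to the top, a runner with beads pushed to the top has associated partition $\varnothing$ and is unaffected (up to relabelling) by the swap. Concretely, the swap can move the empty-partition runners around freely, so the runner carrying $\lambda^{(i_\lambda)}$ in $B_\lambda$ is literally the same sequence of beads as the runner carrying $\mu^{(i_\mu)}$ in $B_\mu$ (its index may change from $i_\lambda$ to $i_\mu$, but $i_\mu$ is by Remark~\ref{rem i and j} forced to be the unique runner of $B_\mu$ with $p$-regular non-empty quotient, hence it picks out that same bead-sequence); therefore $\lambda^{(i_\lambda)}=\mu^{(i_\mu)}=:\sigma$. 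The same reasoning with $p$-restricted in place of $p$-regular gives $\lambda^{(j_\lambda)}=\mu^{(j_\mu)}=:\beta'$.

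Having established these two identities, the conclusion is immediate from Lemma~\ref{L: stripping JM partition}: $\Phi(\lambda)=(\kappa_p(\lambda)+p\sigma\,|\,p\beta)$ and $\Phi(\mu)=(\kappa_p(\mu)+p\sigma\,|\,p\beta)$, with $\sigma=\lambda^{(i_\lambda)}=\mu^{(i_\mu)}$ and $\beta'=\lambda^{(j_\lambda)}=\mu^{(j_\mu)}$, exactly as claimed. (Note the first coordinates differ only through the $p$-cores $\kappa_p(\lambda)$ and $\kappa_p(\mu)$, which of course genuinely change under the swap; the lemma does not assert they are equal.) I expect the main obstacle to be the bookkeeping in the previous paragraph: one must rule out (or carefully handle) the configurations in which $\ell-1$ or $\ell$ coincides with $i_\lambda$ or $j_\lambda$, using the adjacency condition ``a bead in position $ap+\ell$ forces a bead in position $ap+\ell-1$'' together with (F2) and (F3) to show that such a runner still has the form demanded by the characterization after the swap — in other words, that $i_\mu,j_\mu$ really do track the same bead-sequences as $i_\lambda,j_\lambda$. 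Once that is pinned down, everything else is a direct appeal to results already in the excerpt. It may streamline the argument to prove the $j$-statement by passing to conjugates: by Lemma~\ref{L: conjugate pair of JM}, $\lambda'$ and $\mu'$ are again a pair of adjacent JM-partitions, and under conjugation the roles of $i$ and $j$ (horizontal versus vertical hooks) are exchanged, so the $j$-identity for $(\lambda,\mu)$ becomes the $i$-identity for $(\lambda',\mu')$ and need not be re-proved from scratch.
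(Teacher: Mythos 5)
Your reduction to the two identities $\lambda^{(i_\lambda)}=\mu^{(i_\mu)}$ and $\lambda^{(j_\lambda)}=\mu^{(j_\mu)}$ via Lemma~\ref{L: stripping JM partition} is exactly what the paper does, and you correctly isolate the only nontrivial issue: the case where one of $i_\lambda,j_\lambda$ lies in $\{\ell-1,\ell\}$. But the mechanism you propose for pinning down $i_\mu$ after the swap --- ``$i_\mu$ is the unique runner of $B_\mu$ with $p$-regular non-empty quotient'' --- is not a valid characterization, and this is where your argument has a real gap. First, Remark~\ref{rem i and j} does not say this; it only says that the partitions $\mu^{(i_\mu)}$ and $\mu^{(j_\mu)}$ are independent of the chosen $\beta$-set. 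Second, the characterization fails outright in easy cases: if $\lambda^{(i_\lambda)}=\varnothing\neq\lambda^{(j_\lambda)}$, the only non-empty quotient lives on the $j$-runner, and it may well be $p$-regular (e.g.\ a single part $<p$); conversely when both quotients are nonempty they could both be simultaneously $p$-regular and $p$-restricted. So ``$p$-regular non-empty quotient'' does not distinguish the $i$-runner from the $j$-runner. The correct distinguishing data are the geometric conditions (F2) and (F3), and the actual work is to check that, after swapping runners $\ell-1$ and $\ell$, the bead pattern that was witnessing (F2) in $B_\lambda$ still witnesses (F2) (and not (F3)) in $B_\mu$.

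The paper closes this gap with a short but essential argument you do not reproduce. Assuming $\lambda^{(i_\lambda)}\neq\varnothing$ and $i_\lambda\in\{\ell-1,\ell\}$: first one shows $i_\lambda\neq\ell$, because a bead at position $cp+\ell$ with $c>a_0$ would have to be accompanied by a bead at $cp+\ell-1$ (adjacency) while (F2) says $cp+\ell-1$ is vacant, a contradiction; so $i_\lambda=\ell-1$. Then, using (F2) for $B_\lambda$, one exhibits in $B_\mu$ an occupied position $\ell+cp$ with the lower position $\ell-1+cp$ unoccupied, which violates (F3) for the hypothesis $j_\mu=\ell$; since $\mu^{(\ell)}\neq\varnothing$, condition (F1) then forces $i_\mu=\ell$, giving $\mu^{(i_\mu)}=\mu^{(\ell)}=\lambda^{(\ell-1)}=\lambda^{(i_\lambda)}$ and hence the other identity as well. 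Your conjugation trick (Lemma~\ref{L: conjugate pair of JM}) to pass from the $i$-statement to the $j$-statement is a legitimate and slightly slicker alternative to the paper's ``the other case is treated similarly,'' but it does not reduce the work on the $i$-side, which is where the above case analysis is unavoidable.
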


\begin{proof}
%
We shall justify that $\lambda^{(i_\lambda)}=\mu^{(i_\mu)}$ and $\lambda^{(j_\lambda)}=\mu^{(j_\mu)}$. Once we have done that, we obtain our desired result, since, by Lemma~\ref{L: stripping JM partition}, we have $\Phi(\lambda)=(\kappa_p(\lambda)+p\lambda^{(i_\lambda)}|p(\lambda^{(j_\lambda)})')$ and $\Phi(\mu)=(\kappa_p(\mu)+p\mu^{(i_\mu)}|p(\mu^{(j_\mu)})')$.

If $\lambda$ is a $p$-core then $\mu$ is a $p$-core. In this case, $\lambda^{(i_\lambda)}=\mu^{(i_\mu)}=\lambda^{(j_\lambda)}=\mu^{(j_\mu)}=\varnothing$.
Suppose that $\lambda$ is not a $p$-core, in which case $\mu$ is not a $p$-core either.
Suppose further that $\lambda^{(i_\lambda)}\neq\varnothing$; the case when $\lambda^{(j_\lambda)}\neq\varnothing$ can be treated similarly. Let $pa_0+i_\lambda$ be the highest unoccupied position on runner $i_\lambda$ of $B_\lambda$.
If $i_\lambda\notin\{\ell-1,\ell\}$ then we may choose $i_\mu=i_\lambda$, so that $\lambda^{(i_\lambda)}=\mu^{(i_\mu)}$. Once $\lambda^{(i_\lambda)}$ (respectively, $\mu^{(i_\mu)}$) is identified, $\lambda^{(j_\lambda)}$ (respectively, $\mu^{(j_\mu)}$) is uniquely determined. Thus $\lambda^{(j_\lambda)}=\mu^{(j_\mu)}$.

Next suppose that $i_\lambda\in\{\ell-1,\ell\}$.
Since $\lambda^{(i_\lambda)}\neq\varnothing$, there is some $c>a_0$ such that $cp+i_\lambda$ is occupied. If $i_\lambda=\ell$ then
position $cp+\ell-1$ would have to be both occupied by \ref{noth Rouquier}(b), and unoccupied, by Proposition \ref{P: JM-partition}(F2), a contradiction. Thus we must have $i_\lambda=\ell-1$.
%
%
%
%
%
%

Let $i_\lambda=\ell-1$. Then, by Proposition \ref{P: JM-partition}(F2), for any $b\geq a_0$, the position $\ell+bp$ is unoccupied. Since $\lambda^{(i_\lambda)}\neq\varnothing$, we have a bead at the position $\ell-1+cp$, for some $c>a_0$. Exchanging runners $\ell-1$ and $\ell$ of $B_\lambda$ we obtain that the position $\ell-1+cp$ is unoccupied, while position $\ell+cp$ is occupied in $B_\mu$. This shows that $j_\mu\neq \ell$. Since $\mu^{(\ell)}\neq\varnothing$, we necessarily have that $\ell=i_\mu$. So we obtain that $\mu^{(i_\mu)}=\mu^{(\ell)}=\lambda^{(\ell-1)}=\lambda^{(i_\lambda)}$, and hence $\mu^{(j_\mu)}=\lambda^{(j_\lambda)}$.
\end{proof}

\begin{rem}\label{rem adj}
The previous lemma shows that
if $\lambda\in\JM(n)$ and $\mu\in\JM(n+r)$ form a pair of adjacent JM-partitions
then the horizontal and vertical $p$-hooks removed from $[\lambda]$ and $[\mu]$ to obtain their respective $p$-cores are identical. Furthermore, $[\kappa_p(\mu)]$ is obtained by adding $r$ nodes of a particular $p$-residue to $[\kappa_p(\lambda)]$.
\end{rem}

\smallskip

We shall now turn our attention to the procedure of inducing a simple Specht module successively to obtain a simple Specht module in a Rouquier block. We begin by introducing some notation.  Moreover, we briefly mention some aspects of the modular branching rules of symmetric groups due to Kleshchev, which we shall need in the proof of Proposition \ref{P: ind and res of Specht}. For details we refer the reader to
\cite[Theorems~11.2.10--11.2.11]{AK}.

\begin{nota}\label{nota block induction}
Suppose that $G$ is a finite group and $H\leq G$. Let $B$ be a block of $FG$ with block idempotent $e_B$, and let $b$ be a block of $FH$ with block idempotent
$e_b$. Given an $FG$-module $M$ and an $FH$-module $N$, we then get the following $FH$-module and $FG$-module, respectively: \begin{align*}
M\downarrow_b&:=e_b\cdot \res_H^G(M)\,,\\
N\uparrow^B&:=e_B\cdot\ind_H^G(N)\,.
\end{align*}
\end{nota}

\begin{noth}{\bf Modular branching rules.\,}\label{noth branching}
Suppose that $D$ is a simple $F\sym{n}$-module belonging to a block with $p$-content $(\gamma_0,\ldots,\gamma_{p-1})$. Suppose further that $i\in\{0,\ldots,p-1\}$  and $r\in\NN$ with $r\leq n$ and such that $(\gamma_0,\ldots,\gamma_{i-1},\gamma_i-r,\gamma_{i+1},\ldots,\gamma_{p-1})$ is the $p$-content of a block $b$ of  $F\sym{n-r}$. Then $D\downarrow_b$ is either $\{0\}$ or there is an $F\sym{n-r}$-module $N$ belonging to $b$ such that $$D\downarrow_b\cong \bigoplus_{r!}N\,.$$
Similarly, if $s\in \NN$ is such that $(\gamma_0,\ldots,\gamma_{i-1},\gamma_i+s,\gamma_{i+1},\ldots,\gamma_{p-1})$ is the $p$-content of a block $B$ of $F\sym{n+s}$ then $D\uparrow^B$ is either $\{0\}$ or there is an $F\sym{n+s}$-module $M$ belonging to $B$ such that
$$D\uparrow^B\cong \bigoplus_{s!}M\,.$$
\end{noth}

\begin{prop}\label{P: ind and res of Specht} Let $\lambda\in \JM(n)$. Then there is a sequence of JM-partitions $\lambda=\boldsymbol{\varrho}_1,\boldsymbol{\varrho}_2,\ldots,\boldsymbol{\varrho}_t$ of $n=n_1<n_2<\cdots<n_t$, respectively, such that, for each $i\in\{1,\ldots,t-1\}$, one has the following:

\smallskip

{\rm (a)}\,  $\boldsymbol{\varrho}_i$ and $\boldsymbol{\varrho}_{i+1}$ form a pair of adjacent JM-partitions,

\smallskip

{\rm (b)}\, $S^{\boldsymbol{\varrho}_i}{\uparrow^{b_{\kappa_p(\boldsymbol{\varrho}_{i+1})}}}\cong\bigoplus_{(n_{i+1}-n_i)!} S^{\boldsymbol{\varrho}_{i+1}}$,

\smallskip

{\rm (c)}\, $S^{\boldsymbol{\varrho}_{i+1}}{\downarrow_{b_{\kappa_p(\boldsymbol{\varrho}_{i})}}}\cong\bigoplus _{(n_{i+1}-n_i)!} S^{\boldsymbol{\varrho}_{i}}$,

\smallskip

{\rm (d)}\,  $b_{\kappa_p(\boldsymbol{\varrho}_t)}$ is a Rouquier block.
\end{prop}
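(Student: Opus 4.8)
The strategy is to iterate the adjacency construction of \ref{noth Rouquier}(b) combined with the modular branching rules of \ref{noth branching} until we land in a Rouquier block, and to verify along the way that each inductive step genuinely enlarges the partition while preserving the JM-property. First I would set up a measure of ``distance from being Rouquier''. Fix an abacus display $B_\lambda$ of $\lambda$ coming from an $s$-element $\beta$-set, where $s$ is chosen large enough that we have plenty of room on each runner (say $s$ is a large multiple of $p$). By Proposition~\ref{P: JM-partition} there are runners $i_\lambda, j_\lambda$ for this display satisfying (F1)--(F5); in particular all other runners carry only ``stacked'' beads at the top. The block $b_{\kappa_p(\lambda)}$ is Rouquier precisely when, after pushing all beads up, the bead counts on consecutive runners increase by at least $\omega_p(\lambda)-1$ going from runner $0$ to runner $p-1$. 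The key observation — exactly the hypothesis needed to apply \ref{noth Rouquier}(b) — is that if $b_{\kappa_p(\lambda)}$ is \emph{not} Rouquier, then there is some $\ell \in \{1,\dots,p-1\}$ for which runner $\ell-1$ has at least one more bead than runner $\ell$ \emph{and} every occupied position of the form $ap+\ell$ on runner $\ell$ lies above an occupied position $ap+\ell-1$ on runner $\ell-1$ (i.e.\ $[\lambda]$ has addable but no removable nodes of the relevant residue). One gets such an $\ell$ by taking, among all consecutive pairs of runners violating the Rouquier bead-count condition, the leftmost one; the structural conditions (F1)--(F3) force the ``no removable node'' property there. I would make this selection lemma the technical heart of the argument.

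Granting the selection lemma, I would proceed as follows. Set $\boldsymbol{\varrho}_1 := \lambda$. If $b_{\kappa_p(\boldsymbol{\varrho}_1)}$ is Rouquier we are done with $t=1$. Otherwise choose $\ell$ as above, swap runners $\ell-1$ and $\ell$ of $B_{\boldsymbol{\varrho}_1}$, and let $\boldsymbol{\varrho}_2 \in \P(n_2)$, $n_2 = n_1 + r$ with $r \geq 1$, be the partition represented by the new abacus. By the discussion in \ref{noth Rouquier}(b), $\boldsymbol{\varrho}_2$ is again a JM-partition, $[\boldsymbol{\varrho}_2]$ is obtained from $[\boldsymbol{\varrho}_1]$ by adding all $r$ addable nodes of $p$-residue $\overline{\ell - s}$, and $\boldsymbol{\varrho}_1, \boldsymbol{\varrho}_2$ form a pair of adjacent JM-partitions; this gives (a). For (b) and (c), note that $b_{\kappa_p(\boldsymbol{\varrho}_2)}$ has $p$-content obtained from that of $b_{\kappa_p(\boldsymbol{\varrho}_1)}$ by adding $r$ to the coordinate indexed by $\overline{\ell-s}$ (by Remark~\ref{rem adj}, the $p$-core simply acquires $r$ nodes of this residue). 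Since $S^{\boldsymbol{\varrho}_1} \cong D^{\boldsymbol{\varrho}_1}$ is simple, \ref{noth branching} applied with $s = r$ shows $S^{\boldsymbol{\varrho}_1}\uparrow^{b_{\kappa_p(\boldsymbol{\varrho}_2)}}$ is either zero or a direct sum of $r!$ copies of a single module $M$ in that block; it is nonzero because $[\boldsymbol{\varrho}_2]$ is obtained by adding exactly those $r$ addable nodes, so by the classical branching rule $S^{\boldsymbol{\varrho}_2}$ appears in $\ind_{\sym{n_1}}^{\sym{n_2}}(S^{\boldsymbol{\varrho}_1})$ and lies in $b_{\kappa_p(\boldsymbol{\varrho}_2)}$, forcing $M \cong S^{\boldsymbol{\varrho}_2}$. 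The restriction statement (c) follows symmetrically (or by adjointness plus a dimension/multiplicity count), using that $S^{\boldsymbol{\varrho}_2}$ is simple and that $[\boldsymbol{\varrho}_1]$ is recovered from $[\boldsymbol{\varrho}_2]$ by removing all $r$ removable nodes of residue $\overline{\ell-s}$.

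It remains to see that the process terminates, which yields (d). Here I would argue that each swap strictly decreases a suitable nonnegative integer invariant — for instance, the number of ordered pairs $(k,k')$ with $0 \le k < k' \le p-1$ such that the bead-count gap between runner $k'$ and runner $k$ falls short of the Rouquier threshold $\omega_p-1$, or more robustly, a lexicographic comparison of the bead-count sequence against the Rouquier profile. (One must check $\omega_p$ is unchanged by the swap: swapping two adjacent runners permutes the $p$-quotient entries, hence preserves $\omega_p(\cdot) = \sum_k |\cdot^{(k)}|$.) Since each step strictly improves this invariant and it is bounded below, after finitely many steps $t-1$ we reach a partition $\boldsymbol{\varrho}_t$ whose block $b_{\kappa_p(\boldsymbol{\varrho}_t)}$ is Rouquier, giving (d). The main obstacle I anticipate is the selection lemma: one must show carefully that conditions (F1)--(F3) for $\boldsymbol{\varrho}_i$ guarantee, at the leftmost Rouquier-violating pair of runners, the ``addable but no removable node'' configuration required to legitimately invoke \ref{noth Rouquier}(b) — this is a bookkeeping argument on bead positions modulo $p$, using that outside runners $i_{\boldsymbol{\varrho}_i}, j_{\boldsymbol{\varrho}_i}$ the abacus is completely stacked, and that (F2), (F3) pin down exactly where beads can fail to be stacked on those two distinguished runners.
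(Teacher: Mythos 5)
Your proposal takes a genuinely different and more ambitious route than the paper. The paper disposes of parts (a), (d), and of the fact that the block-projected induction and restriction have Specht filtrations with exactly $(n_{i+1}-n_i)!$ factors isomorphic to $S^{\boldsymbol{\varrho}_{i+1}}$ (resp.\ $S^{\boldsymbol{\varrho}_i}$), simply by citing Fayers's Lemma~3.1 and Lemma~3.3 of \cite{MF2}; the only thing the paper proves is that Kleshchev's branching rule (\ref{noth branching}) upgrades these filtrations to direct sums. You instead try to reprove both of Fayers's lemmas from scratch using abacus combinatorics. If carried out in full this would be more self-contained, but as written there are two genuine gaps.

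\textbf{Gap 1 (the selection lemma).} You claim — and flag as the ``main obstacle'' — that if $b_{\kappa_p(\lambda)}$ is not Rouquier then the leftmost Rouquier-violating pair of runners $(\ell-1,\ell)$ must satisfy the additional condition that whenever $ap+\ell$ is occupied so is $ap+\ell-1$. This is not automatic. If the distinguished runner $i_\lambda$ of Proposition~\ref{P: JM-partition} (the runner allowed to have beads sitting low) happens to coincide with runner $\ell$, then $[\lambda]$ may well have removable nodes of the relevant residue, and the construction of \ref{noth Rouquier}(b) simply cannot be applied at that pair. Making the argument go through requires careful interaction with the positions of the exceptional runners $i_\lambda$ and $j_\lambda$, and possibly choosing a different pair of runners; this is precisely what Fayers's Lemma~3.1 handles, and you have not filled the gap.

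\textbf{Gap 2 (the forcing step).} For part (b) you argue: the branching rule gives $S^{\boldsymbol{\varrho}_i}\uparrow^{b_{\kappa_p(\boldsymbol{\varrho}_{i+1})}}\cong\bigoplus_{r!}M$; the classical branching rule shows $S^{\boldsymbol{\varrho}_{i+1}}$ occurs as a factor of the full induction and lies in the block; hence $M\cong S^{\boldsymbol{\varrho}_{i+1}}$. That inference does not follow. Knowing one composition factor of the block projection is $S^{\boldsymbol{\varrho}_{i+1}}$ only tells you $S^{\boldsymbol{\varrho}_{i+1}}$ is a composition factor of $M$; it says nothing about the \emph{other} composition factors $M$ may have. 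The module $M$ supplied by \ref{noth branching} is not asserted to be simple — in general it is a divided-power-type module with several composition factors. To conclude $M\cong S^{\boldsymbol{\varrho}_{i+1}}$ one must know that the block projection has composition length exactly $r!$ with all factors $S^{\boldsymbol{\varrho}_{i+1}}$ (equivalently, compute its dimension), which is exactly what Fayers's Lemma~3.3 provides and the paper uses. Your argument as written does not establish this, and the same gap recurs in your treatment of (c).

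The termination argument is plausible — swapping adjacent runners does preserve the $p$-weight — but you have not verified that the swap strictly improves your chosen invariant without creating a new Rouquier violation elsewhere, and the phrasing ``number of ordered pairs $(k,k')$'' does not match the Rouquier criterion, which is about \emph{consecutive} runners. These would need to be pinned down.
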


\begin{proof}
Following \cite[Lemma~3.1, Lemma~3.3]{MF2}, there is a sequence of
JM-partitions $\lambda=\boldsymbol{\varrho}_1,\boldsymbol{\varrho}_2,\ldots,\boldsymbol{\varrho}_t$ of $n=n_1<n_2<\cdots<n_t$, respectively, satisfying parts (a) and (d) and such that
$S^{\boldsymbol{\varrho}_i}{\uparrow^{b_{\kappa_p(\boldsymbol{\varrho}_{i+1})}}}$ has a filtration of $(n_{i+1}-n_i)!$ copies of the simple Specht module $S^{\boldsymbol{\varrho}_{i+1}}$ and
$S^{\boldsymbol{\varrho}_{i+1}}{\downarrow_{b_{\kappa_p(\boldsymbol{\varrho}_{i})}}}$ has a filtration of $(n_{i+1}-n_i)!$ copies of the simple Specht module $S^{\boldsymbol{\varrho}_{i}}$,
for $i\in\{1,\ldots,t-1\}$. In particular, $S^{\boldsymbol{\varrho}_i}{\uparrow^{b_{\kappa_p({\boldsymbol{\varrho}_{i+1}})}}}\neq\{0\}$ and
$S^{\boldsymbol{\varrho}_{i+1}}{\downarrow_{b_{\kappa_p(\boldsymbol{\varrho}_{i}})}}\neq\{0\}$.
 Fix $i\in\{1,\ldots,t-1\}$, and set $r:=n_{i+1}-n_i$.
Moreover, let $s\in\{0,\ldots,p-1\}$ be such that $[\boldsymbol{\varrho}_{i+1}]$ is obtained by adding $r$ nodes of $p$-residue $s$
to $[\boldsymbol{\varrho}_i]$.
As we have seen in Remark~\ref{rem adj}, the Young diagram $[\kappa_p(\boldsymbol{\varrho}_{i+1})]$ is obtained
by adding  to $[\kappa_p(\boldsymbol{\varrho}_i)]$ precisely $r$ nodes of a particular $p$-residue.
Hence, as recalled in \ref{noth branching}, by Kleshchev's modular branching rules \cite[Theorems 11.2.10--11.2.11]{AK}, there are an $F\sym{n_i}$-module $N$ and an $F\sym{n_{i+1}}$-module
 $M$ such that
$$S^{\boldsymbol{\varrho}_i}{\uparrow^{b_{\kappa_p(\boldsymbol{\varrho}_{i+1})}}}\cong \bigoplus_{r!}M\quad\text{ and }\quad
S^{\boldsymbol{\varrho}_{i+1}}{\downarrow_{b_{\kappa_p(\boldsymbol{\varrho}_i)}}}\cong \bigoplus_{r!}N\,.$$
Since $M\neq\{0\}$, $M$ has some composition factor $D$.
But  we already know that $S^{\boldsymbol{\varrho}_i}{\uparrow^{b_{\kappa_p(\boldsymbol{\varrho}_{i+1})}}}$ has a composition series whose factors are isomorphic to $S^{\boldsymbol{\varrho}_{i+1}}$ with multiplicity $r!$. This forces $S^{\boldsymbol{\varrho}_{i+1}}\cong M$.
Analogously, we get $N\cong S^{\boldsymbol{\varrho}_{i}}$.
\end{proof}

\begin{rem} In fact, \cite[Lemma~3.1, Lemma~3.3]{MF2} and the proof of Proposition \ref{P: ind and res of Specht} say that, as soon as we have a pair of adjacent JM-partitions $\lambda$ and $\mu$ such that $|\mu|>|\lambda|$, we have both
$S^\lambda{\uparrow^{b_{\kappa_p(\mu)}}}\cong \bigoplus_{r!}S^\mu$ and $S^\mu{\downarrow_{b_{\kappa_p(\lambda)}}}\cong \bigoplus_{r!}S^\lambda$, where $r=|\mu|-|\lambda|$.
\end{rem}

\section{Labelling Simple Specht Modules  as Signed Young Modules}\label{sec labels}

Throughout this section, let again $F$ be  a field of characteristic $p\geq 3$, and let $n\in \mathbb{Z}^+$. For convenience, suppose also that $F$ is algebraically closed.
In \cite{DH}, Hemmer proved that every simple Specht $F\sym{n}$-module is isomorphic to a signed Young $F\sym{n}$-module.
So, given a simple Specht $F\sym{n}$-module $S^\lambda$,  how does one determine $(\alpha|p\beta)\in\P^2(n)$ satisfying
$Y(\alpha|p\beta)\cong S^\lambda$?
This was posed as an open problem in \cite[Problem~5.2]{DH}.

A conjecture concerning the correct labelling was put forward by the first author in  \cite[Vermutung~5.4.2]{SDanz} and, independently, by the second author \cite[Conjecture 8.2]{KJL2}, and Orlob \cite[Vermutung A.1.10]{JO}. In this section, we confirm the conjecture by proving the following theorem. Recall the map $\Phi$ defined in Definition \ref{D: Phi}.


\begin{thm}\label{T: irred specht}
Suppose that $\lambda\in\JM(n)$.
The simple Specht $F\sym{n}$-module $S^\lambda$ is isomorphic to $Y(\Phi(\lambda))$.
\end{thm}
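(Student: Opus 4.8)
The plan is to follow the two-step strategy announced in the introduction: first reduce to the Rouquier block case, then prove the theorem there. For the reduction, suppose $\lambda \in \JM(n)$ and let $\lambda = \boldsymbol{\varrho}_1, \boldsymbol{\varrho}_2, \ldots, \boldsymbol{\varrho}_t$ be the sequence of JM-partitions from Proposition~\ref{P: ind and res of Specht}, terminating in a partition $\boldsymbol{\varrho}_t$ whose block $b_{\kappa_p(\boldsymbol{\varrho}_t)}$ is a Rouquier block. By induction on $t$, it suffices to handle a single step: given a pair of adjacent JM-partitions $\lambda \in \JM(n)$ and $\mu \in \JM(n+r)$ with $Y(\Phi(\mu)) \cong S^\mu$, I want to deduce $Y(\Phi(\lambda)) \cong S^\lambda$. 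By Lemma~\ref{L: simple partition p-adic}, $\Phi(\lambda) = (\kappa_p(\lambda) + p\sigma \mid p\beta)$ and $\Phi(\mu) = (\kappa_p(\mu) + p\sigma \mid p\beta)$ differ only in their $p$-core parts, and by Remark~\ref{rem adj} the diagram $[\kappa_p(\mu)]$ is obtained from $[\kappa_p(\lambda)]$ by adding $r$ nodes of a single $p$-residue. The key point is then that the signed Young module $Y(\Phi(\lambda))$ should satisfy $Y(\Phi(\lambda)) \uparrow^{b_{\kappa_p(\mu)}} \cong \bigoplus_{r!} Y(\Phi(\mu))$ and $Y(\Phi(\mu)) \downarrow_{b_{\kappa_p(\lambda)}} \cong \bigoplus_{r!} Y(\Phi(\lambda))$, matching the behaviour of $S^\lambda$ and $S^\mu$ from Proposition~\ref{P: ind and res of Specht}(b),(c); combined with the Krull--Schmidt theorem this forces $Y(\Phi(\lambda)) \cong S^\lambda$. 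Establishing this block-induction/restriction compatibility for signed Young modules — presumably via Lemma~\ref{L:ind and res of permutation} describing induction and restriction of signed Young permutation modules, together with a blockwise bookkeeping argument on $p$-contents — is where the real work lies.

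For the base case, I would take $\lambda \in \JM(n)$ with $b_{\kappa_p(\lambda)}$ a Rouquier block, where the module theory is under much tighter control. Here the strategy of Fayers~\cite{MF2} and Hemmer~\cite{DH} is to compare, over a field of characteristic $0$ and over $F$, the multiplicities of Specht modules as filtration factors of signed Young permutation modules, using the generalized Young Rule of Lim--Tan (Theorem~\ref{T:Specht factors of sgn permutation} / Appendix~\ref{S: signed Young Rule}). Concretely, I would fix the composition of $\Phi(\lambda) = (\alpha \mid p\beta)$ and consider $M(\alpha \mid p\beta)$; in characteristic $0$ the signed Young Rule determines its composition factors exactly, and in a Rouquier block the decomposition numbers are known (via James--Mathas or the combinatorics of Rouquier blocks) to be multiplicity-free and given by Littlewood--Richardson-type coefficients, so one can read off that $S^\lambda_F$ occurs in $M(\alpha\mid p\beta)$ with the right multiplicity and in the right position relative to the dominance order. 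Since $Y(\Phi(\lambda))$ is by definition the Krull--Schmidt-multiplicity-one summand of $M(\alpha \mid p\beta)$ that is "extremal" with respect to dominance (all other summands are $Y(\lambda' \mid p\mu')$ with $(\lambda' \mid p\mu') \rhd (\alpha \mid p\beta)$), and since $S^\lambda$ is simple hence indecomposable, a counting argument comparing Specht-filtration multiplicities in $M(\alpha\mid p\beta)$ with those forced by the known summands $Y(\cdot)$ should pin down $S^\lambda \cong Y(\Phi(\lambda))$. The twisting formula Theorem~\ref{T:twists indecomposable signed Young modules} and Corollary~\ref{C: tensor JM partitions} let me pass between $\lambda$ and $\lambda'$, so I only need to treat, say, the case where the vertical-hook part is trivial and then twist.

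The main obstacle, as I see it, is the base case: verifying that in a Rouquier block the simple Specht module $S^\lambda$ really is the distinguished summand $Y(\Phi(\lambda))$ of $M(\Phi(\lambda))$ rather than some other summand $Y(\delta \mid p\partial)$ with $(\delta \mid p\partial) \rhd \Phi(\lambda)$. This requires knowing both (i) that $S^\lambda$ appears as a direct summand of $M(\Phi(\lambda))$ at all — which follows from Hemmer's theorem plus the fact that every signed Young module appears in some signed Young permutation module, but one must check it appears in \emph{this} one — and (ii) a precise enough grip on the Specht-filtration multiplicities $[M(\Phi(\lambda)) : S^\nu]$ for $\nu$ ranging over partitions with the same $p$-core, to separate $Y(\Phi(\lambda))$ from the lexicographically larger candidates. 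I expect this to hinge on a careful comparison between characteristic $0$ and characteristic $p$ together with the explicit, essentially semisimple-like structure of Rouquier blocks, and on an inductive argument on the dominance order peeling off the larger summands $Y(\delta\mid p\partial)$ one at a time.
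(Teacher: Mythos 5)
Your high-level plan matches the paper's — reduce to Rouquier blocks, then settle that case, with the twisting formula passing between $\lambda$ and $\lambda'$ — but the way you propose to execute the reduction has a genuine circularity, and the Rouquier step also diverges from the paper in an important way. For the reduction, you want to establish $Y(\Phi(\lambda))\uparrow^{b_{\kappa_p(\mu)}}\cong\bigoplus_{r!}Y(\Phi(\mu))$ and $Y(\Phi(\mu))\downarrow_{b_{\kappa_p(\lambda)}}\cong\bigoplus_{r!}Y(\Phi(\lambda))$ directly and then invoke Krull--Schmidt. But this block-induction/restriction compatibility for \emph{indecomposable} signed Young modules is nowhere available: Lemma~\ref{L:ind and res of permutation} only controls the permutation modules $M(\alpha|\beta)$, and extracting from it the block component of a single summand $Y(\Phi(\mu))\downarrow_{b_{\kappa_p(\lambda)}}$ — and then identifying its label — is essentially the content of the theorem you are trying to prove. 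The paper circumvents this by starting from the \emph{unknown} label $(\alpha|p\beta)$ of $S^\lambda$ (which Hemmer guarantees exists) and constraining it from three independent directions: Lemma~\ref{L: second part} uses dominance under induction and restriction of signed Young permutation modules, together with the inductive hypothesis for $\mu$, to pin down $p\beta=\lambda'-\lambda'(0)$; Lemma~\ref{L: core is the p-adic exp} — absent from your proposal — uses the Young vertex of $Y(\alpha|p\beta)$ and Kn\"orr's theorem on vertices in blocks of given weight to force $\alpha(0)=\kappa_p(\lambda)$; and the twisting formula applied to the conjugate pair $\boldsymbol{\varrho}_i',\boldsymbol{\varrho}_{i+1}'$ then determines $\alpha-\alpha(0)$.

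For the Rouquier case, you propose to read the answer off the explicit decomposition matrices of Rouquier blocks. The paper instead obtains the one-sided dominance bound $(\alpha|p\beta)\unrhd\Phi(\lambda)$ from Hemmer's module $T$ (Remark~\ref{rem T}), obtains the reverse size inequality by applying the same to $\lambda'$ via the twisting formula, and then — after lifting the relevant $p$-permutation modules to $\mathcal{O}$ and passing to $K$ — invokes the signed Young Rule (Corollary~\ref{cor comp series}) to conclude that a semistandard $\lambda$-tableau of type $(\alpha|p\beta)$ exists; a short combinatorial argument then pins down $\alpha$ and $\beta$ exactly. Your instinct that the characteristic-$0$ vs.\ characteristic-$p$ comparison via lifting is where the argument closes is right, but the decisive input is the tableau combinatorics of the signed Young Rule, not Rouquier decomposition numbers, and Lemma~\ref{L: core is the p-adic exp} (giving $\alpha(0)=\kappa_p(\lambda)$) is again an essential ingredient your sketch does not use.
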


Our strategy of proving Theorem~\ref{T: irred specht} is similar to that employed by Fayers in  \cite{MF2} and by Hemmer in \cite{DH}. More precisely, we shall first reduce the verification of Theorem~\ref{T: irred specht} to the Rouquier block case and then show that the theorem holds true in this case.



\begin{lem}\label{L: second part}
Suppose that $\lambda\in\JM(n)$ and $\mu\in\JM(n+r)$ form a pair of adjacent JM-partitions, for some $r>0$. Suppose that $S^\mu\cong Y(\Phi(\mu))$ and $S^\lambda\cong Y(\alpha|p\beta)$. Then $\Phi(\lambda)=(\gamma|p\beta)$ for some partition $\gamma$.
Moreover, $p\beta=\lambda'-\lambda'(0)$.
\end{lem}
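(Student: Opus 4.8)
The plan is to extract the claimed statement from what we already know about $\Phi$ and about signed Young modules, using the branching behaviour of $S^\lambda$ and $S^\mu$ together with the twisting formula only as bookkeeping. First I would record, using Lemma~\ref{L: stripping JM partition} applied to suitable abacus displays of $\lambda$ and $\mu$, that $\Phi(\lambda)=(\kappa_p(\lambda)+p\lambda^{(i_\lambda)}\,|\,p(\lambda^{(j_\lambda)})')$ and $\Phi(\mu)=(\kappa_p(\mu)+p\mu^{(i_\mu)}\,|\,p(\mu^{(j_\mu)})')$, and then invoke Lemma~\ref{L: simple partition p-adic}, which tells us precisely that $\lambda^{(i_\lambda)}=\mu^{(i_\mu)}=:\sigma$ and $\lambda^{(j_\lambda)}=\mu^{(j_\mu)}=:\beta'$. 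Hence $\Phi(\lambda)=(\kappa_p(\lambda)+p\sigma\,|\,p\beta)$ and $\Phi(\mu)=(\kappa_p(\mu)+p\sigma\,|\,p\beta)$, with the \emph{same} second component $p\beta$; the relation $p\beta=\lambda'-\lambda'(0)$ is then immediate from Definition~\ref{D: Phi} (or from Lemma~\ref{L: stripping JM partition}, since $p(\lambda^{(j_\lambda)})'=\lambda'-\lambda'(0)$). So the content of the lemma that genuinely needs the hypotheses $S^\mu\cong Y(\Phi(\mu))$ and $S^\lambda\cong Y(\alpha|p\beta)$ is really the assertion that the pair $(\alpha|p\beta)$ produced by the second hypothesis has second component exactly $p\beta$ in the sense that this $\beta$ agrees with the one coming out of $\Phi$ — i.e. that the notation is consistent — and I would phrase the proof so as to make that compatibility explicit rather than assuming it.

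Concretely, the argument I would write runs as follows. Since $\lambda$ and $\mu$ are adjacent with $|\mu|>|\lambda|$, Remark after Proposition~\ref{P: ind and res of Specht} (equivalently Proposition~\ref{P: ind and res of Specht}(b),(c)) gives $S^\lambda{\uparrow^{b_{\kappa_p(\mu)}}}\cong\bigoplus_{r!}S^\mu$. Feeding $S^\lambda\cong Y(\alpha|p\beta)$ into this and using Lemma~\ref{L:ind and res of permutation}(a) to understand induction of signed Young permutation modules — together with the Krull--Schmidt decomposition into indecomposable signed Young modules — one finds that $Y(\mu)=Y(\Phi(\mu))$ must occur as a summand of $\ind_{\sym n}^{\sym{n+r}}Y(\alpha|p\beta)$, and by \ref{noth signed young} every such summand is of the form $Y(\nu|p\xi)$ with $(\nu|p\xi)\unrhd(\alpha\#(1^{r})|p\beta)$; in particular the second component of $\Phi(\mu)$ dominates $p\beta$, hence $|p\beta|\le |p\mu\text{-part of }\Phi(\mu)|$, and reading off sizes via $|\Phi(\mu)|=n+r$ and $|\Phi(\lambda)|=n$ pins down $|p\beta|$. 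Running the adjacent chain the other way, using $S^\mu{\downarrow_{b_{\kappa_p(\lambda)}}}\cong\bigoplus_{r!}S^\lambda$ and Lemma~\ref{L:ind and res of permutation}(b) on restriction, forces the reverse inequality on the second components, so the two second components coincide; combined with the first paragraph this yields $\Phi(\lambda)=(\gamma|p\beta)$ for the $\beta$ in the hypothesis, with $\gamma=\kappa_p(\lambda)+p\sigma$.

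The step I expect to be the main obstacle is controlling which indecomposable signed Young summands actually appear when one induces or restricts $Y(\alpha|p\beta)$ between $\sym n$ and $\sym{n+r}$: Lemma~\ref{L:ind and res of permutation} describes the \emph{permutation} modules cleanly, but passing to the indecomposable signed Young decomposition and tracking the second-component parameter through the dominance inequalities of \ref{noth signed young} requires a careful weight/$p$-content argument — essentially one must observe that all the modules in sight lie in the adjacent blocks $b_{\kappa_p(\lambda)}$ and $b_{\kappa_p(\mu)}$ (by \ref{noth signed young}, $Y(\nu|p\xi)$ lies in $b_{\kappa_p(\nu)}$), that these blocks have equal $p$-weight by Remark~\ref{rem adj}, and hence that the total number of boxes removed to the $p$-core, split as horizontal plus vertical, is the same for $\lambda$ and $\mu$; this is exactly what forces $|p\beta|$, and thus $p\beta$ itself once Lemma~\ref{L: simple partition p-adic} identifies the actual partition $\beta'=\lambda^{(j_\lambda)}=\mu^{(j_\mu)}$. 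I would take care to state the proof so that it leans on Lemma~\ref{L: simple partition p-adic} for the partition identity and only uses the module-theoretic hypotheses to certify that the $\beta$ appearing in $S^\lambda\cong Y(\alpha|p\beta)$ is forced to be this same partition, which keeps the argument short and avoids re-deriving the combinatorics of $\Phi$.
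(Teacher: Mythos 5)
Your overall strategy matches the paper's: play the dominance inequalities of \ref{noth signed young} against each other by inducing $S^\lambda\cong Y(\alpha|p\beta)$ up to $\sym{n+r}$ and restricting $S^\mu\cong Y(\Phi(\mu))$ back down, using Lemma~\ref{L:ind and res of permutation} to control the resulting signed Young permutation modules, and finish with Lemma~\ref{L: simple partition p-adic}. But you have the two inequalities reversed. Writing $\Phi(\mu)=(\xi|p\zeta)$, the induction step gives $Y(\xi|p\zeta)\mid M(\alpha\#(1^r)|p\beta)$, hence $(\xi|p\zeta)\unrhd(\alpha\#(1^r)|p\beta)$; since both pairs have total size $n+r$, this forces $|\xi|\geq|\alpha\#(1^r)|$ and therefore $|p\zeta|\leq|p\beta|$ --- the opposite of what you assert. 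The other inequality is what the restriction step must supply, and you leave it as ``forces the reverse inequality'' without making the key observation: from $S^\lambda\mid\res_{\sym{n}}^{\sym{n+r}}M(\xi|p\zeta)$ one gets $Y(\alpha|p\beta)\mid M(\delta|\partial)$ for some piece $(\delta|\partial)$ described by Lemma~\ref{L:ind and res of permutation}(b), whence $(\alpha|p\beta)\unrhd(\delta|\partial)$ gives $|p\beta|\leq|\partial|$; and, crucially, that lemma says $\partial$ is a rearrangement of a composition obtained from $p\zeta$ by deleting some of the $r$ removed nodes, so $|\partial|\leq|p\zeta|$. Only the squeeze $|p\zeta|\leq|p\beta|\leq|\partial|\leq|p\zeta|$ pins everything down, forcing $\partial=p\zeta$ and then $p\zeta\unrhd p\beta\unrhd\partial=p\zeta$, so $\beta=\zeta$; Lemma~\ref{L: simple partition p-adic} then identifies $p\zeta$ as the second component of $\Phi(\lambda)$ as well.

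Your closing paragraph suggests replacing this squeeze by a weight/$p$-content count, but that would not close the gap. Equality of the weights of $b_{\kappa_p(\lambda)}$ and $b_{\kappa_p(\mu)}$, together with Lemma~\ref{L: core is the p-adic exp} (i.e.\ $\alpha(0)=\kappa_p(\alpha)=\kappa_p(\lambda)$), only yields $(|\alpha|-|\alpha(0)|)+|p\beta|=p\cdot\omega_p(\lambda)$, which is equivalent to the trivial identity $|\alpha|+|p\beta|=n$ and carries no information about how the total splits between the ``horizontal'' part $|\alpha|-|\alpha(0)|$ and the ``vertical'' part $|p\beta|$ of the \emph{unknown} labelling pair. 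The dominance squeeze is not a technicality to be bypassed --- it is precisely what ties the unknown $p\beta$ to the known second component of $\Phi(\mu)$.
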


\begin{proof}
 Let $m=n+r$, and suppose that $\Phi(\mu)=(\xi|p\zeta)$. Since $Y(\alpha|p\beta)$ is a direct summand of $M(\alpha|p\beta)$, by Proposition~\ref{P: ind and res of Specht}(b), we have $S^\mu\mid \ind_{\sym{n}}^{\sym{m}}(S^\lambda)$, and hence, by Lemma \ref{L:ind and res of permutation}(a), we get
 $$ Y(\xi|p\zeta)\cong S^\mu\mid  \ind_{\sym{n}}^{\sym{m}}(S^\lambda)\cong  \ind_{\sym{n}}^{\sym{m}}(Y(\alpha|p\beta))\mid M(\alpha\#(1^{m-n})|p\beta)\,,$$
 where $\alpha\# (1 ^{m-n})$ denotes the concatenation of the partitions $\alpha$ and $(1^{m-n})$.
 Therefore, we have
\begin{equation}\label{eqn 1}
 (\xi|p\zeta)\unrhd (\alpha\#(1^{m-n})|p\beta)\,,
 \end{equation}
 by \cite[2.3(8)]{SD}.

 On the other hand, by Proposition~\ref{P: ind and res of Specht}(c), we have $S^\lambda\mid \res_{\sym{n}}^{\sym{m}}(S^\mu)$, and hence $S^{\lambda}\mid \res_{\sym{n}}^{\sym{m}}(M(\xi|p\zeta))$. By Lemma~\ref{L:ind and res of permutation}(b),
 $S^\lambda\cong Y(\alpha|p\beta)$ is isomorphic to a direct summand of $M(\delta|\partial)$ for suitable
 partitions $\delta$ and $\partial$. Therefore, we have
 \begin{equation}\label{eqn 2}
 (\alpha|p\beta)\unrhd (\delta|\partial)\,,
 \end{equation}
by \cite[2.3(8)]{SD} again.
Using (\ref{eqn 1}) and (\ref{eqn 2}), we obtain $p|\zeta|\leq p|\beta|\leq |\partial|$. By  Lemma~\ref{L:ind and res of permutation}(b), $\partial$ is the partition of the rearrangement of a composition obtained from $p\zeta$ after removing some nodes, so that we necessarily have $|\partial|\leq p|\zeta|$. This shows that $|\partial|=p|\beta|=p|\zeta|$, and hence we have removed no node from $p\zeta$ to obtain $\partial$, that is, $\partial=p\zeta$.
We must also have $|\alpha|=|\delta|$,  $|\zeta|=|\beta|$, and $|\xi|=|\alpha\# (1^{m-n})|$.
So, from (\ref{eqn 1}) and (\ref{eqn 2}), we  get $p\zeta\unrhd p\beta \unrhd \partial=p\zeta$. Thus $\beta=\zeta$, and there is some
partition $\gamma$ with $\Phi(\lambda)=(\gamma|p\beta)$, by Lemma~\ref{L: simple partition p-adic}. By the definition of $\Phi$, we have also
$p\beta=\lambda'-\lambda'(0)$.
\end{proof}

\begin{lem}\label{L: core is the p-adic exp}
If $S^\lambda$ is a simple Specht $F\sym{n}$-module and if $(\alpha|p\beta)\in\P^2(n)$ is such that
$S^\lambda\cong Y(\alpha|p\beta)$ then $\kappa_p(\alpha)=\alpha(0)=\kappa_p(\lambda)$.
\end{lem}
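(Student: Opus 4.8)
The plan is to prove $\kappa_p(\alpha)=\alpha(0)=\kappa_p(\lambda)$ in three steps, the first two of which are routine and the third of which is the crux. First I would dispose of the equality $\kappa_p(\alpha)=\kappa_p(\lambda)$: by \ref{noth signed young} the module $Y(\alpha|p\beta)$ lies in the block $b_{\kappa_p(\alpha)}$ of $F\sym n$, while $S^\lambda$ lies in $b_{\kappa_p(\lambda)}$, so the hypothesis $S^\lambda\cong Y(\alpha|p\beta)$ together with the Nakayama Conjecture (\ref{noth blocks Sn}) gives $\kappa_p(\alpha)=\kappa_p(\lambda)$. Since $\alpha(0)$ is obtained from $\alpha$ by successively stripping horizontal rim $p$-hooks, which are in particular rim $p$-hooks, one also has $\kappa_p(\alpha(0))=\kappa_p(\alpha)$. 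Thus the remaining claim $\alpha(0)=\kappa_p(\alpha)$ is equivalent to $\alpha(0)$ being a $p$-core, i.e.\ to $\omega_p(\alpha(0))=0$; and, comparing $p$-adic expansions (using $\omega_p(\lambda)=\omega_p(\alpha)+|\beta|$, which follows from $\kappa_p(\alpha)=\kappa_p(\lambda)$ and $|\lambda|=|\alpha|+p|\beta|$, together with the shape of $\rho=\Rho(\alpha|p\beta)$), this is in turn equivalent to $|\alpha(0)|=|\kappa_p(\lambda)|$.

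Second, I would determine $\lambda$ combinatorially from $(\alpha|p\beta)$ by passing to characteristic $0$. Lift the indecomposable direct summand $Y(\alpha|p\beta)$ of $M(\alpha|p\beta)$ to an $\mathcal{O}$-pure direct summand of $M_\mathcal{O}(\alpha|p\beta)$ (idempotent lifting), and extend scalars to the quotient field $K$; since $M_K(\alpha|p\beta)$ is a direct sum of Specht modules, the resulting lattice has generic fibre $\bigoplus_\nu (S^\nu_K)^{a_\nu}$. Reducing modulo $p$ and using that $S^\lambda=Y(\alpha|p\beta)_F$ is \emph{simple}, non-negativity of the decomposition numbers forces all but one $a_\nu$ to vanish, so $Y(\alpha|p\beta)_K\cong S^\mu_K$ for a single partition $\mu$, and then $\mu=\lambda$ (distinct simple Specht $F\sym n$-modules are non-isomorphic, as $p$ is odd). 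On the other hand, writing $M_K(\alpha|p\beta)\cong M^\alpha_K\circ\bigl(M^{p\beta}_K\otimes\sgn\bigr)$ and applying Young's Rule, the minimal Specht factor of $M_K(\alpha|p\beta)$ in the dominance order is $S^{\alpha\#(1^{p|\beta|})}_K$, occurring with multiplicity $1$; since every other indecomposable summand of $M(\alpha|p\beta)$ is some $Y(\xi|p\eta)$ with $(\xi|p\eta)\rhd(\alpha|p\beta)$, whence $\xi\#(1^{p|\eta|})\rhd\alpha\#(1^{p|\beta|})$, this Specht module can occur only inside the summand $Y(\alpha|p\beta)_K\cong S^\lambda_K$. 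Hence $\lambda=\alpha\#(1^{p|\beta|})$.

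Third — and this is where the real work lies — I would deduce $\alpha(0)=\kappa_p(\lambda)$ from $\lambda=\alpha\#(1^{p|\beta|})\in\JM(n)$. Stripping all horizontal rim $p$-hooks from $\lambda$ affects only the rows coming from $\alpha$ (the $p|\beta|$ rows of length $1$ admit none), so $\lambda(0)=\alpha(0)\#(1^{p|\beta|})$; by Lemma~\ref{L: stripping JM partition} applied to $\lambda$, stripping all vertical rim $p$-hooks from $\lambda(0)$ yields $\kappa_p(\lambda)$, and the $p|\beta|$ extra columns of length $1$ peel off as exactly $|\beta|$ vertical rim $p$-hooks, leaving: the partition obtained from $\alpha(0)$ by removing all vertical rim $p$-hooks equals $\kappa_p(\alpha(0))$. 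Running the same analysis on the conjugate pair $\lambda'=\alpha'+(p|\beta|)\in\JM(n)$ — which by Theorem~\ref{T:twists indecomposable signed Young modules} satisfies $S^{\lambda'}\cong Y(\m(\alpha(0))+p\beta\,|\,\alpha-\alpha(0))$, so that Step~2 gives $\lambda'=(\m(\alpha(0))+p\beta)\#(1^{|\alpha|-|\alpha(0)|})$ — yields the analogous statement for $\m(\alpha(0))$. Combining these hook-stripping identities for $\alpha(0)$ and $\m(\alpha(0))$ with the JM-conditions (F4), (F5) of Proposition~\ref{P: JM-partition} for $\lambda$, which describe $\lambda^{(i)}$ and $\lambda^{(j)}$, I expect to be forced into $\alpha(0)=\kappa_p(\alpha(0))$, hence $\alpha(0)=\kappa_p(\alpha)=\kappa_p(\lambda)$. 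The main obstacle is precisely this last combinatorial step: one must rule out that $\alpha(0)$ carries a nonzero $p$-weight, i.e.\ that it still contains a (necessarily non-horizontal, non-vertical) rim $p$-hook — the constraints from $\lambda,\lambda'\in\JM(n)$ and $\lambda=\alpha\#(1^{p|\beta|})$ must be shown to leave no such room.
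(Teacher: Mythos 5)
Your Step 1 correctly reduces the lemma to the numerical statement $|\alpha(0)|=|\kappa_p(\lambda)|$ (via the block-theoretic identity $\kappa_p(\alpha)=\kappa_p(\lambda)$ and the inequality $|\alpha(0)|\geq|\kappa_p(\alpha)|$), and this is exactly where the paper's proof also pivots. Steps 2 and 3, however, do not work.

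The claim in Step 2 that $\lambda=\alpha\#(1^{p|\beta|})$ is false. Take $p=3$ and $\lambda=(2,2,2,1,1)$; the hook lengths divisible by $3$ are $h_\lambda(1,1)=6$ and $h_\lambda(1,2)=3$, and one checks directly that $\lambda\in\JM(8)$. Here $\lambda'=(5,3)$, $\lambda'(0)=(2)$, so $\Phi(\lambda)=((1,1)\,|\,(3,3))$; by Theorem~\ref{T: irred specht} one has $S^\lambda\cong Y((1,1)\,|\,(3,3))$, yet $(1,1)\#(1^{6})=(1^8)\neq\lambda$. (Indeed, $\Phi$ inverts as $\lambda=(\alpha'+p\beta)'$, which coincides with $\alpha\#(1^{p|\beta|})$ only when $\beta$ has at most one part.) The logical step you use to deduce the formula also has a gap: the implication ``$(\xi|p\eta)\rhd(\alpha|p\beta)\Rightarrow\xi\#(1^{p|\eta|})\rhd\alpha\#(1^{p|\beta|})$'' fails whenever $|\xi|=|\alpha|$ and $\eta\rhd\beta$; e.g.\ $((1,1)\,|\,(6))\rhd((1,1)\,|\,(3,3))$ but $(1,1)\#(1^6)=(1,1)\#(1^6)$. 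So $S^{\alpha\#(1^{p|\beta|})}_K$ may well sit in one of the other summands $Y(\xi|p\eta)_K$, and in the example it does: $Y((1,1)|(3,3))_K\cong S^{(2,2,2,1,1)}_K$, while $S^{(1^8)}_K$ lives elsewhere in $M_K((1,1)|(3,3))$. Since Step~3 builds entirely on $\lambda=\alpha\#(1^{p|\beta|})$ (and you acknowledge the combinatorial endgame there is unfinished), the proof does not go through.

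The paper closes the gap in one line of module theory rather than combinatorics. By Theorem~\ref{thm Young vertex}, $Y(\alpha|p\beta)$ has Green vertex $P_\rho$ with $\rho=\Rho(\alpha|p\beta)$, and $P_\rho$ fixes exactly the $|\alpha(0)|$ points of $\{1,\ldots,n\}$ on which $\sym{(1^{n_0})}$ acts. Since $S^\lambda\cong Y(\alpha|p\beta)$ is \emph{simple}, Kn\"orr's theorem (together with \cite[Proposition~1.4]{JBO}) forces any Green vertex $Q\leq\sym{pw}$ (where $w=\omega_p(\lambda)$) to have no fixed points on $\{1,\ldots,pw\}$; hence a Green vertex of $S^\lambda$ has exactly $n-pw$ fixed points on $\{1,\ldots,n\}$, giving $|\alpha(0)|=n-pw=|\kappa_p(\lambda)|$. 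Combined with your Step~1, this finishes the proof. The moral is that the lemma should be proved without knowing $\lambda$ explicitly (that is Theorem~\ref{T: irred specht}, whose proof depends on this lemma); trying to determine $\lambda$ from $(\alpha|p\beta)$ is both harder than necessary and, via the concatenation formula, leads to a wrong answer.
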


\begin{proof}
Let $w=\omega_p(\lambda)$, so that, by \cite[Theorem~6.2.45, 4.1.22, 4.1.24]{GJAK},  the Sylow $p$-subgroups of $\sym{pw}$ are defect groups of the block
containing $Y(\alpha|p\beta)$. By Theorem~\ref{thm Young vertex}, the indecomposable signed Young module $Y(\alpha|p\beta)$ has
Green vertex $P_{\rho}$, where $\rho=\Rho(\alpha|p\beta)$, so that $P_\rho=_{\sym{n}}Q$ for some subgroup $Q\leq \sym{pw}$.


In consequence of Kn\"orr's Theorem
\cite{RK} and \cite[Proposition~1.4]{JBO}, $Q$ does not have any fixed points on $\{1,\ldots,pw\}$ (see also \cite[Proposition~3.4]{SDanz1}). Therefore, $P_\rho$ fixes exactly $|\alpha(0)|$ numbers in $\{1,\ldots,n\}$, which implies that
\begin{equation}\label{eqn pw}
wp=n-|\alpha(0)|\,.
\end{equation}
On the other hand, recall from \ref{noth blocks Sn} and \ref{noth signed young} that both $S^\lambda$ and $Y(\alpha|p\beta)$ lie in the block $b_{\kappa_p(\lambda)}=b_{\kappa_p(\alpha)}$ of $F\sym{n}$, so that
$\kappa_p(\lambda)=\kappa_p(\alpha)$. Since $\alpha(0)$ is obtained from $\alpha$ by stripping off horizontal $p$-hooks only, we have $|\alpha(0)|\geq |\kappa_p(\alpha)|=|\kappa_p(\lambda)|=n-wp$. Using equation (\ref{eqn pw}), we have $|\alpha(0)|=|\kappa_p(\alpha)|$, and hence we conclude that $\alpha(0)=\kappa_p(\alpha)=\kappa_p(\lambda)$.
%
\end{proof}

\begin{prop}\label{P: reduction}
Theorem~\ref{T: irred specht} holds true if it holds true for all simple Specht modules belonging to Rouquier blocks.
\end{prop}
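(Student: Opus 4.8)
The plan is to use the chain of adjacent JM-partitions provided by Proposition~\ref{P: ind and res of Specht}, moving up from $\lambda$ to a JM-partition $\boldsymbol{\varrho}_t$ lying in a Rouquier block, and then to descend one step at a time, transferring the labelling statement from $\boldsymbol{\varrho}_{i+1}$ to $\boldsymbol{\varrho}_i$. Concretely, suppose $\lambda\in\JM(n)$ and apply Proposition~\ref{P: ind and res of Specht} to obtain the sequence $\lambda=\boldsymbol{\varrho}_1,\ldots,\boldsymbol{\varrho}_t$ with $b_{\kappa_p(\boldsymbol{\varrho}_t)}$ a Rouquier block. Assuming Theorem~\ref{T: irred specht} holds for Rouquier blocks, we have $S^{\boldsymbol{\varrho}_t}\cong Y(\Phi(\boldsymbol{\varrho}_t))$. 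It suffices to show: if $\boldsymbol{\varrho}_i\in\JM(n_i)$ and $\boldsymbol{\varrho}_{i+1}\in\JM(n_{i+1})$ form a pair of adjacent JM-partitions with $n_{i+1}>n_i$, and $S^{\boldsymbol{\varrho}_{i+1}}\cong Y(\Phi(\boldsymbol{\varrho}_{i+1}))$, then $S^{\boldsymbol{\varrho}_i}\cong Y(\Phi(\boldsymbol{\varrho}_i))$. Iterating this downward from $i=t-1$ to $i=1$ yields $S^\lambda\cong Y(\Phi(\lambda))$.

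To carry out the single descent step, write $S^{\boldsymbol{\varrho}_i}\cong Y(\alpha|p\beta)$ for some $(\alpha|p\beta)\in\P^2(n_i)$, which is possible by Hemmer's theorem \cite{DH}. First I would invoke Lemma~\ref{L: second part} (with $\lambda=\boldsymbol{\varrho}_i$, $\mu=\boldsymbol{\varrho}_{i+1}$), which gives $\Phi(\boldsymbol{\varrho}_i)=(\gamma|p\beta)$ for some partition $\gamma$, so that the second coordinate is already pinned down: it is forced to coincide with that of $\Phi(\boldsymbol{\varrho}_i)$. It remains to identify the first coordinate $\alpha$. Here I would use Lemma~\ref{L: core is the p-adic exp}, which says $\alpha(0)=\kappa_p(\alpha)=\kappa_p(\boldsymbol{\varrho}_i)$; together with the formula for $\Phi$ on JM-partitions from Lemma~\ref{L: stripping JM partition}, namely $\Phi(\boldsymbol{\varrho}_i)=(\kappa_p(\boldsymbol{\varrho}_i)+p\boldsymbol{\varrho}_i^{(i_\lambda)}\mid p(\boldsymbol{\varrho}_i^{(j_\lambda)})')$, one sees that $\gamma=\kappa_p(\boldsymbol{\varrho}_i)+p\boldsymbol{\varrho}_i^{(i_\lambda)}$, and it is exactly $\gamma$ that I must show equals $\alpha$. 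Knowing $\alpha(0)=\kappa_p(\boldsymbol{\varrho}_i)=\gamma(0)$ and $|\alpha|=|\gamma|$ (both equal $n_i-p|\beta|$), the task reduces to showing $\alpha-\alpha(0)=\gamma-\gamma(0)=p\boldsymbol{\varrho}_i^{(i_\lambda)}$, i.e.\ identifying the "horizontal part''.

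The main obstacle, and the crux of the step, is pinning down this horizontal part $\alpha-\alpha(0)$. The natural route is to compare more carefully the dominance inequalities already extracted in the proof of Lemma~\ref{L: second part}: from $S^{\boldsymbol{\varrho}_{i+1}}\mid\ind_{\sym{n_i}}^{\sym{n_{i+1}}}(S^{\boldsymbol{\varrho}_i})$ one gets $(\xi|p\zeta)\unrhd(\alpha\#(1^{n_{i+1}-n_i})|p\beta)$ where $\Phi(\boldsymbol{\varrho}_{i+1})=(\xi|p\zeta)$, and by the induction hypothesis applied to $\boldsymbol{\varrho}_{i+1}$ together with Lemma~\ref{L: simple partition p-adic} we know $\xi=\kappa_p(\boldsymbol{\varrho}_{i+1})+p\boldsymbol{\varrho}_{i+1}^{(i_\mu)}$ explicitly, and by Remark~\ref{rem adj} the horizontal/vertical hooks stripped from $\boldsymbol{\varrho}_i$ and $\boldsymbol{\varrho}_{i+1}$ are identical, so $\boldsymbol{\varrho}_i^{(i_\lambda)}=\boldsymbol{\varrho}_{i+1}^{(i_\mu)}$. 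Thus $\xi$ is obtained from $\gamma$ by adjoining the $r$ extra nodes of $\kappa_p(\boldsymbol{\varrho}_{i+1})$ to $\kappa_p(\boldsymbol{\varrho}_i)$. One then argues that the domination $(\xi|p\zeta)\unrhd(\alpha\#(1^{n_{i+1}-n_i})|p\beta)$, combined with $|\alpha|=|\gamma|$ and $\alpha(0)=\gamma(0)=\kappa_p(\boldsymbol{\varrho}_i)$ and the fact that both $\alpha$ and $\gamma$ have the same $p$-core (hence the same $p$-content), forces $\alpha=\gamma$: the concatenation with $(1^{r})$ on the left-hand pair and the transparent relationship between $\xi$ and $\gamma$ leaves no room for $\alpha$ to differ from $\gamma$ once its $p$-adic $0$-part and total size are fixed. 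Making this "no room'' argument fully rigorous — tracking how the added column of $1$'s interacts with the dominance order and ruling out all competitors $\alpha\ne\gamma$ with the correct invariants — is the delicate part; I expect it to hinge on a short combinatorial lemma about $p$-adic expansions and the interplay of $\Rho$ with the dominance order, possibly invoking again the explicit stripping description in Lemma~\ref{L: stripping JM partition} and the structure of adjacent $p$-cores in Remark~\ref{rem adj}.
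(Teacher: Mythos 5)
Your setup is correct: the chain of adjacent JM-partitions, the use of Lemma~\ref{L: second part} to fix the second coordinate $p\beta=\boldsymbol{\varrho}_i'-\boldsymbol{\varrho}_i'(0)$, and Lemma~\ref{L: core is the p-adic exp} to fix $\alpha(0)=\kappa_p(\boldsymbol{\varrho}_i)$. But the proposal has a genuine gap at the crucial point you yourself flag as ``delicate'': you never actually establish $\alpha-\alpha(0)=\boldsymbol{\varrho}_i-\boldsymbol{\varrho}_i(0)$. The dominance inequality $(\xi|p\zeta)\unrhd(\alpha\#(1^{r})|p\beta)$, even together with $|\alpha|=|\gamma|$, $\alpha(0)=\gamma(0)$ and the common $p$-content, does not obviously force $\alpha=\gamma$ --- dominance is only a partial order and a priori leaves room for several $\alpha$ sharing those invariants. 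You ``expect'' a short combinatorial lemma to close this, but no such lemma is supplied, and it is not clear that the route would go through without substantial further work.

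The paper closes this gap by a different and much cleaner device: it applies Lemma~\ref{L: second part} a \emph{second time}, to the conjugate pair $\boldsymbol{\varrho}_i'$, $\boldsymbol{\varrho}_{i+1}'$ (adjacent by Lemma~\ref{L: conjugate pair of JM}, and with $S^{\boldsymbol{\varrho}_{i+1}'}\cong Y(\Phi(\boldsymbol{\varrho}_{i+1}'))$ by Corollary~\ref{C: tensor JM partitions}). This pins down the second coordinate of the label of $S^{\boldsymbol{\varrho}_i'}$ as $\boldsymbol{\varrho}_i-\boldsymbol{\varrho}_i(0)$. Since $S^{\boldsymbol{\varrho}_i'}\cong S^{\boldsymbol{\varrho}_i}\otimes\sgn$, the twisting formula Theorem~\ref{T:twists indecomposable signed Young modules} gives
\[
Y(\xi|\boldsymbol{\varrho}_i-\boldsymbol{\varrho}_i(0))\ \cong\ Y(\alpha|\boldsymbol{\varrho}_i'-\boldsymbol{\varrho}_i'(0))\otimes\sgn\ \cong\ Y(\m(\alpha(0))+(\boldsymbol{\varrho}_i'-\boldsymbol{\varrho}_i'(0))\ |\ \alpha-\alpha(0))\,,
\]
and comparing second coordinates of the unique labels immediately yields $\alpha-\alpha(0)=\boldsymbol{\varrho}_i-\boldsymbol{\varrho}_i(0)$. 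No dominance estimate on the first coordinate is needed. You should replace your speculative dominance argument with this conjugate-plus-twist step; everything else in your proposal matches the paper.
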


\begin{proof}
Assume that Theorem~\ref{T: irred specht} holds true for all simple Specht modules of symmetric groups belonging to Rouquier blocks.
Let $S^\lambda$ be any simple Specht $F\sym{n}$-module. By Proposition~\ref{P: ind and res of Specht}, there is a
sequence of JM-partitions $\lambda=\boldsymbol{\varrho}_1,\ldots, \boldsymbol{\varrho}_t$ of natural numbers $n=n_1<n_2<\cdots <n_t$
such that $\boldsymbol{\varrho}_i$ and $\boldsymbol{\varrho}_{i+1}$ are adjacent, for every $i\in\{1,\ldots,t-1\}$, and such that
$b_{\kappa_p(\boldsymbol{\varrho}_t)}$ is a Rouquier block. We argue by reverse induction on $i$ to show that
$S^{\boldsymbol{\varrho}_i}\cong Y(\Phi(\boldsymbol{\varrho}_i))$, for all $i\in\{1,\ldots,t\}$. If $i=t$ then we are done. Suppose that $i<t$ and that we have already
proved $S^{\boldsymbol{\varrho}_j}\cong Y(\Phi(\boldsymbol{\varrho}_j))$, for all $j\geq i+1$.
%
By Corollary~\ref{C: tensor JM partitions}, we then also have
$$S^{\boldsymbol{\varrho}_{i+1}'}\cong (S^{\boldsymbol{\varrho}_{i+1}})^*\otimes\sgn \cong S^{\boldsymbol{\varrho}_{i+1}}\otimes\sgn\cong Y(\Phi(\boldsymbol{\varrho}_{i+1}))\otimes \sgn\cong Y(\Phi(\boldsymbol{\varrho}_{i+1}'))\,.$$
Lemma~\ref{L: conjugate pair of JM} guarantees that $\boldsymbol{\varrho}_i'$ and $\boldsymbol{\varrho}_{i+1}'$ also form a pair of adjacent JM-partitions. Applying Lemma~\ref{L: second part} to both of the pairs $\boldsymbol{\varrho}_i,\boldsymbol{\varrho}_{i+1}$ and $\boldsymbol{\varrho}'_i,\boldsymbol{\varrho}'_{i+1}$, we have $S^{\boldsymbol{\varrho}_i}\cong Y(\alpha|\boldsymbol{\varrho}_i'-\boldsymbol{\varrho}_i'(0))$ and $S^{\boldsymbol{\varrho}_i'}\cong Y(\xi|\boldsymbol{\varrho}_i-\boldsymbol{\varrho}_i(0))$,
for suitable partitions $\alpha$ and $\xi$. By Lemma~\ref{L: core is the p-adic exp},
$\kappa_p(\alpha)=\alpha(0)=\kappa_p(\boldsymbol{\varrho}_i)$, and, by Theorem~\ref{T:twists indecomposable signed Young modules},
\begin{align*}
Y(\xi|\boldsymbol{\varrho}_i-\boldsymbol{\varrho}_i(0))&\cong S^{\boldsymbol{\varrho}_i}\otimes \sgn\cong Y(\alpha|\boldsymbol{\varrho}_i'-\boldsymbol{\varrho}_i'(0))\otimes \sgn\\
&\cong Y(\m(\alpha(0))+\boldsymbol{\varrho}_i'-\boldsymbol{\varrho}_i'(0)|\alpha-\alpha(0))\,.
\end{align*}
The equation above implies, in particular, that
$\alpha=\alpha(0)+(\boldsymbol{\varrho}_i-\boldsymbol{\varrho}_i(0))=\kappa_p(\boldsymbol{\varrho}_i)+(\boldsymbol{\varrho}_i-\boldsymbol{\varrho}_i(0))$.
Applying Lemma~\ref{L: stripping JM partition}, we have $\Phi(\boldsymbol{\varrho}_i)=(\kappa_p(\boldsymbol{\varrho}_i)+(\boldsymbol{\varrho}_i-\boldsymbol{\varrho}_i(0))|\boldsymbol{\varrho}_i'-\boldsymbol{\varrho}_i'(0))$. Therefore, $S^{\boldsymbol{\varrho}_i}\cong Y(\Phi(\boldsymbol{\varrho}_i))$ as required.
\end{proof}



Proposition \ref{P: reduction} reduces the proof of Theorem \ref{T: irred specht} to the Rouquier block case, and we are working towards completing it. In order to complete an argument of the proof of Theorem~\ref{T: irred specht} in this case, we need to consider lifting of signed Young permutation modules and simple Specht modules for $F\sym{n}$-modules.  As such, for the remainder of this section, we reintroduce the notation $S^\lambda_R$ and $M_R(\alpha|\beta)$ for the Specht and  signed Young permutation $RG$-modules respectively, where $R\neq\{0\}$ is a commutative ring (see \ref{noth perm Specht}(a)).

\begin{noth}\label{rem modular system}{\bf Lifting of trivial source modules.}\
Consider a $p$-modular system $(K,\mathcal{O},F)$. That is, $\mathcal{O}$
is a complete discrete valuation ring with quotient field $K$ of characteristic 0, and with residue field $F$.
Given a finite group $G$ and an $\mathcal{O}G$-module $M$, we obtain the $KG$-module $K\otimes_{\mathcal{O}} M$ as well
as the $FG$-module  $F\otimes_{\mathcal{O}} M$.

\smallskip

Recall, for instance from \cite[\S4.3]{NT}, that one can define Green vertices and Green sources for indecomposable $\mathcal{O}G$-modules. As for $FG$-modules one calls an $\mathcal{O}G$-module  a {\sl $p$-permutation module} if
it is a direct summand of a permutation $\mathcal{O}G$-module. By \cite[0.4]{MB} the indecomposable $p$-permutation
$\mathcal{O}G$-modules are precisely those with trivial Green sources.

\smallskip

We say that an $FG$-module $N$ {\sl lifts} to $\mathcal{O}G$ if there is some $\mathcal{O}G$-module $\hat{N}$ such that
$F\otimes_{\mathcal{O}} \hat{N}\cong N$. In general, such a lift is not unique. However, if $N$ is a $p$-permutation $FG$-module then
there is a $p$-permutation $\mathcal{O}G$-module $\hat{N}$, unique up to isomorphism, such that $F\otimes_{\mathcal{O}} \hat{N}\cong N$, by \cite[Theorem 4.8.9]{NT}. As well, if $N$ is a $p$-permutation $FG$-module then consider an indecomposable direct sum
decomposition $N=N_1\oplus\cdots \oplus N_r$. For $i\in\{1,\ldots,r\}$, let $\hat{N}_i$ be the unique $p$-permutation lift of
$N_i$ to $\mathcal{O}G$. Then $\hat{N}_1\oplus\cdots\oplus \hat{N}_r$ is the unique $p$-permutation lift of $N$ to $\mathcal{O}G$,
by  \cite[Theorem 4.8.9]{NT}.
\end{noth}

\begin{lem}\label{lemma young lift}
Let $(\lambda|\zeta)\in\P^2(n)$. Then the $\mathcal{O}\sym{n}$-module $M_{\mathcal{O}}(\lambda|\zeta)$ is the
$p$-permutation lift of the $F\sym{n}$-module $M_F(\lambda|\zeta)$.
\end{lem}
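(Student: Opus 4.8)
The plan is to establish two facts: first, that $F\otimes_{\mathcal{O}}M_{\mathcal{O}}(\lambda|\zeta)\cong M_F(\lambda|\zeta)$; and second, that $M_{\mathcal{O}}(\lambda|\zeta)$ is a $p$-permutation $\mathcal{O}\sym{n}$-module. Since $M_F(\lambda|\zeta)$ is itself a $p$-permutation $F\sym{n}$-module, the claim will then be immediate from the uniqueness of $p$-permutation lifts recalled in \ref{rem modular system} (that is, from \cite[Theorem~4.8.9]{NT}).

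The first fact is pure base change. As induction commutes with the extension of scalars along $\mathcal{O}\to F$, and as $F\otimes_{\mathcal{O}}(\mathcal{O}\boxtimes\sgn)\cong F\boxtimes\sgn$ as $F[\sym{\lambda}\times\sym{\zeta}]$-modules, we get
$$F\otimes_{\mathcal{O}}M_{\mathcal{O}}(\lambda|\zeta)=F\otimes_{\mathcal{O}}\ind_{\sym{\lambda}\times\sym{\zeta}}^{\sym{n}}(\mathcal{O}\boxtimes\sgn)\cong\ind_{\sym{\lambda}\times\sym{\zeta}}^{\sym{n}}(F\boxtimes\sgn)=M_F(\lambda|\zeta).$$
For the second fact, the crucial point --- and the only place where the hypothesis $p\geq 3$ is used --- is that a Sylow $p$-subgroup $P$ of $\sym{\zeta}$ consists of even permutations, hence lies in the alternating group $\mathfrak{A}_{|\zeta|}$, so that $\res_P^{\sym{\zeta}}(\sgn)$ is the trivial $\mathcal{O}P$-module. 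Since $[\sym{\zeta}:P]$ is a unit in $\mathcal{O}$, Higman's criterion yields $\sgn\mid\ind_P^{\sym{\zeta}}(\res_P^{\sym{\zeta}}(\sgn))=\ind_P^{\sym{\zeta}}(\mathcal{O})$, and $\ind_P^{\sym{\zeta}}(\mathcal{O})=\mathcal{O}[\sym{\zeta}/P]$ is a permutation module; thus $\sgn$ is a $p$-permutation $\mathcal{O}\sym{\zeta}$-module. Consequently $\mathcal{O}\boxtimes\sgn$ is a $p$-permutation $\mathcal{O}[\sym{\lambda}\times\sym{\zeta}]$-module (an outer tensor product of $p$-permutation modules is again a $p$-permutation module), and, since induction sends permutation modules to permutation modules and commutes with finite direct sums, $M_{\mathcal{O}}(\lambda|\zeta)=\ind_{\sym{\lambda}\times\sym{\zeta}}^{\sym{n}}(\mathcal{O}\boxtimes\sgn)$ is a $p$-permutation $\mathcal{O}\sym{n}$-module. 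The identical argument over $F$ shows that $M_F(\lambda|\zeta)$ is a $p$-permutation $F\sym{n}$-module, and the lemma follows.

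I do not expect a genuine obstacle. The only subtlety worth flagging is the sign twist: one must remember that ``$p$-permutation module'' here means (a direct summand of) a permutation module over the given coefficient ring, and then exploit that $p$ is odd to see that $\sgn$, from the point of view of $p$-local structure, behaves exactly like the trivial module.
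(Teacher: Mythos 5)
Your proof is correct and follows essentially the same approach as the paper: both reduce via \cite[Theorem 4.8.9]{NT} to showing $M_{\mathcal{O}}(\lambda|\zeta)$ is a $p$-permutation module, and both exploit that $p\geq 3$ forces the relevant Sylow $p$-subgroup into the alternating group so that $\sgn$ restricts to the trivial module. The only cosmetic difference is that the paper works with a single Sylow $p$-subgroup $P\leq\sym{\lambda}\times\sym{\zeta}$ and shows directly that $\mathcal{O}\boxtimes\sgn\mid\ind_P^{\sym{\lambda}\times\sym{\zeta}}(\mathcal{O})$, whereas you handle $\sgn$ on $\sym{\zeta}$ first and then invoke closure of $p$-permutation modules under outer tensor product and induction; this is the same argument packaged modularly.
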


\begin{proof}
Clearly, $M_{\mathcal{O}}(\lambda|\zeta)$ is a lift of $M_F(\lambda|\zeta)$. So, by \cite[Theorem 4.8.9]{NT}, we only have to show
that $M_{\mathcal{O}}(\lambda|\zeta)$ is a $p$-permutation $\mathcal{O}\sym{n}$-module. By definition, we have
$M_{\mathcal{O}}(\lambda|\zeta)=\ind_{\sym{\lambda}\times\sym{\zeta}}^{\sym{n}}(\mathcal{O}\boxtimes \sgn)$. Since
$p\geq 3$, every Sylow $p$-subgroup $P$ of $\sym{\lambda}\times \sym{\zeta}$ is also contained in $\mathfrak{A}_n$. Therefore, the
$\mathcal{O}[\sym{\lambda}\times \sym{\zeta}]$-module $\mathcal{O}\boxtimes \sgn$ restricts to the trivial $\mathcal{O}P$-module, which has Green vertex $P$ and is its own Green $P$-source.
Hence $\mathcal{O}\boxtimes \sgn\mid\ind_{P}^{\sym{\lambda}\times \sym{\zeta}}(\mathcal{O})$, and then also
$M_{\mathcal{O}}(\lambda|\zeta)\mid \ind_P^{\sym{n}}(\mathcal{O})$. So $M_{\mathcal{O}}(\lambda|\zeta)$ is a $p$-permutation
module.
\end{proof}

\begin{lem}\label{lemma specht lift}
Suppose that $\lambda\in\mathrm{JM}(n)$, so that the Specht $F\sym{n}$-module
$S_F^\lambda$ is simple. Then $S_F^\lambda$ lifts to an $\mathcal{O}\sym{n}$-module. Moreover, for every lift $S$ of $S^\lambda_F$ to $\mathcal{O}\sym{n}$, one has $K\otimes_{\mathcal{O}}S\cong S^\lambda_K$.
\end{lem}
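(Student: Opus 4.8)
The plan is to exhibit one explicit lift and then, via the decomposition map, to show that every lift has the same character. For existence, set $S^\lambda_{\mathcal{O}}:=\mathcal{O}\otimes_{\mathbb{Z}}S^\lambda_{\mathbb{Z}}$. Since the Specht module is defined over $\mathbb{Z}$ through a basis independent of the ground ring (see \ref{noth perm Specht}(a)), one has $F\otimes_{\mathcal{O}}S^\lambda_{\mathcal{O}}\cong S^\lambda_F$ and $K\otimes_{\mathcal{O}}S^\lambda_{\mathcal{O}}\cong S^\lambda_K$. Hence $S^\lambda_F$ does lift, and this particular lift already has the asserted property.

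For the `moreover' part, let $S$ be an arbitrary $\mathcal{O}\sym{n}$-lattice with $F\otimes_{\mathcal{O}}S\cong S^\lambda_F$. As $K$ has characteristic $0$, the algebra $K\sym{n}$ is semisimple and its simple modules are exactly the $S^\mu_K$ with $\mu\vdash n$; thus $K\otimes_{\mathcal{O}}S\cong\bigoplus_{\mu\vdash n}(S^\mu_K)^{\oplus a_\mu}$ for uniquely determined integers $a_\mu\geq 0$, and it suffices to show that $a_\lambda=1$ and $a_\mu=0$ for $\mu\neq\lambda$. I would feed this into the decomposition map $d$ from the Grothendieck group of finitely generated $K\sym{n}$-modules to that of $F\sym{n}$-modules: since $S^\mu_{\mathcal{O}}$ is an $\mathcal{O}$-form of $S^\mu_K$ we have $d([S^\mu_K])=[S^\mu_F]$, and since $S$ is an $\mathcal{O}$-form of $K\otimes_{\mathcal{O}}S$ we have $d([K\otimes_{\mathcal{O}}S])=[F\otimes_{\mathcal{O}}S]=[S^\lambda_F]$. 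Applying $d$ to the decomposition above thus yields the identity $\sum_{\mu\vdash n}a_\mu[S^\mu_F]=[S^\lambda_F]$ in the Grothendieck group of $F\sym{n}$.

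Next I would compare coefficients in the $\mathbb{Z}$-basis $\{[D^\pi]:\pi\text{ a }p\text{-regular partition of }n\}$. Writing $[S^\mu_F]=\sum_\pi d_{\mu\pi}[D^\pi]$ with decomposition numbers $d_{\mu\pi}\in\mathbb{Z}_{\geq 0}$, and recalling that $S^\lambda_F$ is irreducible because $\lambda\in\JM(n)$ (Theorem~\ref{T: irred classification}), say $S^\lambda_F\cong D^\nu$, the identity becomes $\sum_\mu a_\mu d_{\mu\pi}=\delta_{\pi\nu}$ for every $p$-regular $\pi$. All quantities being non-negative integers, the equation with $\pi=\nu$ singles out a unique index $\mu_0$ with $a_{\mu_0}=d_{\mu_0\nu}=1$, while the equations with $\pi\neq\nu$ give $a_\mu d_{\mu\pi}=0$ for all $\mu$; so, were $a_\mu\neq 0$ for some $\mu\neq\mu_0$, then $[S^\mu_F]=0$, contradicting $\dim S^\mu\geq 1$. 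Hence $a_\mu=0$ for $\mu\neq\mu_0$, so $K\otimes_{\mathcal{O}}S\cong S^{\mu_0}_K$, and the residual relations force $[S^{\mu_0}_F]=[D^\nu]$; in particular $S^{\mu_0}_F$ is irreducible and $S^{\mu_0}_F\cong S^\lambda_F$.

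It remains to identify $\mu_0$ with $\lambda$, which I expect to be the main obstacle and is the only place where the hypothesis $p\geq 3$ is genuinely needed (for $p=2$ the statement fails, since $S^{(n)}_F\cong S^{(1^n)}_F$ although $(n)\neq(1^n)$). I would derive it from the fact that, for $p\geq 3$, distinct partitions label non-isomorphic Specht $F\sym{n}$-modules: composing the James embeddings $S^\lambda_F\hookrightarrow M^\lambda_F$ and $S^{\mu_0}_F\hookrightarrow M^{\mu_0}_F$ with the isomorphism $S^\lambda_F\cong S^{\mu_0}_F$ produces non-zero homomorphisms $S^{\mu_0}_F\to M^\lambda_F$ and $S^\lambda_F\to M^{\mu_0}_F$, which force $\mu_0\unrhd\lambda$ and $\lambda\unrhd\mu_0$ respectively by the classical relationship between such homomorphisms and the dominance order. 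Hence $\mu_0=\lambda$, and $K\otimes_{\mathcal{O}}S\cong S^{\mu_0}_K=S^\lambda_K$, completing the plan.
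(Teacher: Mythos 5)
Your proof is correct but follows a genuinely different route from the paper's. For existence, the paper observes that $S^\lambda_F$ is a signed Young module and hence a $p$-permutation module, so it lifts by \cite[Theorem~4.8.9]{NT}; you instead note directly that $\mathcal{O}\otimes_{\mathbb{Z}}S^\lambda_{\mathbb{Z}}$ is a lift, which is simpler and requires no simplicity hypothesis at all. For the ``moreover'' clause, the paper cites two general Curtis--Reiner results: \cite[Proposition 16.17]{CR1} (a lift of a simple module has simple generic fibre, hence $K\otimes_{\mathcal{O}}S\cong S^\mu_K$ for some $\mu$) and \cite[Proposition 16.16]{CR1} (any two $\mathcal{O}$-forms of $S^\mu_K$ have the same reduction), and then concludes $\mu=\lambda$ from \cite[Corollary 13.17]{GJ1}. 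Your decomposition-map and Grothendieck-group computation is, in effect, a self-contained reproof of the Curtis--Reiner facts in this special case; it is more hands-on but arrives at the same identity $[S^{\mu_0}_F]=[S^\lambda_F]$. Both proofs then rest on the same final ingredient, namely that for $p\geq 3$ isomorphic Specht modules share a label; the paper cites James, and you rederive it from the Submodule Theorem via the dominance-order restriction on homomorphisms $S^\mu\to M^\lambda$, which is essentially James's own argument. The net effect is that your version trades two black-box citations for a somewhat longer but self-contained computation, while obtaining the same conclusion.
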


\begin{proof}
Since $S^\lambda_F$ is simple and thus a signed Young module, it is a $p$-permutation module, by Theorem~\ref{thm Young vertex}.
Hence $S^\lambda_F$ lifts to an $\mathcal{O}\sym{n}$-module $S$, by \cite[Theorem~4.8.9]{NT}.  By \cite[Proposition 16.17]{CR1}, the $K\sym{n}$-module
$K\otimes_{\mathcal{O}} S$ is then a simple $K\sym{n}$-module, so that there is some $\mu\in\P(n)$ with $K\otimes_{\mathcal{O}} S\cong S^\mu_K$. The $\mathcal{O}\sym{n}$-module $S$ is an $\mathcal{O}$-form of $S^\mu_K$, as is the $\mathcal{O}\sym{n}$-module
$S^\mu_{\mathcal{O}}$. By \cite[Proposition 16.16]{CR1}, the $F\sym{n}$-modules $F\otimes_{\mathcal{O}} S$ and
$F\otimes_{\mathcal{O}} S^\mu_{\mathcal{O}}$ both have the same composition factors, and are thus isomorphic to $S^\lambda_F$.
But $F\otimes_{\mathcal{O}} S^\mu_{\mathcal{O}}\cong S^\mu_F$.
Since
$p\geq 3$, this forces $\lambda=\mu$, by \cite[Corollary 13.17]{GJ1}.
\end{proof}

\begin{rem}\label{rem T}
Let $\lambda\in\JM(n)$ be such that the Specht module $S^\lambda_F$ lies in a Rouquier block. Furthermore, let $i,j\in\{0,\ldots,p-1\}$ be integers satisfying the conditions (F1)--(F5) in Proposition~\ref{P: JM-partition} for some abacus display $B$ of $\lambda$. By Lemma \ref{L: stripping JM partition}, we have $\Phi(\lambda)=(\kappa_p(\lambda)+p\sigma|p\tau)$, where $\lambda^{(i)}=\sigma$ and $(\lambda^{(j)})'=\tau$. The partition $\kappa_p(\lambda)+p\sigma$ is obtained from $B$ by moving all beads on runner $j$ as high as possible, which is equivalent to removing all vertical $p$-hooks from $[\lambda]$ (see the proof of Lemma~\ref{L: stripping JM partition}). Let $B'$ be the abacus display obtained. So $\kappa_p(\lambda)+p\sigma$ is a $p$-regular JM-partition with the same runners $i$ and $j$ satisfying conditions (F1)--(F5) for the abacus $B'$, that is, $S^{\kappa_p(\lambda)+p\sigma}_F$ is again a  simple Specht module. We conclude that $S^{\kappa_p(\lambda)+p\sigma}_F\cong Y^{\kappa_p(\lambda)+p\sigma}\cong Y(\kappa_p(\lambda)+p\sigma|\varnothing)$ (see, for instance, \cite[Proposition 1.1]{DH}). In particular, the $F\sym{n}$-module
\begin{equation}\label{eqn T}
T:=\ind_{\sym{|\kappa_p(\lambda)|+p|\sigma|}\times\sym{p\tau}}^{\sym{n}}(S^{\kappa_p(\lambda)+p\sigma}_F\boxtimes\sgn)
\end{equation}
is isomorphic to a direct summand of the signed Young permutation module $M_F(\kappa_p(\lambda)+p\sigma|p\tau)$.
In \cite{DH} Hemmer examined  the module $T$ and, in particular, showed that $S^\lambda_F$ is isomorphic to a direct summand of
$T$ (see \cite[proof of Theorem~4.2]{DH}).
\end{rem}

We are now in the position to prove Theorem \ref{T: irred specht}.

\begin{proof}[Proof of Theorem \ref{T: irred specht}]
By Proposition \ref{P: reduction}, it suffices to consider the Rouquier block case.
Suppose that $\lambda$ is a JM-partition of $n$ such that the Specht $F\sym{n}$-module $S^\lambda_F$ belongs  to some Rouquier block. By Lemma \ref{L: stripping JM partition}, we have $\Phi(\lambda)=(\kappa_p(\lambda)+p\sigma|p\tau)$, for some partitions $\sigma$ and $\tau$.
Suppose that $(\alpha|p\beta)\in\P^2(n)$ is such that $S^\lambda_F\cong Y(\alpha|p\beta)$.
By Lemma \ref{L: core is the p-adic exp}, we have $\kappa_p(\alpha)=\alpha(0)=\kappa_p(\lambda)$.
We aim to prove that $(\alpha|p\beta)=\Phi(\lambda)$.

Consider the $F\sym{n}$-module
$T$ defined as in (\ref{eqn T}).
As mentioned in Remark~\ref{rem T}, we have
$Y(\alpha|p\beta)\cong S^\lambda_F\mid T\mid M_F(\kappa_p(\lambda)+p\sigma|p\tau)$, so that $(\alpha|p\beta)\unrhd (\kappa_p(\lambda)+p\sigma|p\tau)$, by \cite[2.3(8)]{SD}. In particular, we have $p|\beta|\leq p|\tau|$. We shall show that, in fact, $p|\beta|=p|\tau|$.


\smallskip

The Specht $F\sym{n}$-module $S^{\lambda'}_F$ is also simple,  lies in a Rouquier block, and  satisfies
$$S^{\lambda'}_F\cong (S^\lambda_F)^*\otimes \sgn\cong S^\lambda_F\otimes \sgn\cong Y(\alpha|p\beta)\otimes\sgn\cong Y(\m(\alpha(0))+p\beta|\alpha-\alpha(0))\,,$$
by Theorem~\ref{T:twists indecomposable signed Young modules}.

Now, replacing $\lambda$ by $\lambda'$ in Remark~\ref{rem T}, we also deduce that
$S^{\lambda'}_F$ is isomorphic to a direct summand of the signed permutation module $M_F(\kappa_p(\lambda')+p\tau|p\sigma)$. Thus, since $\m(\kappa_p(\lambda))=\kappa_p(\lambda)'=\kappa_p(\lambda')$ (see the final paragraph of \ref{noth Specht}(b)), we have
$$(\m(\alpha(0))+p\beta|\alpha-\alpha(0))=(\m(\kappa_p(\lambda))+p\beta|\alpha-\kappa_p(\lambda))\unrhd (\kappa_p(\lambda)'+p\tau|p\sigma)\,.$$
In particular, we have $|\m(\kappa_p(\lambda))|+p|\beta|\geq |\kappa_p(\lambda)'|+p|\tau|$, and hence $p|\beta|\geq p|\tau|$. This shows that $p|\beta|=p|\tau|$, and thus $|\alpha|=|\kappa_p(\lambda)|+p|\sigma|$. The condition $(\alpha|p\beta)\unrhd (\kappa_p(\lambda)+p\sigma|p\tau)$ is thus equivalent to $\alpha\unrhd\kappa_p(\lambda)+p\sigma$ and $p\beta\unrhd p\tau$. The next aim is to show that we indeed have $\alpha=\kappa_p(\lambda)+p\sigma$ and $p\beta=p\tau$, which will then complete the proof
of the theorem.

We have $S^\lambda_F\mid M_F(\alpha|p\beta)$. We claim that $S^\lambda_K\mid M_K(\alpha|p\beta)$. By Lemma~\ref{lemma young lift}, $M_{\mathcal{O}}(\alpha|p\beta)$ is the unique
$p$-permutation lift of $M_F(\alpha|p\beta)$ to $\mathcal{O}\sym{n}$. Moreover, let $S$ be the unique $p$-permutation lift
of $S^\lambda_F$ to $\mathcal{O}\sym{n}$. Then $S\mid M_{\mathcal{O}}(\alpha|p\beta)$, by \cite[Theorem~4.8.9]{NT}, and thus also
$(K\otimes_{\mathcal{O}} S)\mid (K\otimes_{\mathcal{O}} M_{\mathcal{O}}(\alpha|p\beta))$ as $K\sym{n}$-modules.
By Lemma~\ref{lemma specht lift}, we know that $K\otimes_{\mathcal{O}} S\cong S^\lambda_K$, and clearly
$K\otimes_{\mathcal{O}} M_{\mathcal{O}}(\alpha|p\beta)\cong M_K(\alpha|p\beta)$. Since $K$ has characteristic 0, this means that
$S^\lambda_K$ is a composition factor of $M_K(\alpha|p\beta)$. This justifies our claim.

By Corollary~\ref{cor comp series}, the composition multiplicity
of $S^\lambda_K$ in $M_K(\alpha|p\beta)$ equals the number of
semistandard $\lambda$-tableaux of type $(\alpha|p\beta)$.
%
%
%
%
Hence there must exist a semistandard $\lambda$-tableau of type $(\alpha|p\beta)$.
Let $\sigma=(\sigma_1,\ldots,\sigma_r)$ be such that $\sigma_r\neq 0$.
Then $\lambda_i=(\kappa_p(\lambda))_i+p\sigma_i$, for all $1\leq i\leq r$. Recall that $\alpha\unrhd \kappa_p(\lambda)+p\sigma$. If $\alpha_1>\lambda_1$ then there would be no semistandard $\lambda$-tableau of type $(\alpha|p\beta)$, since the $\alpha_1$ copies of the number $1$ would have to be assigned to the first row of $[\lambda]$ and clearly there would not be enough nodes in the first row of $[\lambda]$.
Thus $\alpha_1=\lambda_1$. Arguing inductively we obtain that $\lambda_i=\alpha_i$ for all $1\leq i\leq r$, that is,  $\alpha_i=(\kappa_p(\lambda))_i+p\sigma_i$, for all $1\leq i\leq r$. Using the fact that $\alpha(0)=\kappa_p(\lambda)$, we have $\alpha-\alpha(0)=(p\sigma_1,\ldots,p\sigma_r,\ldots)$, and hence $\alpha-\alpha(0)=(p\sigma_1,\ldots,p\sigma_r)$, since $|\alpha|-|\alpha(0)|=p|\sigma|$. This shows that $\alpha=\kappa_p(\lambda)+p\sigma$.

\smallskip

Repeating the argument in the previous paragraph with the Specht module $S^{\lambda'}_F$, which is isomorphic to a
direct summand of the signed Young permutation module $M_F(\kappa_p(\lambda)'+p\tau|p\sigma)$, we deduce that $\m(\alpha(0))+p\beta=\kappa_p(\lambda)'+p\tau$. Since $\m(\alpha(0))=\m(\kappa_p(\lambda))=\kappa_p(\lambda)'$, we obtain $p\beta=p\tau$.  Altogether, this gives
 $$(\alpha|p\beta)=(\kappa_p(\lambda)+p\sigma|p\tau)=\Phi(\lambda).$$ The proof is now complete.
\end{proof}




\section{Some consequences}\label{S: conseq}

In this section, we derive some immediate consequences from our results in Theorem~\ref{T:twists indecomposable signed Young modules} and Theorem~\ref{T: irred specht}.
So far, we know the Green vertices and Green correspondents of indecomposable signed Young modules. We shall
see below that also the cohomological varieties and complexities of indecomposable signed Young modules can easily
be determined; in particular, we obtain these invariants for simple Specht modules.
%
Furthermore, using a result of Gill \cite{CG}, we determine the periods, and, in the case of weight $1$, the minimal projective resolutions of non-projective periodic indecomposable signed Young modules. Throughout this section, $F$ denotes an algebraically closed field of characteristic $p>0$.

From now on we denote the trivial $FG$-module, for every finite group $G$, simply by $F$.


\begin{noth}\label{nota cohom}{\bf Cohomological variety and complexity of modules.}\  For the theory and notation of cohomological variety and complexity, we refer the reader to \cite[\S5]{DB}, and briefly summarize here the notation we shall use in the following.

Suppose that $G$ is any finite group. Then $H^\cdot(G,F)$ denotes the cohomology ring $H^*(G,F)$ if $p=2$,
and it denotes the subring generated by elements of even degrees if $p\geq 3$.
If $H$ is a subgroup of $G$ then one has a restriction map $\res_{G,H}:H^{\cdot}(G,F)\to H^{\cdot}(H,F)$, which then induces a map $\res^*_{G,H}:V_H\to V_G$ on the level of
{\sl cohomological varieties} of $H$ and $G$.

Let $M$ be an $FG$-module. The {\sl cohomological variety} $V_G(M)$ of $M$ is the maximal ideal spectrum of $V_G$ containing a certain ideal. Furthermore, we denote by $c_G(M)$ the {\sl complexity} of $M$, which is the rate of growth of a minimal projective resolution of $M$, or equivalently, the dimension of the cohomological variety $V_G(M)$. For precise definitions of $V_G(M)$ and $c_G(M)$,
see \cite[\S5]{DB}.

The cohomological variety $V_G(M)$ and complexity $c_G(M)$ of an indecomposable $FG$-module $M$ are related to its Green vertices and sources as follows.
\end{noth}

\begin{lem}[\protect{\cite[Proposition~2.15]{KJL2}}]\label{lem cohom sce}
Let $M$ be an indecomposable $FG$-module, let $Q$ be a Green vertex of $M$, and let $S$ be a Green $Q$-source of $M$.
Then one has $V_G(M)=\res^*_{G,Q}(V_Q(S))$. Moreover, the complexity $c_G(M)$ of $M$ equals the complexity
$c_Q(S)$ of $S$. If the dimension of $S$ is coprime to $p$ then $c_Q(S)$ equals the $p$-rank of $Q$.
\end{lem}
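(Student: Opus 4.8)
The statement is well known and a proof is given in \cite[Proposition~2.15]{KJL2}; here I sketch the approach I would take. By the very definition of a Green vertex and a Green source one has $M\mid\ind_Q^G(S)$ and $S\mid\res_Q^G(M)$, and the plan is to feed these two divisibility relations into the standard functorial properties of cohomological varieties collected in \cite[\S5]{DB}: (i) if $M_1\mid M_2$ then $V_G(M_1)\subseteq V_G(M_2)$; (ii) for $H\leq G$ and an $FH$-module $N$ one has $V_G(\ind_H^G(N))=\res^*_{G,H}(V_H(N))$; and (iii) by the Avrunin--Scott theorem, $V_H(\res_H^G(M'))=(\res^*_{G,H})^{-1}(V_G(M'))$ for every $FG$-module $M'$.

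First I would use $M\mid\ind_Q^G(S)$ together with (i) and (ii) to get $V_G(M)\subseteq V_G(\ind_Q^G(S))=\res^*_{G,Q}(V_Q(S))$. For the reverse inclusion I would use $S\mid\res_Q^G(M)$ together with (i) and (iii) to get $V_Q(S)\subseteq V_Q(\res_Q^G(M))=(\res^*_{G,Q})^{-1}(V_G(M))$, and then apply the map $\res^*_{G,Q}$ to both sides, which yields $\res^*_{G,Q}(V_Q(S))\subseteq V_G(M)$. Combining the two inclusions gives $V_G(M)=\res^*_{G,Q}(V_Q(S))$.

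For the complexity assertion I would argue directly with rates of growth of minimal projective resolutions, invoking $c_G(N)=\dim V_G(N)$ only at the end. Since induction is exact and carries projective modules to projective modules, inducing a minimal projective resolution of $S$ over $FQ$ produces a projective resolution of $\ind_Q^G(S)$ over $FG$ of the same rate of growth, so $M\mid\ind_Q^G(S)$ forces $c_G(M)\leq c_G(\ind_Q^G(S))\leq c_Q(S)$; and since restriction to a subgroup does not increase complexity, $S\mid\res_Q^G(M)$ forces $c_Q(S)\leq c_Q(\res_Q^G(M))\leq c_G(M)$. Hence $c_G(M)=c_Q(S)$.

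Finally, when $p\nmid\dim_F(S)$, I would show $V_Q(S)=V_Q$, which immediately gives $c_Q(S)=\dim V_Q$, and the latter is the $p$-rank of $Q$ by Quillen's theorem. For every elementary abelian subgroup $E\leq Q$, the module $\res_E^Q(S)$ has dimension prime to $p$, hence is not free when restricted to any cyclic shifted subgroup of $E$ (free $F[C_p]$-modules have dimension divisible by $p$); by Dade's Lemma, i.e. the rank-variety description of $V_E(-)$, this forces $V_E(\res_E^Q(S))=V_E$. The subgroup theorem of Avrunin and Scott then gives $V_Q(S)=\bigcup_E\res^*_{Q,E}(V_E(\res_E^Q(S)))=\bigcup_E\res^*_{Q,E}(V_E)=V_Q$, the unions being taken over the elementary abelian subgroups $E\leq Q$. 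I do not expect a genuine obstacle here: the only point requiring care is to invoke the Avrunin--Scott and Quillen theorems in the correct generality, and in the applications relevant to this paper $Q$ is always a $p$-subgroup, so everything applies without complication.
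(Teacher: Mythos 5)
Your proposal is correct and follows essentially the same route as the paper: the paper derives the variety equality from the two divisibility relations $M\mid\ind_Q^G(S)$ and $S\mid\res_Q^G(M)$ using \cite[Proposition 5.7.5]{DB} and \cite[Proposition 8.2.4]{LE}, and then cites \cite[Proposition~2.12(vii),(ix)]{KJL2} for the complexity statement and \cite[Corollary~5.8.5]{DB} for the $p$-rank statement, all of which you reprove in slightly more detail. The only minor difference is that for the reverse inclusion $\res^*_{G,Q}(V_Q(S))\subseteq V_G(M)$ you invoke the full Avrunin--Scott subgroup theorem (the equality $V_Q(\res^G_Q M)=(\res^*_{G,Q})^{-1}(V_G(M))$), whereas the paper only needs the one-directional containment $\res^*_{G,Q}(V_Q(\res^G_Q M))\subseteq V_G(M)$, which is more elementary; this does not affect correctness.
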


\begin{proof}
Since $M\mid \ind^G_Q(S)$, we have $V_G(M)\subseteq \res^*_{G,Q}(V_Q(S))$, by \cite[Proposition 5.7.5]{DB} and \cite[Proposition 8.2.4]{LE}. Conversely, since $S\mid \res^G_Q(M)$, we have $\res^*_{G,Q}(V_Q(S))\subseteq \res^*_{G,Q}(V_Q(M))\subseteq V_G(M)$, by \cite[Proposition 5.7.5]{DB}.

By  \cite[Proposition 2.12(vii) and (ix)]{KJL2}), we have $c_G(M)=c_Q(S)$. The last statement follows from \cite[Corollary 5.8.5]{DB}.
\end{proof}

Applying the general theory to indecomposable signed Young modules, we obtain the following proposition. 

\begin{prop}\label{P: variety and complx of signed Young}
Let $p\geq 3$, let $(\lambda|p\mu)\in\P^2(n)$, and let $\rho=\Rho(\lambda|p\mu)$. 
The cohomological variety of the indecomposable signed Young module $Y(\lambda|p\mu)$ is
\[\res^*_{\sym{n},P_\rho}(V_{P_\rho}(F))\,,\]
where $P_\rho$ is the fixed Sylow $p$-subgroup of the Young vertex $\sym{\rho}$ of $Y(\lambda|p\mu)$. In particular, $Y(\lambda|p\mu)$ has complexity $|\mu|+\frac{1}{p}(|\lambda|-|\lambda(0)|)$.
\end{prop}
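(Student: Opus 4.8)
The plan is to apply Lemma~\ref{lem cohom sce} to the indecomposable signed Young module $Y(\lambda|p\mu)$, using the description of its Green vertices and sources from Theorem~\ref{thm Young vertex}(b). First I would record that, by Theorem~\ref{thm Young vertex}(b), a Green vertex of $Y(\lambda|p\mu)$ is any Sylow $p$-subgroup $P_\rho$ of the Young vertex $\sym{\rho}$, where $\rho = \Rho(\lambda|p\mu)$, and a Green $P_\rho$-source is the trivial $FP_\rho$-module $F$. Lemma~\ref{lem cohom sce} then immediately gives $V_{\sym{n}}(Y(\lambda|p\mu)) = \res^*_{\sym{n},P_\rho}(V_{P_\rho}(F))$, which is the first assertion. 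For the complexity, since the trivial module $F$ has dimension $1$, which is coprime to $p$, the same lemma yields $c_{\sym{n}}(Y(\lambda|p\mu)) = c_{P_\rho}(F)$, and this equals the $p$-rank of $P_\rho$.

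The remaining task is purely combinatorial: compute the $p$-rank of $P_\rho$ and check it equals $|\mu| + \tfrac{1}{p}(|\lambda| - |\lambda(0)|)$. Recall from Notation~\ref{nota p-adic} that $\rho = (1^{|\lambda(0)|}, p^{|\lambda(1)|+|\mu(0)|}, (p^2)^{|\lambda(2)|+|\mu(1)|}, \ldots, (p^r)^{|\lambda(r)|+|\mu(r-1)|})$, so that $\sym{\rho}$ is a direct product of $n_0 = |\lambda(0)|$ trivial factors together with, for each $i\in\{1,\ldots,r\}$, exactly $n_i = |\lambda(i)| + |\mu(i-1)|$ copies of $\sym{p^i}$. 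Hence $P_\rho$ is, up to conjugacy, a direct product of $n_i$ copies of $P_{p^i}$ for $i = 1,\ldots,r$, and its $p$-rank is the sum of the $p$-ranks of these factors. The $p$-rank of a Sylow $p$-subgroup $P_{p^i} \cong C_p \wr \cdots \wr C_p$ ($i$-fold) of $\sym{p^i}$ is well known to be $p^{i-1}$ (for instance, the elementary abelian subgroup generated by the $p$-cycles in the base copies of $\sym{p}$ inside $\sym{p^i}$ has rank $p^{i-1}$, and this is maximal). Therefore the $p$-rank of $P_\rho$ is
\[
\sum_{i=1}^{r} n_i \cdot p^{i-1} = \sum_{i=1}^{r} \bigl(|\lambda(i)| + |\mu(i-1)|\bigr) p^{i-1}.
\]

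To finish, I would split this sum and recognize each piece via the $p$-adic expansions $\lambda = \sum_{i=0}^{r_\lambda} p^i \lambda(i)$ and $\mu = \sum_{i=0}^{r_\mu} p^i \mu(i)$ from \ref{noth abacus}(c). Taking sizes gives $|\lambda| = \sum_{i\geq 0} p^i |\lambda(i)|$ and $|\mu| = \sum_{i\geq 0} p^i |\mu(i)|$. Thus $\sum_{i=1}^r |\lambda(i)| p^{i-1} = \tfrac{1}{p}\sum_{i\geq 1} |\lambda(i)| p^i = \tfrac{1}{p}(|\lambda| - |\lambda(0)|)$, and reindexing, $\sum_{i=1}^{r} |\mu(i-1)| p^{i-1} = \sum_{j\geq 0} |\mu(j)| p^j = |\mu|$. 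Adding these gives exactly $|\mu| + \tfrac{1}{p}(|\lambda| - |\lambda(0)|)$, as claimed. I do not anticipate a genuine obstacle here; the only point requiring a little care is the computation of the $p$-rank of $P_{p^i}$, which should be cited or given a one-line justification, and making sure the (cyclic-permutation) ambiguity in $\rho$ and the choice of Sylow subgroup do not affect the $p$-rank — they do not, since $p$-rank is a conjugacy invariant and $\rho$ is determined up to reordering its parts.
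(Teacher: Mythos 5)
Your proof is correct and follows essentially the same route as the paper: cite Theorem~\ref{thm Young vertex}(b) for the Green vertex $P_\rho$ and trivial source, apply Lemma~\ref{lem cohom sce} for both the variety and the reduction of complexity to the $p$-rank of $P_\rho$, and then compute that $p$-rank from the $p$-adic expansions. The paper states the $p$-rank formula $\sum_{i\geq 1}(|\lambda(i)|+|\mu(i-1)|)p^{i-1}$ directly without spelling out the $p$-rank $p^{i-1}$ of $P_{p^i}$; your slightly more explicit justification is accurate and in no conflict with the paper's argument.
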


\begin{proof} By Theorem \ref{thm Young vertex}(b), $Y(\lambda|p\mu)$ has Green vertex $P_\rho$ and trivial Green $P_\rho$-source $F$.  By Lemma~\ref{lem cohom sce}, the first assertion is justified. Since the trivial $FP_\rho$-module $F$ has dimension coprime to $p$, the complexity of $Y(\lambda|p\mu)$ is the $p$-rank of $P_\rho$, which is \[(|\lambda(1)|+|\mu(0)|)+p(|\lambda(2)|+|\mu(1)|)+p^2(|\lambda(3)|+|\mu(2)|)+\cdots=|\mu|+\frac{|\lambda|-|\lambda(0)|}{p}.\]
\end{proof}

Recall that, for every finite group $G$, an $FG$-module has complexity $0$ if and only if it is projective, and has complexity 1 if and only if it is non-projective periodic. Applying Proposition~\ref{P: variety and complx of signed Young} we obtain the following corollary. Note that
the assertion in part (b) can also be found in the paper of Hemmer and Nakano \cite[Corollary~3.3.3]{DHDN2} in the case of
Young modules.

\begin{cor}\label{C: periodic signed Young}
Let $p\geq 3$, and let $(\lambda|p\mu)\in\P^2(n)$.
The indecomposable signed Young $F\sym{n}$-module $Y(\lambda|p\mu)$ is

\smallskip

{\rm (a)}\, projective if and only if $\mu=\varnothing$ and $\lambda$ is $p$-restricted,

\smallskip

{\rm (b)}\,  non-projective periodic if and only if either $\mu=(1)$ and $\lambda$ is $p$-restricted, or $\mu=\varnothing$ and $\lambda=\lambda(0)+(p)$.
\end{cor}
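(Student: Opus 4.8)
The plan is to read off both parts directly from Proposition~\ref{P: variety and complx of signed Young}, since $Y(\lambda|p\mu)$ is projective precisely when its complexity is $0$, and non-projective periodic precisely when its complexity is $1$. By that proposition the complexity of $Y(\lambda|p\mu)$ equals $|\mu|+\tfrac{1}{p}(|\lambda|-|\lambda(0)|)$, so the whole corollary reduces to solving the two equations $|\mu|+\tfrac{1}{p}(|\lambda|-|\lambda(0)|)=0$ and $|\mu|+\tfrac{1}{p}(|\lambda|-|\lambda(0)|)=1$ for pairs $(\lambda|p\mu)\in\P^2(n)$.

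For part (a): both summands $|\mu|$ and $\tfrac{1}{p}(|\lambda|-|\lambda(0)|)$ are non-negative integers (the latter because $\lambda-\lambda(0)$ is a partition all of whose parts are multiples of $p$, as recalled in \ref{noth abacus}(c)). Hence their sum is $0$ iff both vanish: $|\mu|=0$ forces $\mu=\varnothing$, and $|\lambda|=|\lambda(0)|$ forces $\lambda=\lambda(0)$, i.e.\ $\lambda$ is $p$-restricted. Conversely, if $\mu=\varnothing$ and $\lambda$ is $p$-restricted, then $\lambda(0)=\lambda$, and the complexity is $0$; alternatively one may invoke directly the fact recalled in \ref{noth Specht}(c) that $Y^\lambda=Y(\lambda|\varnothing)$ is projective exactly when $\lambda\in\RP(n)$.

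For part (b): the sum of the two non-negative integers equals $1$ iff exactly one of them is $1$ and the other $0$. The first case is $|\mu|=1$ and $|\lambda|=|\lambda(0)|$, i.e.\ $\mu=(1)$ and $\lambda$ is $p$-restricted. The second case is $|\mu|=0$ and $\tfrac{1}{p}(|\lambda|-|\lambda(0)|)=1$, i.e.\ $\mu=\varnothing$ and $|\lambda-\lambda(0)|=p$; since $\lambda-\lambda(0)$ is a partition whose every part is a positive multiple of $p$, having size exactly $p$ means it has a single part equal to $p$, so $\lambda=\lambda(0)+(p)$. Conversely each of these two families has complexity exactly $1$ by Proposition~\ref{P: variety and complx of signed Young}, hence gives a non-projective periodic module (it is non-projective by part (a), since in both families the condition of part (a) fails).

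I do not anticipate a genuine obstacle here: the only point requiring a little care is the bookkeeping observation that $\lambda - \lambda(0)$ is a partition all of whose parts are divisible by $p$, so that $\tfrac1p(|\lambda|-|\lambda(0)|)$ is a bona fide non-negative integer and the analysis of when $|\lambda-\lambda(0)|=p$ is forced to give a single part $(p)$; this is immediate from the description of the $p$-adic expansion in \ref{noth abacus}(c). Everything else is the elementary fact that a sum of two non-negative integers equals $0$ (resp.\ $1$) iff the summands take the obvious values, combined with the complexity formula already established.
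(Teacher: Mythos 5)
Your proposal is correct and takes exactly the route the paper intends: the paper introduces the corollary with the phrase ``Applying Proposition~\ref{P: variety and complx of signed Young} we obtain the following corollary'' and leaves the bookkeeping implicit, which is precisely what you have carried out. The key observation that $\lambda-\lambda(0)$ is a partition whose parts are all positive multiples of $p$ (from the $p$-adic expansion described in \ref{noth abacus}(c)) is correctly used to pin down the case $|\lambda-\lambda(0)|=p$ as $\lambda=\lambda(0)+(p)$.
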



Let $\alpha\in\P(n)$.
Recall from \cite[Corollary~13.18]{GJ1} that, in the case when $p$ is odd, or $p=2$ and $\alpha$ is $2$-regular, the Specht $F\sym{n}$-module $S^\alpha$ is indecomposable.
If $S^\alpha$ is simple then  $S^\alpha$ is isomorphic to $D^{\alpha^R}$, where $\alpha^R$ is the {\sl $p$-regularization} of the partition $\alpha$ (see \cite[6.3.59]{GJAK}). In the case when $p$ is odd, Hemmer's result in \cite{DH} implies that a simple Specht $F\sym{n}$-module
$S^\alpha$ and the associated simple module $D^{\alpha^R}$ have trivial Green sources. Our result Theorem~\ref{T: irred specht} allows us to describe the Green vertices, Green correspondents, cohomological varieties and complexities for all simple Specht
$F\sym{n}$-modules in the case where $p\geq 3$.
Recall the map $\Phi:\P(n)\to \P^2(n)$ in Definition~\ref{D: Phi}.

\begin{cor}\label{C: 1} Let $p\geq 3$, and suppose that $S^\alpha$ is a simple Specht $F\sym{n}$-module. Let $\Phi(\alpha)=(\lambda|p\mu)$, let $\rho=\Rho(\lambda|p\mu)$, and let $N(\rho)$ be the normalizer of $\sym{\rho}$ in $\sym{n}$. Then the $F\sym{n}$-module $S^\alpha\cong D^{\alpha^R}$ has
\smallskip

{\rm (a)}\, Green vertex a Sylow $p$-subgroup $P_\rho$ of $\sym{\rho}$,

\smallskip

{\rm (b)}\, Green correspondent $\res_H^{N(\rho)}(\mathbf{R}(\lambda|p\mu))$ with respect to $N_{\sym{n}}(P_\rho)\leq H\leq N(\rho)$,

\smallskip

{\rm (c)}\, cohomological variety $\res^*_{\sym{n},P_{\rho}}(V_{P_{\rho}}(F))$, and

\smallskip

{\rm (d)}\, complexity equal to the $p$-weight of $\alpha$.
\end{cor}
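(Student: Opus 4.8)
The plan is to deduce Corollary~\ref{C: 1} directly from Theorem~\ref{T: irred specht} together with the structural results on indecomposable signed Young modules already assembled in Sections~\ref{sec signed Young} and~\ref{sec simple Specht}. By Theorem~\ref{T: irred specht}, writing $\Phi(\alpha)=(\lambda|p\mu)$, we have an isomorphism of $F\sym{n}$-modules $S^\alpha\cong Y(\lambda|p\mu)$; and since $p\geq 3$ and $S^\alpha$ is simple, the remarks preceding the corollary give $S^\alpha\cong D^{\alpha^R}$. Thus every assertion about $S^\alpha\cong D^{\alpha^R}$ is an assertion about the indecomposable signed Young module $Y(\lambda|p\mu)$, and the four parts follow by quoting the relevant earlier results.

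First I would prove~(a): by Theorem~\ref{thm Young vertex}(b), any Sylow $p$-subgroup $P_\rho$ of the Young vertex $\sym{\rho}$ of $Y(\lambda|p\mu)$ is a Green vertex of $Y(\lambda|p\mu)$, and $Y(\lambda|p\mu)$ has a trivial Green source. Hence $P_\rho$ is a Green vertex of $S^\alpha$. For~(b), apply Corollary~\ref{cor Green}: for any $H$ with $N_{\sym{n}}(P_\rho)\leq H\leq N(\rho)$, the restriction $\res_H^{N(\rho)}(\mathbf{R}(\lambda|p\mu))$ is the Green correspondent of $Y(\lambda|p\mu)$ with respect to $H$; transporting across the isomorphism of Theorem~\ref{T: irred specht} gives the Green correspondent of $S^\alpha\cong D^{\alpha^R}$. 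For~(c), apply Proposition~\ref{P: variety and complx of signed Young}, which states that the cohomological variety of $Y(\lambda|p\mu)$ is $\res^*_{\sym{n},P_\rho}(V_{P_\rho}(F))$; again this transports to $S^\alpha$.

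For~(d) I would again invoke Proposition~\ref{P: variety and complx of signed Young}: the complexity of $Y(\lambda|p\mu)$ equals $|\mu|+\tfrac{1}{p}(|\lambda|-|\lambda(0)|)$, so it remains to identify this quantity with the $p$-weight $\omega_p(\alpha)$. Here I would use the combinatorial description of $\Phi$ on JM-partitions from Lemma~\ref{L: stripping JM partition}: we have $\Phi(\alpha)=(\kappa_p(\alpha)+p\sigma\,|\,p\tau)$ where $p\sigma=\alpha-\alpha(0)$ and $p\tau=\alpha'-\alpha'(0)$, with $\sigma=\alpha^{(i)}$ and $\tau'=\alpha^{(j)}$ for suitable runners $i,j$; moreover that lemma asserts that the $p$-core of $\alpha$ is reached by independently stripping off all horizontal and all vertical rim $p$-hooks, so that $\omega_p(\alpha)=|\sigma|+|\tau|$ (the number of horizontal hooks removed being $|\sigma|=|\alpha^{(i)}|$ and the number of vertical hooks being $|\tau|=|\alpha^{(j)}|$). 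On the other hand $\lambda=\kappa_p(\alpha)+p\sigma$ has $\lambda(0)=\kappa_p(\alpha)$ (since $\kappa_p(\alpha)$ is a $p$-core, hence $p$-restricted, and $p\sigma$ contributes nothing to the $0$th layer of the $p$-adic expansion), so $|\lambda|-|\lambda(0)|=p|\sigma|$, and $\mu=\tau$. Therefore $|\mu|+\tfrac{1}{p}(|\lambda|-|\lambda(0)|)=|\tau|+|\sigma|=\omega_p(\alpha)$, as claimed. I do not anticipate any genuine obstacle here: the only mild care needed is the bookkeeping identification of the complexity formula of Proposition~\ref{P: variety and complx of signed Young} with $\omega_p(\alpha)$ via Lemma~\ref{L: stripping JM partition}, and the observation that $\lambda(0)=\kappa_p(\alpha)$, both of which are routine given the earlier results.
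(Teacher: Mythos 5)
Your proof is correct and follows essentially the same route as the paper: part (a) from Theorem~\ref{thm Young vertex}(b), part (b) from Corollary~\ref{cor Green}, part (c) from Proposition~\ref{P: variety and complx of signed Young}, and part (d) by combining the complexity formula of that proposition with Lemma~\ref{L: stripping JM partition} and the definition of $\Phi$, all transported across the isomorphism $S^\alpha\cong Y(\Phi(\alpha))$ of Theorem~\ref{T: irred specht}. Your verification in (d) that $|\mu|+\tfrac1p(|\lambda|-|\lambda(0)|)=\omega_p(\alpha)$ simply makes explicit the bookkeeping the paper leaves to the reader.
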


\begin{proof} By Theorem \ref{T: irred specht}, Theorem~\ref{thm Young vertex}, Corollary~\ref{cor Green} and Proposition~\ref{P: variety and complx of signed Young}, we obtain (a), (b), (c), and the complexity of $S^\alpha$ is  $|\mu|+\frac{1}{p}(|\lambda|-|\lambda(0)|)$. By the definition of $\Phi$ and Lemma~\ref{L: stripping JM partition}, we get (d).
\end{proof}

\begin{rem}\label{rem known vertices}
Suppose again that $p\geq 3$.
Corollary~\ref{C: 1} thus reveals the class of all simple Specht $F\sym{n}$-modules and the class of simple $F\sym{n}$-modules
\[\{D^{\alpha^R}\ :\ \alpha\in\JM(n)\}\,,\]
whose Green vertices, Green correspondents, cohomological varieties and complexities are known. In the literature, there are other
special classes of Specht or simple $F\sym{n}$-modules whose Green vertices or complexities are computed:
results concerning Green vertices and complexities can, for instance, be found in  \cite{EGKJLMW}, \cite{KJL2,KJL,KJL4}, \cite{GMMP} and
\cite{MW,MW2} for Specht modules, and in \cite{SDanz1,SDanz2,DG,DKZ}, \cite{DHDN2}, \cite{KJLKMT2}, \cite{JMRZ} and \cite{MW}
for simple modules.

In fact, Corollary~\ref{C: 1} recovers Wildon's result \cite[Theorem~2]{MW} when $p\geq 3$.
\end{rem}


\begin{cor}
Suppose that $p\geq 3$. If $p$ does not divide $n$ then, for any $r\in \{0,\ldots,n-1\}$, a Sylow $p$-subgroup of $\sym{n-r-1}\times \sym{r}$ is a Green vertex of
the Specht module $S^{(n-r,1^r)}$.
\end{cor}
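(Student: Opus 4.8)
The plan is to recognise $(n-r,1^r)$ as a JM-partition, invoke Corollary~\ref{C: 1}, and then compute $\Phi$ and $\Rho$ explicitly in order to identify the Green vertex predicted there.

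First I would check that $\lambda:=(n-r,1^r)$ lies in $\JM(n)$. If $1\le r\le n-2$, the unique node of $[\lambda]$ lying simultaneously in a row and in a column that each contain a further node of $[\lambda]$ is the node $(1,1)$, whose hook length is $h_\lambda(1,1)=n$; since $p\nmid n$ this hook length is coprime to $p$, so condition (a) of Definition~\ref{defn JM} can never be met and no forbidden triple of nodes exists, whence $\lambda\in\JM(n)$. For $r\in\{0,n-1\}$ the diagram $[\lambda]$ is a single row or a single column and trivially belongs to $\JM(n)$. By Theorem~\ref{T: irred classification} the Specht module $S^\lambda$ is therefore simple, so Corollary~\ref{C: 1}(a) applies: writing $\Phi(\lambda)=(\alpha|p\mu)$ and $\rho=\Rho(\alpha|p\mu)$, a Green vertex of $S^\lambda$ is the fixed Sylow $p$-subgroup $P_\rho$ of the Young subgroup $\sym{\rho}$.

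Next I would compute $\Phi(\lambda)$ via the column-stripping procedure of Remark~\ref{rem Phi}. Columns $2,\ldots,n-r$ of $[\lambda]$ each contain a single node, so no vertical rim $p$-hook can be removed from them; the first column contains $r+1$ nodes, and successively stripping vertical rim $p$-hooks from its lower end (these never reach the first row, so every such move is legitimate) removes exactly $\lfloor r/p\rfloor$ of them. Hence $\Phi(\lambda)=(\alpha|p\mu)$ with $\alpha=(n-r,1^{r\bmod p})$ and $\mu=(\lfloor r/p\rfloor)$. I would then read off the $p$-adic expansions of $\alpha$ and $\mu$ as in~\ref{noth abacus}(c). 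Writing $n-r-1=\sum_{i\ge0}a_ip^i$ and $r=\sum_{i\ge0}b_ip^i$, a short induction (using that $\alpha-\alpha(0)$ is a one-row partition, and using $p\nmid n$ a second time to treat the boundary case $r\equiv0\pmod p$) gives $|\alpha(0)|=1+a_0+b_0$ and $|\alpha(i)|=a_i$ for $i\ge1$, while $\mu$ being a one-part partition gives $|\mu(i-1)|=b_i$ for $i\ge1$. Thus $\rho$ has $1+a_0+b_0$ parts equal to $1$ and, for each $i\ge1$, exactly $a_i+b_i$ parts equal to $p^i$.

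Finally I would match the Sylow subgroups. By its construction in~\ref{nota Nrho}, $P_\rho$ is a direct product of copies of the groups $P_{p^i}$ from~\ref{noth wreath}(b), so as a permutation group on $\{1,\ldots,n\}$ it has $1+a_0+b_0$ fixed points and $a_i+b_i$ orbits of length $p^i$ for each $i\ge1$, acting on every such orbit as a Sylow $p$-subgroup of the symmetric group on that orbit. On the other hand, a Sylow $p$-subgroup $Q$ of $\sym{n-r-1}\times\sym{r}$, viewed inside $\sym{n}$ (where it fixes the point $n$), has $(a_0+b_0)+1$ fixed points and $a_i+b_i$ orbits of length $p^i$ for $i\ge1$, once more acting as a Sylow $p$-subgroup of $\sym{p^i}$ on each such orbit. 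Two $p$-subgroups of $\sym{n}$ with the same multiset of orbit lengths that act as a Sylow $p$-subgroup of $\sym{p^i}$ on every orbit of length $p^i$ are $\sym{n}$-conjugate: first conjugate the orbits of one onto those of the other by a length-preserving bijection, then apply Sylow's theorem on each orbit. Hence $P_\rho=_{\sym{n}}Q$, so $Q$ is a Green vertex of $S^{(n-r,1^r)}$, as claimed. The main obstacle here is the combinatorial bookkeeping in the third step — the $p$-adic expansions of $\alpha$ and $\mu$ and the reconciliation of the exponent of $1$ in $\rho$ with the extra fixed point $n$ of $\sym{n-r-1}\times\sym{r}\le\sym{n}$, which is exactly where the hypothesis $p\nmid n$ is needed again; everything else is routine.
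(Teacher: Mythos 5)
Your proof is correct and follows essentially the same route as the paper: recognise $(n-r,1^r)$ as a JM-partition, invoke Corollary~\ref{C: 1}, compute $\Phi((n-r,1^r))$ and the resulting $\rho$ explicitly from the $p$-adic expansions of $n-r-1$ and $r$, and observe that a Sylow $p$-subgroup of $\sym{\rho}$ is $\sym{n}$-conjugate to a Sylow $p$-subgroup of $\sym{n-r-1}\times\sym{r}$. The only superficial differences are that the paper cites Peel for simplicity rather than verifying Definition~\ref{defn JM} directly, computes $\Phi$ via Lemma~\ref{L: stripping JM partition} rather than Remark~\ref{rem Phi}, and matches Sylow subgroups by identifying conjugate Young subgroups (via \cite[4.1.22]{GJAK}) rather than by comparing orbit structures.
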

\begin{proof}
By \cite{Peel}, the Specht module $S^{(n-r,1^r)}$ is simple.
Let $n-r-1=\sum_{i=0}^k s_ip^i$ and $r=\sum_{i=0}^\ell r_ip^i$ be the $p$-adic expansions of the integers $n-r-1$ and $r$, respectively. Then Lemma~\ref{L: stripping JM partition} gives
\[\Phi((n-r,1^r))=\left ((1+s_0,1^{r_0})+\sum_{i= 1}^k p^i(s_i)|\sum_{i= 1}^\ell p^i(r_i)\right ).\]
Moreover,  $S^{(n-r,1^r)}\cong Y(\Phi((n-r,1^r)))$ by Theorem~\ref{T: irred specht}.
By Theorem~\ref{thm Young vertex},
$Y(\Phi((n-r,1^r)))$ has Young vertex $\sym{\rho}$, where $\rho=(1^{1+s_0+r_0},p^{s_1+r_1},(p^2)^{s_2+r_2},\ldots)$.
By Corollary~\ref{C: 1}, the Sylow $p$-subgroups of $\sym{\rho}$ are  Green vertices of
$S^{(n-r,1^r)}$.
On the other hand, every Sylow $p$-subgroup of
\begin{align*}
\ & (\sym{1})^{s_0}\times (\sym{p})^{s_1}\times \cdots\times (\sym{p^k})^{s_k}\times (\sym{1})^{r_0}\times (\sym{p})^{r_1}\times \cdots \times (\sym{p^\ell})^{r_\ell}\\
=&\  \sym{(1^{s_0},p^{s_1},\ldots,(p^k)^{s_k})}\times \sym{(1^{r_0},p^{r_1},\ldots,(p^\ell)^{r_\ell})} =_{\sym{n}}\sym{\rho}\,
\end{align*}
is  a Sylow $p$-subgroup of $\sym{n-r-1}\times\sym{r}$, by \cite[4.1.22]{GJAK}.
\end{proof}

\begin{rem}\label{rem char 2}
Now we deal with the simple Specht modules when $p=2$. In general, for any prime $p$, if $\lambda$ is $p$-regular and $S^\lambda$ is simple then $S^\lambda\cong Y^\lambda$ (see, for instance, \cite[Proposition~1.1]{DH}).

Now let $p=2$. The simple Specht modules $S^\lambda$ have been classified by James and Mathas \cite[Main Theorem]{GJAM}. By the classification, any of the associated partitions $\lambda$ is either

\smallskip

(a)\,  $2$-regular,

\smallskip

(b)\, $2$-restricted, or

\smallskip

(c)\, $\lambda=(2,2)$.

\medskip

Notice that, if $\lambda$ is $2$-restricted then $S^\lambda\cong S^{\lambda'}\otimes \sgn\cong S^{\lambda'}$, where $n=|\lambda|$. Hence also $S^\lambda\cong Y^{\lambda'}$, in this case. Since, by \cite{KE,JGrab,DHDN2}, one knows the Green vertices, Green correspondents, cohomological varieties and complexities of Young modules, one also knows these invariants
for all simple Specht $F\sym{n}$-modules $S^\lambda$, except when $\lambda=(2,2)$. When $\lambda=(2,2)$, it is easy to compute that the simple Specht module $S^{(2,2)}$ has the Green vertex $E=\langle (1,2)(3,4),(1,3)(2,4)\rangle$ (see, for instance, \cite[Lemma 6]{MW}) and trivial Green $E$-source. Since $E$ is normal in $\sym{4}$, the Green correspondent of $S^{(2,2)}$ with respect to $N_{\sym{4}}(E)=\sym{4}$ is the $F\sym{4}$-module $S^{(2,2)}$ itself. By Lemma~\ref{lem cohom sce}, the cohomological variety of $S^{(2,2)}$ is $\res^*_{\sym{4},E}(V_E(F))$, and hence the complexity of $S^{(2,2)}$ is $2$.

Together with Corollary \ref{C: 1}, we have a complete understanding of the properties we desired about all simple Specht modules over fields of positive characteristics.
\end{rem}



\begin{rem} Corollary~\ref{C: periodic signed Young}(b) classifies the class of non-projective periodic signed Young modules. In what follows, we make use of Gill's result \cite{CG} to obtain the periods of non-projective periodic indecomposable signed Young modules and their minimal projective resolutions in the weight 1 case, and hence deduce similar results for all simple Specht modules and the class of simple modules mentioned in Remark~\ref{rem known vertices}. By \cite[Theorem 1]{CG}, a non-projective periodic Young module $Y^\lambda$ has period $2p-2$ when $p\geq 3$, and it has period $1$ when $p=2$. Moreover,
if $Y^\lambda$ belongs to a block of weight 1 then one can describe a minimal projective resolution of $Y^\lambda$ \cite[\S4.1]{CG}.
\end{rem} 

\begin{cor}\label{C: 3}
Let $p\geq 3$. Every non-projective periodic indecomposable signed Young $F\sym{n}$-module has period $2p-2$. 
\end{cor}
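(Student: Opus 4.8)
The plan is to reduce the statement about arbitrary non-projective periodic signed Young modules to the already-known case of non-projective periodic Young modules, using the twisting formula Theorem~\ref{T:twists indecomposable signed Young modules} together with Gill's theorem \cite[Theorem~1]{CG}. By Corollary~\ref{C: periodic signed Young}(b), a non-projective periodic indecomposable signed Young module $Y(\lambda|p\mu)$ falls into exactly one of two cases: either $\mu=\varnothing$ and $\lambda=\lambda(0)+(p)$, or $\mu=(1)$ and $\lambda$ is $p$-restricted. In the first case $Y(\lambda|p\mu)=Y(\lambda|\varnothing)\cong Y^\lambda$ is an honest Young module lying in a block of weight $1$, so Gill's result \cite[Theorem~1]{CG} applies directly and gives period $2p-2$.

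For the second case, where $\mu=(1)$ and $\lambda$ is $p$-restricted, the idea is to tensor with the sign representation, which is an exact self-inverse functor on $F\sym{n}$-modules and hence preserves periodicity and the period. Applying Theorem~\ref{T:twists indecomposable signed Young modules} with $p\mu=(p)$ (so that $\lambda(0)=\lambda$ since $\lambda$ is $p$-restricted, and $\lambda-\lambda(0)=\varnothing$), we get
\[
Y(\lambda|(p))\otimes\sgn\cong Y(\m(\lambda)+(p)\mid\varnothing)\cong Y^{\m(\lambda)+(p)}\,,
\]
which is again a Young module whose label has the form $\kappa+(p)$ with $\kappa=\m(\lambda)$ a $p$-restricted partition; in particular it lies in a block of weight $1$. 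By Corollary~\ref{C: periodic signed Young}(b) (or directly from Proposition~\ref{P: variety and complx of signed Young}, which gives complexity $1$ here), this Young module is non-projective periodic, so Gill's theorem gives it period $2p-2$. Since $-\otimes\sgn$ is an exact autoequivalence, $Y(\lambda|(p))$ has the same period $2p-2$.

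The only mild subtlety to check — and the point where I expect the argument needs a careful sentence rather than any real difficulty — is that tensoring with $\sgn$ genuinely preserves the period of a periodic module. This follows because $-\otimes\sgn$ commutes with the syzygy operator $\Omega$ up to isomorphism (it sends a minimal projective resolution of $M$ to a minimal projective resolution of $M\otimes\sgn$, as $\sgn$ is one-dimensional hence projectives go to projectives and minimality is preserved), so $\Omega^d(M)\cong M$ if and only if $\Omega^d(M\otimes\sgn)\cong M\otimes\sgn$. With this in hand the two cases above exhaust Corollary~\ref{C: periodic signed Young}(b), and in both the period equals the period of an associated non-projective periodic Young module in a weight-$1$ block, which is $2p-2$ by \cite[Theorem~1]{CG}. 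This completes the proof.
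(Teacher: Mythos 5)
Your proof follows essentially the same route as the paper's: split into the two cases of Corollary~\ref{C: periodic signed Young}(b), apply Gill's result \cite[Theorem~1]{CG} directly in the $\mu=\varnothing$ case, and in the $\mu=(1)$ case use Theorem~\ref{T:twists indecomposable signed Young modules} to twist $Y(\lambda|(p))$ into the Young module $Y^{\m(\lambda)+(p)}$ and transfer the period back. Your extra paragraph checking that $-\otimes\sgn$ commutes with $\Omega$ and hence preserves the period is a welcome elaboration of a step the paper treats as immediate. One small inaccuracy worth flagging: the assertion that the Young modules in question lie in blocks of weight $1$ is not true in general (e.g.\ for $p=3$ and $\lambda=(7,3,2)$ one has $\lambda=\lambda(0)+(p)$ but $\omega_p(\lambda)=4$); the complexity is $1$ by Proposition~\ref{P: variety and complx of signed Young}, but the block weight can be larger. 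This does not affect the proof, since Gill's theorem on the period applies to all non-projective periodic Young modules, not only those in weight-$1$ blocks, but the parenthetical claim should be dropped.
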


\begin{proof}
Let $(\lambda|p\mu)\in\P^2(n)$ be such that the signed Young module $Y(\lambda|p\mu)$ is non-projective and periodic.
 By Corollary \ref{C: periodic signed Young}, there are only two possibilities. In the case when $\mu=\varnothing$, the signed Young module $Y(\lambda|p\mu)$ is isomorphic to the Young module $Y^\lambda$. Thus we only need to consider the signed
 Young $F\sym{n}$-modules $Y(\lambda|p(1))$, where $\lambda$ is a $p$-restricted partition. By Theorem \ref{T:twists indecomposable signed Young modules},
 \[Y(\lambda|p(1))\otimes \sgn\cong Y(\m(\lambda)+p(1)|\varnothing)\,.\]
 Since $Y(\m(\lambda)+p(1)|\varnothing)\cong Y^{\m(\lambda)+p(1)}$, it has period $2p-2$, by \cite[Theorem 1]{CG}. Hence the same holds true for $Y(\lambda|p(1))$.
\end{proof}

\begin{cor}
Let $S^\alpha$ be a simple, non-projective and periodic Specht $F\sym{n}$-module.
If $p\geq 3$ then $S^\alpha\cong D^{\alpha^R}$ has period $2p-2$. If $p=2$ then $S^\alpha\cong D^{\alpha^R}$ has period $1$.
\end{cor}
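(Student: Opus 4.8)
The plan is to feed the module $S^\alpha$ into the labelling theorem and then quote the period computations already available. First recall, as noted just before Corollary~\ref{C: 1}, that a simple Specht module $S^\alpha$ is isomorphic to $D^{\alpha^R}$ for every prime $p$, so it suffices to determine the period of $S^\alpha$ itself. I would then split into the two cases $p\geq 3$ and $p=2$.

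For $p\geq 3$, the argument is immediate: by Theorem~\ref{T: irred specht} we have $S^\alpha\cong Y(\Phi(\alpha))$, an indecomposable signed Young $F\sym{n}$-module. Since $S^\alpha$ is assumed non-projective and periodic, the same is true of $Y(\Phi(\alpha))$, and Corollary~\ref{C: 3} then gives that its period is $2p-2$. Combined with $S^\alpha\cong D^{\alpha^R}$ this settles the odd-characteristic statement.

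For $p=2$, I would invoke the James--Mathas classification recalled in Remark~\ref{rem char 2}: the partition $\alpha$ is either $2$-regular, $2$-restricted, or equal to $(2,2)$. The case $\alpha=(2,2)$ cannot occur here, since $S^{(2,2)}$ has complexity $2$ (again by Remark~\ref{rem char 2}) and hence is not periodic, contradicting the hypothesis. In the remaining two cases $S^\alpha$ is a Young module: if $\alpha$ is $2$-regular then $S^\alpha\cong Y^\alpha$, while if $\alpha$ is $2$-restricted then $S^\alpha\cong S^{\alpha'}\otimes\sgn\cong S^{\alpha'}\cong Y^{\alpha'}$ (as observed in Remark~\ref{rem char 2}). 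Thus $S^\alpha$ is a non-projective periodic Young module, which has period $1$ by Gill's result \cite[Theorem~1]{CG}. Together with $S^\alpha\cong D^{\alpha^R}$ this completes the proof.

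There is essentially no serious obstacle in this corollary: all the substantive work is done by Theorem~\ref{T: irred specht}, Corollary~\ref{C: 3}, and \cite[Theorem~1]{CG}. The only point requiring a little care is the $p=2$ case, where one must use the explicit classification of simple Specht modules to reduce to Young modules and rule out the exceptional partition $(2,2)$ by a complexity (equivalently, periodicity) check.
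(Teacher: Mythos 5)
Your proof is correct and follows essentially the same path as the paper: Theorem~\ref{T: irred specht} plus Corollary~\ref{C: 3} for $p\geq 3$, and Remark~\ref{rem char 2} plus Gill's \cite[Theorem~1]{CG} for $p=2$, with the same complexity argument to rule out $\alpha=(2,2)$.
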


\begin{proof}
Let $p\geq 3$. So $S^\alpha\cong Y(\Phi(\alpha))$ by Theorem~\ref{T: irred specht}.
So $S^\alpha$ has period $2p-2$, by Corollary~\ref{C: 3}. If $p=2$ then, as we have seen in Remark~\ref{rem char 2}, the
result of James and Mathas \cite{GJAM} implies that $S^\alpha$ is isomorphic to one of the indecomposable Young
$F\sym{n}$-modules $Y^\alpha$ or $Y^{\alpha'}$, or  that $\alpha=(2,2)$. By Remark~\ref{rem char 2}, the Specht
$F\sym{4}$-module $S^{(2,2)}$ has complexity 2, and hence is not periodic.
By \cite[Theorem 1]{CG}, every non-projective indecomposable Young $F\sym{n}$-module that is periodic has period $1$.
Thus the assertion also holds true when $p=2$.
\end{proof}

\begin{noth}\label{proj resol}{\bf Minimal projective resolutions.} \,
Suppose that $p\geq 3$.
A minimal projective resolution of a non-projective periodic Young $F\sym{n}$-module belonging to a block of $p$-weight $1$ is known, by the result of Gill \cite[\S4.1]{CG}. We make use of this result to obtain a minimal projective resolution of a non-projective periodic indecomposable signed Young $F\sym{n}$-module of the form $Y(\lambda|p(1))$ that belongs to a block of $p$-weight 1.
Note that $\lambda$ must then be  a $p$-core, i.e. $\kappa_p(\lambda)=\lambda$, which will be fixed from now on. Recall from \ref{noth Specht} that
$\m(\lambda)=\lambda'$.
So, by Theorem \ref{T:twists indecomposable signed Young modules}, we know that
\[Y(\lambda|p(1))\cong Y(\m(\lambda)+p(1)|\varnothing)\otimes \sgn\cong Y^{\m(\lambda)+p(1)}\otimes\sgn=Y^{\lambda'+p(1)}\otimes\sgn\,.\]

There are precisely $p$ partitions of $n$ of $p$-weight 1 whose $p$-core is $\lambda$. We shall denote these by
$\boldsymbol{\varrho}_0=\lambda+p(1),\boldsymbol{\varrho}_1,\ldots,\boldsymbol{\varrho}_{p-1}$.
Moreover, we may choose our labelling such that $\boldsymbol{\varrho}_0\rhd \boldsymbol{\varrho}_1\rhd\cdots\rhd \boldsymbol{\varrho}_{p-1}$. For each $i\in\{1,\ldots,p-1\}$, the partition $\boldsymbol{\varrho}_i$
is $p$-restricted.
Taking the conjugates of these partitions, we obtain the partitions $\lambda'+p(1)=\boldsymbol{\varrho}'_{p-1}\rhd\cdots\rhd \boldsymbol{\varrho}'_1\rhd \boldsymbol{\varrho}'_0$, which lie in the block of weight 1 labelled by the $p$-core $\lambda'$.

By \cite[\S4.1]{CG}, the Young module $Y^{\lambda'+p(1)}=Y^{\boldsymbol{\varrho}_{p-1}'}$ has a minimal projective resolution of the
form
\begin{equation}\label{eqn resol}
\cdots \to Y^{\boldsymbol{\varrho}_{p-2}'}\to Y^{\boldsymbol{\varrho}_{p-3}'}\to\cdots\to Y^{\boldsymbol{\varrho}_{0}'}\to Y^{\boldsymbol{\varrho}_{0}'}\to Y^{\boldsymbol{\varrho}_{1}'}\to\cdots \to Y^{\boldsymbol{\varrho}_{p-2}'}\to Y^{\boldsymbol{\varrho}_{p-1}'}\to \{0\}\,.
\end{equation}
Since the partitions $\boldsymbol{\varrho}_{0}',\ldots,\boldsymbol{\varrho}_{p-2}'$
are $p$-restricted, we have $\boldsymbol{\varrho}_{i}'=\boldsymbol{\varrho}_{i}'(0)$, for all $i\in\{0,\ldots,p-2\}$.
Theorem~\ref{T:twists indecomposable signed Young modules} implies $Y^{\boldsymbol{\varrho}_{i}'}\otimes\sgn\cong Y^{\m(\boldsymbol{\varrho}_{i}')}$, for
$i\in\{0,\ldots,p-2\}$, and, after tensoring (\ref{eqn resol}) by $\sgn$, we obtain a minimal projective resolution
\begin{multline}\label{eqn resol 2}
\cdots \to Y^{\m(\boldsymbol{\varrho}_{p-2}')}\to Y^{\m(\boldsymbol{\varrho}_{p-3}')}\to \cdots\to Y^{\m(\boldsymbol{\varrho}_{0}')}\to Y^{\m(\boldsymbol{\varrho}_{0}')}\to Y^{\m(\boldsymbol{\varrho}_{1}')}\to\cdots\\ \to Y^{\m(\boldsymbol{\varrho}_{p-2}')}\to Y(\lambda|p(1))\to \{0\}
\end{multline}
of $Y^{\lambda'+p(1)}\otimes\sgn\cong Y(\lambda|p(1))$. The partitions $\boldsymbol{\varrho}_1,\ldots,\boldsymbol{\varrho}_{p-1},\boldsymbol{\varrho}_{0}',\ldots,\boldsymbol{\varrho}_{p-2}'$ are $p$-restricted, and the partitions $\boldsymbol{\varrho}_0,\ldots,\boldsymbol{\varrho}_{p-2},\boldsymbol{\varrho}_{1}',\ldots,\boldsymbol{\varrho}_{p-1}'$ are $p$-regular. The structures of both of the Specht modules $S^{\boldsymbol{\varrho}_i}$ and $S^{\boldsymbol{\varrho}_{i}'}$
are well known, for every $i\in\{0,\ldots,p-1\}$. In fact, every block of $F\sym{n}$ of $p$-weight 1 is Scopes equivalent, thus, in particular, Morita equivalent to the principal
block of $F\sym{p}$ (see \cite{Scopes}). The Specht modules in the principal block of $F\sym{p}$ are labelled by the hook partitions
of $p$, whose module structures have been determined by Peel \cite{Peel}.
With our notation, we get
$S^{\boldsymbol{\varrho}_0}\cong D^{\boldsymbol{\varrho}_0}$, $S^{\boldsymbol{\varrho}_{p-1}}\cong D^{\boldsymbol{\varrho}_{p-2}}$,
$S^{\boldsymbol{\varrho}_{0}'}\cong D^{\boldsymbol{\varrho}_{1}'}$, $S^{\boldsymbol{\varrho}_{p-1}'}\cong D^{\boldsymbol{\varrho}_{p-1}'}$. Furthermore, for $i\in\{1,\ldots,p-2\}$, the Specht modules $S^{\boldsymbol{\varrho}_i}$
and $S^{\boldsymbol{\varrho}_{i}'}$ both have composition length 2 and Loewy structures
$$S^{\boldsymbol{\varrho}_i}=\begin{bmatrix}  D^{\boldsymbol{\varrho}_i}\\D^{\boldsymbol{\varrho}_{i-1}} \end{bmatrix}\quad\text{ and }\quad S^{\boldsymbol{\varrho}_{i}'}=\begin{bmatrix}  D^{\boldsymbol{\varrho}_{i}'}\\D^{\boldsymbol{\varrho}_{i+1}'} \end{bmatrix}\,,$$
respectively. Hence, by \ref{noth Specht}, we get
$D_{\boldsymbol{\varrho}_{i}'}\cong\Soc(S^{\boldsymbol{\varrho}_{i}'})\cong D^{\boldsymbol{\varrho}_{i+1}'}$, and thus
\begin{align*}
D_{\m(\boldsymbol{\varrho}_{i}')}&\cong D_{\boldsymbol{\varrho}_{i}'}\otimes\sgn\cong D^{\boldsymbol{\varrho}_{i+1}'}\otimes\sgn\cong \Hd(S^{\boldsymbol{\varrho}_{i+1}'}\otimes\sgn)\\
&\cong \Hd((S^{\boldsymbol{\varrho}_{i+1}})^*)\cong \Soc(S^{\boldsymbol{\varrho}_{i+1}})\cong D_{\boldsymbol{\varrho}_{i+1}}\,,
\end{align*}
for all $i\in\{0,\ldots,p-2\}$.
This implies $\m(\boldsymbol{\varrho}_{i}')=\boldsymbol{\varrho}_{i+1}$, for $i\in\{0,\ldots,p-2\}$.
Together with (\ref{eqn resol 2}), this gives the minimal projective resolution
\[\cdots \to Y^{\boldsymbol{\varrho}_{p-1}}\to Y^{\boldsymbol{\varrho}_{p-2}}\to\cdots\to Y^{\boldsymbol{\varrho}_1}\to Y^{\boldsymbol{\varrho}_1}\to Y^{\boldsymbol{\varrho}_2}\to\cdots\to Y^{\boldsymbol{\varrho}_{p-1}}\to Y(\lambda|p(1))\to \{0\}\]
of $Y(\lambda|p(1))$.

\medskip

In \cite[\S4.1]{CG} Gill  also determines the Heller translates
$\Omega^i(Y^{\boldsymbol{\varrho}_{p-1}'})$ in terms of their Loewy structures, for all $i\in\{0,\ldots, 2p-2\}$.
Since $\Omega^i(Y^{\boldsymbol{\varrho}_{p-1}'})\otimes\sgn\cong \Omega^i(Y^{\boldsymbol{\varrho}_{p-1}'}\otimes\sgn)\cong \Omega^i(Y(\lambda|p(1)))$, for all integers $i$, our previous considerations and Gill's result yield the following Loewy structures
of the Heller translates of $Y(\lambda|p(1))$:
$$\Omega^i(Y(\lambda|p(1)))\cong\begin{cases}
                                                       \begin{bmatrix} D_{\boldsymbol{\varrho}_{p-1-i}}\\ D_{\boldsymbol{\varrho}_{p-i}}\end{bmatrix}&\text{ if } 1\leq i\leq p-2\,,\\
                                                       D_{\boldsymbol{\varrho}_1}&\text{ if } i=p-1\,,\\
                                                       \begin{bmatrix} D_{\boldsymbol{\varrho}_{i-p+2}}\\ D_{\boldsymbol{\varrho}_{i-p+1}}\end{bmatrix}&\text{ if } p\leq i\leq 2p-3\,,\\
                                                       Y(\lambda|p(1))&\text{ if } i=2p-2\,.
\end{cases}$$
\end{noth}

\begin{appendix}

\section{Appendix: Proof of Theorem~\ref{thm Young Green}}\label{appendix}

Throughout this appendix, let $F$ be a field of characteristic $p\geq 3$. For convenience, suppose also that $F$ is algebraically closed. In the following, we shall establish a proof of Theorem~\ref{thm Young Green}, following the arguments given by Donkin in \cite[\S5.2]{SD}.

To this end, let $n\geq 1$, and let $(\lambda|p\mu)\in\P^2(n)$. We recall the notation defined in \ref{nota p-adic} and \ref{nota Nrho}. We have the $p$-adic expansions $\lambda=\sum_{i=0}^{r_\lambda}p^i\cdot\lambda(i)$ and $\mu=\sum_{i=0}^{r_\mu}p^i\cdot\mu(i)$ of $\lambda$ and $\mu$, respectively. Let $r:=\max\{r_\lambda,r_\mu+1\}$, let $n_i=|\lambda(i)|+|\mu(i-1)|$, for every $i\in\{0,\ldots,r\}$ (where $\mu(-1)=\varnothing$), and let $\rho=\Rho(\lambda|p\mu)=(1^{n_0},p^{n_1},\ldots,(p^r)^{n_r})$. Furthermore, we identify the normalizer $N_{\sym{n}}(\sym{\rho})$ with $N(\rho)=\sym{n_0}\times (\sym{p}\wr\sym{n_1})\times\cdots \times (\sym{p^r}\wr\sym{n_r})$.

As we have seen in Theorem~\ref{thm Young vertex}(a), the  signed Young $F\sym{n}$-module $Y(\lambda|p\mu)$ has Young vertex $\sym{\rho}$. Let 
\begin{equation*}\label{eqn Arho}
A(\rho):=(\sym{1})^{n_0}\times (\mathfrak{A}_{p})^{n_1}\times\cdots \times (\mathfrak{A}_{p^r})^{n_r}\unlhd N(\rho)\,,
\end{equation*}
so that $N(\rho)/A(\rho)\cong \sym{n_0}\times (\sym{2}\wr\sym{n_1})\times\cdots\times (\sym{2}\wr\sym{n_r})$.
By \cite[1.1(1)]{SD},
the isomorphism classes of indecomposable signed Young $F\sym{n}$-modules with Young vertex $\sym{\rho}$ are
in bijection with the isomorphism classes of indecomposable projective $F[N(\rho)/A(\rho)]$-modules. More precisely, this bijection is induced
by the Young--Green correspondence with respect to $N(\rho)$. Recall the notions of $R_k(\alpha|\beta)$ and $\mathbf{R}(\nu|p\delta)$ from \ref{nota R} and \ref{thm Young Green}. The next proposition together with the observation in Remark~\ref{rem k odd} will
imply that the Young--Green correspondent of $Y(\lambda|p\mu)$ with respect to $N(\rho)$ must be isomorphic
to one of the $FN(\rho)$-modules $\mathbf{R}(\nu|p\delta)$, where $(\nu|p\delta)\in \P^2(n)$.

\begin{prop}\label{prop proj modules}
Let $m\in \NN$, and let $\RP^2(m)$ be the set of pairs $(\alpha|\beta)$ of
$p$-restricted partitions such that $|\alpha|+|\beta|=m$. Then, as $(\alpha|\beta)$ varies over $\RP^2(m)$, the $F[\sym{2}\wr\sym{m}]$-module
$R_2(\alpha|\beta)$ varies over a set of representatives of the isomorphism classes of indecomposable projective $F[\sym{2}\wr\sym{m}]$-modules.
\end{prop}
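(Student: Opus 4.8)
The plan is to reduce the statement to Clifford theory for the base subgroup $B:=\sym{2}^m\unlhd \sym{2}\wr\sym{m}$, which has order $2^m$ coprime to $p$ since $p\geq 3$. Thus $FB$ is semisimple, and its simple modules are the outer tensor products of copies of the trivial and the sign $F\sym{2}$-module; the $\sym{2}\wr\sym{m}$-orbits of these are indexed by the pairs $(m_1,m_2)$ with $m_1+m_2=m$, the $(m_1,m_2)$-orbit being represented by $W_{m_1,m_2}:=F^{\boxtimes m_1}\boxtimes\sgn(2)^{\boxtimes m_2}$. Because $p\neq 2$, the modules $F$ and $\sgn(2)$ are non-isomorphic, so the inertia group of $W_{m_1,m_2}$ in $\sym{2}\wr\sym{m}$ is exactly $T:=\sym{2}\wr(\sym{m_1}\times\sym{m_2})=(\sym{2}\wr\sym{m_1})\times(\sym{2}\wr\sym{m_2})$, and $W_{m_1,m_2}$ extends to the $FT$-module $\widetilde{W}:=F\boxtimes\sgn(2)^{\otimes m_2}$, the trivial $F[\sym{2}\wr\sym{m_1}]$-module times the tensor-induced module $\sgn(2)^{\otimes m_2}$ of $\sym{2}\wr\sym{m_2}$. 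Since $|B|$ is invertible in $F$, standard Clifford theory for a normal subgroup of order prime to the characteristic then shows that the indecomposable projective $F[\sym{2}\wr\sym{m}]$-modules are precisely the modules $\ind_T^{\sym{2}\wr\sym{m}}(\widetilde{W}\otimes\Inf_{\sym{m_1}\times\sym{m_2}}^{T}(Q))$, where $(m_1,m_2)$ runs over all decompositions of $m$ and $Q$ runs over the indecomposable projective $F[\sym{m_1}\times\sym{m_2}]$-modules, equivalently over the modules $Y^\alpha\boxtimes Y^\delta$ with $\alpha\in\RP(m_1)$ and $\delta\in\RP(m_2)$; moreover, distinct data yield non-isomorphic modules.

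Second, I would match $R_2(\alpha|\beta)$ with such a module. Unwinding the definitions in \ref{nota R}, namely $\sgn(2;m_2)=\sgn(2)^{\otimes m_2}\otimes\Inf_{\sym{m_2}}^{\sym{2}\wr\sym{m_2}}(\sgn(m_2))$, together with $Y^\beta\otimes\sgn\cong Y^{\m(\beta)}$ from \ref{noth Specht}(c) and the monoidality of inflation, one computes
\[Y^\alpha_2\boxtimes\bigl(Y^\beta_2\otimes\sgn(2;m_2)\bigr)\ \cong\ \widetilde{W}\otimes\Inf_{\sym{m_1}\times\sym{m_2}}^{(\sym{2}\wr\sym{m_1})\times(\sym{2}\wr\sym{m_2})}\bigl(Y^\alpha\boxtimes Y^{\m(\beta)}\bigr)\]
as $FT$-modules, so that $R_2(\alpha|\beta)\cong\ind_T^{\sym{2}\wr\sym{m}}(\widetilde{W}\otimes\Inf(Y^\alpha\boxtimes Y^{\m(\beta)}))$ is exactly the indecomposable projective $F[\sym{2}\wr\sym{m}]$-module attached to the datum $((m_1,m_2),\,Y^\alpha\boxtimes Y^{\m(\beta)})$. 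Since $\beta\mapsto\m(\beta)$ is a bijection of $\RP(m_2)$, as $(\alpha|\beta)$ runs over $\RP^2(m)$ the module $R_2(\alpha|\beta)$ runs without repetition over all indecomposable projective $F[\sym{2}\wr\sym{m}]$-modules, which is the claim.

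Alternatively, one can assemble the needed facts by hand, avoiding the wholesale Clifford list. (i)~\emph{$R_2(\alpha|\beta)$ is projective}: $Y^\alpha$ and $Y^\beta$ are projective (as $\alpha,\beta$ are $p$-restricted, see \ref{noth Specht}(c)); inflation along the quotient $\sym{2}\wr\sym{m_i}\to\sym{m_i}$, whose kernel has order prime to $p$, sends projectives to projectives, because the inflation of the regular module is $\ind_{\sym{2}^{m_i}}^{\sym{2}\wr\sym{m_i}}(F)$, which is projective; and twisting by the one-dimensional module $\sgn(2;m_2)$, forming outer tensor products, and inducing all preserve projectivity. (ii)~\emph{$R_2(\alpha|\beta)$ is indecomposable}: by Mackey, $\res_T^{\sym{2}\wr\sym{m}}\ind_T^{\sym{2}\wr\sym{m}}(U)$ with $U=Y^\alpha_2\boxtimes(Y^\beta_2\otimes\sgn(2;m_2))$ decomposes along $(\sym{m_1}\times\sym{m_2})\backslash\sym{m}/(\sym{m_1}\times\sym{m_2})$, and for a representative $g\notin T$ the restriction of ${}^g U$ to $B$ is isotypic of a type $\ne W_{m_1,m_2}$, so $\operatorname{Hom}_T(U,{}^g(\res\, U)\!\uparrow)=0$; hence $\operatorname{End}_{\sym{2}\wr\sym{m}}(R_2(\alpha|\beta))\cong\operatorname{End}_T(U)\cong\operatorname{End}_{\sym{m_1}}(Y^\alpha)\otimes_F\operatorname{End}_{\sym{m_2}}(Y^\beta)$, a tensor product of local algebras over the algebraically closed field $F$, hence local. (iii)~\emph{Distinct $(\alpha|\beta)\in\RP^2(m)$ give non-isomorphic $R_2(\alpha|\beta)$}: inflation commutes with $\Hd$ here (the inflation of a simple, resp.\ semisimple, module is simple, resp.\ semisimple), so $\Hd(R_2(\alpha|\beta))\cong\ind_T^{\sym{2}\wr\sym{m}}(\widetilde{W}\otimes\Inf(D_\alpha\boxtimes D_{\m(\beta)}))$ is a simple $F[\sym{2}\wr\sym{m}]$-module; restricting it to $B$ recovers $(m_1,m_2)$, and the Clifford correspondence then recovers $D_\alpha\boxtimes D_{\m(\beta)}$, hence $(\alpha,\beta)$. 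Finally, comparing with the count $|\RP^2(m)|$ of simple $F[\sym{2}\wr\sym{m}]$-modules (again by the orbit/Clifford count) shows the $R_2(\alpha|\beta)$ exhaust the indecomposable projectives.

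\textbf{Main obstacle.} The only genuinely delicate point is the bookkeeping of the various sign representations: carefully distinguishing the two extensions $\sgn(2)^{\otimes m_2}$ and $\sgn(2;m_2)$ of $\sgn(2)^{\boxtimes m_2}$ from $B$ to $\sym{2}\wr\sym{m_2}$ (as already flagged in Remark~\ref{rem k odd}), and tracking precisely how the Mullineux twist $Y^\beta\otimes\sgn\cong Y^{\m(\beta)}$ enters the displayed identification. Once these normalizations are pinned down, the rest is routine Clifford theory for a normal subgroup of order prime to $p$.
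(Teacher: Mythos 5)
Your argument is correct and ultimately rests on the same underlying Clifford theory for the base group $B=\sym{2}^m\unlhd\sym{2}\wr\sym{m}$, but the route differs in a way worth noting. The paper's proof runs the ``simple-first'' version: it introduces the simple $F[\sym{2}\wr\sym{m}]$-module $D_2(\alpha|\beta)$ explicitly, cites the standard classification of irreducibles of a wreath product \cite[Theorem~4.3.34]{GJAK} to see that these exhaust all simples as $(\alpha|\beta)$ ranges over $\RP^2(m)$, quotes K\"ulshammer's \cite[Proposition~5.1]{BK} for the fact that $R_2(\alpha|\beta)$ is projective indecomposable, and then observes that $D_\alpha\boxtimes D_{\m(\beta)}\cong\Hd(Y^\alpha\boxtimes Y^{\m(\beta)})$ forces $R_2(\alpha|\beta)$ to be the projective cover of $D_2(\alpha|\beta)$. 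You instead classify the indecomposable projectives directly via Clifford theory for the normal $p'$-subgroup $B$ (inertia group $T=\sym{2}\wr(\sym{m_1}\times\sym{m_2})$, extension of the isotypic character $W_{m_1,m_2}$, induction from $T$) and then identify $R_2(\alpha|\beta)$ in that list by unwinding the $\sgn(2;m_2)$ twist into the Mullineux shift $Y^\beta\otimes\sgn\cong Y^{\m(\beta)}$ --- which is precisely the delicate sign bookkeeping the paper also performs. The two arguments are equivalent via projective-cover duality; the paper's is shorter by outsourcing to \cite{BK}, while yours is more self-contained. Your ``alternative'' version (direct projectivity via inflation of projectives across a $p'$-kernel, indecomposability via a Mackey computation of the endomorphism ring using that $T$ is the full inertia group, and distinctness via heads) is essentially a special-case reproof of \cite[Proposition~5.1]{BK} and is also correct; the only small slip is writing $\End_{\sym{m_2}}(Y^\beta)$ where $\End_{\sym{m_2}}(Y^{\m(\beta)})$ would be the literal factor, but these are isomorphic via the sign twist, so the conclusion (tensor product of local algebras, hence local) stands.
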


\begin{proof}
Fix $m_1,m_2\in\NN$ such that $m_1+m_2=m$. For $\alpha\in\RP(m_1)$ and $\beta\in\RP(m_2)$, we consider the
$F[\sym{2}\wr\sym{m}]$-module
$$D_2(\alpha|\beta):=\ind_{\sym{2}\wr(\sym{m_1}\times\sym{m_2})}^{\sym{2}\wr\sym{m}}((F(2)^{\otimes m_1}\boxtimes\sgn(2)^{\otimes m_2})\otimes \Inf_{\sym{m_1}\times\sym{m_2}}^{\sym{2}\wr(\sym{m_1}\times\sym{m_2})}(D_\alpha\boxtimes D_{\m(\beta)}))\,.$$
Recall from Remark~\ref{rem k odd} that $\sgn(2)^{\otimes m_2}=\sgn(\sym{2}\wr\sym{m_2})$, and $F(2)^{\otimes m_1}$ is of course the trivial $F[\sym{2}\wr\sym{m_1}]$-module. As $\alpha$ varies over $\RP(m_1)$, the module $D_\alpha$ varies over a set of representatives of the isomorphism classes of simple $F\sym{m_1}$-modules, and as $\beta$ varies over $\RP(m_2)$, both $D_\beta$ and $D_{\m(\beta)}$ vary over a set of representatives of the isomorphism classes of simple $F\sym{m_2}$-modules.

Thus, by \cite[Theorem~4.3.34]{GJAK},
the modules $D_2(\alpha|\beta)$, as $(\alpha|\beta)$ varies over $\RP^2(m)$, vary over a set of representatives of the isomorphism classes of simple $F[\sym{2}\wr\sym{m}]$-modules. Given $(\alpha|\beta)\in \RP^2(m)$, we have $D_\alpha\boxtimes D_{\m(\beta)}\cong \Hd(Y^{\alpha}\boxtimes Y^{\m(\beta)})$ as $F[\sym{m_1}\times\sym{m_2}]$-modules. Hence $D_2(\alpha|\beta)$ is
isomorphic to a quotient module of $R_2(\alpha|\beta)$. Since $p\neq 2$, both the $F\sym{2}$-modules $F(2)$ and $\sgn(2)$ are projective. Therefore, $R_2(\alpha|\beta)$ is a projective indecomposable $F[\sym{2}\wr\sym{m}]$-module, for every $(\alpha|\beta)\in \RP^2(m)$; a proof of  this can, for instance, be found in \cite[Proposition~5.1]{BK}.
This forces that $R_2(\alpha|\beta)$ is a projective cover of $D_2(\alpha|\beta)$. So, altogether, we obtain that $R_2(\alpha|\beta)$, as $(\alpha|\beta)$ varies over $\RP^2(m)$, varies over a set of representatives of the isomorphism classes of indecomposable projective $F[\sym{2}\wr\sym{m}]$-modules.
\end{proof}

\begin{cor}\label{cor L}
Given the composition $\rho=(1^{n_0},p^{n_1},\ldots,(p^r)^{n_r})$ of $n$, let $L\subseteq \P^2(n)$ be the set of pairs $(\nu|p\delta)$ such that
$\nu$ and $\delta$ have $p$-adic expansions $\nu=\sum^{r_\nu}_{i=0} p^i\cdot \nu(i)$ and $\delta=\sum^{r_\delta}_{i=0} p^i\cdot \delta(i)$ satisfying $r= \max\{r_\nu,r_\delta+1\}$ and $n_i=|\nu(i)|+|\delta(i-1)|$, for every $i\in\{0,\ldots,r\}$ (where $\delta(-1)=\varnothing$).
Then the $FN(\rho)$-modules $\mathbf{R}(\nu|p\delta)$, as $(\nu|p\delta)$ varies over the set $L$,
form a set of representatives of the isomorphism classes of Young--Green correspondents of indecomposable
signed Young $F\sym{n}$-modules with Young vertex $\sym{\rho}$.
\end{cor}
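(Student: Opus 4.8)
The plan is to combine the two building blocks already available: Proposition~\ref{prop proj modules}, which classifies the indecomposable projective $F[\sym{2}\wr\sym{m}]$-modules via the $R_2(\alpha|\beta)$, and the general principle recorded at the start of the appendix (from \cite[1.1(1)]{SD}) that Young--Green correspondence sets up a bijection between the indecomposable signed Young $F\sym{n}$-modules with Young vertex $\sym{\rho}$ and the indecomposable projective $F[N(\rho)/A(\rho)]$-modules. So the whole corollary is really a matter of (i) identifying the indecomposable projectives over $N(\rho)/A(\rho)$ explicitly, (ii) inflating them back to $N(\rho)$ and matching them up with the modules $\mathbf{R}(\nu|p\delta)$, and (iii) checking that the labelling set is exactly $L$.

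First I would note that $N(\rho)/A(\rho)\cong \sym{n_0}\times(\sym{2}\wr\sym{n_1})\times\cdots\times(\sym{2}\wr\sym{n_r})$, so by the Külshammer/Gaschütz-type tensor factorisation an indecomposable projective $F[N(\rho)/A(\rho)]$-module is an outer tensor product $P_0\boxtimes P_1\boxtimes\cdots\boxtimes P_r$, where $P_0$ is an indecomposable projective $F\sym{n_0}$-module (hence some $Y^{\nu(0)}$ with $\nu(0)\in\RP(n_0)$, by \ref{noth Specht}(c)) and, for $i\geq 1$, $P_i$ is an indecomposable projective $F[\sym{2}\wr\sym{n_i}]$-module, which by Proposition~\ref{prop proj modules} is $R_2(\alpha_i|\beta_i)$ for a unique pair $(\alpha_i|\beta_i)\in\RP^2(n_i)$. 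Then I would inflate along $N(\rho)\twoheadrightarrow N(\rho)/A(\rho)$: inflation of $Y^{\nu(0)}$ is again $Y^{\nu(0)}$, and inflation of $R_2(\alpha_i|\beta_i)$ along $\sym{p^i}\wr\sym{n_i}\twoheadrightarrow\sym{2}\wr\sym{n_i}$ is exactly $R_{p^i}(\alpha_i|\beta_i)$ by Remark~\ref{rem k odd}(c). Hence the inflation of $P_0\boxtimes\cdots\boxtimes P_r$ is precisely $Y^{\nu(0)}\boxtimes R_p(\alpha_1|\beta_1)\boxtimes\cdots\boxtimes R_{p^r}(\alpha_r|\beta_r)$, and this is an $FN(\rho)$-module on which $A(\rho)$ acts trivially; so, up to isomorphism, these are all the inflations of indecomposable projective $F[N(\rho)/A(\rho)]$-modules, and by \cite[1.1(1)]{SD} they are exactly the Young--Green correspondents of the indecomposable signed Young $F\sym{n}$-modules with Young vertex $\sym{\rho}$.

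The remaining step is to recognise each such module as $\mathbf{R}(\nu|p\delta)$ for a suitable pair and to pin down the label set. Given the pairs $(\alpha_i|\beta_i)\in\RP^2(n_i)$ for $i\geq 1$ and $\nu(0)\in\RP(n_0)$, define $p$-restricted partitions $\nu(i):=\alpha_i$ for $1\le i\le r$ and $\delta(i-1):=\beta_i$ for $1\le i\le r$ (with $\delta(-1)=\varnothing$ forced, consistent with $\beta_0$ being absent since $\sym{n_0}$ contributes only a Young, not a signed, factor). Setting $\nu:=\sum_{i=0}^r p^i\nu(i)$ and $\delta:=\sum_{i=0}^{r-1}p^i\delta(i)$ gives a pair $(\nu|p\delta)\in\P^2(n)$ whose $p$-adic expansions satisfy $n_i=|\nu(i)|+|\delta(i-1)|$ for all $i$ and $r=\max\{r_\nu,r_\delta+1\}$ — i.e. exactly the defining conditions of $L$ — and by the very definition of $\mathbf{R}$ in \ref{nota Nrho} the inflated module above equals $\mathbf{R}(\nu|p\delta)$. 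Conversely every $(\nu|p\delta)\in L$ arises this way, and distinct elements of $L$ give non-isomorphic $\mathbf{R}(\nu|p\delta)$ because the data $(\nu(0);(\nu(i),\delta(i-1))_{i\geq1})$ is recovered from the tensor factors by the uniqueness in \ref{noth Specht}(c) and Proposition~\ref{prop proj modules}. Thus $(\nu|p\delta)\mapsto \mathbf{R}(\nu|p\delta)$ is a bijection from $L$ onto the set of Young--Green correspondents in question. I expect the only mildly delicate point to be the bookkeeping that the label set is precisely $L$ (in particular the index shift between the $\mu(i)$'s/$\delta(i)$'s and the wreath factors, and the convention $\mu(-1)=\delta(-1)=\varnothing$); the representation-theoretic content is entirely carried by Proposition~\ref{prop proj modules}, Remark~\ref{rem k odd}(c), and \cite[1.1(1)]{SD}, so no further hard work is needed there.
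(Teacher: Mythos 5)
Your proof is correct and follows the same route as the paper: decompose $N(\rho)/A(\rho)$ as $\sym{n_0}\times\prod_i(\sym{2}\wr\sym{n_i})$, classify its indecomposable projectives via the tensor factorisation together with Proposition~\ref{prop proj modules}, inflate along $N(\rho)\twoheadrightarrow N(\rho)/A(\rho)$ using Remark~\ref{rem k odd}(c) to recognise $\mathbf{R}(\nu|p\delta)$, and invoke \cite[1.1(1)]{SD} for the bijection. You merely spell out more explicitly than the paper does the outer-tensor-product step and the bookkeeping that identifies the label set as $L$.
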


\begin{proof}
Recall that $N(\rho)/A(\rho)\cong  \sym{n_0}\times (\sym{2}\wr\sym{n_1})\times\cdots\times (\sym{2}\wr\sym{n_r})$.
Via this isomorphism, in consequence of Proposition~\ref{prop proj modules}, the modules
$$Y^{\nu(0)}\boxtimes R_2(\nu(1)|\delta(0))\boxtimes\cdots\boxtimes R_2(\nu(r)|\delta(r-1)),$$ as $(\nu|p\delta)$ varies over the set $L$, form a set of representatives of the isomorphism classes of indecomposable projective $F[N(\rho)/A(\rho)]$-modules.
Recall that if $n_i=0$ for some $i\in\{0,\ldots,r\}$ then $\sym{2}\wr\sym{n_i}$ is just the trivial group and $R_2(\nu(i)|\delta(i-1))=R_2(\varnothing|\varnothing)$ is the trivial $F[\sym{2}\wr\sym{n_i}]$-module.
So, by \cite[1.1(1)]{SD}, the isomorphism classes of indecomposable signed Young $F\sym{n}$-modules with Young vertex $\sym{\rho}$
are, via Young--Green correspondence, in bijection with the inflations
\begin{align*}
&\Inf_{N(\rho)/A(\rho)}^{N(\rho)}(Y^{\nu(0)}\boxtimes R_2(\nu(1)|\delta(0))\boxtimes\cdots\boxtimes R_2(\nu(r)|\delta(r-1)))\\
\cong& Y^{\nu(0)}\boxtimes R_p(\nu(1)|\delta(0))\boxtimes\cdots\boxtimes R_{p^r}(\nu(r)|\delta(r-1))=\mathbf{R}(\nu|p\delta)\,,
\end{align*}
as $(\nu|p\delta)$ varies over $L$.
\end{proof}

\begin{nota}\label{nota Jk} Let $m_1,m_2\in\NN$, and let $m:=m_1+m_2$. Let further $k\geq 2$. Given partitions $\alpha=(\alpha_1,\ldots,\alpha_s)\in\P(m_1)$ and $\beta=(\beta_1,\ldots,\beta_t)\in\P(m_2)$, we denote by $I_k(\alpha|\beta)$ the following $F[\sym{k}\wr(\sym{\alpha}\times\sym{\beta})]$-module:
\[I_k(\alpha|\beta):=F(\sym{k}\wr\sym{\alpha})\boxtimes \sgn(k;\beta),\] where $\sgn(k;\beta)=\res^{\sym{k}\wr\sym{m_2}}_{\sym{k}\wr\sym{\beta}}(\sgn(k;m_2))$.
Note that the trivial $F[\sym{k}\wr\sym{\alpha}]$-module $F(\sym{k}\wr\sym{\alpha})$ is also isomorphic to the inflation of the trivial $F\sym{\alpha}$-module to $\sym{k}\wr\sym{\alpha}$. Moreover, if $k$ is odd then
$ \sgn(k;\beta)\cong\sgn(\sym{k}\wr\sym{\beta})$ (see Remark~\ref{rem k odd}). Now set
$$J_k(\alpha|\beta):=\begin{cases}
\ind_{\sym{k}\wr(\sym{\alpha}\times\sym{\beta})}^{\sym{k}\wr\sym{m}}(I_k(\alpha|\beta))&\text{ if } m>0\,,\\
F(k) &\text{ if } m=0\,.
\end{cases}$$

\smallskip

With the above notation we obtain the $FN(\rho)$-module
$$\mathbf{J}(\lambda|p\mu):=M^{\lambda(0)}\boxtimes J_p(\lambda(1)|\mu(0))\boxtimes\cdots\boxtimes J_{p^r}(\lambda(r)|\mu(r-1))\,.$$
\end{nota}

\begin{lem}\label{lemma R J} Suppose that both $\alpha$ and $\beta$ are $p$-restricted. Then
the  $F[\sym{k}\wr\sym{m}]$-module $R_k(\alpha|\beta)$ is isomorphic to a direct summand of  $J_k(\alpha|\beta)$.
\end{lem}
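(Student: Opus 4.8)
The statement asserts that $R_k(\alpha|\beta)$ is a direct summand of $J_k(\alpha|\beta)$ when $\alpha \in \RP(m_1)$ and $\beta \in \RP(m_2)$. The plan is to unwind both constructions and exhibit the required projection, using the fact that the Young modules $Y^\alpha$ and $Y^\beta$ are direct summands of the corresponding Young permutation modules $M^\alpha$ and $M^\beta$. First I would recall the definitions: by \eqref{eqn Rk},
\[
R_k(\alpha|\beta) = \ind_{(\sym{k}\wr\sym{m_1})\times(\sym{k}\wr\sym{m_2})}^{\sym{k}\wr\sym{m}}\bigl(Y^\alpha_k \boxtimes (Y^\beta_k\otimes\sgn(k;m_2))\bigr),
\]
where $Y^\alpha_k = \Inf_{\sym{m_1}}^{\sym{k}\wr\sym{m_1}}(Y^\alpha)$, while $J_k(\alpha|\beta) = \ind_{\sym{k}\wr(\sym{\alpha}\times\sym{\beta})}^{\sym{k}\wr\sym{m}}(I_k(\alpha|\beta))$ with $I_k(\alpha|\beta) = F(\sym{k}\wr\sym{\alpha})\boxtimes\sgn(k;\beta)$.

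The key observation is that inducing $I_k(\alpha|\beta)$ in two stages — first from $\sym{k}\wr(\sym{\alpha}\times\sym{\beta})$ up to $(\sym{k}\wr\sym{m_1})\times(\sym{k}\wr\sym{m_2})$, then up to $\sym{k}\wr\sym{m}$ — shows that $J_k(\alpha|\beta)$ is induced from $(\sym{k}\wr\sym{m_1})\times(\sym{k}\wr\sym{m_2})$ of the module
\[
\ind_{\sym{k}\wr\sym{\alpha}}^{\sym{k}\wr\sym{m_1}}\bigl(F(\sym{k}\wr\sym{\alpha})\bigr) \boxtimes \ind_{\sym{k}\wr\sym{\beta}}^{\sym{k}\wr\sym{m_2}}\bigl(\sgn(k;\beta)\bigr).
\]
Using Lemma~\ref{lemma inflation} (or rather Remark~\ref{rem inflation}, with $N = \mathfrak{A}_k^{m_1}$ trivial on the relevant modules, or more directly the transitivity of induction together with the fact that $F(\sym{k}\wr\sym{\alpha})$ is the inflation of the trivial $F\sym{\alpha}$-module), one identifies $\ind_{\sym{k}\wr\sym{\alpha}}^{\sym{k}\wr\sym{m_1}}(F(\sym{k}\wr\sym{\alpha})) \cong \Inf_{\sym{m_1}}^{\sym{k}\wr\sym{m_1}}(\ind_{\sym{\alpha}}^{\sym{m_1}}(F)) = \Inf_{\sym{m_1}}^{\sym{k}\wr\sym{m_1}}(M^\alpha) =: M^\alpha_k$, and similarly $\ind_{\sym{k}\wr\sym{\beta}}^{\sym{k}\wr\sym{m_2}}(\sgn(k;\beta)) \cong M^\beta_k \otimes \sgn(k;m_2)$, where the factorization of $\sgn(k;m_2)$ as a tensor of $\sgn(k)^{\otimes m_2}$ with an inflation from $\sym{m_2}$ lets the inflation part combine with $M^\beta$ and the $\sgn(k)^{\otimes m_2}$ part pull out of the induction. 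Consequently
\[
J_k(\alpha|\beta) \cong \ind_{(\sym{k}\wr\sym{m_1})\times(\sym{k}\wr\sym{m_2})}^{\sym{k}\wr\sym{m}}\bigl(M^\alpha_k \boxtimes (M^\beta_k\otimes\sgn(k;m_2))\bigr).
\]
Since $Y^\alpha \mid M^\alpha$ and $Y^\beta \mid M^\beta$ as modules for $\sym{m_1}$ and $\sym{m_2}$ respectively, inflation, tensoring with the one-dimensional module $\sgn(k;m_2)$, forming outer tensor products, and inducing are all additive functors preserving the relation $\mid$; hence $R_k(\alpha|\beta) \mid J_k(\alpha|\beta)$, as claimed.

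The main obstacle I anticipate is purely bookkeeping: one must be careful to track the various sign representations correctly when passing between $\sgn(k;m_2)$, $\sgn(k;\beta)$, $\sgn(k)^{\otimes m_2}$, and $\sgn(\sym{k}\wr\sym{\beta})$, and to verify that $\sgn(k;m_2)$ genuinely factors as $\sgn(k)^{\otimes m_2}\otimes\Inf_{\sym{m_2}}^{\sym{k}\wr\sym{m_2}}(\sgn(m_2))$ so that the inflated factor can be absorbed into the permutation-module part during the two-step induction — this is exactly the subtlety that Remark~\ref{rem k odd} flags. Once that factorization is in hand, the transitivity-of-induction manipulations and the commutation of induction with the outer tensor product over a direct product of groups are routine, and the fact that Young modules are summands of Young permutation modules does the rest. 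A further minor point is the degenerate cases $m_1 = 0$ or $m_2 = 0$ (and $m = 0$), where one checks directly against the conventions set up in \ref{nota R} and \ref{nota Jk}.
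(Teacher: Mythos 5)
Your proof is correct and follows essentially the same route as the paper's: the paper also rewrites $J_k(\alpha|\beta)$ as a two-stage induction, applies Lemma~\ref{lemma inflation} and the Frobenius formula to identify $\ind_{\sym{k}\wr\sym{\alpha}}^{\sym{k}\wr\sym{m_1}}(F)\boxtimes\ind_{\sym{k}\wr\sym{\beta}}^{\sym{k}\wr\sym{m_2}}(\sgn(k;\beta))$ with $\Inf_{\sym{m_1}}^{\sym{k}\wr\sym{m_1}}(M^\alpha)\boxtimes\bigl(\Inf_{\sym{m_2}}^{\sym{k}\wr\sym{m_2}}(M^\beta)\otimes\sgn(k;m_2)\bigr)$, and then invokes $Y^\alpha\mid M^\alpha$ and $Y^\beta\mid M^\beta$. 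The only cosmetic difference is that the paper passes through $M(\varnothing|\beta)\cong M(\beta|\varnothing)\otimes\sgn(m_2)$ explicitly in the sign-bookkeeping step, which you describe a bit more informally but equivalently.
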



\begin{proof}
If $m=0$ then the assertion is clear. Suppose that $m\geq 1$.
Since
\begin{align*}
R_k(\alpha|\beta)&=\ind_{(\sym{k}\wr\sym{m_1})\times(\sym{k}\wr\sym{m_2})}^{\sym{k}\wr\sym{m}}(Y^\alpha_k\boxtimes (Y^\beta_k\otimes \sgn(k;m_2)),\\
J_k(\alpha|\beta)&=\ind_{\sym{k}\wr(\sym{\alpha}\times\sym{\beta})}^{\sym{k}\wr\sym{m}}(I_k(\alpha|\beta))\cong \ind_{\sym{k}\wr(\sym{m_1}\times\sym{m_2})}^{\sym{k}\wr\sym{m}}\left (\ind_{\sym{k}\wr(\sym{\alpha}\times\sym{\beta})}^{\sym{k}\wr(\sym{m_1}\times\sym{m_2})}(I_k(\alpha|\beta))\right )\,,
\end{align*}
 it suffices to show that $Y^\alpha_k\boxtimes (Y^\beta_k\otimes \sgn(k;m_2))$ is isomorphic to a direct summand of $\ind_{\sym{k}\wr(\sym{\alpha}\times\sym{\beta})}^{\sym{k}\wr(\sym{m_1}\times\sym{m_2})}(I_k(\alpha|\beta))$.

Firstly, by Lemma~\ref{lemma inflation}, we have
\begin{align*}
&\ind_{\sym{k}\wr(\sym{\alpha}\times\sym{\beta})}^{\sym{k}\wr(\sym{m_1}\times\sym{m_2})}(I_k(\alpha|\beta))\\
\cong& \ind_{\sym{k}\wr\sym{\alpha}}^{\sym{k}\wr\sym{m_1}}(F)\boxtimes \ind_{\sym{k}\wr\sym{\beta}}^{\sym{k}\wr\sym{m_2}}\left (\sgn(k;\beta)\right )\\
\cong& \ind_{\sym{k}\wr\sym{\alpha}}^{\sym{k}\wr\sym{m_1}}(\Inf_{\sym{\alpha}}^{\sym{k}\wr\sym{\alpha}}(F))\boxtimes \ind_{\sym{k}\wr\sym{\beta}}^{\sym{k}\wr\sym{m_2}}\left (\sgn(k;\beta)\right )\\
\cong& \Inf_{\sym{m_1}}^{\sym{k}\wr\sym{m_1}}(\ind_{\sym{\alpha}}^{\sym{m_1}}(F)) \boxtimes \ind_{\sym{k}\wr\sym{\beta}}^{\sym{k}\wr\sym{m_2}}\left (\sgn(k;\beta)\right )\\
\cong& \Inf_{\sym{m_1}}^{\sym{k}\wr\sym{m_1}}(M(\alpha|\varnothing))\boxtimes \ind_{\sym{k}\wr\sym{\beta}}^{\sym{k}\wr\sym{m_2}}\left (\sgn(k;\beta)\right )\,.
\end{align*}
Secondly, as $F[\sym{k}\wr\sym{\beta}]$-modules, we have
\begin{align*}
\sgn(k;\beta)&\cong \Inf_{\sym{\beta}}^{\sym{k}\wr\sym{\beta}}(\sgn(\beta))\otimes (\sgn(k)^{\otimes \beta_1}\boxtimes \cdots \boxtimes \sgn(k)^{\otimes \beta_t})\\
&\cong \Inf_{\sym{\beta}}^{\sym{k}\wr\sym{\beta}}(\sgn(\beta))\otimes \res^{\sym{k}\wr\sym{m_2}}_{\sym{k}\wr\sym{\beta}} (\sgn(k)^{\otimes m_2}).
\end{align*}
Hence, applying the Frobenius Formula and Lemma~\ref{lemma inflation}, we get
\begin{align*}
\ind_{\sym{k}\wr\sym{\beta}}^{\sym{k}\wr\sym{m_2}}\left (\sgn(k;\beta)\right ) \cong& \ind_{\sym{k}\wr\sym{\beta}}^{\sym{k}\wr\sym{m_2}}(\Inf_{\sym{\beta}}^{\sym{k}\wr\sym{\beta}}(\sgn(\beta)))\otimes \sgn(k)^{\otimes m_2}\\
\cong& \Inf_{\sym{m_2}}^{\sym{k}\wr\sym{m_2}}(\ind_{\sym{\beta}}^{\sym{m_2}}(\sgn(\beta)))\otimes \sgn(k)^{\otimes m_2}\\
\cong& \Inf_{\sym{m_2}}^{\sym{k}\wr\sym{m_2}}(M(\varnothing|\beta))\otimes \sgn(k)^{\otimes m_2}\\
\cong&  \Inf_{\sym{m_2}}^{\sym{k}\wr\sym{m_2}}(M(\beta|\varnothing)\otimes\sgn(m_2))\otimes \sgn(k)^{\otimes m_2}\\
\cong&  \Inf_{\sym{m_2}}^{\sym{k}\wr\sym{m_2}}(M(\beta|\varnothing))\otimes\Inf_{\sym{m_2}}^{\sym{k}\wr\sym{m_2}}(\sgn(m_2))\otimes \sgn(k)^{\otimes m_2}\\
\cong& \Inf_{\sym{m_2}}^{\sym{k}\wr\sym{m_2}}(M(\beta|\varnothing))\otimes\sgn(k;m_2)\,.
\end{align*}
Therefore,
$$\ind_{\sym{k}\wr(\sym{\alpha}\times\sym{\beta})}^{\sym{k}\wr(\sym{m_1}\times\sym{m_2})}(I_k(\alpha|\beta))\cong\Inf_{\sym{m_1}}^{\sym{k}\wr\sym{m_1}}(M(\alpha|\varnothing))\boxtimes \left (\Inf_{\sym{m_2}}^{\sym{k}\wr\sym{m_2}}(M(\beta|\varnothing))\otimes\sgn(k;m_2)\right )\,.$$
Since $Y^\alpha\mid M^\alpha=M(\alpha|\varnothing)$ and $Y^\beta\mid M^\beta=M(\beta|\varnothing)$, we conclude that \[Y^\alpha_k\boxtimes (Y^\beta_k\otimes \sgn(k;m_2))\mid \ind_{\sym{k}\wr(\sym{\alpha}\times\sym{\beta})}^{\sym{k}\wr(\sym{m_1}\times\sym{m_2})}(I_k(\alpha|\beta)).\]
\end{proof}

\begin{rem}\label{rem Nrho}
In the proof of the next lemma we shall have to determine the structure of the intersections of
$N(\rho)$ with $\sym{n}$-conjugates of the Young subgroup $\sym{\lambda}\times \sym{p\mu}$.  To this end, suppose that $\lambda=(\lambda_1,\ldots,\lambda_u)$
and $\mu=(\mu_1,\ldots,\mu_v)$, for some $u,v\in\NN$. Given the $p$-adic expansions (\ref{eqn exp}) of $\lambda$ and $\mu$,
we thus have $\lambda_j=\sum_{i=0}^rp^i\cdot \lambda(i)_j$ as well as $p\mu_\ell=\sum_{i=1}^rp^i\cdot \mu(i-1)_\ell$ for
$j\in\{1,\ldots,u\}$ and $\ell\in\{1,\ldots, v\}$, where here we set $\lambda(i):=\varnothing$, for $i>r_\lambda$, and $\mu(k):=\varnothing$, for
$k>r_\mu$. Therefore, we have
$$H:=\prod_{j=1}^u\left( \sym{\lambda(0)_j}\times\prod_{i=1}^r\sym{p^i}^{\lambda(i)_j}\right)\times \prod_{\ell=1}^v\prod_{i=1}^r\sym{p^i}^{\mu(i-1)_\ell}\leq \prod_{j=1}^u\sym{\lambda_j}\times\prod_{\ell=1}^v\sym{p\mu_\ell}=\sym{\lambda}\times\sym{p\mu}\leq \sym{n}\,.$$
%
Now let $g\in \sym{n}$ be such that
\begin{align*}
{}^gH&= \sym{\lambda(0)}\times\prod_{i=1}^r\left(\prod_{j=1}^u\sym{p^i}^{\lambda(i)_j}\times\prod_{\ell=1}^v\sym{p^i}^{\mu(i-1)_\ell}\right)\\
&\leq {}^g\sym{\lambda_1}\times\cdots \times {}^g\sym{\lambda_u}\times {}^g\sym{p\mu_1}\times\cdots\times {}^g\sym{p\mu_v}={}^g(\sym{\lambda}\times\sym{p\mu})\,.
\end{align*}
Then also ${}^gH\leq N(\rho)$, since ${}^gH\leq \sym{\lambda(0)}\times \sym{p}^{n_1}\times\cdots \times \sym{p^r}^{n_r}$.
On the other hand, every element of $N(\rho)\cap {}^g(\sym{\lambda}\times\sym{p\mu})$ fixes
the orbits of ${}^g(\sym{\lambda}\times\sym{p\mu})$ and permutes the orbits of ${}^gH$. This implies that
\begin{equation*}\label{eqn Nrho}
N(\rho)\cap{}^g(\sym{\lambda}\times\sym{p\mu})=\sym{\lambda(0)}\times \prod_{i=1}^r\sym{p^i}\wr(\sym{\lambda(i)}\times\sym{\mu(i-1)})\,.
\end{equation*}
We illustrate this by an example as below.
\end{rem}

\begin{eg}\label{expl Nrho}
Consider the case when $p=3$, $\lambda=(14,3)$ and $\mu=(10,1)$. Then $\lambda=(2)+3\cdot (1,1)+3^2\cdot (1)$
and $\mu=(1,1)+3^2 \cdot (1)$, so that $r=3$ and
$\lambda(0)=(2)$, $\lambda(1)=(1,1)$, $\lambda(2)=(1)$, $\mu(0)=(1,1)$, $\mu(2)=(1)$. Also,  $n_0=2$, $n_1=2+2=4$, $n_2=1=n_3$.
Hence $\rho=(1^2,3^4,9,27)$ and $N(\rho)=\sym{2}\times(\sym{3}\wr\sym{4})\times \sym{9}\times\sym{27}$.
On the other hand, the group called $H$ in Remark~\ref{rem Nrho} above has the form
$$H=\sym{2}\times \sym{3}\times\sym{9}\times \sym{3}\times \sym{3}\times\sym{27}\times\sym{3}\leq \sym{14}\times\sym{3}\times\sym{30}\times\sym{3}=\sym{\lambda}\times\sym{p\mu}\,.$$
So we take $g\in\sym{50}$ to be the following permutation
$$g:=\begin{pmatrix} 1&2&3&4&5&6&\cdots&14&15&\cdots&20&21&\cdots&47&48&49&50\\1&2&3&4&5&15&\cdots&23&6&\cdots&11&24&\cdots&50&12&13&14\end{pmatrix}$$
to get
\begin{align*}
{}^gH&=\sym{2}\times\sym{3}\times\sym{3}\times\sym{3}\times\sym{3}\times\sym{9}\times\sym{27}\\
&\leq
\mathfrak{S}_{\{1,\ldots,5,15,\ldots,23\}}\times\mathfrak{S}_{\{6,7,8\}}\times \mathfrak{S}_{\{9,10,11,24,\ldots,50\}}\times\mathfrak{S}_{\{12,13,14\}} \\
&={}^g(\sym{\lambda}\times\sym{p\mu})
\end{align*}
and $N(\rho)\cap{}^g(\sym{\lambda}\times\sym{p\mu})=\sym{2}\times (\sym{3})^4\times\sym{9}\times\sym{27}$.
\end{eg}

\begin{lem}\label{lemma R J M}
For $(\lambda|p\mu)\in\P^2(n)$, one has
$$\mathbf{R}(\lambda|p\mu)\big\arrowvert \mathbf{J}(\lambda|p\mu)\big\arrowvert \res_{N(\rho)}^{\sym{n}}(M(\lambda|p\mu))$$
as $FN(\rho)$-modules, where $\rho=\Rho(\lambda|p\mu)$.
\end{lem}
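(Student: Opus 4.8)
The statement is a chain of two relative-projectivity (direct-summand) relations: $\mathbf{R}(\lambda|p\mu)\mid \mathbf{J}(\lambda|p\mu)\mid \res_{N(\rho)}^{\sym{n}}(M(\lambda|p\mu))$. I would prove the two divisibilities separately, working with the explicit tensor-factorized descriptions. For the first one, recall that
$$\mathbf{R}(\lambda|p\mu)=Y^{\lambda(0)}\boxtimes R_p(\lambda(1)|\mu(0))\boxtimes\cdots\boxtimes R_{p^r}(\lambda(r)|\mu(r-1))$$
and
$$\mathbf{J}(\lambda|p\mu)=M^{\lambda(0)}\boxtimes J_p(\lambda(1)|\mu(0))\boxtimes\cdots\boxtimes J_{p^r}(\lambda(r)|\mu(r-1)).$$
Since both are outer tensor products over the direct-product decomposition $N(\rho)=\sym{n_0}\times(\sym{p}\wr\sym{n_1})\times\cdots\times(\sym{p^r}\wr\sym{n_r})$, and a direct summand of an outer tensor product of modules over a direct product of groups is obtained factorwise, it suffices to check each factor: $Y^{\lambda(0)}\mid M^{\lambda(0)}$ (which is the defining property of the Young module $Y^{\lambda(0)}$, since $\lambda(0)\in\RP(n_0)$), and $R_{p^i}(\lambda(i)|\mu(i-1))\mid J_{p^i}(\lambda(i)|\mu(i-1))$ for each $i\in\{1,\ldots,r\}$. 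The latter is exactly Lemma~\ref{lemma R J}, applied with $k=p^i$, $\alpha=\lambda(i)$, $\beta=\mu(i-1)$, both $p$-restricted by construction of the $p$-adic expansion.

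\textbf{The second divisibility.} For $\mathbf{J}(\lambda|p\mu)\mid \res_{N(\rho)}^{\sym{n}}(M(\lambda|p\mu))$ I would apply the Mackey Formula to $\res_{N(\rho)}^{\sym{n}}(M(\lambda|p\mu))=\res_{N(\rho)}^{\sym{n}}\ind_{\sym{\lambda}\times\sym{p\mu}}^{\sym{n}}(F\boxtimes\sgn)$, expressing it as a direct sum over double cosets $N(\rho)\backslash\sym{n}/(\sym{\lambda}\times\sym{p\mu})$ of modules of the form $\ind_{N(\rho)\cap{}^g(\sym{\lambda}\times\sym{p\mu})}^{N(\rho)}\big({}^g(F\boxtimes\sgn)\big)$. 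The point of Remark~\ref{rem Nrho} (and Example~\ref{expl Nrho}) is that for the particular coset representative $g$ chosen there — the one taking the subgroup $H$ to $\prod_i\sym{p^i}\wr(\sym{\lambda(i)}\times\sym{\mu(i-1)})$ — one has
$$N(\rho)\cap{}^g(\sym{\lambda}\times\sym{p\mu})=\sym{\lambda(0)}\times\prod_{i=1}^r\sym{p^i}\wr(\sym{\lambda(i)}\times\sym{\mu(i-1)}),$$
and the restriction of ${}^g(F\boxtimes\sgn)$ to this subgroup is precisely $F(\sym{\lambda(0)})\boxtimes\bigboxtimes_{i=1}^r I_{p^i}(\lambda(i)|\mu(i-1))$ once one identifies which tensor legs carry the trivial versus the sign representation (the parts of $\lambda$ contribute trivially; the parts of $p\mu$ contribute a sign, and splitting $p\mu_\ell=\sum_i p^i\mu(i-1)_\ell$ distributes that sign across the blocks of size $p^i$, which is exactly the module $\sgn(p^i;\mu(i-1))$ appearing in $I_{p^i}$). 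Inducing this up through $N(\rho)$ factor by factor yields exactly $M^{\lambda(0)}\boxtimes\bigboxtimes_i J_{p^i}(\lambda(i)|\mu(i-1))=\mathbf{J}(\lambda|p\mu)$, so $\mathbf{J}(\lambda|p\mu)$ appears as (at least) one Mackey summand, hence as a direct summand of $\res_{N(\rho)}^{\sym{n}}(M(\lambda|p\mu))$.

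\textbf{Main obstacle.} The routine part is the first divisibility; the delicate part is bookkeeping in the second one — verifying that the chosen double-coset representative genuinely gives the intersection claimed in Remark~\ref{rem Nrho}, and, above all, tracking the sign representations correctly through the conjugation and the wreath-product identifications. One must be careful that ${}^g(\sgn)$ restricted to a wreath factor $\sym{p^i}\wr(\sym{\mu(i-1)})$ becomes $\sgn(p^i;\mu(i-1))$ and not $\sgn(p^i)^{\otimes|\mu(i-1)|}$ or the outer sign of the wreath product — these differ (cf.\ Remark~\ref{rem k odd}(b)), and getting the wrong one would break the identification with $I_{p^i}$ and $J_{p^i}$. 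I would handle this by carefully writing out the action of a generic element of $N(\rho)\cap{}^g(\sym{\lambda}\times\sym{p\mu})$ on a basis vector of ${}^g(F\boxtimes\sgn)$, matching signs against the definition of $\sgn(k;\beta)$ in \ref{nota Jk}. Once the identification of one Mackey summand with $\mathbf{J}(\lambda|p\mu)$ is in place, both divisibilities are immediate, and the lemma follows by transitivity.
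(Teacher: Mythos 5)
Your plan matches the paper's proof in both steps: the first divisibility reduces factorwise to $Y^{\lambda(0)}\mid M^{\lambda(0)}$ together with Lemma~\ref{lemma R J} applied with $k=p^i$, $\alpha=\lambda(i)$, $\beta=\mu(i-1)$; and the second is obtained by the Mackey Formula using precisely the double-coset representative $g$ and the intersection computed in Remark~\ref{rem Nrho}, with the sign bookkeeping handled via Remark~\ref{rem k odd} so that the restriction of ${}^g(F\boxtimes\sgn)$ is identified with $F(\lambda(0))\boxtimes I_p(\lambda(1)|\mu(0))\boxtimes\cdots\boxtimes I_{p^r}(\lambda(r)|\mu(r-1))$ and the induced module with $\mathbf{J}(\lambda|p\mu)$. (Incidentally, you correctly use $M^{\lambda(0)}$ as the first tensor factor of $\mathbf{J}(\lambda|p\mu)$, as in its definition in \ref{nota Jk}; the paper's first displayed line of the proof has a typo writing $Y^{\lambda(0)}$ there.)
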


\begin{proof}
As usual, let $r=\max\{r_\lambda,r_\mu+1\}$. Since $\mathbf{R}(\lambda|p\mu)=Y^{\lambda(0)}\boxtimes R_p(\lambda(1)|\mu(0))\boxtimes\cdots\boxtimes R_{p^r}(\lambda(r)|\mu(r-1))$
and $\mathbf{J}(\lambda|p\mu)=Y^{\lambda(0)}\boxtimes J_p(\lambda(1)|\mu(0))\boxtimes\cdots\boxtimes J_{p^r}(\lambda(r)|\mu(r-1))$,
we obtain $\mathbf{R}(\lambda|p\mu)\mid \mathbf{J}(\lambda|p\mu)$, by Lemma~\ref{lemma R J}.

As explained in Remark~\ref{rem Nrho}, there is some $g\in\sym{n}$
such that $N(\rho)\cap{}^g(\sym{\lambda}\times\sym{p\mu})=\sym{\lambda(0)}\times \prod_{i=1}^r\sym{p^i}\wr(\sym{\lambda(i)}\times\sym{\mu(i-1)})$.
By the Mackey Formula,
$$\ind_{N(\rho)\cap{}^g(\sym{\lambda}\times\sym{p\mu})}^{N(\rho)}\left (\res_{N(\rho)\cap {}^g(\sym{\lambda}\times\sym{p\mu})}^{{}^g(\sym{\lambda}\times\sym{p\mu})}(F({}^g\sym{\lambda})\boxtimes \sgn({}^g\sym{p\mu}))\right )\big\arrowvert \res_{N(\rho)}^{\sym{n}}(M(\lambda|p\mu))\,.$$
Furthermore, we have
\begin{align*}
&\res_{N(\rho)\cap {}^g(\sym{\lambda}\times\sym{p\mu})}^{{}^g(\sym{\lambda}\times\sym{p\mu})}(F({}^g\sym{\lambda})\boxtimes \sgn({}^g\sym{p\mu}))\\
\cong &F(\lambda(0))\boxtimes\left (\boxtimes_{i=1}^r(F(\sym{p^i}\wr\sym{\lambda(i)})\boxtimes\sgn(\sym{p^i}\wr\sym{\mu(i-1)}))\right )\,,
\end{align*}
when identifying each wreath product $\sym{p^i}\wr(\sym{\lambda(i)}\times\sym{\mu(i-1)})$ with
$(\sym{p^i}\wr\sym{\lambda(i)})\times (\sym{p^i}\wr\sym{\mu(i-1)})$ as usual.
For each $i\in\{1,\ldots,r\}$, let $\mu(i-1)=(\mu(i-1)_1,\ldots,\mu(i-1)_t)$; here $\mu(i-1)_t$ may be $0$.
Since $p$ is odd, we have   $\sgn(\sym{p^i}\wr \sym{\mu(i-1)})\cong \sgn(p^i;\mu(i-1)_1)\boxtimes\cdots\boxtimes \sgn(p^i;\mu(i-1)_t)$, by  Remark~\ref{rem k odd}.
This shows that
\begin{align*}
&\ind_{N(\rho)\cap{}^g(\sym{\lambda}\times\sym{p\mu})}^{N(\rho)}\left (\res_{N(\rho)\cap {}^g(\sym{\lambda}\times\sym{p\mu})}^{{}^g(\sym{\lambda}\times\sym{p\mu})}(F({}^g\sym{\lambda})\boxtimes \sgn({}^g\sym{p\mu}))\right )\\
\cong &\ind_{\sym{\lambda(0)}}^{\sym{n_0}}(F(\lambda(0)))\boxtimes\left (\boxtimes_{i=1}^r\ind_{\sym{p^i}\wr(\sym{\lambda(i)}\times\sym{\mu(i-1)})}^{\sym{p^i}\wr\sym{n_i}}(I_{p^i}(\lambda(i)\mid\mu(i-1)))\right )\\
 \cong &M^{\lambda(0)}\boxtimes J_p(\lambda(1)|\mu(0))\boxtimes \cdots\boxtimes J_{p^r}(\lambda(r)|\mu(r-1))=\mathbf{J}(\lambda|p\mu)\,,
\end{align*}
which completes the proof of the lemma.
 \end{proof}

To conclude this appendix, we shall now prove Theorem~\ref{thm Young Green}.

\begin{proof}[Proof of Theorem~\ref{thm Young Green}]
Let $L\subseteq \P^2(n)$ be the set of pairs of partitions defined as in Corollary~\ref{cor L}. That is, fixing $\rho=(1^{n_0},p^{n_1},\ldots,(p^r)^{n_r})$, we have $(\nu|p\delta)\in L$ if and only
if $r=\max\{r_\nu,r_\delta+1\}$ and $|\nu(i)|+|\delta(i-1)|=n_i$, for every $i\in\{0,1,\ldots,r\}$. By Corollary~\ref{cor L}, there is a
bijection $\psi:L\to L$ induced by the Young--Green correspondence with respect to $N(\rho)$
of indecomposable signed Young $F\sym{n}$-modules with Young vertex $\sym{\rho}$.
We have to show that  $\psi(\alpha|p\beta)=(\alpha|p\beta)$, for all $(\alpha|p\beta)\in L$.

Let $(\alpha|p\beta)\in L$, and assume that $(\nu|p\delta)=\psi(\alpha|p\beta)$.
By \cite[Satz~6.3]{JGrab}, the Krull--Schmidt multiplicity of the indecomposable signed Young module $Y(\alpha|p\beta)$ as a direct summand
of $M(\nu|p\delta)$ equals the Krull--Schmidt multiplicity of the Young--Green correspondent $\mathbf{R}(\nu|p\delta)$ of $Y(\alpha|p\beta)$ as a
direct summand of $\res_{N(\rho)}^{\sym{n}}(M(\nu|p\delta))$. By Lemma~\ref{lemma R J M}, we know that
$\mathbf{R}(\nu|p\delta)\mid \res_{N(\rho)}^{\sym{n}}(M(\nu|p\delta))$, so that also $Y(\alpha|p\beta)\mid M(\nu|p\delta)$. By
\cite[2.3(8)]{SD}, this implies $\psi(\alpha|p\beta)=(\nu|p\delta)\unlhd (\alpha|p\beta)$. Now we argue by induction on the length of $(\alpha|p\beta)$ in the poset $(L,\unlhd)$. Suppose that $(\alpha|p\beta)$ is a minimal element in $(L,\unlhd)$, that is, has length 0 in $(L,\unlhd)$. Then
$\psi(\alpha|p\beta)\unlhd (\alpha|p\beta)$ forces $\psi(\alpha|p\beta)= (\alpha|p\beta)$. Now suppose that
$(\alpha|p\beta)$ has some length $\ell\geq 1$ in $(L,\unlhd)$ and that $(\zeta|p\xi)=\psi(\zeta|p\xi)$, for all
$(\zeta|p\xi)\in L$ of length less that $\ell$. Then $\psi(\alpha|p\beta)\unlhd (\alpha|p\beta)$, and if
$(\zeta|p\xi)\lhd (\alpha|p\beta)$ then $(\zeta|p\xi)=\psi(\zeta|p\xi)$ by induction. Since $\psi$ is bijective, this implies $\psi(\alpha|p\beta)=(\alpha|p\beta)$, so that $Y(\alpha|p\beta)$ has Young--Green correspondent $\mathbf{R}(\alpha|p\beta)$ as desired.
\end{proof}


\section{Signed Young Rule}\label{S: signed Young Rule}

Throughout this appendix, $F$ is a field of arbitrary characteristic.

Given a Young permutation module, Young's Rule (see  \cite[14.1, 17.14]{GJ1}) describes the multiplicities of Specht modules occurring as factors of certain Specht series of Young permutation modules. More precisely, given a partition $\alpha\in \P(n)$,  the Young permutation $F\sym{n}$-module $M^\alpha=M(\alpha|\varnothing)$ admits a Specht filtration in which a Specht $F\sym{n}$-module $S^\lambda$ occurs as a factor with multiplicity equal to the number of {\sl semistandard $\lambda$-tableaux of type $\alpha$}.

In this appendix, following \cite{KJLKMT}, we describe the multiplicities of the Specht $F\sym{n}$-modules occurring as factors of certain Specht series of a signed Young permutation  $F\sym{n}$-module. Theorem~\ref{T:Specht factors of sgn permutation} can thus be seen  as  a generalization of Young's Rule.

To generalize the notion of a  semistandard tableau of a particular shape and type,
consider the set of `primed' positive integers $\{1',2',3',\ldots\}$. We define the obvious total ordering on the set $\{1',2',3',\ldots\}$,
by declaring \[1'<2'<3'<\cdots.\]

\begin{defn}[{\cite[\S2.4]{NM}}] \label{defn semistd}
Let $n\in \mathbb{Z}^+$, and let $(\alpha|\beta)\in \P^2(n)$. Moreover, let $\lambda\in\P(n)$.
A {\sl semistandard $\lambda$-tableaux $\mathfrak{t}$ of type $(\alpha|\beta)$} is obtained by filling the Young diagram
$[\lambda]$ with  $\alpha_i$ entries equal to $i$ and $\beta_j$ entries equal to $j'$, for $i,j\in\mathbb{Z}^+$, such that the following conditions are satisfied.


\smallskip

(a)\, The sub-tableau $\mathfrak{s}$ of $\mathfrak{t}$ occupied by the non-primed integers
is a  semistandard $\mu$-tableaux of type $\alpha$ for some sub-partition $\mu$ of $\lambda$, that is, the
non-primed integers are strictly increasing along every column from top to bottom and are weakly increasing along every
row from left to right.

\smallskip


(b)\, The skew  tableau $\mathfrak{t}\smallsetminus \mathfrak{s}$ occupied by the primed integers forms a conjugate semistandard skew
$(\lambda/ \mu)$-tableau, that is,  the primed integers are weakly increasing along every column from top to bottom and strictly increasing along every row from left to right.

\smallskip

\noindent
In the case when $\beta=\varnothing$, one recovers the usual semistandard $\lambda$-tableau $\mathfrak{t}$ of type $\alpha$.
\end{defn}

\begin{eg}\label{eg semistd} Let $\lambda=(4,3,2,2)$, and let $(\alpha|\beta)=((3,3,1)|(2,2))$. There are a total of five semistandard $\lambda$-tableaux of type $(\alpha|\beta)$ as below.

\[\begin{array}{ccccc}
  {\begin{pspicture}(0,0)(2.5,2.5) \psset{xunit=.3cm,yunit=.3cm}
  \rput(1,1){$1'$} \rput(3,1){$2'$} \rput(1,3){3} \rput(3,3){$1'$} \rput(1,5){2} \rput(3,5){2} \rput(5,5){$2'$} \rput(1,7){1} \rput(3,7){1} \rput(5,7){1} \rput(7,7){2} \psline(0,0)(4,0) \psline(0,2)(4,2) \psline(0,4)(6,4) \psline(0,6)(8,6) \psline(0,8)(8,8) \psline(0,0)(0,8) \psline(2,0)(2,8) \psline(4,0)(4,8) \psline(6,4)(6,8) \psline(8,6)(8,8)
  \end{pspicture}}&
  {\begin{pspicture}(0,0)(2.5,2.5) \psset{xunit=.3cm,yunit=.3cm}
  \rput(1,1){$1'$} \rput(3,1){$2'$} \rput(1,3){$1'$} \rput(3,3){$2'$} \rput(1,5){2} \rput(3,5){2} \rput(5,5){$3$} \rput(1,7){1} \rput(3,7){1} \rput(5,7){1} \rput(7,7){2} \psline(0,0)(4,0) \psline(0,2)(4,2) \psline(0,4)(6,4) \psline(0,6)(8,6) \psline(0,8)(8,8) \psline(0,0)(0,8) \psline(2,0)(2,8) \psline(4,0)(4,8) \psline(6,4)(6,8) \psline(8,6)(8,8)
  \end{pspicture}}&
  {\begin{pspicture}(0,0)(2.5,2.5) \psset{xunit=.3cm,yunit=.3cm}
  \rput(1,1){$1'$} \rput(3,1){$2'$} \rput(1,3){$1'$} \rput(3,3){$2'$} \rput(1,5){2} \rput(3,5){2} \rput(5,5){$2$} \rput(1,7){1} \rput(3,7){1} \rput(5,7){1} \rput(7,7){3} \psline(0,0)(4,0) \psline(0,2)(4,2) \psline(0,4)(6,4) \psline(0,6)(8,6) \psline(0,8)(8,8) \psline(0,0)(0,8) \psline(2,0)(2,8) \psline(4,0)(4,8) \psline(6,4)(6,8) \psline(8,6)(8,8)
  \end{pspicture}}&
  {\begin{pspicture}(0,0)(2.5,2.5) \psset{xunit=.3cm,yunit=.3cm}
  \rput(1,1){$1'$} \rput(3,1){$2'$} \rput(1,3){$3$} \rput(3,3){$2'$} \rput(1,5){2} \rput(3,5){2} \rput(5,5){$2$} \rput(1,7){1} \rput(3,7){1} \rput(5,7){1} \rput(7,7){$1'$} \psline(0,0)(4,0) \psline(0,2)(4,2) \psline(0,4)(6,4) \psline(0,6)(8,6) \psline(0,8)(8,8) \psline(0,0)(0,8) \psline(2,0)(2,8) \psline(4,0)(4,8) \psline(6,4)(6,8) \psline(8,6)(8,8)
  \end{pspicture}}&
  {\begin{pspicture}(0,0)(2.5,2.5) \psset{xunit=.3cm,yunit=.3cm}
  \rput(1,1){$1'$} \rput(3,1){$2'$} \rput(1,3){$3$} \rput(3,3){$1'$} \rput(1,5){2} \rput(3,5){2} \rput(5,5){$2$} \rput(1,7){1} \rput(3,7){1} \rput(5,7){1} \rput(7,7){$2'$} \psline(0,0)(4,0) \psline(0,2)(4,2) \psline(0,4)(6,4) \psline(0,6)(8,6) \psline(0,8)(8,8) \psline(0,0)(0,8) \psline(2,0)(2,8) \psline(4,0)(4,8) \psline(6,4)(6,8) \psline(8,6)(8,8)
  \end{pspicture}}
\end{array}\]
\end{eg}

\begin{nota}\label{nota LR}
Suppose that $\lambda\in\P(n)$, and let $\gamma$ and $\xi$ be partitions such that $|\gamma|+|\xi|=n$.
Denote by $c_{\gamma,\xi}^\lambda$ the {\sl Littlewood--Richardson coefficient} in the sense of \cite[\S16]{GJ1}.
By
\cite[2.8.13]{GJAK}, the $F\sym{n}$-module $\ind_{\sym{|\gamma|}\times\sym{|\xi|}}^{\sym{n}}(S^\gamma\boxtimes S^\xi)$
admits a Specht filtration such that $S^\lambda$ occurs with multiplicity $c_{\gamma,\xi}^\lambda$ in this filtration.

Moreover, if $\alpha\in\P(n)$ then, by Young's Rule, the Young permutation $F\sym{n}$-module $M^\alpha=M(\alpha|\varnothing)$
admits a  Specht filtration
in which $S^\lambda$ occurs with multiplicity equal to the number of semistandard $\lambda$-tableaux of type $\alpha$, and we denote this multiplicity by $y_{\alpha,\lambda}$.

By convention, we set $c^\varnothing_{\varnothing,\varnothing}:=1$ and $y_{\varnothing,\varnothing}:=1$.
\end{nota}

\begin{lem} \label{lemma y}
Let $m\in \mathbb{Z}^+$, and let $\beta\in\P(m)$. Then
the signed Young permutation $F\sym{m}$-module $M(\varnothing|\beta)$ has a Specht series
in which, for each $\xi\in\P(m)$, the Specht $F\sym{m}$-module
$S^{\xi}$ occurs as a factor with multiplicity $y_{\beta,\xi'}$.
\end{lem}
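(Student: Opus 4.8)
The plan is to identify the signed Young permutation module $M(\varnothing|\beta) = \ind_{\sym{\beta}}^{\sym{m}}(\sgn(\beta))$ with a twist of an ordinary Young permutation module and then apply the classical Young Rule together with the contragredient-dual formula for Specht modules. First I would observe that $M(\varnothing|\beta) = \ind_{\sym{\beta}}^{\sym{m}}(\sgn) \cong \ind_{\sym{\beta}}^{\sym{m}}(F\otimes\sgn(\beta)) \cong \left(\ind_{\sym{\beta}}^{\sym{m}}(F)\right)\otimes\sgn(m) = M^\beta\otimes\sgn(m)$, using the standard fact that $\ind_H^G(V\otimes\res_H^G(W))\cong\ind_H^G(V)\otimes W$ (Frobenius Formula), with $W=\sgn(m)$ and $V = F$ the trivial $F\sym{\beta}$-module.

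Next I would exploit exactness of the functor $-\otimes\sgn(m)$. By Young's Rule (recalled in Notation~\ref{nota LR}), the module $M^\beta$ admits a Specht filtration $\{0\}=M_0\subset M_1\subset\cdots\subset M_k=M^\beta$ in which each subquotient $M_i/M_{i-1}$ is isomorphic to a Specht module $S^{\xi}$, and the number of indices $i$ with $M_i/M_{i-1}\cong S^{\xi}$ equals $y_{\beta,\xi}$ for every $\xi\in\P(m)$. Applying the exact functor $-\otimes\sgn(m)$ to this filtration yields a filtration of $M^\beta\otimes\sgn(m)\cong M(\varnothing|\beta)$ with subquotients $(M_i/M_{i-1})\otimes\sgn(m)\cong S^{\xi}\otimes\sgn(m)$. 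Now I would invoke the isomorphism $S^{\xi}\otimes\sgn\cong S_{\xi'}\cong (S^{\xi'})^*$, or more directly the well-known fact $S^{\xi'}\cong (S^\xi)^*\otimes\sgn$, which after dualizing and relabelling gives $S^{\xi}\otimes\sgn(m)\cong S_{\xi'} = (S^{\xi'})^*$. The cleanest route is: the excerpt records in \eqref{eqn dual S} that $S_\lambda = (S^\lambda)^* \cong S^{\lambda'}\otimes\sgn$; replacing $\lambda$ by $\xi'$ and tensoring by $\sgn$ gives $S^{\xi}\otimes\sgn \cong S_{\xi'}$; but for producing a Specht \emph{filtration} I actually want genuine Specht modules $S^\eta$ as factors, so I would instead use the dual statement $(S^{\xi})^*\otimes\sgn\cong S^{\xi'}$, equivalently $S^\xi\otimes\sgn\cong (S^{\xi'})^*$. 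Since the Specht module $S^{\xi'}$ is self-dual only in characteristic $0$, in general $(S^{\xi'})^*$ need not be a Specht module — so the honest move is to dualize the \emph{whole} filtration: $M(\varnothing|\beta)$ itself is the dual of $M^\beta\otimes\sgn(m)$ up to... actually $M(\varnothing|\beta)$ is self-dual as a permutation-type module. The correct argument: $M(\varnothing|\beta)\cong M^\beta\otimes\sgn(m)$ has a filtration with factors $S^\xi\otimes\sgn(m)$; dualizing this filtration (and noting $M(\varnothing|\beta)$ is self-dual, being $\ind$ of a self-dual module over a group in which relevant indices behave well, or simply $(M^\beta\otimes\sgn)^* \cong (M^\beta)^*\otimes\sgn \cong M^\beta\otimes\sgn$ since $M^\beta$ is self-dual) produces a filtration of $M(\varnothing|\beta)$ with factors $(S^\xi\otimes\sgn(m))^* \cong (S^\xi)^*\otimes\sgn(m) \cong S^{\xi'}$ by \eqref{eqn dual S}. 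Collecting multiplicities, $S^{\xi'}$ appears $y_{\beta,\xi}$ times; reindexing via $\eta=\xi'$ (so $\xi = \eta'$) gives that $S^\eta$ appears $y_{\beta,\eta'}$ times, which is exactly the claim.

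The main obstacle is purely bookkeeping: making sure the dualization step is carried out correctly and that the reindexing $\xi\leftrightarrow\xi'$ is applied consistently, together with checking that $M^\beta$ (hence $M(\varnothing|\beta)$) is self-dual so that a Specht filtration of its dual transports back to a Specht filtration of the module itself. One should also record the edge case $m=0$ (or more generally empty parts of $\beta$) matching the convention $y_{\varnothing,\varnothing}=1$. No deep input is needed beyond Young's Rule and formula \eqref{eqn dual S}; the whole argument is two exact-functor manipulations plus a transpose.
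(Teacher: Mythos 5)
Your proposal is correct and follows essentially the same route as the paper: invoke Young's Rule for $M^\beta$, identify $M(\varnothing|\beta)\cong M^\beta\otimes\sgn$, use self-duality of $M^\beta$ together with the formula $(S^\xi)^*\otimes\sgn\cong S^{\xi'}$ (i.e.\ \eqref{eqn dual S}), and relabel. The paper's proof is just a more compact packaging of exactly these steps, stating the Young Rule directly in terms of $S^{\xi'}$ so that a single dualize-and-twist produces the factors $S^\xi$ without the intermediate reindexing.
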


\begin{proof}
By Young's Rule \cite[17.14]{GJ1}, the Young permutation $F\sym{m}$-module $M^\beta=M(\beta|\varnothing)$ has
a Specht filtration in which the Specht $F\sym{m}$-module $S^{\xi'}$ occurs with multiplicity  $y_{\beta,\xi'}$.
Now recall from \ref{noth Specht} that we have $S^{\xi}\cong (S^{\xi'})^*\otimes\sgn$ and $(M(\beta|\varnothing))^*\otimes \sgn\cong M(\beta|\varnothing)\otimes \sgn\cong M(\varnothing|\beta)$. Consequently, $M(\varnothing|\beta)$ admits a Specht filtration in which
$S^\xi$ occurs as a factor with multiplicity  $y_{\beta,\xi'}$.
\end{proof}

The proof of the following lemma requires the notion of the {\sl rectification} of a skew tableau. Since we do not need this notion elsewhere in our paper, we refer the reader to the reference \cite[\S5.1]{WF} for the necessary definitions and notation that we follow.

\begin{lem}\label{L: number of sstd}
Let $\lambda\in\P(n)$, and let $(\alpha|\beta)\in\P^2(n)$.
The number of semistandard $\lambda$-tableaux of type $(\alpha|\beta)$ is
$$\sum_{\gamma\vdash |\alpha|,\xi\vdash |\beta|} y_{\alpha,\gamma}y_{\beta,\xi'}c_{\gamma,\xi}^\lambda\,.$$
\end{lem}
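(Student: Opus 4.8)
The plan is to establish a bijection between semistandard $\lambda$-tableaux of type $(\alpha|\beta)$ and certain composite combinatorial data whose cardinality is manifestly $\sum_{\gamma\vdash|\alpha|,\ \xi\vdash|\beta|} y_{\alpha,\gamma}\, y_{\beta,\xi'}\, c_{\gamma,\xi}^{\lambda}$. First I would unpack Definition~\ref{defn semistd}: a semistandard $\lambda$-tableau $\mathfrak{t}$ of type $(\alpha|\beta)$ decomposes uniquely into an inner semistandard $\mu$-tableau $\mathfrak{s}$ of type $\alpha$ (the non-primed part) together with a conjugate-semistandard skew $(\lambda/\mu)$-tableau (the primed part), for a uniquely determined sub-partition $\mu\subseteq\lambda$ with $|\mu|=|\alpha|$. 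Thus the count splits, over all such $\mu$, as a product of (number of semistandard $\mu$-tableaux of type $\alpha$) times (number of conjugate-semistandard skew $(\lambda/\mu)$-tableaux of content $\beta$).

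The first factor is $y_{\alpha,\mu}$ by definition. For the second factor, the key step is to identify the number of conjugate-semistandard skew $(\lambda/\mu)$-tableaux of content $\beta$ with $\sum_{\xi\vdash|\beta|} y_{\beta,\xi'}\, c_{\mu,\xi}^{\lambda}$. This is the combinatorial heart of the argument. I would prove it via rectification: conjugating everything reduces conjugate-semistandard skew tableaux of shape $\lambda/\mu$ to ordinary semistandard skew tableaux of shape $\lambda'/\mu'$ and content $\beta$. By the standard Littlewood--Richardson theory (as in \cite[\S5.1]{WF}), each such skew tableau rectifies to a unique semistandard tableau of some straight shape $\xi'$ of content $\beta$, and for fixed $\xi'$ the number of skew tableaux rectifying to a fixed target tableau is $c^{\lambda'}_{\mu',\xi'}$. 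Summing over the $y_{\beta,\xi'}$ possible rectified targets of each shape $\xi'$, the count of conjugate-semistandard skew $(\lambda/\mu)$-tableaux of content $\beta$ becomes $\sum_{\xi} y_{\beta,\xi'}\, c^{\lambda'}_{\mu',\xi'}$, and then the symmetry $c^{\lambda'}_{\mu',\xi'} = c^{\lambda}_{\mu,\xi}$ of Littlewood--Richardson coefficients under conjugation brings it to $\sum_{\xi} y_{\beta,\xi'}\, c^{\lambda}_{\mu,\xi}$.

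Combining, the total number of semistandard $\lambda$-tableaux of type $(\alpha|\beta)$ is
\[
\sum_{\mu\subseteq\lambda,\ |\mu|=|\alpha|} y_{\alpha,\mu}\sum_{\xi\vdash|\beta|} y_{\beta,\xi'}\, c^{\lambda}_{\mu,\xi}
= \sum_{\gamma\vdash|\alpha|,\ \xi\vdash|\beta|} y_{\alpha,\gamma}\, y_{\beta,\xi'}\, c^{\lambda}_{\gamma,\xi},
\]
where in the last step I rename $\mu$ to $\gamma$ and observe that $c^{\lambda}_{\gamma,\xi}=0$ unless $[\gamma]\subseteq[\lambda]$, so extending the sum to all $\gamma\vdash|\alpha|$ is harmless. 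The main obstacle is the rectification step: I must carefully justify, using the confluence/Church--Rosser property of jeu de taquin and the Littlewood--Richardson rule, that for each target shape the fiber has size exactly $c^{\lambda}_{\gamma,\xi}$ and that conjugation interchanges semistandard with conjugate-semistandard correctly; the rest is bookkeeping with the definitions of $y_{\alpha,\gamma}$ and the decomposition $\mathfrak{t}=\mathfrak{s}\sqcup(\mathfrak{t}\smallsetminus\mathfrak{s})$.
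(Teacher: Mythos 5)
Your proposal is correct and follows essentially the same route as the paper: decompose the tableau into the non-primed inner part (counted by $y_{\alpha,\gamma}$) and the primed skew part, then count the skew part via rectification/jeu de taquin from \cite[\S5.1]{WF}. The only cosmetic difference is that you make the conjugation step explicit and invoke the symmetry $c^{\lambda'}_{\gamma',\xi'}=c^{\lambda}_{\gamma,\xi}$ at the end, whereas the paper appeals directly to \cite[\S5.1, Corollary 2(iii)]{WF}, which already builds this equality into the statement about $S((\lambda/\gamma)',\mathfrak{t}')$ and $S(\lambda/\gamma,\mathfrak{t})$.
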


\begin{proof}
There is a one-to-one correspondence between the set of semistandard $\lambda$-tableaux of type $(\alpha|\beta)$ and the set of
pairs consisting of a semistandard $\gamma$-tableau of type $\alpha$ and a conjugate semistandard skew $(\lambda/ \gamma)$-tableau of type $\beta$, as  $\gamma$ varies over the set of sub-partitions of $\lambda$ of size $|\alpha|$.
Thus, it suffices to show that, for each such $\gamma$, the number of conjugate semistandard skew $(\lambda/ \gamma)$-tableau of type $\beta$ is \[\sum_{\xi\vdash |\beta|} y_{\beta,\xi'}c_{\gamma,\xi}^\lambda\,.\]
Fix a partition $\xi$ of $|\beta|$ and  a semistandard $\xi'$-tableau $\mathfrak{t}'$ of type $\beta$. Let $S((\lambda/\gamma)',\mathfrak{t}')$ be the set consisting of semistandard skew $(\lambda/\gamma)'$-tableaux whose rectification is $\mathfrak{t}'$ (see \cite[\S5.1]{WF}). By \cite[\S5.1 Corollary 2(iii)]{WF}, the set $S((\lambda/\gamma)',\mathfrak{t}')$ has the same cardinality as $S(\lambda/\gamma,\mathfrak{t})$, which is also the same as the Littlewood--Richardson coefficient $c^\lambda_{\gamma,\xi}$. This shows that, for each conjugate semistandard $\xi$-tableau $\mathfrak{t}$ of type $\beta$, and hence every semistandard $\xi'$-tableau $\mathfrak{t}'$ of type $\beta$, we get all semistandard skew $(\lambda/\gamma)$-tableaux whose rectification is $\mathfrak{t}$. Hence we get a total of $y_{\beta,\xi'}c^\lambda_{\gamma,\xi}$ conjugate semistandard skew $(\lambda/ \gamma)$-tableau of type $\beta$.
\end{proof}

\begin{thm}[Signed Young Rule {\cite[Theorem 3.5]{KJLKMT}}]\label{T:Specht factors of sgn permutation}
Let $(\alpha|\beta)\in\P^2(n)$.
The signed Young permutation $F\sym{n}$-module $M(\alpha|\beta)$ has a Specht series in which, for each $\lambda\in\P(n)$,
the Specht module $S^\lambda$ occurs as a factor with multiplicity equal to the number of semistandard $\lambda$-tableaux of type $(\alpha|\beta)$.
\end{thm}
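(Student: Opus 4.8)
The statement to prove is the Signed Young Rule, Theorem~\ref{T:Specht factors of sgn permutation}: for $(\alpha|\beta)\in\P^2(n)$, the signed Young permutation module $M(\alpha|\beta)$ has a Specht series in which $S^\lambda$ appears with multiplicity equal to the number of semistandard $\lambda$-tableaux of type $(\alpha|\beta)$. My plan is to decompose $M(\alpha|\beta)$ into two ``halves'' --- the untwisted factor $M(\alpha|\varnothing)=M^\alpha$ and the twisted factor $M(\varnothing|\beta)$ --- analyze the Specht filtrations of each half via the classical Young Rule, then combine them by induction through transitivity of induction, and finally match the resulting multiplicity with the combinatorial count supplied by Lemma~\ref{L: number of sstd}.

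\emph{Step 1: Reduce to a tensor-over-a-Young-subgroup statement.} By definition $M(\alpha|\beta)=\ind_{\sym{|\alpha|}\times\sym{|\beta|}}^{\sym{n}}\bigl(M(\alpha|\varnothing)\boxtimes M(\varnothing|\beta)\bigr)$, where on the left factor we induce the trivial module up the Young subgroup $\sym{\alpha}\leq\sym{|\alpha|}$ and on the right factor we induce the sign module up $\sym{\beta}\leq\sym{|\beta|}$; this is immediate from transitivity of induction. So it suffices to determine a Specht filtration of an induction product $\ind_{\sym{a}\times\sym{b}}^{\sym{n}}(X\boxtimes Y)$, where $X$ has a known Specht filtration and $Y$ has a known Specht filtration. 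Because the induction functor $\ind_{\sym{a}\times\sym{b}}^{\sym{n}}(-)$ is exact, a Specht filtration of $X$ (with factors $S^\gamma$) and one of $Y$ (with factors $S^\xi$) together give $\ind_{\sym{a}\times\sym{b}}^{\sym{n}}(X\boxtimes Y)$ a filtration whose successive quotients are the modules $\ind_{\sym{|\gamma|}\times\sym{|\xi|}}^{\sym{n}}(S^\gamma\boxtimes S^\xi)$, each with the appropriate multiplicity.

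\emph{Step 2: Insert the two known filtrations and the Littlewood--Richardson rule.} For the untwisted factor I use Young's Rule: $M(\alpha|\varnothing)$ has a Specht filtration in which $S^\gamma$ occurs with multiplicity $y_{\alpha,\gamma}$ (see \ref{nota LR}). For the twisted factor I use Lemma~\ref{lemma y}: $M(\varnothing|\beta)$ has a Specht filtration in which $S^\xi$ occurs with multiplicity $y_{\beta,\xi'}$. Feeding these into Step~1 gives a filtration of $M(\alpha|\beta)$ whose factors are the modules $\ind_{\sym{|\gamma|}\times\sym{|\xi|}}^{\sym{n}}(S^\gamma\boxtimes S^\xi)$, the $(\gamma,\xi)$-term appearing $y_{\alpha,\gamma}\,y_{\beta,\xi'}$ times. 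Now I invoke the Littlewood--Richardson rule in the form cited in \ref{nota LR} (from \cite[2.8.13]{GJAK}): each module $\ind_{\sym{|\gamma|}\times\sym{|\xi|}}^{\sym{n}}(S^\gamma\boxtimes S^\xi)$ itself admits a Specht filtration in which $S^\lambda$ occurs with multiplicity $c_{\gamma,\xi}^\lambda$. Splicing all these filtrations together (using exactness and the fact that a module with a filtration whose factors themselves have Specht filtrations has a Specht filtration) produces a genuine Specht filtration of $M(\alpha|\beta)$ in which $S^\lambda$ occurs with total multiplicity
$$\sum_{\gamma\vdash|\alpha|,\ \xi\vdash|\beta|} y_{\alpha,\gamma}\,y_{\beta,\xi'}\,c_{\gamma,\xi}^\lambda\,.$$

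\emph{Step 3: Identify the count.} Finally, by Lemma~\ref{L: number of sstd} this sum is exactly the number of semistandard $\lambda$-tableaux of type $(\alpha|\beta)$, which completes the proof. The edge cases $\alpha=\varnothing$ or $\beta=\varnothing$ are handled by the conventions $c^\varnothing_{\varnothing,\varnothing}=y_{\varnothing,\varnothing}=1$ and by noting that when $\beta=\varnothing$ the statement reduces to the classical Young Rule. \textbf{The main obstacle} I anticipate is purely bookkeeping rather than conceptual: one must be careful that ``having a filtration whose factors have Specht filtrations'' can be upgraded to ``having a Specht filtration'' --- this needs a small lemma about refining filtrations, valid over any field since induction is exact and the Specht module is defined integrally. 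There is no issue of multiplicities depending on the chosen filtration \emph{for the existence assertion} we are proving: we only claim that \emph{some} Specht series realizes these multiplicities, which is all Step~2 delivers. The combinatorial heart of the argument is really Lemma~\ref{L: number of sstd}, whose proof via rectification of skew tableaux is where the genuine content lies; given that lemma, Theorem~\ref{T:Specht factors of sgn permutation} is a clean assembly of exact functors and known filtrations.
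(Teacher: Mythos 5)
Your proposal is correct and follows exactly the same route as the paper's own proof: decompose $M(\alpha|\beta)$ as $\ind_{\sym{|\alpha|}\times\sym{|\beta|}}^{\sym{n}}(M(\alpha|\varnothing)\boxtimes M(\varnothing|\beta))$, apply Young's Rule and Lemma~\ref{lemma y} to the two factors, splice via the Littlewood--Richardson rule of \cite[2.8.13]{GJAK}, and identify the resulting multiplicity using Lemma~\ref{L: number of sstd}. The only difference is that you spell out the intermediate ``filtration of filtrations'' splicing step, which the paper leaves implicit.
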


\begin{proof}
Suppose that $n_1=|\alpha|$ and $n_2=|\beta|$. By  Young's Rule and Lemma~\ref{lemma y}, we obtain Specht series of $M(\alpha|\varnothing)$ and of $M(\varnothing|\beta)$ in which
the Specht $F\sym{n_1}$-module $S^\gamma$ and the Specht $F\sym{n_2}$-module $S^{\xi}$ occur as factors with multiplicities
$y_{\alpha,\gamma}$ and $y_{\beta,\xi'}$, respectively. Moreover, we have
$M(\alpha|\beta)\cong\ind_{\sym{n_1}\times\sym{n_2}}^{\sym{n}}(M(\alpha|\varnothing)\boxtimes M(\varnothing|\beta))$.
Thus, by \cite[2.8.13]{GJAK}, $M(\alpha|\beta)$ has a Specht filtration in which
$S^\lambda$ occurs as a factor with multiplicity
$$\sum_{\gamma\vdash n_1,\ \xi\vdash n_2} y_{\alpha,\gamma}y_{\beta,\xi'}c_{\gamma,\xi}^\lambda\,.$$
The result now follows from Lemma \ref{L: number of sstd}.
\end{proof}

As an immediate corollary we have

\begin{cor}\label{cor comp series}
Suppose that $F$ has characteristic $0$, let $(\alpha|\beta)\in\P^2(n)$, and let $\lambda\in\P(n)$. Then
the composition factors of the signed Young permutation $F\sym{n}$-module $M(\alpha|\beta)$ isomorphic to
$S^\lambda$ equals the number of semistandard $\lambda$-tableaux of type $(\alpha|\beta)$.
\end{cor}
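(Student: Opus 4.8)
The plan is to derive this corollary immediately from the Signed Young Rule (Theorem~\ref{T:Specht factors of sgn permutation}) together with the fact that Specht modules over a field of characteristic $0$ are simple. First I would invoke Theorem~\ref{T:Specht factors of sgn permutation}: the signed Young permutation module $M(\alpha|\beta)$ admits a Specht filtration in which, for each $\lambda\in\P(n)$, the Specht module $S^\lambda$ appears as a subquotient with multiplicity equal to the number of semistandard $\lambda$-tableaux of type $(\alpha|\beta)$. This already gives a filtration of $M(\alpha|\beta)$ whose successive quotients are Specht modules, with the stated multiplicities.

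The second and final step is to translate ``filtration factors'' into ``composition factors''. Here I would use the hypothesis that $F$ has characteristic $0$: by the classical result recalled in \ref{noth perm Specht}(a) (and \ref{noth Specht}(b)), over a field of characteristic $0$ each Specht module $S^\lambda$ is simple, and the distinct $S^\lambda$, as $\lambda$ runs over $\P(n)$, form a complete irredundant list of the isomorphism classes of simple $F\sym{n}$-modules. Consequently, a Specht filtration of $M(\alpha|\beta)$ is in fact a composition series, and by the Jordan--H\"older theorem the multiplicity of $S^\lambda$ as a composition factor of $M(\alpha|\beta)$ is independent of the chosen filtration and equals the number of factors isomorphic to $S^\lambda$ in the Specht filtration provided by Theorem~\ref{T:Specht factors of sgn permutation}. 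Combining the two steps yields exactly the asserted count.

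I do not anticipate a genuine obstacle here, since the corollary is a direct specialization; the only point requiring a word of care is the appeal to Jordan--H\"older to guarantee that the multiplicity read off from the particular Specht filtration of Theorem~\ref{T:Specht factors of sgn permutation} coincides with the composition multiplicity, but this is automatic once one knows the Specht modules are simple and pairwise non-isomorphic in characteristic $0$. (Contrast this with the characteristic $p\in\{2,3\}$ situation mentioned in \ref{noth perm Specht}(b), where the number of Specht factors can depend on the filtration; in characteristic $0$ no such subtlety arises.) Thus the proof is a two-line deduction: quote Theorem~\ref{T:Specht factors of sgn permutation}, then use simplicity of Specht modules in characteristic $0$ to identify the Specht filtration with a composition series.
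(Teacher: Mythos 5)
Your proof is correct and is exactly the intended derivation: the paper states Corollary~\ref{cor comp series} as an immediate consequence of the Signed Young Rule (Theorem~\ref{T:Specht factors of sgn permutation}), and the only thing left implicit is precisely the observation you supply, namely that over a field of characteristic $0$ the Specht modules are simple and pairwise non-isomorphic, so a Specht filtration is a composition series and Jordan--H\"older applies.
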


\end{appendix}

\bigskip

\noindent
{\sc S.D.: Catholic University of Eichst\"att-Ingolstadt, Department of Mathematics and Geography, Ostenstrasse 28, 85072 Eichst\"att, Germany\\ {\sf susanne.danz@ku.de}}

\bigskip

\noindent
{\sc K.J.L.: Division of Mathematical Sciences, Nanyang Technological University, SPMS-MAS-03-01, 21 Nanyang Link, Singapore 637371}\\ {\sf limkj@ntu.edu.sg}

\end{document}